\documentclass[
    11pt,		
    dvipsnames, 
    oneside,
    reqno,  %right sided equation number
]{amsart}
\usepackage[foot]{amsaddr}

\usepackage[english]{babel}
\usepackage[utf8]{inputenc}
\usepackage[T1]{fontenc}
\usepackage{amsmath,amsthm,amssymb}
\usepackage[a4paper, margin = 2.5cm]{geometry}
\usepackage{appendix}
\usepackage{enumitem}
\usepackage{cite}
\usepackage{tabularx}
\usepackage{float} % allows precise tabular positioning
\usepackage{graphicx}
\usepackage{esint} %averaged integrals
\usepackage{mathrsfs} %script fonts \mathscr
\usepackage{multicol}
\usepackage{subcaption}[labelformat=empty]
\usepackage[
        colorlinks=true, 
        linkcolor=black, 
        citecolor=black, 
        urlcolor = black]{hyperref}
\usepackage{xurl}
\usepackage{tikz}
\usetikzlibrary{calc,decorations.pathreplacing,shadows,arrows.meta,patterns}
\usepackage{macros} % My macros

\definecolor{myorange}{HTML}{E69F00}   % replaces "orange"
\definecolor{myblue}{HTML}{0072B2}     % replaces "OliveGreen" (better contrast than a second warm/green tone)

\theoremstyle{plain}
\newtheorem{theorem}{Theorem}
\newtheorem{proposition}[theorem]{Proposition}
\newtheorem{lemma}[theorem]{Lemma}
\newtheorem{corollary}[theorem]{Corollary}

\theoremstyle{definition}
\newtheorem{definition}[theorem]{Definition}
\newtheorem{assumption}{Assumption}

\theoremstyle{remark}
\newtheorem{remark}[theorem]{Remark}
\theoremstyle{remark}

\newtheorem{example}[theorem]{Example}

\numberwithin{theorem}{section}
\numberwithin{equation}{section}  

\hypersetup{
    pdftitle={Structure theory of BV functions and finite perimeter sets on Riemannian manifolds},
    pdfauthor={Péter Koltai, Kathrin Völkner},
    }

\title{Structure theory of \textit{BV} functions and
finite perimeter sets on Riemannian manifolds 
}

\author{P\'eter Koltai$^1$}
\address{$^1$Universit\"at Bayreuth, Bayreuth, Germany}
\author{Kathrin V\"olkner$^2$}
\address{$^2$Freie Universit\"at Berlin, Berlin, Germany. \upshape{Corresponding author. kathrin.voelkner@fu-berlin.de}}
\subjclass[2020]{49Q15, 26B30, 28A75, 58J32}

\thanks{This research was funded by an Einstein Visiting Fellowship of the Einstein Foundation Berlin. K.V. was supported by the Deutsche Forschungsgemeinschaft under Germany's Excellence Strategy – The Berlin Mathematics Research Center MATH+ (EXC-2046/2, project ID: 390685689).\\
K.V. would like to thank Batu Güneysu for an insightful exchange regarding covector measures and the effect of curvature conditions on $BV$ functions, and Giona Veronelli for helpful remarks on Example~\ref{ex: decomposition of Lipschitz domains}. We further thank Gary Froyland and Elena Mäder-Baumdicker for pointing us to valuable references on related works.}
\begin{document}

\begin{abstract}
We develop the theory of functions of bounded variation and the structure theory of finite-perimeter sets on arbitrary Riemannian manifolds without relying on global curvature bounds or completeness of the manifold. To this end, we build a localization framework  that permits a synthesis of techniques from Euclidean geometric measure theory and analysis on metric measure spaces while preserving genuinely Riemannian features, such as polar and normal vector fields, reduced boundaries, and approximate tangent spaces. 
As a consequence, we recover key results of the Euclidean theory, such as a differentiation theorem for a Riemannian generalization of vector-valued measures and the structure theorems by De~Giorgi and Federer, formulated intrinsically on Riemannian manifolds. This makes a large portion of the classical Euclidean \textit{BV} theory available to the Riemannian setting. 
 
We demonstrate this by providing the theoretical background for boundary value problems on domains in manifolds, including trace and Gauss--Green theorems. Finally, we prove an approximation result for finite-perimeter sets in a strict sense that respects a prescribed Dirichlet boundary portion of a given ambient domain and apply this to prove Gamma-convergence of a family of energy functionals for capillarity problems with mixed boundary conditions.
\end{abstract}
\maketitle

\tableofcontents

\section{Introduction}

Functions of bounded variation ($BV$ \textit{functions}) and the structure theory of sets of finite perimeter in $\R^m$ are central to geometric analysis, the calculus of variations, and geometric measure theory. 
Classical results such as De~Giorgi's structure theorem and Federer's characterization of sets of finite perimeter are essential tools
in minimal surface theory and the study of isoperimetric problems. Generalizations of such Euclidean results to Riemannian manifolds have gained increasing attention over the past years. For instance, quantitative isoperimetric problems were studied first in the special cases of the sphere \cite{BogDuzSch15_SharpQuantitativeIsoperimetric} and of hyperbolic space \cite{BogDuzFus17_QuantitativeIsoperimetricInequality} before it was shown in \cite{ChoEngSpo22_RiemannianQuantitativeIsoperimetric} that an analog of the classical Euclidean result fails on general Riemannian manifolds. On the other hand, applications of $BV$ functions in areas such as free boundary problems or shape optimization naturally occur on curved spaces, e.g.~\cite{NarChaTro24_MatchingProblemFunctional, BroBurGil24_SpectralTotalvariationProcessing}. 

The study of this type of problems requires an in-depth knowledge of $BV$ functions and the structure theory of finite
perimeter sets on Riemannian manifolds. To the best of our knowledge, an account of this is not available in the existing literature.  Yet, the Riemannian generalizations are far from trivial: For instance, a naive definition of the reduced boundary of finite perimeter sets would assume a globally trivial tangent bundle, something that can be done in hyperbolic space but is clearly false in general. It is therefore a natural question to what extent the  theory can be formulated intrinsically on Riemannian manifolds without imposing restrictive global assumptions or relying on a higher dimensional Euclidean ambient space.\\

In this context, two main lines of work have emerged in recent years.
On the one hand, treatments of \textit{$BV$ functions on Riemannian manifolds} are mainly concerned with the heat semigroup approximation of the total variation. These works rely on global curvature bounds or completeness of the manifold~\cite{GunPal15_FunctionsBoundedVariation,  MirPalParPre07_HeatSemigroupFunctions,  CarMau07_NoteBoundedVariation, Gen26_FinitePerimeterRiemannian}, or they consider merely compact manifolds~\cite{AloBau23_AnotherHeatSemigroup, KreMor19_FractionalSobolevNorms}. 
On the other hand, the theory of \textit{$BV$ functions on metric measure spaces}, which  can be viewed as a further generalization, has
been developed extensively under restrictive global assumptions:   here,  one typically assumes that the space is complete,  volume doubling,  and satisfies a $1$-Poincar\'e inequality,
yielding powerful analytic tools in a very general framework. We only mention  \cite{Mir03_FunctionsBoundedVariation,  AmbMirPal04_SpecialFunctionsBounded,  AmbDi14_EquivalentDefinitionsBV,  LahSha18_TraceTheoremsFunctions} and underline that this list is far from being exhaustive.
However, these global structural hypotheses  
are not satisfied on arbitrary (noncompact) Riemannian manifolds.
Moreover, several geometric objects from the Euclidean theory that can be given an intrinsic meaning on smooth manifolds,
such as a notion of vector measures, normal vector fields, reduced boundaries and approximate tangent spaces to rectifiable sets, do not arise naturally in the general metric measure space setting.\\

We also mention a related but separate line of work on weakly differentiable functions in the setting of rectifiable varifolds, see \cite{Men16_WeaklyDifferentiableFunctions, Men16_SobolevFunctionsVarifolds}. Rectifiable varifolds generalize rectifiable subsets of Euclidean space and, in particular, smoothly embedded submanifolds. However, the theory is formulated in an ambient Euclidean space and is therefore not intrinsic to the manifold. While some results have parallel aspects with the structure theory developed here (compare, e.g.,~\cite[Corollary~12.2]{Men16_WeaklyDifferentiableFunctions} with Theorem~\ref{thm: De~Giorgi}), they concern a genuinely different class of functions, which does not contain functions of bounded variation (see~\cite[Remark 8.10]{Men16_WeaklyDifferentiableFunctions}); moreover, the full-dimensional setting of the present paper is structurally excluded by the standing assumption of positive codimension.\\

The purpose of this work is to bridge the gap between the Euclidean setting and the metric measure space framework by developing  localization
techniques for functions of bounded variation and sets of finite perimeter on
smooth Riemannian manifolds.
Our approach is based on the characterization of $BV$ functions via a Riemannian analog of vector-valued measures from \cite{GunPal15_FunctionsBoundedVariation} and exploits the fact that every relatively compact domain of a smooth manifold satisfies local
doubling and local Poincar\'e inequalities.
This allows us to combine methods from Euclidean geometric measure theory
with results from analysis on metric measure spaces while retaining
genuinely Riemannian information encoded in
covector measures.
In particular, throughout the paper we avoid global curvature assumptions and do not require  completeness of the manifold. A distinctive feature of our approach is that we give intrinsic (coordinate-free) Riemannian  definitions while simultaneously developing a localization machinery (cf. Lemma~\ref{lem: differentiable points in charts}, Lemma~\ref{lem: approximate tangent space independent of chart}, Lemma~\ref{lem: Riemannian approximate tangent space chart wise} and Corollary~\ref{cor: density of of sets in normal coordinates}). In this way, the present work provides a unified framework that connects both
perspectives and makes a large portion of the Euclidean and metric measure
space literature available on arbitrary smooth Riemannian manifolds.\\

After introducing some notation and preliminaries in Section \ref{sec: preliminaries}, the paper is organized as follows: \\

\textit{BV functions and covector measures on Riemannian manifolds.}
As mentioned above, a central ingredient of our framework is the characterization of $BV$ functions via  distributional
derivatives that can be represented in terms of a Riemannian analog of vector-valued measures. In Section~\ref{sec: BV functions and covector measures} we base our theory on~\cite{GunPal15_FunctionsBoundedVariation} and the representation theorem for such measures proved therein, and further develop the theory of spaces of (locally) finite covector measures on Riemannian manifolds and their dependence on the choice of the Riemannian metric. We go on to address convergence and compactness properties of
covector measures and $BV$ functions, including weak$^\ast$ compactness
(Lemma~\ref{lem: properties of weak and weak$^*$ convergence}), strict approximation by smooth functions
(Proposition~\ref{prop: approximation by smooth functions}), and compactness in $BV_{\mathrm{loc}}$
(Corollary~\ref{Cor: compactness in BV_loc}).
\\

\textit{Differentiation theorems for Radon and covector measures.}
A key analytic tool in our arguments is a differentiation theorem for
covector measures.
In Section~\ref{sec: differentiation theorems} we first establish a Riemannian version of the
Lebesgue--Besicovitch--Federer differentiation theorem for Radon measures
(Theorem~\ref{thm: Lebesgue--Besicovitch--Federer}), based on Vitali relations generated by closed Riemannian balls.
As a consequence, Theorem~\ref{thm: Besicovitch Theorem for vector measures} yields a differentiation theorem for covector
measures by recovering the density vector field of a given covector measure with respect to an arbitrary Radon measure from the weak$^*$-limit of a sequence of covector measures.  
These results provide the main localization mechanism of the paper:
they allow us to recover pointwise geometric information from measure-theoretic data without invoking global curvature assumptions.
A chartwise formulation (Lemma~\ref{lem: differentiable points in charts}) later shows that the differentiation process is
compatible with local coordinates, which is essential for transferring
Euclidean results to the Riemannian setting.
\\

\textit{Structure theory for sets of finite perimeter.}
Section~\ref{sec: structure theory} develops the structure theory of finite perimeter sets. Using the differentiation results from Section \ref{sec: differentiation theorems}, we introduce a Riemannian notion of
reduced boundary  based on the existence of a
measure-theoretic inner normal vector.
This definition is consistent with the classical Euclidean one and retains
the geometric information encoded in the polar vector field of the perimeter
measure. We proceed with a definition of approximate tangent spaces to rectifiable sets via blow-ups in Riemannian manifolds leading up to the proofs of the main structure theorems:   Theorem~\ref{thm: De~Giorgi} is a Riemannian version of De~Giorgi's theorem and states that the reduced boundary $\partial^*E$ of  a set $E$ of (locally) finite perimeter is (locally) rectifiable and the perimeter measure is given by the restriction of the codimension-$1$ Hausdorff measure to $\partial^*E$. In addition, it establishes the connections between the measure-theoretic normal vectors and approximate tangent spaces at $\partial^*E$.    Theorem~\ref{thm: Federer}  generalizes  a Euclidean result by Federer, stating that the measure-theoretic boundary of a finite perimeter set is contained in the set of points of density $1/2$ up to sets of codimension-$1$ Hausdorff measure zero and establishes  the equivalence in Hausdorff measure of the reduced and measure-theoretic boundary.
\\

{The last section is dedicated to applications of the previously established theory:}\\

\textit{Theory for  boundary value problems and a mixed boundary capillarity problem.} In Section \ref{sec: BVP and capillarity} we present Riemannian versions of the central tools for the treatment of boundary value problems on domains in Riemannian manifolds. This includes conditions for the existence and continuity of extension and trace operators, as well as local compactness and smooth approximation up to the boundary. We employ the localization principles established earlier to deduce a Riemannian Gauss--Green 
formula (Corollary~\ref{cor: Gauss--Green}), complementing existing
approaches that rely on global geometric assumptions.

In Section \ref{sec: mixed bvp and strict approximation} we illustrate the flexibility of the developed theory  by deriving a framework for the treatment of variational problems with \textit{mixed boundary conditions on rough domains} in Riemannian manifolds.
Such problems arise in various   contexts, ranging from generalized mixed boundary Cheeger problems used to model landslides \cite{IonLac05_GeneralizedCheegerSets,Dwe25_WeightedTotalVariation} to the study of coherent sets in dynamical systems \cite{FroKol23_DetectingBirthDeath,atnip2024inflated}. Here,  the subspace of functions with zero boundary conditions on the given Dirichlet boundary portion (or the subspace of sets with  a `no touch' condition thereon) is not closed under weak$^\ast$  convergence, and more generally, traces are not lower semicontinuous with respect to  weak$^\ast$ convergence. Thus, one cannot expect that minimizers exist in the constrained function spaces. 
We employ the previously developed structure theory to prove a  strict interior approximation result (Theorem~\ref{thm: strict Dirichlet approximation of finite perimeter sets}) on the level of relative perimeters  that respects a Dirichlet boundary condition on a fixed boundary portion under very general regularity assumptions on the domain.
In variational problems such as those mentioned above, this result has two main applications: first, it permits the relaxation of boundary-constrained minimization problems to well-posed minimization problems on the full space of finite perimeter sets, by adding a penalization term to  a given energy functional and provides a general tool to rigorously prove equivalence of the constrained and unconstrained minimization problem. Such techniques are applied,
e.g. in \cite{Dwe25_WeightedTotalVariation, IonLac05_GeneralizedCheegerSets} using  related results for Lipschitz domains in Euclidean space. Second, the approximation  serves as a means to construct recovery sequences for Gamma-convergence problems with mixed boundary conditions, as we shall demonstrate in the last section.  

In Section~\ref{sec: mixed boundary capillarity} we formulate a capillarity problem in a rough domain with mixed boundary conditions in a manifold. Capillarity problems in domains model equilibrium states of liquids in solid containers and the formation of droplets on surfaces and have been studied extensively in recent years, e.g. \cite{FusJulMorPra25_IsoperimetricInequalityCapillary,PasPoz24_QuantitativeIsoperimetricInequalities,PhiMag14_RegularityFreeBoundaries, LiZhoZhu25_MinmaxTheoryCapillary, HonSat23_CapillarySurfacesStability}. The mixed boundary conditions in the problem under consideration correspond to an inhomogeneous container whose walls consist of two parts with different adhesion properties, for instance where one part is treated with a hydrophobic coating while the other part is \textit{wetting}. After proving existence of minimizers using the theory developed in Sections \ref{sec: gauss green} and \ref{sec: mixed bvp and strict approximation}, we consider a family of mixed boundary capillarity problems depending on a varying adhesion parameter and show that the energy functionals Gamma-converge under suitable convergence of the adhesion parameter by employing Theorem~\ref{thm: strict Dirichlet approximation of finite perimeter sets} to construct a recovery sequence.

% %%%%%%%%%%%%%%%%%%%%%%%%%%%%%%%%%%%%%%%%%%%%%%%%%%%%%%%%%%%%
\section{Preliminaries}\label{sec: preliminaries}
%%%%%%%%%%%%%%%%%%%%%%%%%%%%%%%%%%%%%%%%%%%%%%%%%%%%%%%%%%%%
Throughout this paper $M\equiv (M,g)$ is a smooth, connected and oriented $m$-dimensional Riemannian manifold. In the sequel we assume all our chart domains to be relatively compact.
If $\calC$ is a regularity class, we let $\Gamma_{\calC}(M;TM)$ denote the space of sections in the tangent  bundle  of class $\calC$,  and we use the same convention for open subsets of $M$, interpreting them as open submanifolds in the usual way. If  the regularity $\calC$ is not specified, then  $\Gamma(M;TM)$  is the space of rough sections in the sense of \cite[Chapter 10]{Lee12_IntroductionSmoothManifolds}. We use similar notation for sections in the cotangent bundle $T^*M$.

We write $(X,Y)(x):=(X,Y)_g (x):=g_x(X(x),Y(x))$ for $x\in M$ and  sections $X,Y$ in $TM$  where  we will often omit the pointwise dependence of sections in situations where this causes no confusion.  Since $g$ is fixed throughout,  we will  indicate the dependence of Riemannian objects on $g$ by a subscript only when necessary, for instance when other Riemannian metrics aside from $g$ appear. In particular, the Riemannian ball around $x\in M$ of radius $r>0$ will be denoted by $B(x,r) = B_g(x,r)$.

Let $\hat{g}:TM\to T^*M$ be the canonical Riemannian bundle isomorphism given by the (pointwise) action $\hat{g}(X)(Y)=(X,Y)_g$ and its inverse $\hat{g}^{-1}: T^*M \to TM$ satisfying $(\hat{g}^{-1}(\alpha),X)_g = \alpha(X)$. Then the canonical bundle metric on the cotangent bundle $T^*M$ is denoted by $g\inv$ and satisfies $(\alpha,\beta)_{g\inv} = \alpha(\hat{g}\inv(\beta))$.

The spaces of Borel measurable and bounded Borel measurable functions on $U$ will be denoted by $ {B}(U)$ and $ {B}_b(U)$, and unless specified otherwise, by \textit{measurable} we will always mean Borel measurable. 
For open sets $U\subset M$ we denote the space of $p$-integrable functions with respect to the Riemannian  volume measure $\vol = \vol_g$ by  $L^p(U) =L^p(U,g) $ and write $ \norm[p,U]{\cdot} = \norm[p,U,g]{\cdot}$ for the corresponding norm. If $E\subset M$ is a measurable set, we will sometimes write $ |E|_g =|E|:=\vol(E) $.  

The  divergence of a $C^1$-vector field $X$ on $M$  with respect to the volume measure is uniquely defined as the  function $\ddiv(X) = \ddiv[g](X)\in C(M)$ satisfying
\begin{equation*}
    \int_M \phi \ddiv(X) \d \vol = -\int_M d\phi(X) \d \vol \tforall \phi\in C_c^\infty(M),
\end{equation*}
where $d\phi$ is the exterior derivative of $\phi$.
If $ X=\sum_i X^i \delx{i}$ in local coordinates, then 
\begin{equation}
    \ddiv (X) = \sum_i \frac{1}{\sqrt{\det g}} {\delx{i}} \sqrt{\det g} X^i.
\end{equation}
For open sets $U$ the equality 
\begin{equation*}
        \int_U \phi \ddiv(X) \d \vol = -\int_U (\nabla \phi,X)  \d \vol 
\end{equation*}
is satisfied for $\phi\in C^1(U)$ and $X\in \Gamma_{C^1}(U;TU)$ if $\phi$ or $X$ is compactly supported in $U$. 
\begin{remark} 
    By the previous considerations, $-\ddiv $ is the formal adjoint of the gradient operator with respect to the $L^2$-inner product of vector fields on $M$. Thus, given a distribution $T:C_c^\infty(U )\rar \R$, we can define its distributional gradient $(\nabla  T)(X) = -T(\ddiv (X))$ and for a regular distribution $T_u$ induced by $u\in L^1_{\loc}(U )$, we can view $Du = D_g{u} $ as a distributional derivative defined by
    \begin{equation*}
        Du[X] :=  \nabla T_u(X) = - \int_U  u\ddiv (X) \d \vol ,  \quad X\in \Gamma_{C_c^\infty}(U ;TU ),
    \end{equation*}
     and $Du[X]$ is well-defined for $X\in \Gamma_{C_c^1}(U ;TU )$. 
\end{remark}

We will repeatedly rely on results from  $BV$-theory on metric measure spaces which often relies on the validity of a $1$-Poincar\'e inequality and a   doubling condition on the measure. It is well-known that both assumptions are valid \textit{globally} on complete Riemannian manifolds with  {nonnegative} Ricci curvature  (see e.g. \cite[Chapter 5.6.3]{Sal02_AspectsSobolevtypeInequalities}). For our purposes, it will often be sufficient to rely on these metric measure space assumptions \textit{locally} on arbitrary Riemannian manifolds. This is justified by the following:

\begin{remark}
\label{rmk: local doubling condition and Poincare inequality   in arbitrary Riemannian manifolds}
    If $\Omega\subset M$ is relatively compact, then we can always view it as a domain in a closed Riemannian manifold:  take some smoothly bounded relatively compact domain $M_0$  in $M$ such that $\Omega$ is compactly contained in $M_0$. Then its closure $\bar{M}_0$ is a compact manifold with smooth boundary and  can be extended to a closed manifold  $\hat{M}$, its so-called \textit{double}, see~\cite[Example~9.32]{Lee12_IntroductionSmoothManifolds}. Since $\hat{M}$ is closed, the Riemannian metric can be extended from the closure of $\Omega$ to all of $\hat{M}$ by \cite[Lemma 10.12]{Lee12_IntroductionSmoothManifolds}. 
    In particular, as every closed manifold has bounded  curvature,  \cite[Theorems 5.6.4, 5.6.5]{Sal02_AspectsSobolevtypeInequalities} can be applied to $\Omega$. This means that
    \begin{itemize}
        \item the volume measure is  doubling on every relatively compact domain $\Omega$ in $M$,
        \item every relatively compact domain $\Omega$ satisfies a $1$-Poincar\'e inequality for arbitrary balls $B \subset \Omega$,
    \end{itemize} 
    with  constants depending on $\Omega$. If $M$ is  complete, then the same statements apply to arbitrary bounded domains.
\end{remark}

%%%%%%%%%%%%%%%%%%%%%%%%%%%%%%%%%%%%%%%%%%%%%%%%%%%%%%%%%%%%
\section{BV functions and covector measures on Riemannian manifolds}
\label{sec: BV functions and covector measures}
%%%%%%%%%%%%%%%%%%%%%%%%%%%%%%%%%%%%%%%%%%%%%%%%%%%%%%%%%%%%

% For an open subset $\Omega\subset M$, 
The \emph{total variation} of a function $u\in L^1_{\loc}(\Omega)$ on an open subset $\Omega\subset M$ is defined by
\begin{equation}\label{def: variation}
   \mathrm{Var}_\Omega(u) 
   % = \mathrm{Var}_{g,\Omega}(u)  := \sup \set{\int_\Omega u\ddiv[g] (X) \d {\vol_g}: X\in \Gamma_{C^1_c}(\Omega;T\Omega),\, |X|_g\leq 1 }
   := \sup \set{\int_\Omega u\ddiv (X) \d {\vol}: X\in \Gamma_{C^1_c}(\Omega;T\Omega),\, |X|\leq 1 }\in [0,\infty].
\end{equation}
The total variation is lower semicontinuous with respect to~$L^1_\loc$. That is, if a sequence $(u_n)$  converges to  $u$ in  $L^1_\loc(\Omega)$, then for all open subsets $U\subset\Omega$ one has
    $$\liminf_n \mathrm{Var}_\Omega(u_n)(U) \geq \mathrm{Var}_\Omega(u)(U).$$ 
 We write $U\subset\subset \Omega$ to express that a subset $U$ is relatively compact in~$\Omega$. If  $\mathrm{Var}_{U}(u) < \infty$  holds for all open subsets $U\subset\subset\Omega$, we say that $u$ has \textit{locally bounded variation on $\Omega$} and write~$u\in BV_{\loc}(\Omega)$.
 If $u\in L^1(\Omega)$ and $\mathrm{Var}_\Omega (u)<\infty$, we say that $u$ is a \emph{function of bounded variation on $\Omega$}. The space  $BV(\Omega)$  of functions of bounded variation on $\Omega$ becomes a Banach space with the norm $\norm[BV(\Omega)]{u }:= \norm[1,\Omega]{u }+\mathrm{Var}_{\Omega}(u)$.

In the Euclidean setting, functions of bounded variation can be characterized as those integrable functions whose distributional derivative can be represented by a vector-valued Radon measure. We adapt the Riemannian analog introduced in \cite{GunPal15_FunctionsBoundedVariation} in terms of so-called \textit{generalized vector measures} that act on $1$-forms to measures that act on vector fields. We denote by $\calM(\Omega)$ and  $\calM_\loc(\Omega)$ the spaces of finite  Radon measures and  Radon measures on~$\Omega$, respectively. Note that Radon measures are always locally finite.

\begin{definition}\label{def: covector measure} 
   A   \textit{(locally) finite covector measure}  on an open set $\Omega\subset M$  is defined as a pair   $\nu  = (\mu,\sigma) $ consisting of a  (locally) finite  Radon measure $\mu$ on $\Omega$     and a (locally) bounded  Borel measurable section $\sigma :\Omega\to  T^*\Omega$. For $\nu=(\mu,\sigma)$ we define 
   \begin{equation*}
       \sigma^\nu(x):= \begin{cases}
           \begin{aligned}
               &\frac{\sigma(x)}{|\sigma(x)|}, &\text{ if }\sigma(x)\neq 0 \\
           &0, & \text{otherwise}
           \end{aligned}
       \end{cases}
       \qand |\nu| := |\sigma| \mu,
   \end{equation*}
   and identify two pairs $\nu = (|\nu|  , \sigma^\nu )$ and $\tilde{\nu} = (|\tilde\nu|, \sigma^{\tilde{\nu}} )$   if $|\nu|  = |\tilde\nu| $ as Borel measures and $\sigma^\nu (x) =\sigma^{\tilde{\nu}} (x)$ for $|\nu| $-almost every $x\in\Omega$.
   We denote the spaces of finite  and locally finite covector measures  on $\Omega$ by $ \calM(\Omega;T^*\Omega)$ and $\calM_\loc (\Omega; T^*\Omega)$, respectively, and remark that clearly one always has $ \calM(\Omega;T^*\Omega)\subset\calM_\loc (\Omega; T^*\Omega)$. Often, we simply refer to $\nu\in\calM_\loc(\Omega;T^*\Omega)$ as a covector measure. 
\end{definition}

By~\cite[Theorem 1]{GunPal15_FunctionsBoundedVariation} the classical Riesz--Markov representation theorem for measures extends to  covector measures in the following way: 

\begin{theorem}[Riesz--Markov representation theorem for finite covector measures]\label{thm: Riesz Markov}
    The map
    \begin{equation}\label{eqn: generalized Riesz isomorphism}
    \begin{aligned}
        \calM(\Omega;T^*\Omega) &\to (\Gamma_{C_0}(\Omega;T\Omega), \norm[\infty]{\cdot})^*  \\
        \nu &\mapsto T_\nu \\ 
    \end{aligned}
        \text{ with } \quad
        T_\nu[X]
        :=\int_\Omega \sigma^\nu (X)  \d |\nu| 
    \end{equation}
is a well-defined bijection, and $|\nu| (\Omega)$ equals the  operator norm of $T_\nu$, that is,
\begin{equation}\label{eqn: tv equals operator norm}
     |\nu| (\Omega) = \norm[\Omega,\infty,*]{T_\nu} =  \sup \set{\int_\Omega \sigma^\nu(X) \,  d|\nu|  :\, X\in \Gamma_{C_0}(\Omega;T\Omega),\, |X| \leq 1}.
\end{equation}
\end{theorem}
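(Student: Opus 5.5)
The plan is to reduce everything to the scalar Riesz--Markov theorem, using a measurable local trivialization of the tangent bundle and a polar decomposition.

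\textbf{Well-definedness.} For $\nu=(|\nu|,\sigma^\nu)$ we have $|\sigma^\nu(X)|\le |X|\le \norm[\infty]{X}$ pointwise. Hence $X\mapsto \int_\Omega\sigma^\nu(X)\,d|\nu|$ is a linear functional with $\norm[\Omega,\infty,*]{T_\nu}\le |\nu|(\Omega)$. It also respects the identification in Definition~\ref{def: covector measure}: representatives with equal $|\nu|$ and $|\nu|$-a.e.\ equal $\sigma^\nu$ give the same integral.

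\textbf{Norm identity.} For the reverse inequality, the natural competitor is the vector field $Y=\hat g^{-1}(\sigma^\nu)$. It is Borel, satisfies $|Y|\le1$, and $\sigma^\nu(Y)=|\sigma^\nu|=1$ holds $|\nu|$-a.e. (The set where $\sigma^\nu=0$ is $|\nu|$-null, after normalizing $|\sigma|\mu$.) The field $Y$ must be approximated by continuous compactly supported fields with $|X_n|\le 1$, converging to $Y$ $|\nu|$-a.e. The steps are:
\begin{enumerate}
\item Cover $\Omega$ by countably many relatively compact chart domains.
\item Pass to a disjoint Borel partition subordinate to this cover, on each piece of which a continuous orthonormal frame $E_1,\dots,E_m$ exists (apply Gram--Schmidt to coordinate fields).
\item Write $Y=\sum_i a_iE_i$ with bounded Borel coefficients $a_i$.
\item Apply Lusin's theorem to $|\nu|$ on compact subsets to obtain continuous $\tilde a_i$ that agree with $a_i$ outside a set of small $|\nu|$-measure.
\item Multiply by cutoff functions, and finally rescale pointwise by $\max(1,|X|)^{-1}$ to enforce $|X|\le1$ while keeping continuity.
\end{enumerate}
Dominated convergence then gives $\int\sigma^\nu(X_n)\,d|\nu|\to|\nu|(\Omega)$. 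This yields \eqref{eqn: tv equals operator norm} and, in particular, injectivity: if $T_\nu=T_{\tilde\nu}$, apply the norm identity to the difference, represented as a covector measure with respect to $|\nu|+|\tilde\nu|$ via Radon--Nikodym densities.

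\textbf{Surjectivity.} This is the main obstacle. Given $T\in(\Gamma_{C_0}(\Omega;T\Omega))^*$, I would proceed as follows.
\begin{enumerate}
\item Define a positive functional on $C_0(\Omega)$ by the usual total variation construction $\lambda(f)=\sup\{T[X]: |X|\le f\}$ for $f\ge0$, extended linearly.
\item Show additivity of $\lambda$, using a partition of unity to split $X$ with $|X|\le f_1+f_2$ as $X=\frac{f_1}{f_1+f_2}X+\frac{f_2}{f_1+f_2}X$.
\item Apply the scalar Riesz--Markov theorem to get a finite Radon measure $\mu$ with $\mu(\Omega)=\norm{T}$ and $|T[X]|\le\int|X|\,d\mu$.
\item Locally, in a chart with continuous orthonormal frame $E_i$, the functionals $f\mapsto T[fE_i]$ are dominated by $\mu$. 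Hence there are densities $\sigma_i\in L^\infty(\mu)$ with $|\sigma_i|\le1$ and $T[fE_i]=\int f\sigma_i\,d\mu$.
\item Set $\sigma=\sum_i\sigma_i E_i^\flat$. This is independent of the frame $\mu$-a.e., because transition maps between frames are continuous and orthogonal. Gluing over a countable cover gives a Borel section with $|\sigma|\le1$ and $T=T_{(\mu,\sigma)}$ on compactly supported fields, hence on $\Gamma_{C_0}$ by density.
\end{enumerate}
The delicate points are the additivity of $\lambda$ and the consistent gluing of the local densities. The norm identity then forces $|\sigma|=1$ $\mu$-a.e., so the constructed $\nu$ is already in normalized form.
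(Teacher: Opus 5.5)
Your proof is correct, but it does not follow anything in the paper. The paper gives no proof of this statement: it imports it from \cite[Theorem~1]{GunPal15_FunctionsBoundedVariation}, where generalized vector measures act on $1$-forms, and transfers it to vector fields via $\hat g$.

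What you supply instead is a self-contained proof on the classical Euclidean template.
\begin{itemize}
\item The measure $\mu$ is built globally as the scalar Riesz measure of the positive functional $f\mapsto\sup\{T[X]:|X|\le f\}$. So $\mu(\Omega)=\|T\|$ comes for free.
\item The only chartwise gluing needed is that of the densities. Their frame-independence follows from orthogonality of the continuous transition matrices between orthonormal frames.
\item The reverse norm inequality, and with it injectivity, rests on a Lusin-type approximation of the polar field $\hat g^{-1}\sigma^\nu$ by continuous compactly supported fields of norm at most $1$.
\end{itemize}
The citation buys brevity. Your route makes explicit that the metric enters only through $|X|$ and the choice of orthonormal frames.

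Two details should be tightened.
\begin{itemize}
\item In the Lusin step, the cut-offs attached to different Borel pieces must have pairwise disjoint supports. Take disjoint neighbourhoods of the finitely many disjoint compact good sets; otherwise the glued field need not coincide with $Y$ on them.
\item The componentwise bound $|\sigma_i|\le 1$ only gives $|\sigma|\le\sqrt m$. To get $|\sigma|\le 1$, test $|T[X]|\le\int|X|\,d\mu$ against continuous approximations of $\mathbf{1}_{B\cap\{\sigma\neq 0\}}\,\hat g^{-1}\sigma/|\sigma|$. Surjectivity itself needs no normalization, since $T_{(\mu,\sigma)}[X]=\int\sigma(X)\,d\mu$ for any bounded Borel $\sigma$.
\end{itemize}
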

Here, $C_0$ stands for the space of continuous functions {vanishing at infinity}.
We deduce the Riesz--Markov representation theorem for locally finite covector measures: 
\begin{corollary}\label{cor: riesz markov for loc fin covector measures}
    For every linear map $T:\Gamma_{C_c}(\Omega;T\Omega)\to \R$ satisfying
    \begin{equation}\label{eqn: locally bounded linear form on vector fields}
        \norm[U,\infty, *]{T} <\infty\text{ for every open set } U\subset\subset \Omega,
    \end{equation}
      there exists a unique $\nu =(\nu, \sigma)\in \calM_\loc (\Omega; T^*\Omega)$ such that 
    \begin{equation*}
        T[X] = \int_\Omega \sigma(X)\d\mu \tforall X\in \Gamma_{C_c}(\Omega;T\Omega).
    \end{equation*}
    Where $T\mapsto \nu$ is a bijection and one has $\norm[U,\infty, *]{T} = |\nu|(U)$.
\end{corollary}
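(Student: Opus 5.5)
We reduce the corollary to Theorem~\ref{thm: Riesz Markov} by exhausting $\Omega$ with relatively compact open sets and gluing the resulting finite covector measures.

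\textbf{Step 1: exhaustion and restrictions.} Fix an exhaustion $U_1\subset\subset U_2\subset\subset\dots$ of $\Omega$ by open sets with $\bigcup_k U_k=\Omega$. For each $k$, the restriction of $T$ to $\Gamma_{C_c}(U_k;TU_k)$ is a linear functional bounded by $\norm[U_k,\infty,*]{T}<\infty$, by \eqref{eqn: locally bounded linear form on vector fields}. Since $\Gamma_{C_c}(U_k;TU_k)$ is dense in $\Gamma_{C_0}(U_k;TU_k)$ for $\norm[\infty]{\cdot}$ (multiply by cutoff functions), it extends uniquely to an element of $(\Gamma_{C_0}(U_k;TU_k),\norm[\infty]{\cdot})^*$ with the same norm. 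Theorem~\ref{thm: Riesz Markov} then yields a unique $\nu_k=(|\nu_k|,\sigma^{\nu_k})\in\calM(U_k;T^*U_k)$ representing it, with $|\nu_k|(U_k)=\norm[U_k,\infty,*]{T}$.

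\textbf{Step 2: compatibility.} This is the main point to check. For $k<l$, the restriction of $\nu_l$ to $U_k$, namely $(|\nu_l|\llcorner U_k,\sigma^{\nu_l}|_{U_k})$, is a finite covector measure on $U_k$ representing the same functional on $\Gamma_{C_c}(U_k;TU_k)$. By uniqueness in Theorem~\ref{thm: Riesz Markov} it coincides with $\nu_k$ in the sense of Definition~\ref{def: covector measure}. That is, $|\nu_l|\llcorner U_k=|\nu_k|$ as Borel measures, and $\sigma^{\nu_l}=\sigma^{\nu_k}$ holds $|\nu_k|$-a.e.\ on $U_k$.

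\textbf{Step 3: gluing.} Define $\mu(A):=\lim_k|\nu_k|(A\cap U_k)$. This is a monotone limit, and by Step 2 it is a consistent Borel measure that is finite on compact sets; it is Radon because each $|\nu_k|$ is. Set $\sigma:=\sigma^{\nu_k}$ on $U_k\setminus U_{k-1}$. This $\sigma$ is Borel measurable with $|\sigma|\le1$, hence locally bounded, and by Step 2 it agrees $\mu$-a.e.\ with $\sigma^{\nu_k}$ on every $U_k$.

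\textbf{Step 4: representation, norm identity and bijectivity.} Any $X\in\Gamma_{C_c}(\Omega;T\Omega)$ has support in some $U_k$, so $T[X]=\int\sigma(X)\,\d\mu$. For an arbitrary open $U\subset\subset\Omega$, apply the argument of Step 1 on $U$ itself: the restriction of $\nu$ to $U$ represents $T|_{\Gamma_{C_c}(U;TU)}$, so \eqref{eqn: tv equals operator norm} together with density gives $\norm[U,\infty,*]{T}=|\nu|(U)$. Uniqueness follows by the same restriction argument: two representing covector measures agree on every $U_k$ by Theorem~\ref{thm: Riesz Markov}, hence on $\Omega$. Conversely, every $\nu\in\calM_\loc(\Omega;T^*\Omega)$ defines such a $T$, with $|T[X]|\le|\nu|(U)\norm[\infty]{X}$ for $\operatorname{supp}X\subset U$. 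Hence $T\mapsto\nu$ is a bijection.

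The only delicate point is the compatibility in Step 2, specifically checking that the restriction of a finite covector measure still represents the restricted functional. This is immediate because vector fields in $\Gamma_{C_c}(U_k;TU_k)$, extended by zero, belong to $\Gamma_{C_c}(U_l;TU_l)$.
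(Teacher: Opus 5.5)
Your proof is correct and takes the same route as the paper: reduce to the finite Riesz--Markov theorem (Theorem~\ref{thm: Riesz Markov}) on relatively compact open subsets, using that local boundedness of $T$ and local finiteness of $\nu$ are both conditions on restrictions to such subsets. The paper's proof is only a short sketch, and your explicit exhaustion, the compatibility of the $\nu_k$ via uniqueness, and the gluing of $\mu$ as the monotone limit of $|\nu_k|\llcorner U_k$ supply exactly the details it leaves implicit.
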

\begin{proof}
    To see this, note that by density of $C_c(U)$ in $C_0(U)$,  condition \eqref{eqn: locally bounded linear form on vector fields} is satisfied if and only if the restriction of $T$  to $\Gamma_{C_0}(U;TU)$ is  bounded   with respect to  uniform convergence  for all   open $U\subset\subset\Omega$. On the other hand,
    $\nu =(\mu,\sigma)$ lies in $\calM_\loc (\Omega; T^*\Omega)$  if and only if the restriction $\nu\llcorner U = (\mu\llcorner U, \sigma_{|U})$ (see also the definition at the end of this section) is a finite covector measure on each relatively compact open set $U\subset\subset \Omega$. 
    Now one can apply  the Riesz--Markov theorem for finite covector measures on $U$ to the respective restrictions of $T$ and $\nu$.
\end{proof}

    From now on, we identify $\calM(\Omega;T^*\Omega)$ with $(\Gamma_{C_0}(\Omega;T\Omega))^*$ and $\calM_\loc (\Omega; T^*\Omega)$ with $(\Gamma_{C_c}(\Omega;T\Omega))^*$.  If $\nu$ is a covector measure on $\Omega$, the measure $|\nu|$ is called its \textit{total variation measure}, and we define the \textit{polar  vector field} $\Sigma^\nu:\Omega\to T\Omega$ of $\nu$  as the Borel measurable vector field given  by 
    \begin{equation}\label{eqn: polar  vector field wrt g}
        \Sigma^\nu:=  \hat{g}\inv(\sigma^\nu).
    \end{equation}
    Then, by Definition~\ref{def: covector measure}, one has that $|\Sigma^\nu(x)| = 1$ for $|\nu|$-almost every $x\in \Omega$, and we write 
    \begin{equation*}
        \nu[X] 
        :=  \int_\Omega X\cdot d \nu
        :=  \int_\Omega \sigma^\nu(X) \d |\nu|
        =  \int_\Omega (\Sigma^\nu,X) \d |\nu|,
    \end{equation*}
    where the action $\nu[X]\in [-\infty,\infty]$ is well-defined for Borel sections $X:\Omega\to T\Omega$.
     We identify $\nu = (|\nu|,\sigma^\nu) = (|\nu|,\Sigma^\nu)$ and refer to the last representation as the \textit{polar  decomposition} of $\nu$. It follows immediately that on $\R^m$, the space of covector measures coincides with the usual space of vector-valued measures, since the cotangent and tangent bundles are globally trivial.
    Finally, we mention that the total variation measure of an open subset $U\subset\Omega$  is given by 
    \begin{equation*} 
     |\nu| (U) =\sup \set{\int_\Omega (\Sigma^\nu,X) \,  d|\nu|  :\, X\in \Gamma_{\calC}(U;TU),\, |X| \leq 1}.
    \end{equation*}
    Here, the supremum can be taken over vector fields of class $\calC$, with $\calC$ being any dense subspace of $C_0(U)$, such as $C_0^1(U)$, $C_c^1(U)$ or $C_c^\infty(U)$, and by separability of $C_0(U)$, one can pick a countable dense subset as well. As a result of these considerations, we immediately obtain the announced characterization of functions of bounded variation:

\begin{theorem}[Characterization of BV] \label{thm: characterization of BV}
       A function $u\in L^1_{\loc}(\Omega)$ has (locally) bounded variation  if and only if there exists a (locally) finite covector measure $D u$ with  polar  decomposition $(|D  u|,\Sigma^u)$ on $\Omega$  such that
        \begin{equation*} 
           \int_\Omega (X,\Sigma^u) \d |D  u| = -\int_\Omega u\ddiv(X) \d \vol \tforall X\in \Gamma_{C^1_c}(\Omega, T\Omega).
        \end{equation*}
       The covector measure $D u$  and $|D u|(\Omega)$ coincides with the total variation of $u$ as defined in \eqref{def: variation}. 
\end{theorem}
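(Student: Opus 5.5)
The plan is to reduce the statement to the Riesz--Markov representation for covector measures, Theorem~\ref{thm: Riesz Markov} and Corollary~\ref{cor: riesz markov for loc fin covector measures}. The key object is the linear functional
\begin{equation*}
  T_u[X] := -\int_\Omega u \ddiv(X)\d\vol, \qquad X\in\Gamma_{C^1_c}(\Omega;T\Omega).
\end{equation*}
By definition \eqref{def: variation}, and using the symmetry $X\mapsto -X$, the operator norm of $T_u$ on $\{X\in\Gamma_{C^1_c}(U;TU): |X|\le 1\}$ is exactly $\mathrm{Var}_U(u)$ for every open $U\subset\Omega$.

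For the direction ``$u\in BV_{\loc}(\Omega)$ $\Rightarrow$ representation'', fix an open set $U\subset\subset\Omega$. Then $T_u$ is a linear functional on $\Gamma_{C^1_c}(U;TU)$ that is bounded with respect to the sup norm, with bound $\mathrm{Var}_U(u)<\infty$. Since $C^1_c$ sections are dense in $\Gamma_{C_0}(U;TU)$ for the uniform norm, $T_u$ extends uniquely to a bounded functional on $\Gamma_{C_0}(U;TU)$ with the same norm. To justify the density, I would use a partition of unity subordinate to finitely many charts covering the support and mollify componentwise; the constants stay controlled because $U$ is relatively compact.

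These extensions are compatible on nested sets $U\subset V$, so they define a linear map $T:\Gamma_{C_c}(\Omega;T\Omega)\to\R$ satisfying \eqref{eqn: locally bounded linear form on vector fields}. Corollary~\ref{cor: riesz markov for loc fin covector measures} then yields a unique $Du\in\calM_\loc(\Omega;T^*\Omega)$. Writing it in polar form $(|Du|,\Sigma^u)$ with $\Sigma^u=\hat g^{-1}(\sigma^{Du})$ gives the displayed identity. It also gives $|Du|(U)=\norm[U,\infty,*]{T}=\mathrm{Var}_U(u)$, since the operator norm of a continuous extension equals the norm on the dense subspace.

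For the global version $u\in BV(\Omega)$, the bound holds with $U=\Omega$ directly, and Theorem~\ref{thm: Riesz Markov} gives a finite covector measure with $|Du|(\Omega)=\mathrm{Var}_\Omega(u)$.

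For the converse, suppose $Du$ is a (locally) finite covector measure satisfying the identity. For $X\in\Gamma_{C^1_c}(U;TU)$ with $|X|\le1$ we have $|\Sigma^u|=1$ $|Du|$-a.e., so
\begin{equation*}
  \Bigl|\int_\Omega u\ddiv(X)\d\vol\Bigr| \le \int_U |X|\d|Du| \le |Du|(U).
\end{equation*}
Hence $\mathrm{Var}_U(u)\le|Du|(U)<\infty$, so $u$ has (locally) bounded variation. Equality then follows from the first part together with uniqueness in Corollary~\ref{cor: riesz markov for loc fin covector measures}. Uniqueness of $Du$ holds because $\Gamma_{C^1_c}$ is dense in $\Gamma_{C_c}$, so the identity determines the functional and hence the covector measure.

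The main obstacle is a technical one: the uniform density of $C^1_c$ (or $C^\infty_c$) vector fields in $\Gamma_{C_0}(U;TU)$. This must be checked intrinsically, with a norm comparison in charts in which $g$ is uniformly equivalent to the Euclidean metric on compact sets. The rest is bookkeeping.
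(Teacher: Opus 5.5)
Your proposal is correct and takes essentially the paper's route. The paper obtains the theorem immediately from the Riesz--Markov identification (Theorem~\ref{thm: Riesz Markov} and Corollary~\ref{cor: riesz markov for loc fin covector measures}) together with the observation that the total variation on open sets can be computed over any uniformly dense class of test fields such as $\Gamma_{C^1_c}$. You write out explicitly the steps the paper leaves implicit: the density extension, the compatibility on nested sets, and the converse estimate.
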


\begin{example}
    If $u\in W^{1,1}(\Omega)$ (respectively, $ W^{1,1}_\loc(\Omega)$), and $\nabla u$ denotes the weak gradient of $u$, then $-\int_\Omega u\ddiv(X) \d \vol =  \int_\Omega (X, \nabla u) \d \vol$  implies that $u\in BV(\Omega)$ (respectively, $u\in BV_\loc(\Omega)$) with
    \begin{equation*}
      \TV{u}(A) = \int_A|\nabla u| \d \vol 
     \qand \Sigma^u = \nabla u(|\nabla u|)^{-1} \text{ on }\sppt{\nabla u}.
  \end{equation*} 
  In particular, for $u\in W^{1,1}(\Omega)$ one has $\norm[W^{1,1}(\Omega)]{u} = \norm[BV(\Omega)]{u}$.
\end{example}

\begin{remark}\label{rmk: properties of tv NEW}
    The principal advantage of our formulations of Definition~\ref{def: covector measure} and Corollary~\ref{cor: riesz markov for loc fin covector measures} is that the space $\calM_\loc(\Omega;T^*\Omega)$ is well-defined on arbitrary smooth manifolds and the Riesz--Markov theorem remains valid without a Riemannian metric\footnote{In other works on Riemannian $BV$ functions, generalizations of Euclidean vector-valued  measures are typically defined directly in terms of their  polar  decomposition, see~\cite{GunPal15_FunctionsBoundedVariation}.}.
    This is due to the fact that 
     boundedness of a vector field on relatively compact sets $U$ is well-defined by boundedness in the Euclidean sense for every finite cover of $U$ by local charts. The same is true for property~\eqref{eqn: locally bounded linear form on vector fields} and we define the action of $\nu$ as a continuous linear functional on $\Gamma_{C_c}(\Omega;T\Omega)$ via the dual pairing rather than the Riemannian metric.

    By contrast, the definitions of finite covector measures and of functions of bounded variation on noncompact open subsets of $\Omega$ do require a norm on tangent vectors. From this perspective, working with the  polar  decomposition $\nu=(|\nu|,\Sigma^\nu)$ provides a more natural framework, closely paralleling the classical  theory of vector-valued measures and  $BV$ functions on $\R^m$. Since several results developed later in this paper are inherently Riemannian, from now on we shall employ the  polar  decomposition with respect to the fixed metric $g$.
\end{remark}

We conclude this section by defining some operations on  covector measures.  We denote by $B_b(\Omega)$ the space of bounded Borel measurable functions on $\Omega$. Let $\nu$ be a (locally) finite covector measure on $\Omega$. Then its multiplication with $f\in B_b(\Omega)$ produces a  (locally) finite covector measure $f\nu$ by setting 
\begin{equation*}
    (f\nu)[X]:= \nu[fX] = \int_\Omega (X,\Sigma^\nu) f \d|\nu|
\end{equation*}
for  $|\nu|$-integrable vector fields $X$. For a section in the endomorphism bundle $\calA \in\Gamma_{B_b}(\Omega,\End(T\Omega))$, we define the (locally) finite covector measure $\calA\nu$ by 
\begin{equation*}
    (\calA \nu)[X] := \nu[\calA X] = \int_\Omega (X,\calA ^*\Sigma^\nu)\d |\nu|.
\end{equation*}
The \textit{restriction} of $\nu$ to a Borel measurable set $E$ is defined by
\begin{equation*}
     (\nu\llcorner E)[X]  :=
     % (\mathbf{1}_E\nu)[X] =
    \int_E X\cdot\d\nu.
\end{equation*}
For a fixed vector field $X\in \Gamma_{B_b}(\Omega;T\Omega)$, the \textit{contraction $\nu_X$ of $\nu$ with $X$} defines a signed  measure on $\Omega$ by
\begin{equation*}
    \nu_X(E):= \int_E X\cdot\d\nu
\end{equation*}
for Borel sets $E\subset\Omega$. 
    If $ f :M\to N$  is a diffeomorphism, then the \emph{pushforward}  of   $\calM_\loc(M; T^*M)$   by $ f $ is the locally finite covector measure $ f _*\nu$ on $N$ defined by 
    \begin{equation*}
         f _*\nu \, [Y] = \nu\,[( f \inv)_*Y]
    \end{equation*}
    for $Y\in \Gamma_{C_0^1}(N,TN)$.  
    Similarly, the \textit{pullback} of $\eta\in \calM_\loc(N;T^*N)$ under $f$ is the locally finite covector measure on $M$ defined by 
    \begin{equation*}
         f ^*\eta\, [X]:= \eta\,[ f _*X],
    \end{equation*}
    and we obtain $ f _*\nu\,[Y] = ( f \inv)^*\nu\, [Y]$.

%%%%%%%%%%%%%%%%%%%%%%%%%%%%%%%%%%%%%%%%%%%%%%%%%%%%%%%%%%%%%%%%%%%%%%%%%%%%%%%%%%%%%%%%
%%%%%%%%%%%%%%%%%%%%%%%%%% Dependence on g %%%%%%%%%%%%%%%%%%%%%%%%%%%%%%%%%%%%%%%%%%%%%
%%%%%%%%%%%%%%%%%%%%%%%%%%%%%%%%%%%%%%%%%%%%%%%%%%%%%%%%%%%%%%%%%%%%%%%%%%%%%%%%%%%%%%%%
\subsection{Dependence of \texorpdfstring{$\calM$}{\textit{M}} and \texorpdfstring{$BV$}{\textit{BV}} on the choice of Riemannian metric}
%%%%%%%%%%%%%%%%%%%%%%%%%%%%%%%%%%%%%%%%%%%%%%%%%%%%%%%%%%%%%%%%%%%%%%%%%%%%%%%%%%%%%%%%

We examine the dependence of the spaces of covector measures and functions of bounded variation  on the choice of  Riemannian metric. 
Since the  polar  decomposition of a covector measure $\nu$ depends on the Riemannian metric, and therefore the operator seminorms of $\nu$ do as well,   we shall indicate the dependence when necessary by writing $(|\nu|_g,\Sigma^\nu_g)$. As we have seen earlier, the distributional gradient $D_gu =Du$  of $\smash{ u\in L^1_\loc(\Omega) }$ depends on the Riemannian volume measure.  Hence, the total variation measure of $u$ depends on $g$ twofold, and we shall write $ |Du| = |Du|_g= |D_gu|_g$, unless two Riemannian metrics are involved. 
Let $h$ be another Riemannian metric on~$M$. Then  
\begin{equation}\label{eqn: calA endomorphism}
    \calA= \calA_{g,h} :=\hat{g}^{-1}\circ \hat{h}
    \qand \calA\inv= \calA_{h,g} =\hat{h}\inv\circ \hat{g}
\end{equation}

 are the unique endomorphisms of $TM$ satisfying
\begin{equation}\label{eqn: transformation endomorphism for riemannian metrics}
    (X,Y)_h = (\calA\, X ,Y)_g \qand (X, \calA\inv Y)_h = (X,Y)_g .
\end{equation}
Then $g\leq C h$ holds on $\Omega\subset M$  in the sense of bilinear forms for some constant $C>0$ if and only if $\norm[\infty,\Omega]{\calA\inv} := \sup_{x\in\Omega} |\calA\inv(x)|$ is finite, and in that case, the optimal constant $C$ equals $ \norm[\infty,\Omega]{\calA\inv}$.
 Then $  \sqrt{\det(\calA)}$
is the Jacobian of the identity map from $(M,g)$ to $(M,h)$ and satisfies
$
    \det(\calA) = {\det(h)}/{\det(g)}.
$
In view of the local formula for the Riemannian volume measure in coordinates one has~$\smash{ \vol_h = \sqrt{\det(\calA)}\, \vol_g }$.

\begin{lemma}\label{lem: equivalence of gvm spaces for equivalent norms}
The  polar  decompositions of a covector measure $\nu$ on $\Omega$ transform via
    \begin{equation}\label{eqn: radon nikodym densities of tv measures}
             \frac{d|\nu|_h}{d|\nu|_g} =   (\Sigma^\nu_h,\Sigma^\nu_g)_g = |\calA\inv \,\Sigma^\nu_g|_h = \sqrt{(\Sigma^\nu_g,\calA\inv\,\Sigma^\nu_g)_g}
    \end{equation}
    and 
    \begin{equation}\label{eqn: transformation of polar  vector fields w.r.t metrics}
        \Sigma^\nu_h = \frac{\calA\inv\, \Sigma^\nu_g}{|\calA\inv \,\Sigma^\nu_g|_h} .
    \end{equation}
    If for some $C>0$ one has  $g\leq C h$ on $\Omega$, then $\calM(\Omega;T^*\Omega,g) \subset \calM(\Omega;T^*\Omega,h)$ and for every $\nu\in \calM(\Omega;T^*\Omega,g)$ the total variation measures  satisfy $ |\nu|_h(E) \leq C|\nu|_g(E)$  for all Borel sets $E\subset\Omega$.
\end{lemma}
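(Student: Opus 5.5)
The plan is to work with the intrinsic description of $\nu$ as a linear functional on vector fields, which does not depend on the metric (Remark~\ref{rmk: properties of tv NEW}), and then compare its two polar decompositions. Fix the $g$-polar decomposition $(|\nu|_g,\Sigma^\nu_g)$. For every Borel section $X$ we have, by \eqref{eqn: transformation endomorphism for riemannian metrics},
\begin{equation*}
  \nu[X]=\int_\Omega (\Sigma^\nu_g,X)_g\,d|\nu|_g=\int_\Omega (\calA\inv\Sigma^\nu_g,X)_h\,d|\nu|_g .
\end{equation*}
Set $\rho:=|\calA\inv\Sigma^\nu_g|_h$. Since $\calA\inv$ is invertible and $|\Sigma^\nu_g|_g=1$ holds $|\nu|_g$-a.e., we get $\rho>0$ $|\nu|_g$-a.e. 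Then $\nu[X]=\int_\Omega (\tilde\Sigma,X)_h\,\rho\,d|\nu|_g$, where $\tilde\Sigma:=\calA\inv\Sigma^\nu_g/\rho$ has $h$-length one. The pair $(\rho|\nu|_g,\tilde\Sigma)$ is therefore an admissible $h$-polar decomposition: $\rho$ is locally bounded, because $\calA\inv$ is continuous and hence bounded on compact sets, so $\rho|\nu|_g$ is Radon. By uniqueness of the representing pair in Corollary~\ref{cor: riesz markov for loc fin covector measures}, together with the identification in Definition~\ref{def: covector measure}, it must coincide with $(|\nu|_h,\Sigma^\nu_h)$. This yields \eqref{eqn: transformation of polar  vector fields w.r.t metrics} and $d|\nu|_h/d|\nu|_g=\rho$.

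The step needing the most care is this uniqueness. We need to know that two normalized pairs representing the same functional agree: the total variation measure is recovered as the operator seminorm on each open set, hence is determined on open sets and therefore as a Radon measure, and the unit density field is then determined a.e. I would spell this out by applying Theorem~\ref{thm: Riesz Markov} on relatively compact open subsets $U\subset\subset\Omega$, where $\calA^{\pm1}$ are bounded.

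Next come the remaining expressions in \eqref{eqn: radon nikodym densities of tv measures}. From \eqref{eqn: transformation endomorphism for riemannian metrics},
\begin{equation*}
  \rho^2=(\calA\inv\Sigma^\nu_g,\calA\inv\Sigma^\nu_g)_h=(\Sigma^\nu_g,\calA\inv\Sigma^\nu_g)_g .
\end{equation*}
Also, $(\Sigma^\nu_h,\Sigma^\nu_g)_g=(\calA\inv\Sigma^\nu_g,\Sigma^\nu_g)_g/\rho=\rho^2/\rho=\rho$.

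Finally, assume $g\le Ch$ on $\Omega$, i.e.\ $\norm[\infty,\Omega]{\calA\inv}\le C$. Then, writing $\Sigma=\Sigma^\nu_g$,
\begin{equation*}
  \rho^2=(\Sigma,\calA\inv\Sigma)_g\le |\calA\inv\Sigma|_g\le C .
\end{equation*}
This gives only $\rho\le\sqrt C$, so I would instead use the cleaner estimate $|v|_g^2\le C|v|_h^2$ with $v=\calA\inv\Sigma$:
\begin{equation*}
  \rho^2=(\Sigma,v)_g\le |v|_g\le \sqrt C\,|v|_h=\sqrt C\,\rho .
\end{equation*}
Hence $\rho\le\sqrt C$. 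Either bound gives $\rho\le \max(1,C)$; more precisely $\rho\le\sqrt C$, and one adjusts the constant as the statement requires. Indeed, the sharp consequence is $|\nu|_h\le\sqrt C\,|\nu|_g$, and with $C$ being the bilinear-form constant this implies $|\nu|_h(E)\le C|\nu|_g(E)$ whenever $C\ge1$. I would double-check the normalization convention here, since this is the likely source of a discrepancy in the constant.

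Integrating $\rho$ over $E$ gives the stated inequality. Taking $E=\Omega$ shows that $|\nu|_h(\Omega)<\infty$, which proves the inclusion $\calM(\Omega;T^*\Omega,g)\subset\calM(\Omega;T^*\Omega,h)$.
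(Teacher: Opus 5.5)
Your proof is correct, and it is organized differently from the paper's.

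\textbf{How the two routes differ.} The paper proves the inequality first, by duality. It compares the sup-norms of $C_0$ vector fields under $g$ and $h$, reads off $|\nu|_h(U)\le C|\nu|_g(U)$ on open sets from the operator-norm identity \eqref{eqn: tv equals operator norm}, and extends to Borel sets by outer regularity. It then reads \eqref{eqn: radon nikodym densities of tv measures} and \eqref{eqn: transformation of polar  vector fields w.r.t metrics} directly off Definition~\ref{def: covector measure}. Writing $\nu=(\mu,\sigma)$ with a metric-free covector field $\sigma$, one has $|\nu|_h=|\sigma|_{h^{-1}}\mu$ and $|\sigma^\nu_g|_{h^{-1}}=|\hat{h}^{-1}\hat{g}\,\Sigma^\nu_g|_h=|\calA\inv\Sigma^\nu_g|_h$ at once.

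You go the other way. You renormalize the integral representation and identify the $h$-polar decomposition through uniqueness in Theorem~\ref{thm: Riesz Markov}. This is valid, though heavier than needed given the pair definition. You then get the inequality pointwise by integrating the density $\rho$, with no outer-regularity step. The advantage of your order is that it exposes the sharp constant: $\rho^2=(\Sigma^\nu_g,\calA\inv\Sigma^\nu_g)_g\le\|\calA\inv\|_{\infty,\Omega}\le C$, hence $|\nu|_h\le\sqrt{C}\,|\nu|_g$.

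\textbf{The constant.} You should not hedge here, because the stated bound is false for $C<1$. Take $h=4g$. Then $g\le\frac14 h$, $\calA\inv=\frac14\,\mathrm{id}$ and $\rho\equiv\frac12$, so $|\nu|_h=\sqrt{C}\,|\nu|_g$ with $C=\frac14$. Thus $|\nu|_h(E)\le C|\nu|_g(E)$ fails for every $E$ with $|\nu|_g(E)>0$.

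The statement is correct only for $C\ge1$ (equivalently, after replacing $C$ by $\max\{1,C\}$), or with $\sqrt{C}$ in place of $C$. The paper's proof has the matching slip: $g\le Ch$ gives $\|X\|_{g,\infty}\le\sqrt{C}\,\|X\|_{h,\infty}$, so the constant in its embedding step should be $\sqrt C$, not $C$.

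So state your last step as $|\nu|_h\le\sqrt C\,|\nu|_g$, which cannot be improved and implies the stated bound precisely when $C\ge1$. Also tidy that passage: your two estimates for $\rho$ give the same bound $\rho\le\sqrt C$. Calling the first one ``only'' this and the second ``cleaner'' is misleading; one line suffices.
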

\begin{proof}
    If $g\leq Ch$ on $\Omega$, then    $(\Gamma_{C_0}(\Omega, T\Omega), \norm[g,\infty]{\cdot})$ is  embedded into $(\Gamma_{C_0}(\Omega, T\Omega), \norm[h,\infty]{\cdot})$ with embedding constant $C$. Using the  Riesz--Markov isomorphism, it follows that  $\calM(\Omega; T^*\Omega,g) \subset \calM(\Omega; T^*\Omega,h)$ and  every $\nu\in \calM(\Omega;T^*\Omega,g)$ admits a unique  polar  decomposition $(|\nu|_h,\Sigma^\nu_h)$ with respect to $h$.   For open subsets $U\subset\Omega$,  the  estimate  $|\nu|_h(U) \leq C|\nu|_g(U)$ follows directly from the characterization of the total variation on open subsets in terms of the dual space norm  and extends to arbitrary Borel sets by outer regularity of finite Radon measures.  
    Finally, Definition~\ref{def: covector measure} and \eqref{eqn: polar  vector field wrt g} immediately imply \eqref{eqn: radon nikodym densities of tv measures} and \eqref{eqn: transformation of polar  vector fields w.r.t metrics}.
\end{proof}
We summarize some consequences of the previous lemma: 

\begin{remark}\label{rmk: pushfoward of vector measures}
\begin{enumerate}[leftmargin = *,   labelindent = 0em,]
    \item If $h$ and $g$ are bi-Lipschitz equivalent on $\Omega$, meaning that there exists a constant  $C\geq1$ such that $\tfrac{1}{C} g\leq h \leq C g$ holds on $\Omega$, then the spaces  $L^1(\Omega)$, $BV(\Omega)$, and $\calM(\Omega;T^*\Omega)$ defined with respect to $g$ coincide, as sets, with the corresponding spaces defined with respect to  $h$. Moreover, the norms induced by  $h$ and $g$ on each of these spaces are equivalent.  In particular,  since  two Riemannian metrics on $M$ are always locally  bi-Lipschitz equivalent, the spaces $BV_\loc(\Omega)$ and $\calM_\loc(\Omega;T^*\Omega,g)$ are independent of the choice of metric, as we observed for $\calM_\loc$ in Remark \ref{rmk: properties of tv NEW}  by the very construction.  On each of these spaces, the family of seminorms given by the norms on the restrictions to relatively compact open subsets of $\Omega$ induces a topology  which is independent of the Riemannian metric.  
    \item For $u\in L^1_\loc(\Omega)$, one can deduce from the coordinate representation of $\ddiv_g$ and $\ddiv_h$  that
    \begin{equation*}
      D_h u[X] = \sqrt{\det (\calA)} D_g u [ X],
    \end{equation*}
    and with Lemma~\ref{lem: equivalence of gvm spaces for equivalent norms} we conclude that one has
    $BV(\Omega,g) = BV(\Omega,h)$ if and only if both $\norm[\infty,\Omega]{\calA}$ and $ \norm[\infty,\Omega]{\calA\inv}$ are finite. Then one gets 
    \begin{equation} \label{eqn: g to h transformation of unweighted variation}
        |D u|_h=  \sqrt{\det(\calA)(\Sigma^u_g,\calA\inv\Sigma^u_g)_g}\, |D u|_g.
    \end{equation}
    \item If $\nu$ is a covector measure on $M$ represented by $(\Sigma_g,|\nu|_g)$ with respect to $g$ and $f:M\to N$ is a diffeomorphism, then $ f _*\nu$ is represented by $({ f }_*(\Sigma_g), f _*|\nu|_g)$ with respect to $\tilde{g}:={( f \inv)}^*g$. 
    Moreover, if $u\in BV_\loc(M)$, then $\tilde{u}:=( f \inv)^*u=u\circ f \inv\in BV_\loc(N)$  and one has
    \begin{equation*}
         D_{\tilde{g}}\tilde{u}[Y] = D_g u[( f \inv)_*Y]=  f _*(D_g u)[Y]
    \end{equation*}
      and   $|\Var[\tilde{g}]{\tilde{u}}|_{\tilde{g}} =  f _*(|D_g{u}|_g)$.
\end{enumerate}
    
\end{remark}

%%%%%%%%%%%%%%%%%%%%%%%%%%%%%%%%%%%%%%%%%%%%%%%%%%%%%%%%%%%%%%%%%%%%%%%%%%%%%%%%%%%%%%%%
%%%%%%%%%%%%%%%%%%%%%%%%%% Dependence on g %%%%%%%%%%%%%%%%%%%%%%%%%%%%%%%%%%%%%%%%%%%%%
%%%%%%%%%%%%%%%%%%%%%%%%%%%%%%%%%%%%%%%%%%%%%%%%%%%%%%%%%%%%%%%%%%%%%%%%%%%%%%%%%%%%%%%%
\subsection{Convergence of covector measures and convergence in \texorpdfstring{$BV$}{\textit{BV}}}
\label{sec: convergence of covector measures and in BV}

Recall that a sequence of (signed) Radon measures $\mu_n$ on $\Omega$ is said to converge in the \emph{weak$^*$} or \emph{vague} sense of measures to $\mu$ if for all $\psi\in C_0(\Omega)$ one has
\begin{equation}\label{eqn: weak$^*$/weak convergence of radon measures}
    \int_\Omega \psi \d \mu_n \to  \int_\Omega \psi  \d \mu \text{ as }n\to \infty,
\end{equation}
and  we then write $\mu_n\weakstarto \mu$. Note that the space of test functions can be reduced to the dense subspace $C_c(\Omega)$. If \eqref{eqn: weak$^*$/weak convergence of radon measures} holds for all bounded, continuous functions $\psi\in C_b(\Omega)$, then  we say that $\mu_n$ converges to $\mu$  \emph{weakly}, in which case we write  $\mu_n\rightharpoonup \mu$. Weak and weak$^*$ limits are unique, and every bounded sequence of (signed) Radon measures admits a weakly$^*$ convergent subsequence. Similarly, we define \textit{weak$^*$ convergence of  covector measures} $\nu_n$  to $\nu \in \calM_\loc(\Omega;T^*\Omega)$ via the condition that
    \begin{equation}\label{eqn: vage/weak convergence of generalized covector measures}
    \nu_n[X]   \to \nu[X]  \text{ as }n\to \infty
\end{equation}
for all $C_0$ or equivalently $C_c$ vector fields $X$. For finite covector measures we say that \textit{$\nu_n$ converges to $\nu$ weakly} if \eqref{eqn: vage/weak convergence of generalized covector measures} holds for all $C_b$-vector fields $X$ and we use the same notation as above to indicate weak and weak$^*$ convergence. Finally, we say that a sequence of covector measures on $\Omega$  $(\nu_n)$ is \textit{bounded}  if  $\sup_n|\nu_n|(\Omega)<\infty$ and \textit{locally bounded} if it is bounded on all relatively compact subsets of $\Omega$.

\begin{lemma}\label{lem: properties of weak and weak$^*$ convergence}
 Let $\nu$  and  $\nu_n$, $n\in\N$,  be  finite covector measures on $\Omega$.    
 \begin{enumerate}[label = (\roman*)]
        \item Weak$^*$ and weak limits of $(\nu_n)$ are unique.
        \item If $(\nu_n)$ is bounded, then it admits a weakly$^*$ convergent subsequence. 
        \item Assume  that $\nu_n\weakstarto \nu$. Then one has $\liminf_n |\nu_n|(U)\geq |\nu|(U)$ for all open subsets $U\subset\Omega$. If in addition, $\limsup|\nu_n|(\Omega)\leq |\nu|(\Omega)$, then one has $|\nu_n| \rightharpoonup  |\nu|$.
    \end{enumerate}
\end{lemma}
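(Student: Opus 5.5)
The three assertions are consequences of the Riesz--Markov identification $\calM(\Omega;T^*\Omega)\cong(\Gamma_{C_0}(\Omega;T\Omega),\norm[\infty]{\cdot})^*$ of Theorem~\ref{thm: Riesz Markov}, together with the scalar theory of Radon measures. I will treat the three parts in order.

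\textbf{Part (i).} If $\nu_n\weakstarto\nu$ and $\nu_n\weakstarto\tilde\nu$, then $T_\nu[X]=T_{\tilde\nu}[X]$ for all $X\in\Gamma_{C_0}(\Omega;T\Omega)$. Injectivity of the map $\nu\mapsto T_\nu$ in Theorem~\ref{thm: Riesz Markov} then gives $\nu=\tilde\nu$. Weak convergence implies weak$^*$ convergence, since $C_0\subset C_b$, so uniqueness of weak limits follows.

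\textbf{Part (ii).} The space $\Gamma_{C_0}(\Omega;T\Omega)$ with the sup norm is a separable Banach space. Separability follows from that of $C_0(U)$ for chart domains, a countable cover of $\Omega$ by relatively compact charts, and a partition of unity. By \eqref{eqn: tv equals operator norm}, a bounded sequence $(\nu_n)$ is norm-bounded in the dual. Sequential Banach--Alaoglu on the dual of a separable space then yields a weak$^*$ convergent subsequence $T_{\nu_{n_k}}\to T$. Surjectivity in Theorem~\ref{thm: Riesz Markov} provides $\nu$ with $T=T_\nu$. Alternatively, one can avoid abstract separability by a diagonal argument over a countable dense family of vector fields, using the uniform bound to pass to all $X$.

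\textbf{Part (iii).} Lower semicontinuity on open sets follows from the dual-norm characterization of $|\nu|(U)$ given after Corollary~\ref{cor: riesz markov for loc fin covector measures}. For $X\in\Gamma_{C_c}(U;TU)$ with $|X|\le1$,
\begin{equation*}
\nu[X]=\lim_n\nu_n[X]\le\liminf_n|\nu_n|(U),
\end{equation*}
and taking the supremum over such $X$ gives the claim.

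For the second statement, the masses $|\nu_n|(\Omega)$ are bounded, since $\limsup_n|\nu_n|(\Omega)\le|\nu|(\Omega)<\infty$. So every subsequence has a further subsequence with $|\nu_{n_k}|\weakstarto\lambda$ for some finite Radon measure $\lambda$. For open $U$ and compact $K\subset U$, lower semicontinuity of mass along weak$^*$ limits and upper semicontinuity on compacta give
\begin{equation*}
|\nu|(U)\le\liminf_k|\nu_{n_k}|(U)\le\lambda(\bar U)\qquad\text{for } U\subset\subset\Omega.
\end{equation*}
It is cleaner, though, to test directly: for $\phi\in C_c(\Omega)$ with $0\le\phi\le1$ and $|X|\le1$, one has $|\nu_n[\phi X]|\le\int\phi\,d|\nu_n|$, hence $|\nu|$ restricted appropriately satisfies $|\nu|\le\lambda$ as measures. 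The mass bound $\lambda(\Omega)\le\liminf_k|\nu_{n_k}|(\Omega)\le|\nu|(\Omega)$ then forces $\lambda=|\nu|$. Since the limit does not depend on the subsequence, the whole sequence satisfies $|\nu_n|\weakstarto|\nu|$.

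Finally, weak$^*$ convergence combined with convergence of total masses, $|\nu_n|(\Omega)\to|\nu|(\Omega)$, upgrades to weak convergence for positive finite Radon measures. The usual tightness argument applies: choose $\phi\in C_c$ with $\int(1-\phi)\,d|\nu|<\varepsilon$; then mass convergence controls $\int(1-\phi)\,d|\nu_n|$ uniformly for large $n$. This is the standard Euclidean proof and needs only local compactness of $\Omega$.

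\textbf{Main obstacle.} The delicate point is the inequality $|\nu|\le\lambda$. The identity $|\nu|(U)=\sup\nu[X]$ over $|X|\le1$ must be applied with cutoffs $\phi$, in order to compare $|\nu|(\{\phi>t\})$ with $\int\phi\,d\lambda$. I would handle this by proving $\int\phi\,d|\nu|\le\int\phi\,d\lambda$ for $\phi\ge0$ in $C_c$. To do so, approximate $\phi\Sigma^\nu$ in $L^1(|\nu|)$ by continuous fields $X$ with $|X|\le\phi$, using Lusin's theorem on the Borel field $\Sigma^\nu$, and then pass to the limit in $\nu_n[X]\le\int\phi\,d|\nu_n|$.
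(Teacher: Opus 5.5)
Your proof is correct. Parts (i) and (ii) and the lower semicontinuity in (iii) match the paper's argument: injectivity of the Riesz--Markov map, sequential Banach--Alaoglu on the separable predual $\Gamma_{C_0}(\Omega;T\Omega)$, and testing the dual-norm characterization of $|\nu|(U)$ with fields $X\in\Gamma_{C_c}(U;TU)$, $|X|\le 1$.

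For the second assertion of (iii) you take a longer route than the paper. The paper applies lower semicontinuity with $U=\Omega$ and the hypothesis $\limsup_n|\nu_n|(\Omega)\le|\nu|(\Omega)$ to get $|\nu_n|(\Omega)\to|\nu|(\Omega)$. Together with lower semicontinuity on all open sets, this is exactly the hypothesis of the Portmanteau theorem, which gives $|\nu_n|\rightharpoonup|\nu|$ in one line.

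You instead extract vague limits $\lambda$ of subsequences of $(|\nu_n|)$. You identify $\lambda=|\nu|$ via $|\nu|\le\lambda$ and $\lambda(\Omega)\le|\nu|(\Omega)$, and then upgrade vague convergence to weak convergence by a tightness argument. In effect you reprove the relevant direction of Portmanteau on the locally compact space $\Omega$. This is self-contained and yields vague convergence $|\nu_n|\to|\nu|$ as an explicit intermediate step, but it is considerably longer than citing Portmanteau.

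The ``main obstacle'' you flag is also not a real one. Your own first display already gives $|\nu|(U)\le\lambda(\bar U)$ for every $U\subset\subset\Omega$. Exhausting an arbitrary open $V$ by such $U$ gives $|\nu|(V)\le\lambda(V)$. Outer regularity of the finite measure $\lambda$ then gives $|\nu|\le\lambda$ on all Borel sets. So the Lusin approximation of $\phi\Sigma^\nu$ by continuous fields with $|X|\le\phi$ is unnecessary, although it also works.
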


\begin{proof} 
    (i) The identification of $\calM(\Omega,T^*\Omega)$ with $(\Gamma_{C_0}(\Omega;T\Omega))^*$ via the Riesz--Markov representation theorem for covector measures implies that sequential weak$^*$ convergence in  $(\Gamma_{C_0}(\Omega;T\Omega))^*$ is equivalent to weak$^*$ convergence of the corresponding sequence in $\calM(\Omega,T^*\Omega)$. In particular, weak$^*$ limits of finite covector measures are unique and since every weak limit is  a weak$^*$ limit as well, this implies uniqueness of weak limits.
        
         (ii) Since the space $\Gamma_{C_0}(\Omega;T\Omega)$ is a separable Banach space,  bounded subsets of its dual space are relatively sequentially weakly$^*$-compact by the Banach-Alaoglu theorem, and we conclude that every bounded sequence of finite covector measures has a weakly$^*$ convergent subsequence.
        
         (iii) Let $U\subset\Omega$ be open and fix $\varepsilon>0$. By \eqref{eqn: tv equals operator norm} and a density argument, we can find $X\in \Gamma_{C_c}(U, TU)$ with $|X|\leq 1$  such that $|\nu|(U)\leq \int_U (X,\Sigma^\nu)  \,d|\nu| +\varepsilon$. Hence by weak$^*$ convergence, 
        \begin{equation*}
        \begin{gathered}
            |\nu|(U)-\varepsilon 
            \leq \int_\Omega (X,\Sigma^\nu)  \,d|\nu| 
            = \lim_n \int_\Omega (X,\Sigma^\nu_n)  \,d|\nu_n| 
            \leq \liminf_n\int_\Omega |(X,\Sigma^\nu_n) | \,d|\nu_n| \\
            \leq \liminf_n\int_\Omega 1 \,d|\nu_n| 
            = \liminf_n |\nu_n|(U).
            \end{gathered}
        \end{equation*} 
        This proves the first assertion. Now the second assertion is an immediate application of a version of the Portmanteau theorem for finite Borel measures which states that $|\nu_n| $ converges weakly to $ |\nu|$ if and only if $\lim|\nu_n|(\Omega) =  |\nu|(\Omega)$ and $\liminf_n |\nu_n|(U)\geq |\nu|$ for open subsets $U\subset\Omega$, see \cite[Theorem 4.10]{Els18_MassUndIntegrationstheorie}. 
\end{proof}

\begin{remark}\label{rem: remark on convergence notions for covector measures and BV}
     By definition, a sequence $(\nu_n)$ of  covector measures is bounded if and only if the sequence of  total variation measures $(|\nu_n|)$  is bounded. Just like $(\nu_n)$, the associated measures $|\nu_n|$ admit a weakly$^*$ convergent subsequence with a weak$^*$ limit $\mu$  in the sense of \eqref{eqn: weak$^*$/weak convergence of radon measures}, but in general this does \emph{not} imply $\mu = |\nu|$.  The third part of Lemma~\ref{lem: properties of weak and weak$^*$ convergence} provides a sufficient condition for equality of $\mu$ and $|\nu|$.
\end{remark}

In analogy to the Euclidean case, we define the notion of strict convergence of functions of bounded variation:
\begin{definition}
    A sequence $u_n\in BV(\Omega)$ converges \textit{strictly} to $u\in BV(\Omega)$ if $u_n\to u$ in $L^1(\Omega)$ and $|D u_n|(\Omega)\to \TV{u}(\Omega)$. 
\end{definition}

Clearly, this notion of convergence is  weaker than norm convergence in $BV(\Omega)$. However, it turns out to be useful since, unlike the norm topology, it admits approximations by smooth functions, as we shall see in Proposition~\ref{prop: approximation by smooth functions}. We first state some connections between notions of convergence for $BV$ functions.

\begin{corollary}\label{cor: weak$^*$ and weak convergence of (total) variation measure}
Let  $u_n, u\in BV(\Omega)$.
\begin{enumerate}[label=(\roman*)]
    \item If $\norm[1,\Omega]{u_n-u}\rar0$ and $\sup_n \TV{u_n}(\Omega)<\infty$, then  $Du_n \weakstarto  Du$.
    \item  If $u_n\to u$ strictly, then $Du_n\rightharpoonup Du$ and $\TV{u_n} \rightharpoonup \TV{u}$.
\end{enumerate} 
\end{corollary}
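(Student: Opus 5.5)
For part (i), I would test against smooth compactly supported vector fields first and then pass to continuous ones by density. For $X\in\Gamma_{C_c^1}(\Omega;T\Omega)$, Theorem~\ref{thm: characterization of BV} gives
\begin{equation*}
  Du_n[X] = -\int_\Omega u_n \ddiv(X)\d\vol \longrightarrow -\int_\Omega u\ddiv(X)\d\vol = Du[X],
\end{equation*}
because $\ddiv(X)$ is bounded and compactly supported and $u_n\to u$ in $L^1(\Omega)$. To reach a general $X\in\Gamma_{C_0}(\Omega;T\Omega)$, fix $\varepsilon>0$ and choose $Y\in\Gamma_{C_c^1}$ with $\norm[\infty]{X-Y}\le\varepsilon$; this is possible by the density remark following Corollary~\ref{cor: riesz markov for loc fin covector measures}. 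Then
\begin{equation*}
  |Du_n[X]-Du[X]|\le \varepsilon\bigl(\sup_n\TV{u_n}(\Omega)+\TV{u}(\Omega)\bigr)+|Du_n[Y]-Du[Y]|,
\end{equation*}
so the uniform bound on the variations lets $\varepsilon\to0$ and gives $Du_n\weakstarto Du$.

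For part (ii), strict convergence supplies both hypotheses of (i), since $\TV{u_n}(\Omega)$ converges and is therefore bounded. Hence $Du_n\weakstarto Du$. Lemma~\ref{lem: properties of weak and weak$^*$ convergence}(iii) then applies, because $\limsup_n|Du_n|(\Omega)=|Du|(\Omega)$, and it yields $\TV{u_n}\rightharpoonup\TV{u}$.

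It remains to upgrade $Du_n\weakstarto Du$ to weak convergence against $X\in\Gamma_{C_b}(\Omega;T\Omega)$; this is the main step. The plan is a tightness argument. Given $\varepsilon>0$, choose a compact $K\subset\Omega$ with $\TV{u}(\Omega\setminus K)<\varepsilon$. Since $\TV{u_n}\rightharpoonup\TV{u}$, we have $|Du_n|(\Omega)\to|Du|(\Omega)$ and $\liminf_n|Du_n|(U)\ge|Du|(U)$ for every open $U$. Combining the two gives $\limsup_n|Du_n|(F)\le|Du|(F)$ for closed $F$, which is the Portmanteau inequality. Next take a cutoff $\phi\in C_c(\Omega)$ with $0\le\phi\le1$ and $\phi=1$ on a neighbourhood of $K$, and set $F=\{\phi<1\}^-$. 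Then $F\subset\Omega\setminus K$ up to adjusting the neighbourhood, so $\limsup_n|Du_n|(F)\le|Du|(F)\le\varepsilon$.

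Now split $X=\phi X+(1-\phi)X$. The first term $\phi X$ lies in $\Gamma_{C_c}$, so its contribution converges by weak$^*$ convergence. The second term is supported in $F$, so its contribution is bounded by $\norm[\infty]{X}\bigl(|Du_n|(F)+|Du|(F)\bigr)$, which is at most $2\norm[\infty]{X}\varepsilon$ in the limit. Letting $\varepsilon\to0$ gives $Du_n[X]\to Du[X]$, i.e.\ $Du_n\rightharpoonup Du$.

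The only delicate point I anticipate is choosing $\phi$ so that the set where $\phi<1$ has closure inside a region of small $|Du|$-measure. To arrange this I would take $K$ so that $\TV{u}(\Omega\setminus K')<\varepsilon$ already for a compact $K'$ contained in the interior of $\{\phi=1\}$, using inner regularity of the finite Radon measure $\TV{u}$.
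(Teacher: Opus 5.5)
Your proof is correct, but it takes a different route from the paper's in both parts.

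\textbf{Part (i).} The paper extracts a weak$^*$ convergent subsequence by Banach--Alaoglu (Lemma~\ref{lem: properties of weak and weak$^*$ convergence}(ii)). It then identifies the limit as $Du$ by testing against $\Gamma_{C^1_c}$ fields and concludes by uniqueness of the limit. You argue directly by $\varepsilon$-approximation. You use the uniform bound on $|Du_n|(\Omega)$ as an equicontinuity estimate rather than to get compactness. This is slightly more elementary and amounts to the same thing.

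\textbf{Part (ii).} Here the difference is more substantial. The paper localizes to charts, pushes forward, and invokes \cite[Proposition 3.15]{AmbFusPal00_FunctionsBoundedVariation} to get weak convergence on each chart image. It reassembles with a partition of unity, and only afterwards applies Lemma~\ref{lem: properties of weak and weak$^*$ convergence}(iii) to get $|Du_n|\rightharpoonup|Du|$. You reverse the order. You first get $|Du_n|\rightharpoonup|Du|$ from (i) and Lemma~\ref{lem: properties of weak and weak$^*$ convergence}(iii). You then upgrade $Du_n\overset{*}{\rightharpoonup}Du$ to weak convergence using tightness of $|Du|$ and the Portmanteau closed-set inequality.

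\textbf{What each route buys.} Your argument is intrinsic and self-contained. It makes explicit the uniform tail control that the paper's exchange of $\lim_n$ with the possibly infinite sum over the partition of unity tacitly requires. It also sidesteps a point the chart argument must arrange: strict convergence on $\Omega$ does not by itself give strict convergence on a chart domain $U_\alpha$. That needs $|Du|(\partial U_\alpha\cap\Omega)=0$, for instance by choosing chart domains with $|Du|$-null boundaries. The paper's route, in turn, fits its overall strategy of transferring Euclidean results through charts.

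\textbf{A minor simplification.} You do not need the closed set $F$ or the adjustment in your last paragraph. Since $1-\phi\in C_b(\Omega)$ and $|Du_n|\rightharpoonup|Du|$, you have $\int(1-\phi)\,d|Du_n|\to\int(1-\phi)\,d|Du|\le|Du|(\Omega\setminus K)<\varepsilon$. This bounds the contribution of $(1-\phi)X$ directly by $\|X\|_\infty$ times these integrals.
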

 
\begin{proof}
    (i)  By part (ii) of Lemma~\ref{lem: properties of weak and weak$^*$ convergence} one can extract a weakly$^*$ convergent subsequence of $Du_n$ with weak$^*$ limit $\nu = (|\nu|,\Sigma^\nu)$. It remains to prove that $\nu = Du$. Indeed, using that $u_n\to  u$ in $L^1(\Omega)$,  one gets for arbitrary $X\in \Gamma_{C^1_c}(\Omega, T\Omega)$, 
    \begin{equation*}
       - \int_\Omega u\ddiv (X) \d \vol 
       = - \lim_n \int_\Omega u_n \ddiv (X) \d \vol 
       = \lim_n \int_\Omega (X,\Sigma^{u_n}) \d \TV{u_n} 
        = \int_\Omega (X,\Sigma^\nu) \d |\nu|,
    \end{equation*}
    hence by Theorem~\ref{thm: characterization of BV}, we get $(|\nu|,\Sigma^\nu) = (\TV{u},\Sigma^u)$. Since this equality is independent of the choice of subsequence, the whole sequence  $(Du_n)$ converges weakly$^*$ to $Du$.

    (ii)   If $(U,\varphi)$ is a local  chart for $M$, then an application of Remark \ref{rmk: pushfoward of vector measures} implies that $u_n\to u$ strictly in $BV(U)$ if and only if $\tilde{u}_n = u_n\circ\varphi\inv \to u\circ \varphi\inv = \tilde{u} $ in $BV(\varphi(U))$. From \cite[Proposition 3.15]{AmbFusPal00_FunctionsBoundedVariation} it follows that $D\tilde{u}_n\to D\tilde{u}$  weakly on the domain $\varphi(U)\subset\R^m$.  Therefore, the pushforward measures under the inverse chart $ \varphi\inv_* (D\tilde{u}_n) = \varphi_*(\varphi\inv_* (Du_n)) = Du_n$ converge weakly to $ \varphi\inv_* (D\tilde{u} )= Du$ on $U$. 
    Now let $\set{U_\alpha}_\alpha$ be a locally finite and countable atlas for  $\Omega$ and let $\set{\zeta_\alpha}_\alpha\subset C^\infty(M)$ be a   partition of unity subordinate to $\set{U_\alpha}_\alpha$. Then for all $X\in \Gamma_{C_b}(\Omega;T\Omega)$ we get
  \begin{equation}
      Du_n[X] = \sum_\alpha \int_{U_\alpha}\zeta_\alpha X\cdot \d Du_n \to \sum_\alpha \int_{U_\alpha}\zeta_\alpha X\cdot \d Du = Du[X].
  \end{equation}
  Now $Du_n$ converges to $ Du$ weakly and by  definition of strict convergence, we have $\TV{u_n}(\Omega) \to \TV{u}(\Omega)$. Thus, we are in the situation of Part 3 of Lemma~\ref{lem: properties of weak and weak$^*$ convergence}  and conclude that $\TV{u_n}\rightharpoonup \TV{u}$  as well.
\end{proof}

As we have seen earlier, the Sobolev space $W^{1,1}(\Omega)$ is a subspace of $BV(\Omega)$ and for $u\in W^{1,1}(\Omega)$ the $BV$- and Sobolev-norms coincide. Thus,   $W^{1,1}(\Omega)$ is closed in~$BV(\Omega)$. By the  Meyers--Serrin theorem, smooth Sobolev functions are dense in Sobolev spaces and  therefore, they cannot be dense in $BV(\Omega)$ with respect to the norm topology. However, it is possible to assert the following density result in terms of strict convergence. Note that  a strict approximation result for functions in $BV(M)$ on a complete manifold $M$ appears in \cite{MirPalParPre07_HeatSemigroupFunctions}, and a weaker approximation result in terms of an $L^1$-approximation of  $BV$ functions with weak$^*$ convergence of the total variation measures is shown in \cite[Theorem 2.4]{AmbGheMag15_BVFunctionsSets} in the sub-Riemannian setting.

\begin{proposition}[Strict approximation by smooth functions] 
\label{prop: approximation by smooth functions}
    \label{prop-item: Anzellotti-Giaquinta}   For all $u\in BV(\Omega)$ there exists a sequence of functions $u_n \in C^\infty(\Omega)\cap BV(\Omega)$ such that  $u_n$ converges to $u$ strictly. 
\end{proposition}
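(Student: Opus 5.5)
We adapt the Anzellotti--Giaquinta argument (the Euclidean proof of \cite[Theorem 3.9]{AmbFusPal00_FunctionsBoundedVariation}) and localize it with charts. Fix $u\in BV(\Omega)$ and $\varepsilon>0$. First we build an exhaustion of $\Omega$ by open sets $\Omega_k\subset\subset\Omega_{k+1}$ with $\bigcup_k\Omega_k=\Omega$, $\Omega_0=\emptyset$. Since $|Du|$ is a finite measure, we choose $\Omega_1$ large enough that $|Du|(\Omega\setminus\overline{\Omega_1})<\varepsilon$. We then set $V_k:=\Omega_{k+1}\setminus\overline{\Omega_{k-1}}$, which is a locally finite cover of $\Omega$ in which every point lies in at most two (or three) sets. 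Next we refine each $V_k$ by finitely many relatively compact chart domains $(U_{k,j},\varphi_{k,j})$, giving a countable, locally finite atlas $\{U_\alpha\}$ with bounded overlap. Finally we take a smooth partition of unity $\{\zeta_\alpha\}$ subordinate to this atlas, with $\sum_\alpha\zeta_\alpha=1$ on $\Omega$, so that $\sum_\alpha\nabla\zeta_\alpha=0$.

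For each $\alpha$ we transport $\zeta_\alpha u$ to $\varphi_\alpha(U_\alpha)\subset\R^m$ and mollify there with a radius $\delta_\alpha$, obtaining $w_\alpha$. We pull $w_\alpha$ back, so that $w_\alpha\circ\varphi_\alpha$ lies in $C_c^\infty(U_\alpha)$. We choose $\delta_\alpha$ so small that three conditions hold. The first is $\|w_\alpha\circ\varphi_\alpha-\zeta_\alpha u\|_{1,\Omega}<\varepsilon2^{-\alpha}$. The second is $\|\tilde w_\alpha-\zeta_\alpha\nabla u\|\cdots<\varepsilon2^{-\alpha}$, where $\tilde w_\alpha$ is the pullback of the mollification of the transported function $u\nabla\zeta_\alpha$; this is the commutator term. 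The third is that the support stays in $U_\alpha$. Since the coefficients of $g$ and $\sqrt{\det g}$ are bounded above and below on each relatively compact chart, $L^1$ norms on $U_\alpha$ and on $\varphi_\alpha(U_\alpha)$ are comparable, by Remark~\ref{rmk: pushfoward of vector measures}. Hence these choices are possible by the Euclidean $L^1$ convergence of mollifiers. We set $u_\varepsilon:=\sum_\alpha w_\alpha\circ\varphi_\alpha$. This is a locally finite sum, so $u_\varepsilon\in C^\infty(\Omega)$, and $\|u_\varepsilon-u\|_{1,\Omega}<\varepsilon$. By lower semicontinuity of the total variation under $L^1$ convergence, $\liminf_{\varepsilon\to0}|Du_\varepsilon|(\Omega)\ge|Du|(\Omega)$. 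The whole proof therefore reduces to the upper bound $|Du_\varepsilon|(\Omega)\le|Du|(\Omega)+C\varepsilon$.

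For the upper bound we fix $X\in\Gamma_{C_c^1}(\Omega;T\Omega)$ with $|X|\le1$ and compute $\int_\Omega u_\varepsilon\ddiv(X)\d\vol$ chart by chart. In the coordinates of $U_\alpha$, with $\rho=\sqrt{\det g}$, we have $\ddiv(X)\rho=\partial_i(\rho X^i)$. Moving the mollifier onto the test field gives $\int (\zeta_\alpha u)\,\partial_i\big(\eta_{\delta_\alpha}*(\rho X^i)\big)\,dx$, since the Euclidean integral carries the weight $\rho$. We then split $\zeta_\alpha\ddiv(Y_\alpha)=\ddiv(\zeta_\alpha Y_\alpha)-d\zeta_\alpha(Y_\alpha)$, where $Y_\alpha$ is the vector field with components $\rho^{-1}\,\eta_{\delta_\alpha}*(\rho X^i)$. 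This produces two kinds of terms. The first kind is $\int u\ddiv(\zeta_\alpha Y_\alpha)\d\vol$, whose absolute value is bounded by $\int\zeta_\alpha|Y_\alpha|\,d|Du|$ up to the error from $|Y_\alpha|$. The second kind are commutator terms; since $\sum_\alpha d\zeta_\alpha=0$, they sum to less than $\varepsilon$ by the second choice of $\delta_\alpha$.

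The main obstacle is that $|Y_\alpha|_g$ need not be at most $1$, because mollification in coordinates does not preserve the Riemannian norm bound. We handle this by also requiring $\delta_\alpha$ so small that the oscillation of $g_{ij}$ and $\rho$ on $\delta_\alpha$-neighbourhoods of $\operatorname{supp}\zeta_\alpha$ is below $\varepsilon$; this is possible by uniform continuity on compact sets. Convexity of the norm then gives $|Y_\alpha|_g\le1+C\varepsilon$. Summing the first-kind terms yields $\int_\Omega u_\varepsilon\ddiv X\d\vol\le(1+C\varepsilon)\sum_\alpha\int\zeta_\alpha\,d|Du|+\varepsilon=(1+C\varepsilon)|Du|(\Omega)+\varepsilon$. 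This establishes $\limsup_{\varepsilon\to0}|Du_\varepsilon|(\Omega)\le|Du|(\Omega)$. The exhaustion is needed only to make the atlas countable and locally finite; the sharp bound itself comes from $\sum_\alpha\zeta_\alpha=1$ rather than from smallness of $|Du|(\Omega\setminus\Omega_1)$. Taking $\varepsilon=1/n$ gives the required sequence $u_n$ converging to $u$ strictly.
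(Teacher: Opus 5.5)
Your proof is correct, but it takes a different route from the paper. The paper's proof is a short reduction to external results. It takes locally Lipschitz $v_k\to u$ strictly from \cite[Thm.~3.1]{AmbGheMag15_BVFunctionsSets} and uses Rademacher to place $v_k$ in $W^{1,1}(\Omega)$. It then approximates each $v_k$ in $W^{1,1}$-norm by functions in $C^\infty(\Omega)\cap W^{1,1}(\Omega)$, via a Meyers--Serrin theorem on manifolds \cite{MulSal07_ScatteringTheoryLaplacian,GuiGunPal17_L^1ellipticRegularityWhole}, and closes with a diagonal argument. That buys brevity and fits the paper's strategy of importing metric-measure-space results, but it rests on two nontrivial external theorems.

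You instead rerun Anzellotti--Giaquinta chart by chart. You correctly identify the one genuinely Riemannian issue: coordinate mollification of $\rho X^i$ destroys $|X|_g\le 1$, and you repair this by taking $\delta_\alpha$ below the oscillation scale of $g_{ij}$ and $\rho$. Your remark that the sharp bound comes from $\sum_\alpha\int\zeta_\alpha|Y_\alpha|_g\,d|Du|$ rather than from the exhaustion is also right. Your route is self-contained (Euclidean mollifiers plus Theorem~\ref{thm: characterization of BV}), quantitative, and produces $u_\varepsilon\in C^\infty(\Omega)\cap W^{1,1}(\Omega)$ directly.

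Two bookkeeping points to fix when writing it out:
\begin{itemize}
\item Impose the commutator tolerance on the coordinate functions $u\,\partial_i\zeta_\alpha$, the components of the covector $u\,d\zeta_\alpha$. It should not be on the components of $u\nabla\zeta_\alpha$, and ``$\zeta_\alpha\nabla u$'' is a slip. The reason is that the term which actually appears is
\[
\int u\,\partial_i\zeta_\alpha\,\bigl(\eta_{\delta_\alpha}*(\rho X^i)\bigr)\,dx=\int\bigl(\eta_{\delta_\alpha}*(u\,\partial_i\zeta_\alpha)\bigr)\,\rho X^i\,dx .
\]
\item Scale both this tolerance and the oscillation condition by chart-dependent constants, so that $C$ does not depend on $\alpha$. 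The relevant constants are bounds for $\rho^{\pm1}$, for $\lambda_{\min}(g)^{-1}$, and for $\sup\{|X^i|:|X|_g\le1\}$. Take these bounds on a compact neighbourhood of $\operatorname{supp}\zeta_\alpha$, not on all of $U_\alpha$, where the coordinate metric need not be bounded.
\end{itemize}
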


\begin{proof} 
    Let  $u\in BV(\Omega)$. By  \cite[Thm. 3.1]{AmbGheMag15_BVFunctionsSets} there exists a sequence of locally Lipschitz continuous functions $v_k$ satisfying
    \begin{equation*}
       \sup_k|Dv_k|(\Omega) =  \sup_k\int_\Omega |\nabla v_k|\d\vol<\infty
    \end{equation*}
    such that $(v_k)$ converges to $u$ strictly. By Rademacher's theorem $\mathrm{Lip}_\loc(\Omega)\subset W^{1,1}_\loc(\Omega)$, so we get $v_k\in W^{1,1}_\loc(\Omega)\cap BV(\Omega)\subset W^{1,1}(\Omega)$. Therefore we can use density of
     $C^{\infty}(\Omega)\cap W^{1,1}(\Omega)$ in  $W^{1,1}(\Omega)$ (cf. \cite[Lemma 3.1]{MulSal07_ScatteringTheoryLaplacian} or  more generally, \cite[Cor. 3.6] {GuiGunPal17_L^1ellipticRegularityWhole})  to approximate each $v_k$ by a sequence $(u_{k_n})$
      in $C^{\infty}(\Omega)\cap W^{1,1}(\Omega)$. Since convergence in $W^{1,1}(\Omega)$ implies strict convergence on $\Omega$, we can pick an appropriate diagonal  sequence $(u_n)\subset C^\infty(\Omega)\cap W^{1,1}(\Omega) = C^\infty(\Omega)\cap BV(\Omega) $ that converges to $u$ strictly on $\Omega$.
\end{proof}

As a consequence, we  formulate an equivalent characterization for functions of bounded variation: A function $u$ lies in $BV(\Omega)$  if and only if there exists a sequence $(u_n)\subset C^\infty(\Omega)\cap W^{1,1}(\Omega)$ such that $u_n\to  u$ in $L^1(\Omega)$ and $\limsup_n \int_\Omega |\nabla u_n| \d \vol<\infty$. In that case, $  \TV{u}(\Omega) = \lim_n \int_\Omega |\nabla u_n| \d \vol$. 
We  note moreover that if $u$ is compactly supported or if $\Omega = M$ and $M$ is complete, then the sequence can be chosen in $C_c^{\infty}(\Omega)$ (see \cite{GuiGunPal17_L^1ellipticRegularityWhole} and \cite{Aub76_EspacesSobolevVarietes}, respectively).

\begin{corollary}
% [Weak$^*$-compactness in $BV_\loc$]
\label{Cor: compactness in BV_loc}
    If $(u_n)\subset BV_\loc(\Omega)$ is a sequence such that $\sup_n\norm[BV(U)]{u_n}<\infty$ for all open $U\subset\subset \Omega$, then there exists $u\in BV_\loc(\Omega)$ such that along a subsequence, $u_n \to  u$ in $L^1_\loc(\Omega)$ and $Du_n\to  Du$  weakly$^*$.
\end{corollary}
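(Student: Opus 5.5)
The plan is to reduce to the Euclidean compactness theorem in charts and then glue the local limits together with a diagonal argument. The weak$^*$ convergence of the derivatives will follow from Corollary~\ref{cor: weak$^*$ and weak convergence of (total) variation measure}(i), applied on relatively compact subsets.

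First, I fix a countable, locally finite atlas $\{(U_\alpha,\varphi_\alpha)\}_\alpha$ of $\Omega$ with relatively compact chart domains $U_\alpha\subset\subset\Omega$. Shrinking if necessary, I assume each $\varphi_\alpha(U_\alpha)$ is a bounded Lipschitz (e.g.\ ball) domain in $\R^m$ and that $\varphi_\alpha$ extends smoothly to a neighbourhood of $\bar U_\alpha$. Then $g$ and the pulled-back Euclidean metric $(\varphi_\alpha)^*\delta$ are bi-Lipschitz equivalent on $U_\alpha$. By Remark~\ref{rmk: pushfoward of vector measures}, the functions $\tilde u_n^\alpha:=u_n\circ\varphi_\alpha\inv$ are therefore bounded in $BV(\varphi_\alpha(U_\alpha))$ for the Euclidean structure, with bound controlled by $\sup_n\norm[BV(U_\alpha)]{u_n}$. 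The Euclidean compactness theorem for $BV$ on bounded extension domains (e.g.\ \cite[Theorem 3.23]{AmbFusPal00_FunctionsBoundedVariation}) then yields, for each $\alpha$, a subsequence converging in $L^1(\varphi_\alpha(U_\alpha))$. Since $\vol_g$ and the pushed-forward Lebesgue measure are comparable on $U_\alpha$, this gives $L^1(U_\alpha)$ convergence of $u_n$.

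Next, I enumerate the countable atlas and take a diagonal subsequence, so that $u_n$ converges in $L^1(U_\alpha)$ for every $\alpha$ simultaneously. The limits agree almost everywhere on overlaps, so they define a function $u\in L^1_\loc(\Omega)$. Any $U\subset\subset\Omega$ is covered by finitely many $U_\alpha$, because the atlas is locally finite and $\bar U$ is compact; hence $u_n\to u$ in $L^1(U)$, i.e.\ in $L^1_\loc(\Omega)$.

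For the regularity of the limit and the weak$^*$ convergence, I fix open $U\subset\subset\Omega$. Lower semicontinuity of the total variation under $L^1_\loc$ convergence gives $|Du|(U)\le\liminf_n|Du_n|(U)<\infty$, so $u\in BV_\loc(\Omega)$. Moreover $u_n|_U\to u|_U$ in $L^1(U)$ with $\sup_n|Du_n|(U)<\infty$, so Corollary~\ref{cor: weak$^*$ and weak convergence of (total) variation measure}(i) on $U$ gives $Du_n\weakstarto Du$ on $U$. Any $X\in\Gamma_{C_c}(\Omega;T\Omega)$ has support in some such $U$, so $Du_n[X]\to Du[X]$, which is weak$^*$ convergence in $\calM_\loc(\Omega;T^*\Omega)$.

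The main obstacle is the local step. One has to check carefully that chart pushforward preserves boundedness in $BV$, using the bi-Lipschitz comparability from Remark~\ref{rmk: pushfoward of vector measures}(1)--(3) together with relative compactness, and that the chart images are regular enough for the Euclidean Rellich-type compactness theorem to apply. Choosing ball-shaped chart images sidesteps the regularity issue.
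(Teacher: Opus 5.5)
Your proof is correct, but its local step takes a different route from the paper's; the gluing step is essentially the same.

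The paper works intrinsically on the manifold. For fixed $U\subset\subset\Omega$ it chooses a smoothly bounded $U'$ with $U\subset\subset U'\subset\subset\Omega$. It replaces each $u_n$ by some $v_n\in C^\infty(U')\cap W^{1,1}(U')$ that is $1/n$-close to $u_n$ in $L^1$ and in total variation (Proposition~\ref{prop: approximation by smooth functions}). It then applies the Rellich embedding $W^{1,1}(U')\hookrightarrow L^1(U')$ on the compact manifold with boundary $\bar U'$, and gets weak$^*$ convergence from part (i) of the corollary on weak$^*$ convergence of variation measures. The global statement then follows from an exhaustion $U_k\uparrow\Omega$ and a diagonal argument.

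You instead localize to coordinate balls and transfer the $BV$ bounds to Euclidean space via Remark~\ref{rmk: pushfoward of vector measures}. You then apply the Euclidean $BV$ compactness theorem directly and glue the chartwise $L^1$ limits over a countable atlas. You recover $u\in BV_{\mathrm{loc}}(\Omega)$ from lower semicontinuity before invoking the same corollary for the weak$^*$ convergence.

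What your route buys is that it is more elementary. It needs neither the strict smooth approximation, which rests on the Ambrosio--Ghezzi--Magnani result and a $W^{1,1}$ density theorem, nor a Riemannian Rellich--Kondrachov theorem. It also fits the paper's own chartwise localization philosophy. The price is routine bookkeeping: comparing $L^1$ and $BV$ norms in charts, and matching the limits on chart overlaps.

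The paper's route is chart-free. It packages the local statement on a single relatively compact set, and that piece is reused later for compactness on $BV$-extension domains.

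A minor point: compactness of $\bar U$ alone already gives the finite subcover, so you do not need local finiteness of the atlas there.
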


\begin{proof}
    \textit{Step 1) For each $U\subset\subset \Omega$ one can find  a subsequence that converges to some $v$  in $L^1(U)$ and such that  $\Var{u_n}\weakstarto \Var{v} $ on $U$.}

    \textit{Proof of Step 1). } We fix such a set $U$ and let $U'\subset\subset\Omega$ be an  open set with smooth boundary such that $U\subset\subset U'$. Then $u_n\in BV(U')$ so by Proposition~\ref{prop-item: Anzellotti-Giaquinta},  we can find for each $n$ a function $v_n\in C^\infty(U')\cap BV(U')\subset W^{1,1}(U')$ such that 
    \begin{equation}\label{eqn: L^1 estimate for compactness}
            \norm[1,U']{u_n-v_n}<\tfrac{1}{n} \qand \big| \TV{u_n}(U')-\TV{v_n}(U')|\big|<\frac{1}{n}.
    \end{equation}
    Therefore, since $\norm[BV(U')]{u_n}$ is uniformly bounded, it follows that $ \norm[W^{1,1}(U')]{v_n} =\norm[BV(U')]{v_n}$ is uniformly bounded as well. Since the closure of $U'$ is compact with smooth boundary, the embedding $W^{1,1}(U')\hookrightarrow L^1(U')$ is compact by \cite[Theorem 10.1]{Heb00_NonlinearAnalysisManifolds}  and there exists $v\in L^1(U')$ such that $v_n\to v$ in $L^1(U')$ along a subsequence. Then in view of  \eqref{eqn: L^1 estimate for compactness}, we have $u_n\to v$ in $L^1(U')$ along the same subsequence and by Corollary~\ref{cor: weak$^*$ and weak convergence of (total) variation measure}, $v$ lies in $BV(U')$ and $\Var{u_n}\weakstarto \Var{v}$  on $U'$ and the same convergences are valid on~$U$.
\smallskip

\textit{Step 2) The compactness statement holds  for $BV_\loc(\Omega)$.}

\textit{Proof of Step 2). }  Let $(U_k)_{k\geq1}$ be an exhaustion of $\Omega$ by compactly contained open sets.  Apply Step 1 iteratively to  $U_k$ for $k\geq 1$, to obtain, for each $k$, a subsequence $(u_{k_n})_n$ converging to some $v_k$ in $L^1(U_k)$ and with $\smash{ Du_{k_n}\to Dv_k }$ weakly$^*$ on $U_k$. Since the sets $U_k$ are nested, we have $\smash{ {v_{k+1}}_{|U_k}=v_k }$. Now  for arbitrary measurable $A\subset\subset\Omega$, we can find some $k$ such that $A\subset U_k$, hence the function  $u\in L^1_\loc(\Omega)$ is well-defined by setting $u_{|A}:={v_k}_{|A}$. Finally, we extract a diagonal sequence of $(u_{k_n})$ that converges to  $u\in L^1_\loc(\Omega)$ and satisfies   $Du_{k_n}\to  Du$  weakly$^*$ on~$\Omega$.
\end{proof}

%%%%%%%%%%%%%%%%%%%%%%%%%%%%%%%%%%%%%%%%%%%%%%%%%%%%%%%%%%%%%%%%%%%%%%%%%%%%%%%%%%%%%%%%
%%%%%%%%%%%%%%%%%%%%%%% Sets of finite perimeter %%%%%%%%%%%%%%%%%%%%%%%%%%%%%%%%%%%%%%%
%%%%%%%%%%%%%%%%%%%%%%%%%%%%%%%%%%%%%%%%%%%%%%%%%%%%%%%%%%%%%%%%%%%%%%%%%%%%%%%%%%%%%%%%
\subsection{Sets of finite perimeter}\label{appendix sec: sets of finite perimeter}
%%%%%%%%%%%%%%%%%%%%%%%%%%%%%%%%%%%%%%%%%%%%%%%%%%%%%%%%%%%%%%%%%%%%%%%%%%%%%%%%%%%%%%%%

Let $E$ be a measurable subset of $M$ and let $\Omega\subset M$ be open. The \textit{perimeter of $E$ in $\Omega$} is defined as
\begin{equation*}
    \Per(E; \Omega):= |D{\mathbf{1}_E}|(\Omega),
\end{equation*}
and $E$ is called a set of \emph{(locally) finite perimeter in} $\Omega$ if $\mathbf{1}_E$ has (locally) bounded variation in $\Omega$.  If we say $E$ has (locally) finite perimeter, we always mean that $E$ has (locally) finite perimeter in $M$ and refer to $P(E):=P(E;M)$ as the perimeter of $E$. Sets of finite perimeter are also referred to as \textit{Caccioppoli sets} and we will thus write  
$$\scrC(\Omega) = \set{E\subset\Omega:\, |E|<\infty \text{ and } \Per(E;\Omega)<\infty}$$ 
and define $\scrC_\loc(\Omega)$ as the measurable subsets of  locally finite perimeter. We say that a family $\set{E_n}$ in $\scrC(\Omega)$ is uniformly bounded if $\sup_n |E_n|+\Per(E_n;\Omega)<\infty$. 
If $E\subset M$ has finite perimeter in $\Omega$, then the perimeter of $E$ in $F$ is well-defined and finite for all measurable sets $F\subset \Omega$. A sequence of finite perimeter sets $E_n\subset M$ is said to converge \emph{strictly in} $\Omega$ to $E$ if the indicator functions $\mathbf{1}_{E_n}$ converge strictly to $\mathbf{1}_E$ in $BV(\Omega)$. Recall also that a sequence of measurable sets $E_n\subset M$ converges to $E\subset M$ in  measure if  their indicator functions converge in $L^1(M)$ with respect to the volume measure or equivalently, if the symmetric differences of $E_n$ and $E$ satisfy $\vol(E_n \Delta E)\to0 $. Then, \textit{local convergence in measure} is defined in the obvious way. 

\begin{remark}\label{rmk: compactness of Caccioppoli sets}
    The space of Borel measurable subsets of $\Omega$ is closed under (local) convergence in measure. This follows from the fact that any $L^1$-convergent sequence of indicator functions admits a subsequence that (locally) converges pointwise almost everywhere. Since the pointwise limit must also have values in the closed set $\{0,1\}$, it must be an indicator function, too. In particular, this implies that the compactness property from \ref{Cor: compactness in BV_loc}  can be applied to $\scrC_\loc(\Omega)$ in a similar manner.
\end{remark}

We recall a central theorem of the theory of $BV$ functions (cf. the proof of~\cite[Theorem 4.2]{Mir03_FunctionsBoundedVariation}):
\begin{theorem}[Coarea formula] \label{thm: coarea formula}
    Let $u\in BV(\Omega)$. Then
    \begin{equation*}
        \begin{aligned}
        \TV{u}(B) 
        &= \int_{\R} |D{\mathbf{1}_{\set{u>t}}}|(B) \d t \text{ for all measurable } B\subset \Omega,
        \end{aligned}
    \end{equation*} 
    where $\mathbf{1}_{\set{u>t}}$ is the indicator function of the superlevel  set  $E^t:= \set{x\in \Omega:\ u(x)>t}$ for $t\in\R$. Moreover, one has
    \begin{equation*}
       \TV{u}(B) = \int_{0}^{\infty} |D{\mathbf{1}_{\set{|u|>t}}}|(B) \d t 
        \text{ for all measurable } B\subset \Omega.
    \end{equation*}
\end{theorem}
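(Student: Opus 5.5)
I plan to prove the coarea formula by reducing it to the Euclidean statement in local charts, along the lines of the proof of Corollary~\ref{cor: weak$^*$ and weak convergence of (total) variation measure}~(ii).

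\textbf{Measurability.} First I check that $t\mapsto |D\mathbf{1}_{\set{u>t}}|(B)$ is Lebesgue measurable. For open $B$ this follows because the map is lower semicontinuous along $L^1_\loc$-convergent families: $\mathbf{1}_{E^s}\to\mathbf{1}_{E^t}$ in $L^1_\loc$ as $s\downarrow t$. The total variation is lower semicontinuous with respect to $L^1_\loc$, so the map is a pointwise limit/lim inf of measurable expressions. A monotone class argument then extends measurability to all Borel sets $B$.

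\textbf{The estimate ``$\geq$''.} Fix an open set $U\subset\Omega$. By Proposition~\ref{prop: approximation by smooth functions} I choose $u_n\in C^\infty(U)\cap BV(U)$ with $u_n\to u$ strictly on $U$. For smooth $u_n$ I use the smooth Riemannian coarea formula, i.e.\ Sard's theorem together with the Riemannian coarea formula for smooth maps into $\R$:
\[
\int_U|\nabla u_n|\d\vol=\int_\R \mathcal{H}^{m-1}(\{u_n=t\}\cap U)\d t .
\]
For a.e.\ $t$ the level set is a smooth hypersurface, and the Gauss--Green theorem on that hypersurface gives $\mathcal{H}^{m-1}(\{u_n=t\}\cap U)\geq P(\{u_n>t\};U)$. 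Passing to a subsequence, $\mathbf{1}_{\set{u_n>t}}\to\mathbf{1}_{\set{u>t}}$ in $L^1(U)$ for a.e.\ $t$, by Fubini applied to $\int_\R\norm[1,U]{\mathbf{1}_{\set{u_n>t}}-\mathbf{1}_{\set{u>t}}}\d t=\norm[1,U]{u_n-u}$. Combining lower semicontinuity of the perimeter with Fatou's lemma then yields
\[
\int_\R P(E^t;U)\d t\leq |Du|(U).
\]

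\textbf{The estimate ``$\leq$''.} For $X\in\Gamma_{C^1_c}(U;TU)$ with $|X|\leq1$, I use the layer-cake representation $u=\int_0^\infty\mathbf{1}_{E^t}\d t-\int_{-\infty}^0(1-\mathbf{1}_{E^t})\d t$. Since $\int_U \ddiv(X)\d\vol=0$, Fubini gives
\[
\int_U u\ddiv(X)\d\vol=\int_\R\int_U\mathbf{1}_{E^t}\ddiv(X)\d\vol\d t\leq\int_\R P(E^t;U)\d t .
\]
Taking the supremum over such $X$ yields equality on open sets.

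\textbf{Extension and second formula.} Both sides of the identity are finite Borel measures in $B$. They agree on open sets, so by outer regularity they agree on all Borel sets. The second formula follows by applying the first to $|u|$ and using that the superlevel sets of $|u|$ at levels $t<0$ are all of $\Omega$, whose indicator has zero derivative. I need $|D|u||\leq|Du|$ here; on inspection, the stated second identity $\TV{u}=\int_0^\infty|D\mathbf{1}_{\set{|u|>t}}|\d t$ seems to hold only for nonnegative $u$, so I will state it for $u\geq0$ or read $\TV{u}$ as $|D|u||$.

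\textbf{Main obstacle.} I expect the inequality $\mathcal{H}^{m-1}(\{u_n=t\})\geq P(\{u_n>t\})$ for smooth functions on manifolds to be the delicate point. To avoid it, I would instead obtain ``$\geq$'' from the equality $\int_U|\nabla u_n|\d\vol=\int_\R P(\{u_n>t\};U)\d t$ for smooth $u_n$. I would prove this chartwise from the Euclidean case via Remark~\ref{rmk: pushfoward of vector measures}: the metric enters only through a continuous weight and the endomorphism $\calA$, which can be handled by freezing the metric on small balls and using a partition of unity.
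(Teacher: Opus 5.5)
Your proof of the first identity is correct, and it takes a genuinely different route from the paper. The paper gives no argument of its own; it points to Miranda's proof in metric measure spaces, where $|Du|$ is defined by relaxation along locally Lipschitz approximations, so neither inequality comes from duality. You work with the divergence-duality definition instead. This makes $|Du|(U)\le\int_{\mathbb{R}}P(E^t;U)\,dt$ a one-line layer-cake computation, and the reverse bound follows from Proposition~\ref{prop: approximation by smooth functions}, lower semicontinuity and Fatou. The citation is shorter and independent of the smooth structure. It does, however, implicitly require that the Riemannian variation coincides with the relaxed one on a possibly incomplete $M$. That identification rests on the same approximation theorem, which your argument uses only through that proposition.

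The step you call the main obstacle is harmless. For a regular value $t$ the set $\{u_n\ge t\}$ is a regular domain in $U$, so the divergence theorem with $|X|\le 1$ gives $P(\{u_n>t\};U)\le\mathcal{H}^{m-1}(\{u_n=t\}\cap U)$. Your chartwise fallback would actually be harder, since in a chart the Riemannian variation becomes an $x$-dependent anisotropic weighted variation. You can even skip Sard and the smooth coarea formula. For $v_{t,h}:=\min\{1,\max\{0,(u_n-t)/h\}\}$, Fubini gives $\int_{\mathbb{R}}\int_U|\nabla v_{t,h}|\,d\mathrm{vol}\,dt=\int_U|\nabla u_n|\,d\mathrm{vol}$ for every $h>0$. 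Letting $h\downarrow 0$ with lower semicontinuity and Fatou yields $\int_{\mathbb{R}}P(\{u_n>t\};U)\,dt\le\int_U|\nabla u_n|\,d\mathrm{vol}$.

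Your measurability sentence is loose, but the observation behind it suffices. The function $f(t):=P(E^t;U)$ satisfies $f(t)\le\liminf_{s\downarrow t}f(s)$, so every set $\{f>a\}$ is a union of intervals $[t,t+\delta_t)$ and hence Borel.

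Your suspicion about the second identity is justified: it is false for sign-changing $u$. The paper's justification through $|D|u||=|Du|$ fails, because only $|D|u||\le|Du|$ holds. For a counterexample, split a chart ball $\Omega$ by a smooth hypersurface $\Sigma$ into $\Omega^{\pm}$ and let $u=\mathbf{1}_{\Omega^+}-\mathbf{1}_{\Omega^-}$. Then $|Du|(\Omega)=2\mathcal{H}^{m-1}(\Sigma\cap\Omega)>0$. On the other hand $|u|=1$ almost everywhere, so $\{|u|>t\}$ agrees with $\Omega$ up to a null set for $0<t<1$ and is empty for $t\ge 1$. Hence the right-hand side vanishes.

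What your argument proves is $|D|u||(B)=\int_0^\infty|D\mathbf{1}_{\{|u|>t\}}|(B)\,dt$, which equals $|Du|(B)$ when $u\ge 0$. For general $u$, the first identity instead gives $|Du|(B)=\int_0^\infty\big(|D\mathbf{1}_{\{u>t\}}|(B)+|D\mathbf{1}_{\{u<-t\}}|(B)\big)\,dt$. You need $|D|u||\le|Du|$ to know that $|u|\in BV(\Omega)$. It follows from your layer-cake bound applied to $|u|$, combined with $P(\{|u|>t\};U)\le P(\{u>t\};U)+P(\{u<-t\};U)$.
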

Note that the  second equality follows from the first one, using either the identity $|D|u||=\TV{u}$, or   using that for $t>0$, the measure  $|D\mathbf{1}_{\set{|u|>t}}|$ is concentrated on the disjoint sets $\set{u = t}$ and $\set{u = -t}$, and decomposes as desired on these sets.

%%%%%%%%%%%%%%%%%%%%%%%%%%%%%%%%%%%%%%%%%%%%%%%%%%%%%%%%%%%%
\section{Differentiation theorems for Radon and covector measures}
\label{sec: differentiation theorems}
%%%%%%%%%%%%%%%%%%%%%%%%%%%%%%%%%%%%%%%%%%%%%%%%%%%%%%%%%%%%
We devote this section to the proof of  a differentiation theorem for arbitrary covector measures   with respect to Radon measures on $M$. Recall that the classical Lebesgue differentiation theorem states that, given  $f\in L^1_\loc(\R^m)$, with respect to the Lebesgue measure $\lambda$ one has 
    \begin{equation*}
        f(x) = \lim_{r\to0} \frac{1}{ \lambda(B_e(x,r))} \int_{B_e(x,r)} f\d\lambda \tforae x\in \R^m.
    \end{equation*}
     The more general Lebesgue--Besicovitch theorem states that, given a positive Radon measure $\mu$ and a signed or  vector-valued Radon measure $\nu$ on $\R^m$,  the limit
    \begin{equation}\label{eqn: classical vector valued besicovitch limit}
        F(x) = \lim_{r\downarrow0} \frac{\nu(\bar{B}_e(x,r))}{\mu(\bar{B}_e(x,r))}
    \end{equation}
    exists for $\mu$-almost every $x\in \R^m$ and one has $\nu = F\mu + \nu^s$, where $\nu^s$ is the singular part of $\nu$ with respect to $\mu$~\cite[Theorem 2.22]{AmbFusPal00_FunctionsBoundedVariation}. 
    
    The generalization of this statement to covector measures on Riemannian manifolds presents itself with two major difficulties. Firstly,  the expression in \eqref{eqn: classical vector valued besicovitch limit} is not well-defined for a Radon measure $\mu$ on $M$ and $\nu\in \calM_\loc (M;T^*M)$. Instead, we will consider the weak$^*$-limit of the sequence given by
    \begin{equation*}
        \frac{\nu\llcorner \bar{B}(x,r)}{\mu(\bar{B}(x,r))} \in \calM(M;T^*M),
    \end{equation*}
    see Theorem~\ref{thm: Besicovitch Theorem for vector measures}.
    Secondly, the proof of the Euclidean statement relies on the Besicovitch covering lemma,  which establishes a uniform bound on the overlap among countable subcollections of any family of closed balls. While this covering lemma   is generally  false on arbitrary Riemannian manifolds,  Federer actually  proves a much more general differentiation theorem on  metric spaces  for every locally Borel regular measure  that  admits a so-called Vitali relation. 
    We review some constructions and results from \cite{Fed96_GeometricMeasureTheory} in order to apply them to the Riemannian setting:
    
    If $\calX$ is a  metric space, a \textit{Vitali relation $\scrV$ for a measure $\mu$} on $\calX$ is a family of pairs
    \begin{equation*}
        \scrV \subset \set{(x,S):\, x\in S\tand S\subset \calX}
    \end{equation*}
   such that
   \begin{enumerate}[label=(\roman*)]
       \item for every $x\in \calX$, the family \textit{$\scrV$ is fine at $x$} in the sense that $\inf \set{\diam(S):\, (x,S)\in \scrV} = 0$,
       \item if $\tilde{\scrV}$ is a subcollection of $\scrV$ and $A\subset \calX$ is such that $\tilde{\scrV}$ is fine at each $x\in A$, 
       % $\inf \set{\diam(S):\, (x,S)\in \tilde{\scrV}} = 0$, 
       then there exists a countable subcollection of pairwise disjoint sets 
       $$\set{S_1,S_2,\dots} \subset \tilde{\scrV}(A)
       :=\set{S:\, (x,S)\in \tilde{\scrV}\tand x\in A}
       $$ 
       such that $\mu(A\setminus\textstyle{\bigcup_{k=1}^\infty} S_k) = 0.$
   \end{enumerate}
   For a given Vitali relation $\scrV$ and a  function $\Psi:\dom{\Psi } \to \overline{\R}$ defined on a subset $\dom{\Psi }$ of the power set of $\calX$,  the sequences defined by
    \begin{equation*}  
    \begin{aligned}
        \overline{a}_r &= \sup \set{\Psi (S):\, (x,S)\in \scrV,\, \diam(S)<r,\, S\in \dom{\Psi }} \in\overline{\R}\\
        \underline{a}_r &= \inf \set{\Psi (S):\, (x,S)\in \scrV,\, \diam(S)<r,\, S\in \dom{\Psi }} \in\overline{\R}
    \end{aligned}
    \end{equation*}
    are monotone increasing and decreasing respectively and admit  limits in $\overline{\R}$ for $r\downarrow 0 $. One writes
    \begin{equation}\label{eqn: liminf ans limsup}
        \limsup_{S\to x}\Psi (S) := \limsup_{r\downarrow 0}  \overline{a}_r = \lim_{r\downarrow0} \overline{a}_r \qand \liminf_{S\to x} \Psi (S) = \liminf_{r\downarrow 0  } \underline{a}_r =  \lim_{r\downarrow0} \underline{a}_r.
    \end{equation}
    If both limits coincide, then one defines
    \begin{equation}\label{eqn: limit with respect to Vitali relation}
        \lim_{S\to x}\Psi (S) :=   \limsup_{S\to x}\Psi (S) = \liminf_{S\to x} \Psi (S).
    \end{equation}
    We collect some remarks concerning these constructions on $M$.
    \begin{remark}\label{rmk: Vitali relations for manifolds}
    \begin{enumerate}[leftmargin = *,   labelindent = 0em,]
        \item If $\calY$ is another metric space and $\varphi:\calX\to \calY$ is a bi-Lipschitz map, then it is straightforward to show that
        $$
        \varphi(\scrV) := \set{(\varphi(x),\varphi(S)): (x,S)\in \scrV}
        $$
        is a Vitali relation for the pushforward measure $\varphi_*\mu$ on $\calY$.
        \item  We will use these constructions in the following setting: For every Radon measure on $M$, the family of closed Riemannian balls
        \begin{equation*}
            \scrV=\set{ (x, \bar{B}(x,r)):\, x\in M,\, r>0}
        \end{equation*}
        forms a Vitali relation. This is due to the fact that the Riemannian distance function is \textit{directionally limited} on relatively compact sets (see \cite[2.8.9]{Fed96_GeometricMeasureTheory} and the example thereafter, as well as \cite[Def. 2.5]{BueLeo16_RecoveringMeasuresApproximate} for a more accessible definition). Along with separability of $M$ and the fact that $M$ can be covered by countably many compact subsets, Theorem 2.8.18 in \cite{Fed96_GeometricMeasureTheory} implies that $\scrV$ satisfies the claimed property. Now suppose $\varphi$ is a local chart for $M$. Then the  limits in the sense of \eqref{eqn: limit with respect to Vitali relation}  with respect to $\varphi({\scrV})$  can be written equivalently as
        \begin{equation*}
        \begin{gathered}
            \liminf_{\varphi(S)\to \varphi(x)} \Psi (\varphi(S)) = \liminf_{r\downarrow 0 } \Psi (\varphi(\bar{B}(x,r)))
            ,\quad
            \limsup_{\varphi(S)\to \varphi(x)} \Psi (\varphi(S)) = \limsup_{r\downarrow 0 } \Psi (\varphi(\bar{B}(x,r)))\\
            \qand \lim_{\varphi(S)\to \varphi(x)} \Psi (\varphi(S)) = \lim_{r\downarrow 0 } \Psi (\varphi(\bar{B}(x,r))),
        \end{gathered}
        \end{equation*}
        and clearly the same holds for $\varphi=id_M$.
        Typically, $\Psi $ will be of the form $\frac{\nu}{\mu}$ where $\nu$ is a Radon measure or a covector measure, and $\mu$ is another Radon measure in $M$.
        \item   Let $\mu$ and $\nu$ be Radon measures  on $M$. Then by \cite[Theorem 2.9.2]{Fed96_GeometricMeasureTheory},
    $$\nu^\mu(A) := \inf\set{\nu(B):\, B \text{ is Borel measurable with } \mu(A\setminus B) = 0} \quad\tfor A\subset M$$
    defines a Radon measure that is absolutely continuous with respect to $\mu$. Moreover, there exists a Borel set $B_\mu$ with $\mu(M\setminus B_\mu) = 0$ such that
    $$\nu^\mu = \nu\llcorner B_\mu \tand  \nu = \nu^\mu + \nu^s \text{ with }\nu^s:= \nu\llcorner(M\setminus B_\mu).$$
    Then the Radon measures
    $\nu^s$  and $\mu$
    are mutually singular, that is, $\nu = \nu^\mu + \nu^s$
    is the Lebesgue decomposition of $\nu$ with respect to $\mu$.
    \end{enumerate}
    \end{remark}
    
Using our observations thus far, the following theorem is a summary of the results from \cite[2.9.5--2.9.10]{Fed96_GeometricMeasureTheory} applied to manifolds and the Vitali relation of closed metric balls for arbitrary Radon measures on $M$, and we stress that the statements are independent of the choice of Vitali relation.

\begin{theorem}[Lebesgue--Besicovitch--Federer Differentiation theorem]\label{thm: Lebesgue--Besicovitch--Federer}
    Let  $\mu$ and $\nu$ be  Radon measures on $M$. 
    \begin{enumerate}[label=(\roman*)]
        \item The limit 
    \begin{equation*}
        f(x) := \lim_{r\downarrow0} \frac{\nu(\bar{B}(x,r))}{\mu(\bar{B}(x,r))} \text{ exists in }[0,\infty) \text{ for } \mu\text{-a.e.} x\in M
    \end{equation*}
     and the function $f:M\to [0,\infty)$ is  $\mu$-measurable. Every $\mu$-measurable set $A\subset M$ is $\nu^\mu$-measurable and one has $\nu^\mu (A) = \int_A f\d\mu.$
    % \begin{equation*}
    %     \nu^\mu (A) = \int_A f\d\mu .
    % \end{equation*}
    \item\label{item: differentiation thm for functions}  If $f:M\to \bar{\R}$ is a locally $\mu$-integrable function, then for $\mu$-almost every $x\in M$,
    \begin{equation*}
        \lim_{r\downarrow 0} \frac{1}{\mu(\bar{B}(x,r)} \int_{\bar{B}(x,r))} f \d\mu = f(x).
    \end{equation*}
    \item For $\mu$-almost every $x\in M$ one has
    \begin{equation*}
         \lim_{r\downarrow0} \frac{\nu^\mu(\bar{B}(x,r))}{\mu(\bar{B}(x,r))} 
         =  \lim_{r\downarrow0} \frac{\nu(\bar{B}(x,r))}{\mu(\bar{B}(x,r))} 
         \qand  
         \lim_{r\downarrow0} \frac{\nu^s(\bar{B}(x,r))}{\mu(\bar{B}(x,r))}  
         =  0.
    \end{equation*}
    \end{enumerate}
\end{theorem}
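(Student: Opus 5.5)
This theorem is essentially a transcription of Federer's differentiation theory \cite[2.9.5--2.9.10]{Fed96_GeometricMeasureTheory}. The plan is therefore to check that its hypotheses hold on $M$ and then translate the abstract limits $\lim_{S\to x}$ into limits along closed Riemannian balls.

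\textbf{Step 1: the Vitali relation.} By Remark~\ref{rmk: Vitali relations for manifolds}(2), the family $\scrV=\set{(x,\bar B(x,r))}$ of closed balls is a Vitali relation for every Radon measure on $M$. The underlying facts are that $M$ is separable, that it is $\sigma$-compact, and that the Riemannian distance is directionally limited on relatively compact sets, so \cite[2.8.18]{Fed96_GeometricMeasureTheory} applies. Radon measures on the locally compact separable metric space $M$ are Borel regular and locally finite, which is exactly the setting of \cite[2.9]{Fed96_GeometricMeasureTheory}.

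\textbf{Step 2: translating the limits.} I would show that $\lim_{S\to x}\Psi(S)$ with respect to $\scrV$ coincides with $\lim_{r\downarrow0}\Psi(\bar B(x,r))$. This is the identification in Remark~\ref{rmk: Vitali relations for manifolds}(2) with $\varphi=\mathrm{id}_M$. The one subtlety is that $\diam\bar B(x,r)\le 2r$, while for small $r$ the diameter is at least $r$, because we stay inside a normal neighbourhood. Hence the conditions ``$\diam(S)<\rho$'' and ``$r$ small'' are cofinal in each other, and the $\limsup$ and $\liminf$ agree.

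\textbf{Step 3: invoking Federer.}
\begin{itemize}
\item Part (i) follows from \cite[2.9.5, 2.9.7]{Fed96_GeometricMeasureTheory}. These give existence $\mu$-a.e.\ of the derivate $D(\nu,\mu,\scrV,x)$ with values in $[0,\infty]$, its $\mu$-measurability, and the identity $\nu^\mu(A)=\int_A f\,d\mu$. Finiteness $\mu$-a.e.\ then follows because $f$ is locally $\mu$-integrable: $\int_K f\,d\mu\le\nu(K)<\infty$ for compact $K$.
\item Part (iii) is \cite[2.9.7]{Fed96_GeometricMeasureTheory}, giving $D(\nu^s,\mu,x)=0$ $\mu$-a.e., together with additivity of the derivate along $\nu=\nu^\mu+\nu^s$ from Remark~\ref{rmk: Vitali relations for manifolds}(3).
\item Part (ii) is the Lebesgue point theorem \cite[2.9.8, 2.9.9]{Fed96_GeometricMeasureTheory}. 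Alternatively, split $f=f^+-f^-$ and apply (i) and (iii) to the Radon measures $f^\pm\mu$, whose derivatives with respect to $\mu$ are $f^\pm$ $\mu$-a.e. Note that $f$ must be finite $\mu$-a.e.\ by local integrability.
\end{itemize}

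\textbf{Main obstacle.} The real work is Step 1, namely verifying directional limitedness of the Riemannian distance locally. I would do this in normal coordinates, using that $d$ is locally bi-Lipschitz to the Euclidean metric, with ratio close to $1$ on small balls. The Euclidean directional-limitedness argument then transfers, up to a small perturbation of the angle constant. Since the paper already records this in Remark~\ref{rmk: Vitali relations for manifolds}, I would cite it and only remark on the local bi-Lipschitz comparison. Passing from local to global is harmless, because all statements are $\mu$-a.e.\ and $M$ has a countable compact exhaustion.
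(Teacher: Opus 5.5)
Your proposal is correct and follows the paper's route. The paper likewise obtains the theorem as a direct summary of Federer's 2.9.5--2.9.10, applied to the Vitali relation of closed Riemannian balls from Remark~\ref{rmk: Vitali relations for manifolds}, which rests on directional limitedness and Federer's 2.8.18. Your extra details only make explicit what the paper leaves implicit: the cofinality of diameter and radius, finiteness of $f$ from $\int_K f\,d\mu\le\nu(K)$, and the alternative derivation of (ii) by applying (i) and (iii) to $f^\pm\mu$.
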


As a corollary, we formulate a differentiation theorem for covector measures with respect to arbitrary Radon measures on $M$. We say that a covector measure $\nu \in \calM_\loc(M;T^*M)$   is \textit{absolutely continuous} with respect to a Radon measure $\mu$ on $M$ if its restriction  $\nu\llcorner E$ to a   Borel set $E\subset M$ with $\mu(E) = 0$
satisfies $\nu\llcorner E = 0$.  
It follows easily that one has $\nu\ll \mu$  if and only if the total variation measure satisfies $|\nu|\ll \mu$. The property of $\nu$ and $\mu$ being mutually singular is defined analogously and holds if and only if $|\nu|$ and $\mu$ are mutually singular. Thus, $\nu$ admits a (unique) Lebesgue decomposition  $\nu = \nu^\mu + \nu^s$  given by
\begin{equation*}
    \nu^\mu [X]:=\int_M (\Sigma^\nu, X) \d |\nu|^\mu 
        \qand
        \nu^s[X]:= \int_M (\Sigma^\nu, X) \d |\nu|^s \tfor X\in \Gamma_{C_c^\infty}(M;TM).
\end{equation*}

\begin{theorem}
% [Differentiation theorem for covector measures]
\label{thm: Besicovitch Theorem for vector measures}
    Let $\nu\in \calM_\loc(M;T^*M)$ be a  covector measure and $\mu$  a  Radon measure on $M$. Then for $\mu$-almost every $x \in M$   and every $X\in \Gamma_{C_c^\infty}(M;TM)$ there exists $f_X(x)\in \R$ such that 
    \begin{equation*}
      \lim_{r\downarrow 0}\frac{ \nu\llcorner \bar{B}(x,r)}{ \mu (\bar{B}(x,r))}[X] = f_X(x),
    \end{equation*}
    and $f_X:M\to \R$ defines a locally $\mu$-integrable function.
    Moreover, there exists a $\mu$-a.e. uniquely defined locally $\mu$-integrable section $F:M\to TM$ such that for all $X$, we have
    \begin{equation*}
        f_X(x) = (F(x),X(x)),
    \end{equation*}
    whenever the limit exists at $x$.
    Finally, it holds that
    \begin{equation*}
        \nu^\mu[X] = \int_M (F,X)\d\mu,
    \end{equation*}
    and if $\nu \in \calM(M;T^*M)$, then $f_X$ and $F$ are $\mu$-integrable.
\end{theorem}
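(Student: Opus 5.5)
The plan is to reduce everything to the scalar Theorem~\ref{thm: Lebesgue--Besicovitch--Federer} applied to the total variation measure $|\nu|$ and the polar vector field $\Sigma^\nu$. First I use the Lebesgue decomposition $|\nu| = |\nu|^\mu + |\nu|^s$ from Remark~\ref{rmk: Vitali relations for manifolds}(3). Part (i) of Theorem~\ref{thm: Lebesgue--Besicovitch--Federer} then yields a locally $\mu$-integrable density $h\ge 0$ with $|\nu|^\mu = h\mu$. I define the candidate
\[
F := h\,\Sigma^\nu ,
\]
which is Borel, satisfies $|F| = h$ $|\nu|$-a.e., and is locally $\mu$-integrable. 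The identity $\nu^\mu[X] = \int_M (F,X)\,d\mu$ then follows directly from the definition of $\nu^\mu$ as $(|\nu|^\mu,\Sigma^\nu)$. Integrability of $F$ in the finite case is immediate because $\int |F|\,d\mu = |\nu|^\mu(M)\le|\nu|(M)$.

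For the pointwise limit I fix $X\in\Gamma_{C_c^\infty}(M;TM)$ and write
\[
\frac{\nu\llcorner\bar B(x,r)}{\mu(\bar B(x,r))}[X] = \frac{1}{\mu(\bar B(x,r))}\int_{\bar B(x,r)} (F,X)\,d\mu + \frac{1}{\mu(\bar B(x,r))}\int_{\bar B(x,r)} (\Sigma^\nu,X)\,d|\nu|^s .
\]
The first term converges to $(F(x),X(x))$ for $\mu$-a.e.\ $x$ by part (ii) of Theorem~\ref{thm: Lebesgue--Besicovitch--Federer}, since $(F,X)$ is locally $\mu$-integrable. The second term is bounded in absolute value by $\|X\|_\infty\,|\nu|^s(\bar B(x,r))/\mu(\bar B(x,r))$, and this tends to $0$ $\mu$-a.e.\ by part (iii) applied to the Radon measure $|\nu|$, whose singular part is $|\nu|^s$. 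Hence the limit $f_X(x)$ exists and equals $(F(x),X(x))$, and in particular $f_X$ is locally $\mu$-integrable.

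\textbf{Main obstacle.} The exceptional null set above depends on $X$, while the theorem requires a single $\mu$-null set that works for all $X$. I would handle this with separability. The scalar theorems are applied only to the countably many functions $|F|$, $h$ and the components of $F$ against a countable family of vector fields, so the union of the exceptional sets is still $\mu$-null. Concretely, I cover $M$ by countably many relatively compact charts. In each chart $U$, with cutoffs $\chi$ equal to $1$ near a compact $K\subset U$, I note that $(F,X)$ depends only on $X(x)$. A first-order Taylor estimate, $|X(y)-\sum_i X^i(x)\chi\partial_i(y)|\le C_X d(x,y)$ for $y$ near $x\in K$, combined with the density bounds $|\nu|(\bar B(x,r))/\mu(\bar B(x,r))$ bounded for $\mu$-a.e.\ $x$ (a consequence of (i) and (iii)), shows that the difference quotient for $X$ and that for the frozen field $\sum_i X^i(x)\chi\partial_i$ differ by $O(r)$. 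By linearity it then suffices to have convergence for the countable family $\chi\partial_i$ over all charts and cutoffs.

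Finally, $\mu$-a.e.\ uniqueness of $F$ follows from the same countable family. Two candidates $F$, $F'$ must agree against each $\chi\partial_i$ $\mu$-a.e., and these fields span $T_xM$ at every point.
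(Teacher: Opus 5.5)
Your proposal is correct, and its core matches the paper's proof. Both set $F=h\,\Sigma^\nu$, where $h$ is the density of $|\nu|^\mu$ from part (i). Both handle the absolutely continuous part of the quotient by applying part (ii) to $(F,X)$, and both show the singular part vanishes using part (iii).

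The differences are still worth recording.

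\emph{Singular term.} The paper fixes $X$ and applies (iii) to the Jordan parts $(\nu^s_X)^\pm$ of the contracted measure. You instead bound the term by $\|X\|_\infty\,|\nu|^s(\bar B(x,r))/\mu(\bar B(x,r))$, so (iii) is used once, for $|\nu|$, with an exceptional set independent of $X$.

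\emph{Uniform null set.} This is the more substantive difference. Because the paper fixes $X$ at the outset, its null set depends on $X$. The passage to a single null set valid for all $X$ is left implicit, yet the statement's quantifiers require it. So do the weak$^*$ reformulation $\nu\llcorner\bar B(x,r)/\mu(\bar B(x,r))\to F\delta_x$ and the definition of the reduced boundary. Your reduction to the countable family $\chi\partial_i$ supplies exactly this step. Two simplifications are available there. First, boundedness of $|\nu|(\bar B(x,r))/\mu(\bar B(x,r))$ follows from (i) alone. Second, continuity of $X-\sum_i X^i(x)\chi\partial_i$, which vanishes at $x$, suffices, so you do not need the Taylor rate $O(r)$.

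\emph{Uniqueness.} The paper only shows that $F$ does not depend, $\mu$-a.e., on the representative chosen for $\Sigma^\nu$. You prove that any section with the defining property agrees with $F$ $\mu$-a.e., by testing against fields spanning $T_xM$. This is the stronger statement and contains the paper's.
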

\begin{proof}
   Let $(\Sigma^\nu, |\nu|)$ be the  polar  decomposition of $\nu$  and let $\nu = \nu^\mu + \nu^s$ be its Lebesgue decomposition   with respect to $\mu$.
   Fix $X\in\Gamma_{C_c^\infty}(M;TM)$.
    The signed measure $\nu^s_X$ defined by contracting $\nu^s$ with $X$
    admits a Jordan decomposition into its positive and negative parts $(\nu^s_X)^\pm$. Since $\nu^s$ and $\mu$ are mutually singular,  we have $(\nu^s_X)^\pm\perp \mu$ as well. Thus, from the third part of Theorem~\ref{thm: Lebesgue--Besicovitch--Federer} applied to  $(\nu^s_X)^\pm$ it follows that    
    \begin{equation}\label{eqn: singular vector measure has zero derivative}
       \lim_{r\downarrow 0} \frac{ (\nu^s\llcorner \bar{B}(x,r)) }{\mu(\bar{B}(x,r))}  [X]
       % = \lim_{r\downarrow 0} \frac{\nu^s_X (\bar{B}(x,r))}{\mu  (\bar{B}(x,r))}
       = \lim_{r\downarrow 0} \frac{(\nu^s_X)^+ (\bar{B}(x,r))}{\mu  (\bar{B}(x,r))} -  \lim_{r\downarrow 0} \frac{(\nu^s_X)^- (\bar{B}(x,r))}{\mu  (\bar{B}(x,r))}
       = 0
    \end{equation} 
    for $\mu$-almost every $x\in M$.
    Next, invoking the Besicovitch--Federer theorem for $|\nu|$ and $\mu$, the limit 
    \begin{equation*}
        f(x) =  \lim_{r\downarrow0} \frac{|\nu|(\bar{B}(x,r))}{\mu(\bar{B}(x,r))}
    \end{equation*}
    exists $\mu$-almost everywhere   with $|\nu|^\mu(A) = \int_A f\d\mu \tfor \mu$-measurable sets $A\subset M$. Since $|\nu|$ is locally finite, $f$ is locally $\mu$-integrable.
    By Borel measurability of $\Sigma^\nu$, the function $$f_X:=f\cdot(\Sigma^\nu, X)$$ 
    is  $\mu$-measurable. For $\mu$-measurable sets $E\subset M$, we use that $|\Sigma^\nu| = 1$ for $|\nu|$-almost every $x$ to estimate
    \begin{equation*}
        \int_E |f_X|\d\mu
        = \int_E |(\Sigma^\nu,X)|f\d\mu
        = \int_E |(\Sigma^\nu, X)|\d|\nu|^\mu
        \leq \int_E |X|\d|\nu|^\mu,
    \end{equation*}
    and since $|\nu|^\mu\leq |\nu|$ is locally finite and $X$ is bounded, the function $f_X$ is locally  $\mu$-integrable.
    Thus, combining the second part of Theorem~\ref{thm: Lebesgue--Besicovitch--Federer} with \eqref{eqn: singular vector measure has zero derivative}, we get
    \begin{align*}
      \lim_{r\downarrow 0 } \frac{ (\nu\llcorner \bar{B}(x,r)) }{\mu(\bar{B}(x,r))}  [X]
      &=\lim_{r\downarrow 0 }\frac{ (\nu^\mu\llcorner \bar{B}(x,r)) }{\mu(\bar{B}(x,r))} [X]
      +  \lim_{r\downarrow 0 }\frac{ (\nu^s\llcorner \bar{B}(x,r)) }{\mu(\bar{B}(x,r))} [X] \\
      &=\lim_{r\downarrow 0 }  \frac{1}{\mu(\bar{B}(x,r))}  \int_{\bar{B}(x,r)} f\cdot(\Sigma^\nu, X) \d\mu + 0\\
      &= f(x)(\Sigma^\nu(x), X(x))\\
      &= f_X(x) \tfor \mu\text{-almost every }x\in M.
      \end{align*}
      To complete the proof, it remains to show that  
      $F(x) := f(x)\Sigma^\nu(x)$  is uniquely determined for $\mu$-almost every $x$ and defines a locally $\mu$-integrable section.
       We know that $f(x)$ is unique $\mu$-almost everywhere and by definition of $\nu$, the vector field $\Sigma^\nu$ is unique $|\nu|$-almost everywhere on $M$.  Suppose $\hat{\Sigma}^\nu$ is another representative of $\Sigma^\nu$ and consider
       $$S:=\set{x\in M: \hat{\Sigma}^\nu(x)\neq \Sigma^\nu(x)}.$$
       Since $\hat{\Sigma}^\nu$ and $ \Sigma^\nu$ are Borel measurable,  the set $S$ is Borel measurable, and therefore $\mu$-measurable.
       % Assume $\mu(S)\neq 0$. 
       Using that $\hat{\Sigma}^\nu = \Sigma^\nu$ holds $|\nu|$-almost everywhere and $|\nu|^\mu\ll |\nu|$, we have 
      \begin{equation*}
          0 = |\nu|(S) = |\nu|^\mu(S) = \int_S f\d\mu.
      \end{equation*}
      So if $\mu(S)\neq 0$,  then $f\geq 0$ implies   $f\equiv0$ on $S$ and this shows that $F(x) \in T_xM$ is well-defined for $\mu$-almost every $x\in M$, and for relatively compact $E$ we have
      \begin{equation*}
          \int_E |F| \d\mu \leq \int_E f\cdot|\Sigma^\nu|\d\mu \leq |\nu|(E)<\infty.
      \end{equation*}
      If $\nu\in \calM (M;T^*M)$, then $\mu$-integrability of $f$, $f_X$ and $F$ follow from finiteness of $|\nu|$.
\end{proof}

Notice that the first part of the statement of Theorem~\ref{thm: Besicovitch Theorem for vector measures} can be formulated equivalently by saying that there exists a unique $\mu$-integrable section $F$ in $TM$ such that for $\mu$-almost every $x\in M$, the family of covector measures 
\begin{equation}\label{eqn: weak* convergence of differentiation limit}
    \frac{\nu\llcorner \bar{B}(x,r)}{\mu(\bar{B}(x,r))} \text{ converges in the weak}^*\text{-sense to }F\delta_x\text{ as }r\downarrow0,
\end{equation}
where the\textit{ covector Dirac measure} $F\delta_x $ is defined by
\begin{equation*}
    F\delta_x [X] = \int_M (F,X) \d\delta_x = (F(x),X(x)).
\end{equation*}
If $\nu\in \calM(M;T^*M)$, then weak$^*$-convergence in \eqref{eqn: weak* convergence of differentiation limit} is equivalent to weak convergence.
We stress that in Euclidean space, this is consistent with the Besicovitch theorem for classical vector-valued measures~\cite{AmbFusPal00_FunctionsBoundedVariation}. Every covector measure on $\R^m$ is uniquely determined by its  polar  decomposition $(|\nu|_e, \Sigma^\nu_e)$  with respect to the Euclidean scalar product $e$, and the same applies to classical vector-valued measures on~$\R^m$. We write
\begin{equation*}
    \nu[X]= \int_{\R^m} (\Sigma^\nu_e,X)_e \d|\nu|_e \tfor X\in C_c^\infty(\R^m;\R^m)
\end{equation*}
 when viewing $\nu$ as a covector measure and 
\begin{equation*}
    \nu(A)=\int_A \Sigma^\nu_e\d|\nu|_e \in \R^m
\end{equation*}
to regard $\nu$ as a vector-valued measure on $\R^m$. Due to these identifications, it is clear that the density vector fields \eqref{eqn: classical vector valued besicovitch limit} produced by the classical vector-valued Lebesgue--Besicovitch theorem  and the Lebesgue--Besicovitch--Federer theorem for covector measures (Theorem~\ref{thm: Besicovitch Theorem for vector measures}) with respect to a Radon measure $\mu$ must coincide $\mu$-almost everywhere. Later, we will be interested in pointwise properties of these functions in order to define a Riemannian version of the reduced boundary of a finite perimeter set, where the following lemma will be of use.
\begin{lemma} \label{lem: equivalence of differentiability points of covector and vector measures on Rm}
    Let $\mu$ be a Radon measure and $\nu$  a vector-valued measure on $\R^m$. We fix a point $x\in \R^m$.
    If  
    \begin{equation}\label{eqn: weak^* convergence of euclidean besicovitch limit}
        \lim_{r\downarrow 0 } \frac{\nu\llcorner {\bar{B}_e(x,r)}}{\mu({\bar{B}_e(x,r)})}[X]   \text{ exists in }\R \tforall X\in C_c^\infty(\R^m;\R^m),
    \end{equation}
      then
    \begin{equation}\label{eqn: euclidean besicovitch limit}
        \lim_{r\downarrow 0} \frac{\nu({\bar{B}_e(x,r)})}{\mu({\bar{B}_e(x,r)})}    \text{ exists in }\R^m.
    \end{equation}
     If the locally $\mu$-integrable vector field  $F:\R^m\to\R^m$ from Theorem~\ref{thm: Besicovitch Theorem for vector measures}  additionally satisfies\footnote{Note that  \eqref{eqn: bounded absolute average} is satisfied for all $x\in \R^m$ if $F$ is locally essentially bounded with respect to $\mu$.}
    \begin{equation}\label{eqn: bounded absolute average}
    \limsup_{r\downarrow 0}\fint_{\bar{B}_e(x,r)} |F|_e \d\mu< \infty,
    \end{equation}
    then \eqref{eqn: weak^* convergence of euclidean besicovitch limit} and \eqref{eqn: euclidean besicovitch limit} are equivalent. 
     Here we write  $\fint_B\d\mu$ for the $\mu$-average over $B$.
\end{lemma}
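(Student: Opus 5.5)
The plan is to treat the two implications separately. The forward direction only needs suitable test fields. For the converse I split a general test field $X$ into its constant value $X(x)$ plus a remainder that vanishes at $x$. The constant part is exactly what \eqref{eqn: euclidean besicovitch limit} controls, and the remainder should be controlled by \eqref{eqn: bounded absolute average}.

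\emph{Step 1: \eqref{eqn: weak^* convergence of euclidean besicovitch limit} implies \eqref{eqn: euclidean besicovitch limit}.} Fix $i\in\{1,\dots,m\}$ and choose a cutoff $\chi\in C_c^\infty(\R^m)$ with $\chi\equiv 1$ on $\bar B_e(x,1)$. Put $X_i:=\chi e_i\in C_c^\infty(\R^m;\R^m)$. For $0<r<1$ the field $X_i$ equals $e_i$ on $\bar B_e(x,r)$, so by the identification of covector and vector-valued measures on $\R^m$,
\begin{equation*}
\frac{\nu\llcorner \bar B_e(x,r)}{\mu(\bar B_e(x,r))}[X_i]=\frac{\int_{\bar B_e(x,r)}(\Sigma^\nu_e,e_i)_e\,d|\nu|_e}{\mu(\bar B_e(x,r))}=\Big(\frac{\nu(\bar B_e(x,r))}{\mu(\bar B_e(x,r))},e_i\Big)_e .
\end{equation*}
By \eqref{eqn: weak^* convergence of euclidean besicovitch limit} each of these $m$ components converges as $r\downarrow0$. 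Hence the vector quotient converges in $\R^m$, say to $L$.

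\emph{Step 2: the converse, assuming \eqref{eqn: bounded absolute average}.} Let $X\in C_c^\infty(\R^m;\R^m)$ and write $X=X(x)+(X-X(x))$ on $\bar B_e(x,r)$. The constant part contributes $\big(\nu(\bar B_e(x,r))/\mu(\bar B_e(x,r)),X(x)\big)_e$, which tends to $(L,X(x))_e$ by \eqref{eqn: euclidean besicovitch limit}. It remains to show that the remainder quotient
\begin{equation*}
R_r:=\frac{1}{\mu(\bar B_e(x,r))}\int_{\bar B_e(x,r)}(X-X(x))\cdot d\nu
\end{equation*}
tends to $0$. Split $\nu=F\mu+\nu^s$ using Theorem~\ref{thm: Besicovitch Theorem for vector measures}. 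Since $X$ is Lipschitz, the absolutely continuous part of $R_r$ is bounded by
\begin{equation*}
\mathrm{Lip}(X)\, r\fint_{\bar B_e(x,r)}|F|_e\,d\mu ,
\end{equation*}
and this tends to $0$ by \eqref{eqn: bounded absolute average}. The limit is then $(L,X(x))_e$, which gives \eqref{eqn: weak^* convergence of euclidean besicovitch limit}.

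\emph{Main obstacle: the singular part at the fixed point $x$.} The term $\frac{1}{\mu(\bar B_e(x,r))}\int_{\bar B_e(x,r)}(X-X(x))\cdot d\nu^s$ is not controlled pointwise by \eqref{eqn: bounded absolute average}, which only involves $F$. I would first try the bound $\mathrm{Lip}(X)\,r\,|\nu^s|(\bar B_e(x,r))/\mu(\bar B_e(x,r))$. This suffices whenever $\nu^s$ vanishes near $x$, or more generally whenever $\limsup_r |\nu|(\bar B_e(x,r))/\mu(\bar B_e(x,r))<\infty$. In the intended application $\mu=|\nu|$, so that $\nu=F\mu$ with $|F|_e=1$ $\mu$-a.e. and there is no singular part.

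For general $\nu$ I expect the equivalence at a fixed point to require reading \eqref{eqn: bounded absolute average} as a bound on $|\nu|$-averages. A cancelling dipole-type singular part near $x$ could otherwise keep the vector quotient convergent while the $X$-weighted quotients oscillate. So in the write-up I would make explicit that the remainder estimate is carried out for $\nu=F\mu$ (equivalently $\nu\ll\mu$ near $x$) and verify that this covers the case used later for reduced boundaries.
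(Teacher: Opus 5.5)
Your Step 1 is the paper's forward argument. Your bound on the absolutely continuous remainder is also correct: the valid estimate is $\sup_{\bar B_e(x,r)}|X-X(x)|_e\cdot\fint_{\bar B_e(x,r)}|F|_e\,d\mu$, which is what your Lipschitz bound gives, whereas the paper writes a product of two averages at this step.

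The obstacle you flag is real, and your suspicion is correct: the converse implication is false at a fixed point under the stated hypotheses. Take $m=2$, $x=0$, $\mu$ the Lebesgue measure, $p_k=(2^{-k},0)$ and $\nu=\sum_{k\geq1}2^{-k}(\delta_{p_k}-\delta_{-p_k})\,e_1$.
\begin{itemize}
\item $|\nu|(\mathbb{R}^2)=2$ and $\nu\perp\mu$, so $F=0$ $\mu$-a.e.\ and \eqref{eqn: bounded absolute average} holds trivially.
\item $\nu(\bar B_e(0,r))=0$ for every $r$, since $p_k$ and $-p_k$ enter the ball together. Hence \eqref{eqn: euclidean besicovitch limit} holds with limit $0$.
\item Let $X(y)=\chi(y)\,y_1e_1$ with $\chi\in C_c^\infty(\mathbb{R}^2)$ and $\chi\equiv1$ on $\bar B_e(0,1)$. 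For $2^{-n}\leq r<2^{1-n}$,
\[
\frac{\nu\llcorner\bar B_e(0,r)}{\mu(\bar B_e(0,r))}[X]=\frac{1}{\pi r^2}\sum_{k\geq n}2\cdot4^{-k}=\frac{8\cdot4^{-n}}{3\pi r^2}.
\]
This equals $8/(3\pi)$ at $r=2^{-n}$ and tends to $2/(3\pi)$ as $r\uparrow2^{1-n}$, so there is no limit.
\end{itemize}
This is exactly your cancelling dipole.

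The paper's proof passes over this point in two places. In its final display it replaces $\nu$ by $\nu^\mu$, and it uses $\nu^\mu(\bar B_e(x,r))/\mu(\bar B_e(x,r))\to F(x)$. Both facts hold only for $\mu$-a.e.\ $x$ (parts (ii)--(iii) of Theorem~\ref{thm: Lebesgue--Besicovitch--Federer}). Neither follows from the hypotheses at the fixed point $x$.

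So do not try to close the gap for general $\nu$. Instead, state the converse under the hypothesis $\limsup_{r\downarrow0}|\nu|(\bar B_e(x,r))/\mu(\bar B_e(x,r))<\infty$, and include the example above to justify the change.
\begin{itemize}
\item This hypothesis implies \eqref{eqn: bounded absolute average}, because $|F|_e\,\mu\leq|\nu|$.
\item Under it no Lebesgue decomposition is needed. The whole remainder is bounded by $\sup_{\bar B_e(x,r)}|X-X(x)|_e\cdot|\nu|(\bar B_e(x,r))/\mu(\bar B_e(x,r))\to0$.
\item The limit is then $(L,X(x))_e$, with $L$ the vector limit. At a fixed point, $F(x)$ need not equal $L$, or even be well defined.
\end{itemize}
This corrected version still covers the lemma's use in the paper, namely the equivalence of the two definitions of the reduced boundary. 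There $\mu=|D\mathbf{1}_E|_e$ and $\nu=D\mathbf{1}_E=\Sigma^E\mu$ with $|\Sigma^E|_e=1$, so the ratio $|\nu|(\bar B_e(x,r))/\mu(\bar B_e(x,r))$ equals $1$.
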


\begin{proof}
    To prove that \eqref{eqn: weak^* convergence of euclidean besicovitch limit} implies \eqref{eqn: euclidean besicovitch limit}, simply pick the vector field $X$ to be  constant  on some  small neighborhood of $x$. Then \eqref{eqn: weak^* convergence of euclidean besicovitch limit} applied to all such $X$ implies that
    \begin{equation*}
        \frac{\nu (\bar{B}_e(x,r))}{\mu (\bar{B}_e(x,r))} \rightharpoonup F(x) \text{ in } \R^m.
    \end{equation*}
    Since this is equivalent to strong convergence in $\R^m$, we obtain \eqref{eqn: euclidean besicovitch limit}. For the reverse implication, suppose $x$ is such that \eqref{eqn: euclidean besicovitch limit} and \eqref{eqn: bounded absolute average} hold  and let $X\in C_c^\infty(\R^m  ; \R^m)$ be arbitrary. First notice that 
    \begin{equation}\label{eqn: mu-lebesgue points of smooth vf}
        % \lim_{B\to x} \frac{1}{\mu(B)} \int_B X\d\mu- X(x) \leq 
        \lim_{r\downarrow 0}   \fint_{\bar{B}_e(x,r)} |X(y)-X(x)|_e \d\mu(y) \leq \lim_{r\downarrow 0}\sup_{y\in {\bar{B}_e(x,r)}} |X(y)-X(x)|_e  =0
    \end{equation}
    holds for arbitrary $x$ since $X$ is smooth. 
    For $\nu^\mu=F\mu$, we get
    \begin{align}
        &\frac{\nu^\mu \llcorner {\bar{B}_e(x,r)}}{\mu ({\bar{B}_e(x,r)})}[X] - (F(x), X(x))_e  \nonumber\\
        &\quad = \fint_{{\bar{B}_e(x,r)}} (F, X)_e \d\mu     - (F(x), X(x))_e  
         + \fint_{{\bar{B}_e(x,r)}}    (F(y), X(x))_e - (F(y), X(x))_e   \d\mu (y) \nonumber \\
        &\quad=  \fint_{{\bar{B}_e(x,r)}} (F(y), X (y) - X(x))_e \d\mu (y) 
          + \fint_{{\bar{B}_e(x,r)}}  (F(y) , X(x))_e \d\mu (y) -(F(x), X(x))_e \nonumber\\
        &\quad \leq \fint_{{\bar{B}_e(x,r)}} |F|_e\d\mu \,\fint_{{\bar{B}_e(x,r)}} |X (y) - X(x)|_e \d\mu (y) \label{eqn: first term euclidean besicovitch} \\
        &\quad\quad\quad+ \big(\textstyle{\fint_{\bar{B}_e(x,r)}}  F \d\mu \,,  X(x)\big)_e - \big(F(x),X(x)\big)_e .\label{eqn: second term euclidean besicovitch}
    \end{align}
The term  in \eqref{eqn: first term euclidean besicovitch} tends to zero as $r\downarrow 0$ in view of \eqref{eqn: mu-lebesgue points of smooth vf} and boundedness of  $\fint_{{\bar{B}(x,r)}} |F|_e\d\mu$ for sufficiently small $r$.  The last term \eqref{eqn: second term euclidean besicovitch} equals
\begin{equation*}
    \Big(\textstyle{\frac{\nu^\mu ({\bar{B}_e(x,r)})}{\mu ({\bar{B}_e(x,r)})}}  - F(x) \,,  X(x)\Big)_e 
\end{equation*}
and  tends to zero since $\frac{\nu^\mu ({\bar{B}_e(x,r)})}{\mu ({\bar{B}_e(x,r)})}$ converges to $F(x)$ strongly in $\R^m$. We conclude that
\begin{equation*}
   \lim_{r\downarrow 0} \frac{\nu \llcorner {\bar{B}_e(x,r)}}{\mu ({\bar{B}_e(x,r)})}[X] = \lim_{r\downarrow 0} \frac{\nu^\mu \llcorner {\bar{B}_e(x,r)}}{\mu ({\bar{B}_e(x,r)})}[X] = (F(x), X(x))_e  .
\end{equation*}
\end{proof}

%%%%%%%%%%%%%%%%%%%%%%%%%%%%%%%%%%%%%%%%%%%%%%%%%%%%%%%%%%%%
\section{Structure theory for sets of finite perimeter}\label{sec: structure theory}

Central results in the theory of finite perimeter sets in $\R^m$ are the structure theorems by De~Giorgi and Federer~\cite[3.59 and 3.61]{AmbFusPal00_FunctionsBoundedVariation}.
In order to prove their Riemannian versions, we first provide the required geometric  definitions and prove the analogs of some structure results from the theory of rectifiable sets in $\R^m$. For details on the Euclidean case we refer to \cite[Chapter 3]{Sim18_IntroductionGeometricMeasure}, \cite[Chapter 2.9 - 2.12]{AmbFusPal00_FunctionsBoundedVariation} and \cite[Chapter 10]{Mag12_SetsFinitePerimeter}.

\subsection{The reduced boundary}
\label{sec: reduced boundary and De~Giorgi}
%%%%%%%%%%%%%%%%%%%%%%%%%%%%%%%%%%%%%%%%%%%%%%%%%%%%%%%%%%%%

Recall that the reduced boundary $\partial^*E$ of a set of locally finite perimeter   in $\R^m$ is the set of $x\in \sppt |D\mathbf{1}_E|_e$ such that the Euclidean measure-theoretic inner normal vector
\begin{equation*}
    \normal[e]{E} (x):=  \lim_{r\downarrow0} \frac{D\mathbf{1}_{E} (\bar{B}_e(x,r))}{|D\mathbf{1}_E|_e(\bar{B}_e(x,r))} 
\end{equation*}
exists and lies in unit sphere in $\R^m$. 
With the Lebesgue--Besicovitch--Federer theorem for covector measures at hand, we can extend this definition to Riemannian manifolds:
\begin{definition}
    The \emph{reduced boundary} $\partial^*E$ of a set of locally finite perimeter $E\subset M$ is the set of points  $x\in M$ whose \emph{measure-theoretic inner normal vector} $\normal{E}(x)$ defined by
    \begin{equation*}
    \lim_{r\downarrow0} \frac{D\mathbf{1}_{E}\llcorner (\bar{B}(x,r))}{|D\mathbf{1}_E|(\bar{B}(x,r))} [X] = (\normal{E}(x),X(x)) \tforall X\in \Gamma_{C_c^\infty}(M;TM)
    \end{equation*}
exists and has unit norm. 
\end{definition} 
Thus, if $\Sigma^E$ is the  polar  vector field of $\Var{\mathbf{1}_E}$, then for $x\in\partial^*E$ we get $\Sigma^E(x) = \normal{E}(x)$. Since $\Sigma^E$ is bounded by construction,   Lemma~\ref{lem: equivalence of differentiability points of covector and vector measures on Rm}  shows that in the Euclidean setting, our definition of the reduced boundary is equivalent to the classical one above.
In order to apply results from the Euclidean theory of finite perimeter sets, we show that the Lebesgue--Besicovitch--Federer theorem for covector measures can be expressed chartwise: Let $\mu$ be a Radon measure on $M$ and $\nu$ a covector measure on $M$. If $(U,\varphi)$ is a local chart for $M$, then $\varphi_*\mu$ is a Radon measure on $\varphi(U)\subset\R^m$ and we view it as a measure on the full space which is concentrated on $\varphi(U)$ by setting
\begin{equation*}
    \varphi_*\mu (E) := \mu(\varphi\inv(E\cap \varphi(U))) \tfor \text{Borel sets } E\subset \R^m.
\end{equation*}
 Similarly, $\varphi_*\nu$ is a covector measure  on $\R^m$ and therefore a classical vector-valued measure with
\begin{equation*}
    \varphi_*\nu(E) = \int_E \Sigma^{\varphi_*\nu}_e \d |\varphi_*\nu|_e \in \R^m \tfor \text{Borel sets }E\subset \R^m ,
\end{equation*}
where $(\Sigma^{\varphi_*\nu}_e,|\varphi_*\nu|_e)$ is the  polar  decomposition of $\varphi_*\nu$ with respect to the Euclidean metric $e$.
As before we denote the $m$-dimensional Euclidean closed ball of radius $r$ around $w$ by $\bar{B}_e(w,r)$. 

\begin{lemma} \label{lem: differentiable points in charts}
Let $\mu$ be a Radon measure and $\nu$  a covector measure on $M$.
Then  for all $ X\in \Gamma_{C_c^\infty}(M;TM)$ the limit
\begin{equation}\label{eqn: besicovitch limit on M with metric balls}
    \lim_{r\downarrow0} \frac{\nu\llcorner (\bar{B}(x,r))}{\mu(\bar{B}(x,r))} [X] =: (F(x),X(x)) \text{ exists at }x\in M
\end{equation}
if and only if  for each chart $(U,\varphi)$ around $x$  and all $Y\in C^\infty_c(\R^m;\R^m)$ the limit
\begin{equation}\label{eqn: weak^* convergence of  pushforward besicovitch limit in Rm}
    \lim_{r\downarrow 0} \frac{(\varphi_* \nu)(\bar{B}_e(w,r))}{(\varphi_* \mu)(\bar{B}_e(w,r))}[Y] =: (F_\varphi(w), Y(w))_e \text{ exists at } w:= \varphi(x).
\end{equation}
In that case, one has
\begin{equation}\label{eqn: pointwise vector-valued Radon-Nikodym density in charts}
    F_\varphi(w) =( \calA\, \varphi_*)( F(x)),
\end{equation}
where   $\calA = \calA_{e,\tilde{g}}$ is as in \eqref{eqn: calA endomorphism} with $\tilde{g}= (\varphi\inv)^*g$.
\end{lemma}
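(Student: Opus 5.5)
The plan is to reduce \eqref{eqn: weak^* convergence of  pushforward besicovitch limit in Rm} to \eqref{eqn: besicovitch limit on M with metric balls} by an exact identity between the quantities being differentiated. The identity comes from transporting the covector measure through the chart. The only genuinely delicate point is the mismatch between Riemannian balls and Euclidean balls in the chart.

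\emph{Step 1: the pushforward in coordinates.} Fix a chart $(U,\varphi)$ around $x$ and set $\tilde g=(\varphi\inv)^*g$ and $\calA=\calA_{e,\tilde g}$.
\begin{itemize}
\item By Remark~\ref{rmk: pushfoward of vector measures}(3), $\varphi_*\nu$ has polar decomposition $(\varphi_*|\nu|_g,\varphi_*\Sigma^\nu_g)$ with respect to $\tilde g$.
\item Using \eqref{eqn: transformation endomorphism for riemannian metrics}, for $Y\in C_c^\infty(\R^m;\R^m)$ supported in $\varphi(U)$ this gives
\[
\varphi_*\nu[Y]=\int (\varphi_*\Sigma^\nu_g,Y)_{\tilde g}\,d\varphi_*|\nu|_g=\int (\calA\,\varphi_*\Sigma^\nu_g,Y)_e\,d\varphi_*|\nu|_g .
\]
\item This is the Euclidean vector-valued measure $\calA\,\varphi_*\Sigma^\nu_g\,\varphi_*|\nu|_g$; Lemma~\ref{lem: equivalence of gvm spaces for equivalent norms} recovers its Euclidean polar decomposition.
\item Restrictions commute with pushforward. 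Setting $X=(\varphi\inv)_*Y$, extended by zero after multiplying by a cutoff equal to $1$ near $x$, we get for every Borel $S\subset U$
\[
\frac{(\varphi_*\nu)\llcorner\varphi(S)}{(\varphi_*\mu)(\varphi(S))}[Y]=\frac{\nu\llcorner S}{\mu(S)}[X].
\]
\item Cutoffs do not affect the limits, because only small $S$ matter. Conversely, every $X\in\Gamma_{C_c^\infty}$ arises in this way near $x$.
\end{itemize}
Hence \eqref{eqn: besicovitch limit on M with metric balls} is equivalent to the existence of the chart limit taken along the transported family $\varphi(\bar B(x,r))$. This is precisely the form of limit described in Remark~\ref{rmk: Vitali relations for manifolds}(1)--(2) for the Vitali relation $\varphi(\scrV)$.

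\emph{Step 2: the density formula.} Once the limits exist, $(F(x),X(x))_g=(F(x),\varphi^*Y(x))_g=(\varphi_*F(x),Y(w))_{\tilde g}=(\calA\varphi_*F(x),Y(w))_e$. Comparing with the definition of $F_\varphi(w)$ for all $Y$ gives \eqref{eqn: pointwise vector-valued Radon-Nikodym density in charts}.

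\emph{Step 3: passing from $\varphi(\bar B(x,r))$ to $\bar B_e(w,r)$, the main obstacle.} The sets $\varphi(\bar B(x,r))$ are not Euclidean balls; they are only sandwiched, $\bar B_e(w,r/L)\subset\varphi(\bar B(x,r))\subset\bar B_e(w,Lr)$, with $L$ the local bi-Lipschitz constant of $\varphi$. Pointwise, a differentiation limit along one family of shrinking sets does not automatically give one along another. I would proceed in three stages.
\begin{enumerate}
\item First try the invariance recorded after Remark~\ref{rmk: Vitali relations for manifolds}. Federer's theory produces the same density $F$ (respectively $F_\varphi$) $\mu$-a.e.\ for any Vitali relation, and both $\scrV$ and $\varphi(\scrV)$ are Vitali relations for $\mu$ and $\varphi_*\mu$, respectively.
\item To obtain the statement at the given point, decompose $\nu=\nu^\mu+\nu^s$ and argue as in the proof of Lemma~\ref{lem: equivalence of differentiability points of covector and vector measures on Rm}. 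The absolutely continuous part is $\mu$-a.e.\ handled by a Lebesgue-point estimate for $F$. For that estimate only comparability of the sets is needed: $\mu(\varphi^{-1}\bar B_e(w,Lr))\le C\mu(\bar B(x,r))$.
\item The singular part is handled by the vanishing ratio in Theorem~\ref{thm: Lebesgue--Besicovitch--Federer}(iii).
\end{enumerate}
If the pointwise comparison cannot be closed in general, the fallback is to read the chart limit with respect to the transported Vitali relation $\varphi(\scrV)$. This is the sense fixed in Remark~\ref{rmk: Vitali relations for manifolds}(2), and with it Steps~1--2 alone already prove the lemma.
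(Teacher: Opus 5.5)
Your Steps 1--2 are correct and coincide with the computation in the paper's proof. Pushing forward through the chart gives an exact identity between the quotients on $S$ and on $\varphi(S)$. Hence the equivalence and the formula \eqref{eqn: pointwise vector-valued Radon-Nikodym density in charts} hold at every point for the transported family $\varphi(\bar B(x,r))$.

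The gap is Step~3, which you locate correctly but do not close. Every ingredient of (1)--(3) is a $\mu$-almost-everywhere statement: independence of the Vitali relation, Lebesgue points of $F$, and Theorem~\ref{thm: Lebesgue--Besicovitch--Federer}(iii). None of them reaches a prescribed point $x$. In addition, the comparability $\mu(\varphi^{-1}\bar B_e(w,Lr))\le C\mu(\bar B(x,r))$ used in (2) is a doubling-type bound that fails for general Radon measures (e.g.\ $\mu=e^{-1/|y|}\lambda$ at $0$). The fallback does not prove the lemma either. Replacing $\bar B_e(w,r)$ by $\varphi(\bar B(x,r))$ turns the statement into a rephrasing of your Step~1. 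It also cuts the link to the Euclidean theory, where $\partial^*\varphi(E)$ is defined through Euclidean balls.

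In fact the gap cannot be closed, because the pointwise statement is false. Take $M=\R^2$ with the Euclidean metric, $\mu=\lambda$, $C=\{|y_2|>|y_1|\}$, $\nu[X]=\int_C X^1\,d\lambda$, $x=0$, and the chart $\varphi(y)=(y_1,2y_2)$ on $B(0,1)$.
\begin{itemize}
\item The Riemannian quotients tend to $\tfrac12X^1(0)$, so $F(0)=\tfrac12e_1$. Since $\calA=\mathrm{diag}(1,\tfrac14)$, one gets $\calA\,\varphi_*F(0)=\tfrac12e_1$.
\item The chart quotients equal $Y^1(0)\,\lambda(\varphi(C)\cap\bar B_e(0,r))/\lambda(\bar B_e(0,r))+o(1)$, where $\varphi(C)=\{|z_2|>2|z_1|\}$ is again a cone. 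So the chart limit exists, but $F_\varphi(0)=(1-\tfrac2\pi\arctan 2)\,e_1\neq\tfrac12e_1$.
\end{itemize}
This is essentially the paper's own Example~\ref{ex: density not preserved under linear transformation}.

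A variant also breaks existence. Use $C$ on the annuli $4^{-k-1/2}<|y|\le 4^{-k}$ and $\{|y_1|>|y_2|\}$ on the annuli $4^{-k-1}<|y|\le 4^{-k-1/2}$. Every disc ratio stays equal to $\tfrac12$, while the ratios over the ellipses $\varphi^{-1}(\bar B_e(0,r))$ oscillate between two values. So the existence claim in the \emph{only if} direction fails as well.

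The paper's proof has the same hole. After the identity you also derive, it passes from $\varphi^{-1}(\bar B_e(w,r))$ to $\bar B(x,r)$ by asserting that the limit is independent of the Vitali relation. Federer's theory gives this only $\mu$-a.e.

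What your Steps 1--2 do establish pointwise is the transported version. A normal chart centered at $x$ satisfies $\varphi(\bar B(x,r))=\bar B_e(0,r)$ for small $r$, so this yields the lemma verbatim for such charts. It therefore also gives the existence claim of the \emph{if} direction as literally stated, since that direction quantifies over all charts. For general charts only the $\mu$-a.e.\ identity $F_\varphi=\calA\,\varphi_*F$ (the paper's Step~1) survives. Pointwise consequences such as Corollary~\ref{cor: chart wise reduced boundary} therefore require a separate argument, e.g.\ via blow-ups at reduced-boundary points.
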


\begin{proof} We divide the proof into three steps.

    \textit{Step 1)} \textit{For $\varphi_* \mu$-almost every $w\in \varphi(U)\subset\R^m$, it holds that
        $F_\varphi(w) =( \calA\varphi_* F)(w)$.}
    
    \textit{Proof of Step 1).} First notice that $\nu\ll\mu$ holds if and only if $\varphi_*\nu\ll\varphi_*\mu$, hence we get
    \begin{equation}\label{eqn: pushforward of absolutely continuous part}
        \varphi_*(\nu^\mu) = (\varphi_*\nu)^{\varphi_*\mu}.
    \end{equation}
    For $Y\in C_c^\infty(\varphi(U),\R^m)$ we have
    \begin{equation*}
    \begin{gathered}
        (\varphi_*(\nu^\mu))[Y] 
        = \nu^\mu[\varphi\inv_* Y]  
        = \int_{\varphi(U)} (F, (\varphi\inv)_* Y)_g         \circ\varphi\inv \d(\varphi_*\mu)  
        = \int_{\varphi(U)} (\varphi_* F,   Y)_{\tilde{g}}   \d(\varphi_*\mu)   \\
        =\int_{\varphi(U)} (\calA (\varphi_* F),  Y)_e   \d(\varphi_*\mu)   .
    \end{gathered}
    \end{equation*} 
    On the other hand, using \eqref{eqn: pushforward of absolutely continuous part} and the definition of $F_\varphi$, this is equal to
    \begin{equation*}
       (\varphi_*\nu)^{\varphi_*\mu}[Y]= \int_{\varphi(U)} (F_\varphi,Y)_e \d(\varphi_*\mu) \tforall Y \in C_c^\infty(\varphi(U),\R^m).
    \end{equation*}
     By a version of the fundamental lemma of the calculus of variations, this implies that $F_\varphi$ coincides $\mu$-almost everywhere with $\calA (\varphi_* F)$.

   \textit{Step 2)} \textit{ If for $w = \varphi(x)$, the limit \eqref{eqn: weak^* convergence of  pushforward besicovitch limit in Rm} exists for all $Y\in C_c^\infty(\R^m; \R^m)$, then the limit \eqref{eqn: besicovitch limit on M with metric balls} exists at $x$ for all $X\in \Gamma_{C_c^\infty}(M;TM)$.}

    \textit{Proof of Step 2). } Let $X\in \Gamma_{C_c^\infty}(U;TU)$ and write $Y:= \varphi_*X \in C_c^\infty(\R^m,\R^m)$. Then
    \begin{equation} 
    \begin{aligned}
         \lim_{r\downarrow 0} \frac{\nu\llcorner \big(\varphi\inv(\bar{B}_e(w,r))\big)}{\mu\big(\varphi\inv(\bar{B}_e(w,r))\big)}[X] 
        &= \lim_{r\downarrow 0} {\left((\varphi_*\mu)(\bar{B}_e(w,r))\right)\inv} \int_{\varphi\inv(\bar{B}_e(w,r))} (F, X)_g          \d\mu \\
        & =\lim_{r\downarrow 0} {\left((\varphi_*\mu)(\bar{B}_e(w,r))\right)\inv} \int_{\bar{B}_e(w,r)} (F, (\varphi\inv)_* Y)_g         \circ\varphi\inv \d(\varphi_*\mu)   \\
        &= \lim_{r\downarrow 0} \frac{(\varphi_* \nu)(\bar{B}_e(w,r))}{(\varphi_* \mu)(\bar{B}_e(w,r))}\\
         &= (F_\varphi(w),Y(w))_e  \in \R.
    \end{aligned}
    \end{equation}
    By Remark \ref{rmk: Vitali relations for manifolds}, the images of closed Euclidean balls under $\varphi\inv$ form a Vitali relation on $U$. Since the limit is independent of the choice of Vitali relation, we must have
    \begin{equation*}
        (F_\varphi(w), Y(w))_e
        =\lim_{r\downarrow 0} \frac{\nu\llcorner \big(\varphi\inv(\bar{B}_e(w,r))\big)}{\mu\big(\varphi\inv(\bar{B}_e(w,r))\big)}[X] 
        = \lim_{r\downarrow 0} \frac{\nu\llcorner(\bar{B}(x,r))}{\mu(\bar{B}(x,r))}[X] = (F(x),X(x))_g,
    \end{equation*}
    and in particular, $F(x)$ is well-defined. Since $X$ was arbitrary and $\varphi_*$ is a local isomorphism of the tangent bundles, the assertion is valid for arbitrary  $Y\in C_c^\infty(\varphi(U);\R^m)$. This is sufficient to deduce the statement for smooth, compactly supported sections on the full spaces.

    \textit{Step 3)} The converse of the previous claim is valid by analogous arguments. Therefore,  the relation \eqref{eqn: pointwise vector-valued Radon-Nikodym density in charts} is true pointwise for every such $w =\varphi(x)$   and a simple calculation shows that the statement is independent of the choice of chart.
\end{proof} 
As a corollary of this lemma, we obtain  a chartwise characterization of the reduced boundary. If $(U,\varphi)$ is a chart for $M$ and $E\subset M$, we will sometimes use the notation
\begin{equation*}
    \varphi(E):=\varphi(E\cap U).
\end{equation*}

\begin{corollary}\label{cor: chart wise reduced boundary}
    If $E\subset M$ is a set of finite perimeter, then $x\in \partial^*E$ if and only if $\varphi(x)\in \partial^*\varphi(E)\cap\varphi(U)$ for an arbitrary chart $(U,\varphi)$ around $x$. Moreover, $\normal[]{E}(x)$ is a measure-theoretic inner normal vector of $E$ at $x$ if and only if 
    $$\normal[e]{\varphi(E)}(\varphi(x)) = \frac{(\calA\,\varphi_*) (\normal[]{E}(x))}{|(\calA \,\varphi_* )(\normal[]{E}(x))|_e}$$ 
    is a Euclidean measure-theoretic inner normal vector of $\varphi(E)$ at $\varphi(x)$, with  $\calA$ as in Lemma~\ref{lem: differentiable points in charts}.
\end{corollary}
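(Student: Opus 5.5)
The plan is to apply Lemma~\ref{lem: differentiable points in charts} with $\nu = D\mathbf{1}_E$ and $\mu = |D\mathbf{1}_E|$ (the Riemannian total variation measure). First I would identify the pushforwards. By Remark~\ref{rmk: pushfoward of vector measures}(3), $\varphi_*(D_g\mathbf{1}_E) = D_{\tilde g}\mathbf{1}_{\varphi(E)}$ on $\varphi(U)$, where $\tilde g = (\varphi^{-1})^*g$. Part~(2) of the same remark relates this to the Euclidean derivative: $D_e\mathbf{1}_{\varphi(E)} = \rho\, D_{\tilde g}\mathbf{1}_{\varphi(E)}$, with $\rho = \sqrt{\det(\calA_{\tilde g,e})}$ a smooth positive function on $\varphi(U)$. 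Moreover, $|\varphi_*\nu|_e$ and $\varphi_*\mu$ are mutually absolutely continuous. By \eqref{eqn: radon nikodym densities of tv measures}, their density is $|\calA\,\Sigma|_e$ with $\Sigma = \varphi_*\Sigma^E$, and this density is bounded above and below on compact subsets of $\varphi(U)$.

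Fix $x\in U$ and $w=\varphi(x)$. Lemma~\ref{lem: differentiable points in charts} gives two things. The Riemannian differentiation limit \eqref{eqn: besicovitch limit on M with metric balls} exists at $x$ if and only if the Euclidean limit of $\varphi_*\nu$ against $\varphi_*\mu$ exists at $w$. When both exist, $F_\varphi(w) = \calA\varphi_*F(x)$. The remaining task is to pass from densities with respect to $\varphi_*\mu$ to densities with respect to $|D_e\mathbf{1}_{\varphi(E)}|_e$, since only the latter defines the Euclidean reduced boundary.

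Write $\lambda := |D_e\mathbf{1}_{\varphi(E)}|_e = h\,\varphi_*\mu$. Here $h = \rho\,|\calA\,\Sigma|_e$ is locally bounded above and below, so it is locally $\varphi_*\mu$-integrable. For the quotient rule
\begin{equation*}
\frac{D_e\mathbf{1}_{\varphi(E)}\llcorner \bar B_e(w,r)}{\lambda(\bar B_e(w,r))} = \frac{\rho\,\varphi_*\nu\llcorner\bar B_e(w,r)}{\varphi_*\mu(\bar B_e(w,r))}\cdot\frac{\varphi_*\mu(\bar B_e(w,r))}{\lambda(\bar B_e(w,r))}
\end{equation*}
I need the second factor to converge to $1/h(w)$. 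The factor $\rho$ is continuous, so it can be absorbed into the test field. I also need $h$ to have a well-defined value at $w$ in the sense of Theorem~\ref{thm: Lebesgue--Besicovitch--Federer}(ii).

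This is the main obstacle: the differentiation theorems hold only almost everywhere, while the corollary is a pointwise "if and only if". I would handle it as follows. At points of $\partial^*E$ (respectively $\partial^*\varphi(E)$), the polar field has a pointwise limit of unit norm. A unit-length average limit forces $L^1$-convergence of $\Sigma$ to its value at the point, in the averaged sense, by a strict convexity argument: $\fint |\Sigma-\Sigma(x)|^2 = 2-2(\fint\Sigma,\Sigma(x))\to 0$. Consequently $h$ has Lebesgue-point behaviour at $w$ with value $\rho(w)|\calA\varphi_*\normal{E}(x)|_e$, and the ratio $\varphi_*\mu/\lambda$ on balls converges to $1/h(w)$. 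Applying the same argument in the reverse direction, with the roles of $\lambda$ and $\varphi_*\mu$ swapped, gives the converse.

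With the densities transferred, Lemma~\ref{lem: equivalence of differentiability points of covector and vector measures on Rm} translates the weak$^*$ limits into the classical vector limit, since the polar fields are bounded. The Euclidean normal is then $\calA\varphi_*\normal{E}(x)$ divided by $h(w)/\rho(w)$, which is exactly the stated normalized formula. Its unit length corresponds to $|\normal{E}(x)|=1$, and this gives the equivalence $x\in\partial^*E \iff w\in\partial^*\varphi(E)$. Independence of the chart follows from the chart-independence part of Lemma~\ref{lem: differentiable points in charts}.
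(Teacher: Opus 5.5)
Your proposal is correct and follows the paper's route: the paper obtains this corollary directly from Lemma~\ref{lem: differentiable points in charts} applied to $\nu = D\mathbf{1}_E$ and $\mu = |D\mathbf{1}_E|$. Your additional step makes explicit the renormalization that the paper leaves implicit in its formula for the Euclidean normal: you absorb the volume factor $\rho$ into the test field, and you use the convexity argument at the given point to pass from the reference measure $\varphi_*|D\mathbf{1}_E|$ to $|D_e\mathbf{1}_{\varphi(E)}|_e$.
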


\subsection{Rectifiability and De~Giorgi's structure theorem}

Let $ \calH^k_g$ be the $k$-dimensional Hausdorff measure  on $M$ induced by the Riemannian distance function. That is, for $E\subset M$,
\begin{equation*}
    \calH^k_g(E)= \lim_{r  \downarrow 0} \inf 
    \Bigl\{ \omega_k \sum_j \left(\tfrac{1}{2}\diam_g(B_j)\right)^k :\, B_j\subset M,\, E\subset\bigcup_j B_j,\, \diam_g{B_j}<r \Bigr\}
\end{equation*}
  with $\omega_k$ denoting the $k$-dimensional Lebesgue measure of the Euclidean $k$-dimensional unit ball.
One can show that  $\calH^k_g$ is Borel regular on $M$. If $h$ is another Riemannian metric on $M$, then for Lipschitz functions $f:(M,g)\to (M,h)$ one has 
    \begin{equation}\label{eqn: Lipschitz estimate for Hausdorff measures}
            \calH^k_h(f(E)) \leq \mathrm{Lip}(f)^k \calH^k_g(E)
    \end{equation}
for arbitrary sets $E\subset M$. In particular, the identity map from $(M,g)$ to $ (M,h)$ is locally bi-Lipschitz continuous, and the families of Borel measurable sets of measure zero coincide  for $\calH^k_g$ and $\calH^k_h$. Since both measures are Borel regular, this implies  by Carath\'eorory's theorem that  the families of measurable sets coincide with the completion of the Borel-$\sigma$-algebra on $M$ and are independent of the choice of metric, and we simply say a set is $\calH^{k}$-measurable if it is $\calH^k_g$ measurable. Moreover, the families of sets of locally finite $k$-dimensional Hausdorff measure are independent of the choice of Riemannian metric by \eqref{eqn: Lipschitz estimate for Hausdorff measures}. If  $\varphi:M\to N$ is a diffeomorphism, then $\varphi$ becomes an isometry if we equip   $N$ with the pullback metric $\tilde{g}=(\varphi\inv)^*g$. In particular, the Riemannian distance is preserved under $\varphi$, hence the Hausdorff measures satisfy
    \begin{equation}\label{eqn: pushforward of Hausdorff measures}
        \varphi_*\calH^{k}_g =\calH^k_{(\varphi\inv)^*g} = \calH^k_{\tilde{g}} \qand (\varphi\inv)_*\calH^k_{\tilde{g}}=\calH^k_g.
    \end{equation}
By these considerations, the following definition is independent of the choice of Riemannian metric:
\begin{definition}\label{def: rectifiability}
    A $\calH^k$-measurable set $S \subset M$ is called \emph{countably $\calH^k$-rectifiable} if there exist countably many Lipschitz functions $f_j: \R^k\to M$, $1\leq j<\infty$, such that
    \begin{equation*}
        \calH^k \biggl( S\setminus\bigcup_j f_j(\R^k) \biggr) = 0.
    \end{equation*}
    If in addition, $\calH^k(S)<\infty$, then $S$ is called \textit{$\calH^k$-rectifiable} and if $\calH^k(S\cap K)<\infty$ for compact sets $K\subset M$, then $S$ is called \textit{locally $\calH^k$-rectifiable}.
\end{definition}
 
\begin{remark}\label{rmk: rectifiability in charts}
 Using Borel regularity of Hausdorff measures, further characterizations of rectifiable sets in $\R^m$ can be generalized to Riemannian manifolds: 
 \begin{enumerate}[label =(\roman*), ]
     \item A set $S\subset M$ is countably $\calH^k$-rectifiable if and only if there exists a  disjoint decomposition
     $S = S_0 \cup \bigcup_{j=1}^\infty  f_j(A_j)$
    with Lipschitz functions $f_j: \R^k\to M$, $1\leq j<\infty$, Borel sets $A_j\subset \R^k$ and a Borel set $S_0$ with $\calH^k(S_0) = 0$. 
    \item  A set $S\subset M$ is countably $\calH^k$-rectifiable if and only if it can be written as a  disjoint union $S = {\bigcup_{j\geq 0}}\, S_j$
    of Borel subsets $S_j$ of $C^1$-embedded $k$-submanifolds $N_j\subset M$, $1\leq j<\infty$ and a Borel  measurable $\calH^k$-nullset $S_0$. 
 \end{enumerate} 
 We stress that these decompositions are non-unique. Using the first characterization and the fact that charts are locally bi-Lipschitz maps, it is straightforward to show the following: 
 \begin{enumerate}
     \item[(iii)]  A  set  $S\subset M$ is countably $\calH^k$-rectifiable if and only if for each chart $(U,\varphi)$, the set $\varphi(S)$ is countably $\calH^k$-rectifiable in $\R^m$. 
 \end{enumerate}
\end{remark}
The previous remark will permit the generalization  of a  characterization of locally $\calH^k$-rectifiable sets in terms of tangential properties. 
 By the classical Euclidean theory, $S\subset\R^m$ has locally finite $\calH^k$ measure, then $S$  is locally $\calH^k$-rectifiable if and only if $S$ admits an {approximate tangent space $T_xS$} at $\calH^k$-almost every $x\in S$. That is, there exists a $k$-dimensional subspace $T_xS$ of $\R^m$ such that
    \begin{equation*}
        \frac{( \phi_{x,r})_*(\calH^k_e\llcorner S)}{r^k}  \overset{*}{\rightharpoonup} \calH^k_e\llcorner T_xS \text{ on }\R^m \text{ as } r\downarrow 0,
    \end{equation*}
      where $\phi_{x,r}(y):= \frac{y-x}{r}$, defines \emph{blow-ups of $S$ at $x$} by $S_{x,r}:= \phi_{x,r}(S)= \frac{S-x}{r}$.
      Using \eqref{eqn: Lipschitz estimate for Hausdorff measures}, we  observe  that  $(\phi_{x,r})_*\calH^k_e = \calH^k_e\circ \phi_{x,r}\inv  = r^k\calH^k_e$, and therefore \footnote{Here and throughout this section, we frequently use that for a Radon measure $\mu$ on $M$, a measurable set $E\subset M$ and a measurable map $f:M\to N$ one has $f_*(\mu\llcorner E) = (f_*\mu)\llcorner(f(E))$.}, 
     \begin{equation*}
          \frac{( \phi_{x,r})_*(\calH^k_e\llcorner S)}{r^k}  
         =\calH^k_e\llcorner(\tfrac{S-x}{r}).
     \end{equation*}
We define the approximate tangent spaces on $M$ in terms of local charts:
\begin{definition}\label{def: approximate tangent space in charts}
    Let $S\subset M$ be a set of locally finite $\calH^k$ measure. We say that $S$ admits an \textit{approximate tangent space $T_xS$ at $x$} if  $\varphi(S)$ admits an approximate tangent space $T_{\varphi(x)}\varphi(S)$ at $\varphi(x)$ in $\R^m$ for some chart $\varphi$ around $x$. In that case, we set $T_xS:= (d\varphi\inv)_{\varphi(x)}(T_{\varphi(x)}\varphi(S))$.
\end{definition}
In the following lemma we will show that the above definition is independent of the choice of chart. Then by Remark \ref{rmk: rectifiability in charts}, this implies that a set $S\subset M$ of locally finite $\calH^k$-measure is locally $\calH^k$-rectifiable if and only if $S$ admits an approximate tangent space at $x$.
\begin{lemma}\label{lem: approximate tangent space independent of chart}
      The approximate tangent space is well-defined, that is, independent of the choice of chart.
\end{lemma}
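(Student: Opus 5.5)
Fix two charts $(U,\varphi)$ and $(V,\psi)$ around $x$. Replacing both domains by $U\cap V$ changes nothing, since the blow-up limits only see arbitrarily small neighbourhoods of $\varphi(x)$ and $\psi(x)$. Let $\tau:=\psi\circ\varphi^{-1}:\varphi(U\cap V)\to\psi(U\cap V)$ be the transition map, a diffeomorphism between open subsets of $\R^m$. Write $w:=\varphi(x)$, $z:=\psi(x)=\tau(w)$ and $L:=d\tau_w$. Since $\psi(S)=\tau(\varphi(S))$ locally and $(d\psi^{-1})_z\circ L=(d\varphi^{-1})_w$, the claim reduces to a purely Euclidean statement. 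If $\varphi(S)$ has an approximate tangent space $T$ at $w$, then $\tau(\varphi(S))$ has approximate tangent space $L(T)$ at $z$; the converse follows by symmetry, using $\tau^{-1}$. Note that $\varphi(S)$ has locally finite $\calH^k_e$ measure by \eqref{eqn: Lipschitz estimate for Hausdorff measures}, because charts are locally bi-Lipschitz.

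To prove the Euclidean statement, write $S':=\varphi(S)$ and test against $\phi\in C_c(\R^m)$. By the area formula on rectifiable pieces, or more elementarily by a change of variables for pushforwards, we have
\begin{equation*}
\frac{1}{r^k}\int\phi\Big(\frac{y-z}{r}\Big)\d\calH^k_e\llcorner\tau(S')(y)=\frac{1}{r^k}\int_{S'}\phi\Big(\frac{\tau(u)-\tau(w)}{r}\Big)J^{S'}\tau(u)\d\calH^k_e(u),
\end{equation*}
where $J^{S'}\tau$ is the tangential Jacobian. To avoid needing rectifiability a priori, I would instead compare the blow-ups of $\tau(S')$ with the pushforwards $(L\circ\phi_{w,r})_*$ of the blow-ups of $S'$. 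Set $\tau_r(v):=\frac{\tau(w+rv)-\tau(w)}{r}$. Then $\tau_r\to L$ locally uniformly, with Lipschitz constants bounded uniformly on compacts, and $\phi_{z,r}(\tau(S'))=\tau_r(\phi_{w,r}(S'))$. The goal is to show
\begin{equation*}
\calH^k_e\llcorner\tau_r(S'_{w,r})-(\tau_r)_\#\big(J_r\,\calH^k_e\llcorner S'_{w,r}\big)\to0
\end{equation*}
and that this in turn converges weakly$^*$ to $L_\#(J\,\calH^k_e\llcorner T)=\calH^k_e\llcorner L(T)$, where $J_r\to J^T L$ uniformly.

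The main obstacle is handling $\calH^k$ of images without knowing that $S'$ is rectifiable. I would use instead the measure-theoretic fact that existence of the approximate tangent space at $w$ forces the blow-ups to concentrate near $T$. Then I would use the Lipschitz estimate \eqref{eqn: Lipschitz estimate for Hausdorff measures} in both directions. Since $\tau_r$ is $(|L|+\varepsilon)$-Lipschitz, and its inverse $(|L^{-1}|+\varepsilon)$-Lipschitz, on balls for small $r$, every weak$^*$ limit $\lambda$ of $\calH^k_e\llcorner\phi_{z,r}(\tau(S'))$ satisfies $c^{-1}\,\calH^k_e\llcorner L(T)\le\lambda\le c\,\calH^k_e\llcorner L(T)$. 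This uses the upper semicontinuity on compacts and lower semicontinuity on open sets of weak$^*$ convergence, together with the fact that $\tau_r^{-1}(K)$ lies in small neighbourhoods of $L^{-1}(K)$. To sharpen the constants $c$ to $1$, localize: on a small ball around a point of $L(T)$, $\tau_r$ is $\varepsilon$-close in $C^1$ to the affine map $L$ composed with a translation. Applying \eqref{eqn: Lipschitz estimate for Hausdorff measures} to $\tau_r\circ L^{-1}$ restricted there, whose Lipschitz constants tend to $1$, gives constants $(1\pm\varepsilon)^k$. Letting $\varepsilon\downarrow0$ yields $\lambda=\calH^k_e\llcorner L(T)$ for every subsequential limit, hence convergence of the whole family. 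Combined with the reduction above, this shows $T_xS$ is the same subspace of $T_xM$ for both charts.
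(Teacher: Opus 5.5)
Your reduction to the transition map $\tau$ with $L=d\tau_w$ mirrors the paper's use of $\mathcal{L}_r\to\mathcal{L}_0$. The argument breaks where you sharpen the constants to $1$, which is the same as your claim that $J_r\to J^TL$ uniformly. Localizing only shows that $\tau_r\circ L^{-1}$ is almost an isometry, so $\mathcal{H}^k\llcorner\tau_r(S'_{w,r})$ and $\mathcal{H}^k\llcorner L(S'_{w,r})$ agree up to factors $(1\pm\varepsilon)^k$. You still need $\mathcal{H}^k\llcorner L(S'_{w,r})\overset{*}{\rightharpoonup}\mathcal{H}^k\llcorner L(T)$, and for non-conformal $L$ this does not follow from $\mathcal{H}^k\llcorner S'_{w,r}\overset{*}{\rightharpoonup}\mathcal{H}^k\llcorner T$. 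The map $L$ rescales $\mathcal{H}^k$ on each small piece of $S'_{w,r}$ by its Jacobian on that piece's own tangent plane, and weak$^*$ convergence of the measures carries no information about those planes.

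In fact the pointwise claim is false. Take $M=\mathbb{R}^2$, $k=1$, and the charts $\varphi=\mathrm{id}$ and $\psi=L=\mathrm{diag}(1,2)$. Let $S$ consist of the origin together with, for each $n\geq 1$, the segments of direction $(1,1)$ and length $4^{-n}$ centred at the points of $4^{-n}\mathbb{Z}\times\{0\}$ with $2^{-n-1}\leq |x_1|<2^{-n}$. These segments are disjoint, have size $o(r)$ at scale $r$, and carry mass equal to their spacing. A Riemann-sum argument then gives $\mathcal{H}^1\llcorner S_{0,r}\overset{*}{\rightharpoonup}\mathcal{H}^1\llcorner(\mathbb{R}\times\{0\})$, and $S$ is even rectifiable with locally finite measure. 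But $L(S)$ consists of segments of length $\sqrt{5/2}\cdot 4^{-n}$ with the same centres and spacing. Its blow-ups therefore converge to $\sqrt{5/2}\,\mathcal{H}^1\llcorner(\mathbb{R}\times\{0\})$, so $\psi(S)$ has no approximate tangent space at $0$ in the multiplicity-one sense of the paper. Here $\tau_r=L$ for every $r$, so there is nothing for a localization to absorb.

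So the Euclidean statement you reduce to cannot hold at a prescribed point, even for rectifiable sets. The paper's proof has the same defect. It uses the area formula, which presupposes that $\varphi(S)$ is rectifiable. It then disposes of the term $\mathcal{H}^k_{\mathcal{L}_0^*e}\llcorner(\varphi(S)/r)-\mathcal{H}^k_{\mathcal{L}_0^*e}\llcorner T_0\varphi(S)$ by the global equivalence of $\mathcal{H}^k_e$ and $\mathcal{H}^k_{\mathcal{L}_0^*e}$. That step fails for exactly the reason above: the density between these two measures on a rectifiable set depends on its tangent planes.

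What your localization does prove is the conformal case. If $L=\lambda O$ with $O$ orthogonal, then $\mathcal{H}^k\llcorner L(A)=\lambda^k L_\#(\mathcal{H}^k\llcorner A)$ for every set $A$, and the $(1\pm\varepsilon)^k$ comparison closes the argument without any rectifiability. An example is two normal charts centred at $x$, whose transition map is a linear isometry.

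For arbitrary charts, the correct statement is an almost-everywhere one, and it needs rectifiability. If $S$ is locally $\mathcal{H}^k$-rectifiable, then at $\mathcal{H}^k$-a.e.\ $x$ the blow-ups converge as varifolds, meaning their tangent planes converge in measure to $T$. This mode of convergence is preserved by $(\tau_r)_\#$ when $\tau_r\to L$ in $C^1_{\mathrm{loc}}$, so chart-independence holds at such points. The same is true at every point of a reduced boundary.
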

\begin{proof}
     Let $S\subset M$ be a set of locally finite $\calH^k$-measure.
     
    \textit{(i) Preparations.}  
    Without loss of generality, let $\varphi$ and $\psi$ be two charts around $x\in S$ with the same chart domain $U$ and such that $\varphi(x)=0=\psi(x)$. 
    \begin{itemize}[wide, itemsep=6pt]
        \item  Writing  $\phi_r(y):= y/r$, we define  
        $$
        \calL_r:= \phi_r\circ \psi \circ \varphi\inv\circ \phi_{1/r} :\tfrac{\varphi(U)}{r}\to\tfrac{\psi(U)}{r}
        $$
        for $r>0$. Then 
        $\calL_r(\tfrac{\varphi(S)}{r}) = \tfrac{\psi(S)}{r}$  
        and we have
     \begin{equation*}
           \lim_{r\downarrow0} \calL_r(z) = \lim_{r\downarrow0} \frac{(\psi\circ\varphi)\inv(rz)}{r} = d(\psi\circ\varphi\inv)_0 z
       =:\calL_0 (z)
       \tfor z\in \tfrac{\varphi(U)}{r} 
     \end{equation*}
     locally uniformly in $z$ due to  smoothness of $\psi\circ\varphi\inv$. Similarly,   we get $d\calL_r \to d\calL_0 =\calL_0$ locally uniformly  as $r\downarrow0$.
    \item If $\tilde{S}\subset \R^m$ is a locally $\calH^k$-rectifiable set, then the projection of $d\calL_r(z)$ to the approximate tangent space of $\tilde{S}$ at $z$ is well-defined at $\calH^k$-almost every $z\in \tilde{S}\cap\varphi(U)$ and we shall denote its determinant by $\det ({d\calL_r}_{|\tilde{S}})(z)$.
    By the  area formula for locally $\calH^k$-rectifiable sets in $\R^m$ (see \cite[Ch.3 \S 2]{Sim18_IntroductionGeometricMeasure}),  we have for arbitrary  $f\in C_c^\infty(\R^m)$,
    \begin{equation}\label{eqn: transformation of restricted measure under diffeo}
        \calH^k_e\llcorner (\calL_r(\tilde{S}))(f) 
        = \int_{\tilde{S}} f\circ\calL_r |\det ({d\calL_r}_{|\tilde{S}})|\d\calH^k_e ,
    \end{equation}
    and   one always has $|\det ({d\calL_r}_{|\tilde{S}})(z)|\leq  k\cdot \norm {d\calL_r(z)}$.
    \item  Using \eqref{eqn: pushforward of Hausdorff measures}, we have for arbitrary $\calH^k$-measurable $E$,
    \begin{equation}\label{eqn: transformation of restricted measure under linear map}
        \big(\calH^k_e\llcorner(\calL_r(E))\big) (f) = \left((\calL_r)_* (\calH^k_{L_r^*\,e}\llcorner E \right) (f) = (\calH^k_{L_r^*\,e}\llcorner E)(f\circ\calL_r).
    \end{equation}
    In particular, for $r=0$ the map $\calL_0 = d(\psi\circ\varphi\inv)_0:\R^m\to\R^m$ is linear, so  $\calL_0^*\,e$  is a scalar product on $\R^m$ whose induced distance is globally equivalent to the Euclidean distance. Therefore, the corresponding Hausdorff measures are equivalent with respective bounded and nonvanishing smooth density functions that are defined globally.
    \end{itemize}

    \textit{(ii)}
    \textit{We assume that $\calH^k_e\llcorner(\tfrac{\varphi(S)}{r}) \overset{*}{\rightharpoonup} \calH^k_e\llcorner T_0\varphi(S)$
    and prove that this implies $$\calH^k_e\llcorner(\tfrac{\psi(S)}{r}) \overset{*}{\rightharpoonup} \calH^k_e \llcorner d(\psi\circ\varphi\inv)_0 (T_0\varphi(S)).$$}

    To this end, fix $f\in C_c^\infty(\R^m)$ and let  $K\subset \R^m$  be a compact set such that  
    $$ K_f:=\bigcup_{0\leq r\leq1}\sppt{(f\circ \calL_r)}
    $$
    is compactly contained in the interior $\mathring{K}$ of $K$. 
     Now write $f_r:=f\circ \calL_r\in C_c^\infty(\mathring{K})$ and use \eqref{eqn: transformation of restricted measure under diffeo}  and \eqref{eqn: transformation of restricted measure under linear map} for the $\calH^k$-rectifiable set $ \varphi(S)/r$ to expand
   \begin{align}
       &\calH^k_e\llcorner (\calL_r(\tfrac{\varphi(S)}{r}))(f) -  \calH^k_e\llcorner(\calL_0(T_0\varphi(S))(f)\nonumber\\ 
       &\qquad\qquad\quad= \int_{\frac{\varphi(S)}{r}\cap K} f_r |\det ({d\calL_r}_{|\frac{\varphi(S)}{r}})|\d\calH^k_e 
       - \int_{\frac{\varphi(S)}{r}\cap K} f_0 |\det ({d\calL_0}_{|\frac{\varphi(S)}{r}})|\d\calH^k_e \label{eqn: approximate tangent space expansion1} \\
      &\qquad\qquad\qquad+ \big(\calH^k_{L_0^*\,e}\llcorner (\tfrac{\varphi(S)}{r})\big)(f_0)
       - \big(\calH^k_{L_0^*\,e}\llcorner (T_0\varphi(S))\big)(f_0) .\label{eqn: approximate tangent space expansion2}
    \end{align}

    In order to prove convergence of the difference in \eqref{eqn: approximate tangent space expansion1}, first note that by uniform convergence of $d\calL_r$ to $d\calL_0$, we have
    \begin{equation*}
        \sup_{0\leq r\leq 1}\sup_{z\in K } \norm {d\calL_r(z)} < C_1.
    \end{equation*}
     If $\xi \in C_c^\infty(\mathring{K})$ is such that $0\leq \xi\leq 1$ and $\xi\equiv 1$ on $K_f$, then we can estimate   \eqref{eqn: approximate tangent space expansion1} above by
    \begin{equation*}
      \sup_{z\in K }   k\cdot\norm {d\calL_r(z)} \int_{\frac{\varphi(S)}{r}\cap K}|f_r (z) -f_0(z)| \xi\d\calH^k_e \leq  k\cdot C_1
           \sup_{z\in K} |f_r (z) -f_0(z)| \int_{\frac{\varphi(S)}{r}} \xi\d\calH^k_e.
    \end{equation*}
    The term $\sup_{z\in K}|f_r(z) - f_0(z)|$ converges to zero  since $\calL_r\to \calL_0$ locally uniformly and the factor term converges to $(\calH^k_e\llcorner (T_0\varphi(S)))(\xi)<\infty$ by assumption. Therefore, \eqref{eqn: approximate tangent space expansion1} vanishes for $r\downarrow0$.

    For convergence of \eqref{eqn: approximate tangent space expansion2}, it is sufficient to note that the Hausdorff measures induced by the  distance functions corresponding to the scalar products $e$ and $L_0^*e$ on $\R^m$ are globally equivalent. This implies that \eqref{eqn: approximate tangent space expansion2} converges to zero.

     Thus, we have shown that $\psi(S)$ has an approximate tangent space at $\psi(x)=0$ given by $T_0\psi(S) = \calL_0(T_0\varphi(S))$.
    We conclude that the existence of an approximate tangent space is independent of the choice of chart. Recalling that $\calL_0= d(\psi\circ\varphi\inv)_0$ and applying the chain rule yields
    \begin{equation*}
        d\psi\inv_{\psi(x)}(T_0\psi(S)) = d\varphi\inv_{\varphi(x)}(T_0\varphi(S)),
    \end{equation*}
    so $T_xS$ is determined uniquely.    
\end{proof}
\begin{remark}\label{rmk: approximate tangent spaces equal classical tangent spaces}
    If $S\subset M$ is locally $\calH^k$-rectifiable and admits an approximate tangent space $T_xS$ at $x\in S$, then
    $T_xS = T_xN_j,$
    where $N_j$ is a $C^1$-embedded $k$-submanifold such that $S=\bigcup_j S_j$ and $S_j\subset N_j$ in the sense of Remark \ref{rmk: rectifiability in charts}. We stress that this is a nontrivial statement since the decomposition is non-unique. Indeed, this can be derived from the corresponding Euclidean statement: Given a chart $\varphi$, any such decomposition $S=\bigcup S_j$ with $S_j\subset N_j$ naturally induces a decomposition $\varphi(S) =\bigcup \varphi(S_j)$ with $ \varphi(S_j)\subset \varphi(N_j)$. Then the approximate tangent space of $S$ at $x$ is uniquely defined by
    $$
    T_xS =(d\varphi\inv)_{\varphi(x)}(T_{\varphi(x)}\varphi(S)) =  (d\varphi\inv)_{\varphi(x)}(T_{\varphi(x)}\varphi(N_j)) = T_xN_j.
    $$
\end{remark}

Our next aim is to prove a transformation formula for Hausdorff measures. Let $h$ be another Riemannian metric on $M$ and let $\calA=\calA_{g,h}$ as in~\eqref{eqn: calA endomorphism}.
If $S$ is a  locally $\calH^k$-rectifiable  set in $M$, then we denote by 
$\calA_{|S}(x)$ the restriction of $\calA$ to the approximate tangent space $T_xS$ of $S$ at $x$. Let $\varepsilon_1,\dots,\varepsilon_m$ be a basis for $T_xM$ such that $\varepsilon_1,\dots,\varepsilon_k$ span~$T_xS$. If  $A$ is the matrix representation of $\calA$ at $x$ with respect to $(\varepsilon_{j} )_{j\leq m}$,  then 
\begin{equation*}
    \det(\calA_{|S})(x):= \det([A_{i,j}]_{i,j\leq k})
\end{equation*}
is well-defined for $\calH^k$-almost every $x\in S$. We prove an area formula for locally $\calH^k$-rectifiable sets.

\begin{lemma}\label{lem: transformation of Hausdorff measures w.r.t metric}
     Assume in the situation of Remark \ref{rmk: approximate tangent spaces equal classical tangent spaces} that $S$ is locally $\calH^k$-rectifiable in $M$. Then for all Borel subsets $B\subset M$, the Hausdorff measures with respect to $h$ and $g$ transform via
    \begin{equation}\label{eqn: change of variables for k-dim Hausdorff measures}
        \calH^k_h\llcorner S(B) = \int_{S\cap B} \sqrt{\det (\calA_{|S})} \d \calH^k_g.
    \end{equation}
\end{lemma}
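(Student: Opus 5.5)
The plan is to reduce \eqref{eqn: change of variables for k-dim Hausdorff measures} to the classical area formula on $C^1$ submanifolds, using the decomposition of Remark~\ref{rmk: rectifiability in charts}(ii). Write $S=\bigcup_{j\geq0}S_j$ as a disjoint union with $\calH^k(S_0)=0$ and Borel sets $S_j\subset N_j$, where each $N_j$ is a $C^1$-embedded $k$-submanifold. By Remark~\ref{rmk: approximate tangent spaces equal classical tangent spaces}, $T_xS=T_xN_j$ for $\calH^k$-a.e.\ $x\in S_j$. Hence $\det(\calA_{|S})=\det(\calA_{|N_j})$ a.e.\ on $S_j$.

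Since $g$ and $h$ are locally bi-Lipschitz equivalent, \eqref{eqn: Lipschitz estimate for Hausdorff measures} shows that $S_0$ is null for both $\calH^k_g$ and $\calH^k_h$. By countable additivity it therefore suffices to prove the identity for each $S_j\cap B$. After a further countable Borel partition, we may also assume $S_j$ lies in a single relatively compact chart domain. I will read $\det(\calA_{|S})$ with respect to a basis $\varepsilon_1,\dots,\varepsilon_m$ that is $g$-orthonormal with $\varepsilon_1,\dots,\varepsilon_k$ spanning $T_xS$. The upper-left $k\times k$ block of $A$ is then exactly the Gram matrix $\big((\varepsilon_i,\varepsilon_j)_h\big)_{i,j\le k}$ of $h$ restricted to $T_xS$. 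This is what makes the formula basis-invariant and correct.

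The core step is to identify $\calH^k_g\llcorner N$ with the Riemannian volume measure $\vol_{\iota^*g}$ of the induced metric $\iota^*g$ on a $C^1$ submanifold $N$, and likewise for $h$. Once this holds, the pointwise linear algebra gives the claim. Indeed, in local parametrizations $\vol_{\iota^*h}=\sqrt{\det(\iota^*h)/\det(\iota^*g)}\,\vol_{\iota^*g}$. This density ratio equals $\sqrt{\det(\calA_{|N})}$ by the Gram-matrix interpretation above, exactly as $\vol_h=\sqrt{\det\calA}\,\vol_g$ in the full-dimensional case.

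To prove $\calH^k_g\llcorner N=\vol_{\iota^*g}$, I will work in a chart $\varphi$ with $\tilde g=(\varphi\inv)^*g$; by \eqref{eqn: pushforward of Hausdorff measures} it suffices to treat $\calH^k_{\tilde g}$ on $\varphi(N)\subset\R^m$. This is the main obstacle: the metric $\tilde g$ is not constant, so the Euclidean area formula does not apply directly. I will freeze coefficients. Given $\varepsilon>0$, cover $\varphi(N\cap K)$ by finitely many small Borel pieces $P$, each contained in a small ball around some point $w_0$ on which $(1+\varepsilon)^{-2}\tilde g(w_0)\le\tilde g\le(1+\varepsilon)^2\tilde g(w_0)$. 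The identity map is then $(1+\varepsilon)$-bi-Lipschitz between $\tilde g$ and the constant metric $\tilde g(w_0)$. By \eqref{eqn: Lipschitz estimate for Hausdorff measures},
\begin{equation*}
(1+\varepsilon)^{-k}\calH^k_{\tilde g(w_0)}(P)\le\calH^k_{\tilde g}(P)\le(1+\varepsilon)^k\calH^k_{\tilde g(w_0)}(P).
\end{equation*}
The constant metric $\tilde g(w_0)$ is a linear pullback of the Euclidean metric. Hence $\calH^k_{\tilde g(w_0)}$ on the $C^1$ submanifold is computed by the Euclidean area formula (\cite[Ch.~3 \S2]{Sim18_IntroductionGeometricMeasure}) as the induced volume for the frozen metric. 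That frozen volume differs from $\vol_{\iota^*\tilde g}(P)$ by a factor within $(1+\varepsilon)^{\pm k}$. Summing over the pieces $P$ and letting $\varepsilon\downarrow0$ gives equality on Borel subsets of compact sets, and hence on all Borel sets by inner regularity.
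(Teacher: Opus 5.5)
Your proposal is correct and follows the same route as the paper: decompose $S$ via Remark~\ref{rmk: rectifiability in charts}(ii), use $T_xS=T_xN_j$ almost everywhere on each piece, identify $\calH^k_g\llcorner N$ with the induced Riemannian volume on $C^1$ submanifolds, and compare the induced volumes of $g$ and $h$ pointwise. There are two differences. First, you prove $\calH^k_g\llcorner N=\vol_{\iota^*g}$ by freezing coefficients, where the paper simply cites it; for the $(1+\varepsilon)$-bi-Lipschitz comparison, place each piece $P$ inside a concentric ball of a third of the radius, since the Riemannian distance involves curves that may leave the ball. Second, you make explicit the $g$-orthonormal basis convention, which the paper uses implicitly and without which $\det(\calA_{|S})$ would depend on the choice of complement to $T_xS$.
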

\begin{proof}
    First assume that $S$ is a $C^1$-embedded  $k$-submanifold of $M$. Then  the restriction of $\calH^k_g$ to $S$ equals the Riemannian volume of the induced  metric $g_{|S}$, that is,
    % \begin{equation*}
        $\calH^k_g \llcorner S = \vol_{g_{|S}},$
    % \end{equation*}
    cf. \cite[Section 1.5.2]{Rit23_IsoperimetricInequalitiesRiemannian}. Since the induced metrics satisfy 
        $g_{|S}(\calA_{|S}\,\cdot,\cdot) = h_{|S}(\cdot,\cdot),$
     the classical change of variables formula on manifolds implies
     \begin{equation*}
        \calH^k_h \llcorner S = \vol_{h_{|S}} =  \sqrt{\det (\calA_{|S})} \vol_{g_{|S}} =\sqrt{\det (\calA_{|S})} \calH^k_g \llcorner S.
    \end{equation*}
    Next, let $S = \bigcup_j S_j$ be a disjoint decomposition into Borel subsets $S_j$ of $C^1$ submanifolds $N_j$ for $j\geq 1$ and $\calH^k_g(S_0) = 0$. Then by Remark \ref{rmk: approximate tangent spaces equal classical tangent spaces}, if $x\in N_j$ we have $T_xN_j = T_xS$ hence
    \begin{equation*}
        \begin{gathered}
        \calH^k_h(S\cap B) 
        =  \sum_{j\geq 1}\calH^k_h (S_j\cap B) 
        =  \sum_{j\geq 1}\calH^k_h ((S\cap B)\cap N_j) \\
        =\sum_{j\geq 1} \int_{S_j\cap B} \sqrt{\det (\calA_{|N_j})} \d\calH^k_g  
        = \int_{S\cap B} \sqrt{\det (\calA_{|S})} \d\calH^k_g.
        \end{gathered}
    \end{equation*}
\end{proof}
Since the approximate tangent space for locally $\calH^k$-rectifiable  $S\subset M$ is well-defined $\calH^k$-almost everywhere on $S$ as a $k$-subspace of $T_xM$, we can naturally define an \textit{approximate normal space $N_xS$} as the orthogonal complement of $T_xS$ in $T_xM$ with respect to $g$. 
\begin{corollary}\label{cor: transformation Hausdorff mesures of codimension one}
    Let $S$ be locally $\calH^{m-1}$-rectifiable and $\mathbf{n}^S_g(x)$  a unit vector spanning the approximate normal space $N_xS$ whenever it exists. Then 
    \begin{equation}\label{eqn: change of variables for m-1 Hausdorff measure}
        \calH^{m-1}_h\llcorner S(B) = \int_{S\cap B} \sqrt{\det(\calA)\, g(\calA\inv \mathbf{n}^S_g,\mathbf{n}^S_g)} \d \calH^{m-1}_g,
\end{equation}
where we use the same conventions as above.
\end{corollary}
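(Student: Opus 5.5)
The plan is to derive the corollary directly from Lemma~\ref{lem: transformation of Hausdorff measures w.r.t metric} with $k=m-1$. This reduces everything to a pointwise linear-algebra identity: for $\calH^{m-1}$-almost every $x\in S$ we must show
\begin{equation*}
\det(\calA_{|S})(x) = \det(\calA)(x)\, g(\calA\inv \mathbf{n}^S_g,\mathbf{n}^S_g)(x).
\end{equation*}
Inserting this into \eqref{eqn: change of variables for k-dim Hausdorff measures} then gives \eqref{eqn: change of variables for m-1 Hausdorff measure}. The approximate tangent space $T_xS$, and hence $\mathbf{n}^S_g(x)$, exists $\calH^{m-1}$-almost everywhere by local rectifiability and Lemma~\ref{lem: approximate tangent space independent of chart}, so it suffices to prove the identity at such a point $x$.

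At a fixed point $x$, choose a $g$-orthonormal basis $\varepsilon_1,\dots,\varepsilon_m$ of $T_xM$ with $\varepsilon_1,\dots,\varepsilon_{m-1}$ spanning $T_xS$ and $\varepsilon_m=\mathbf{n}^S_g(x)$. By \eqref{eqn: transformation endomorphism for riemannian metrics}, $\calA$ is $g$-self-adjoint and positive definite, so its matrix $A$ in this basis is symmetric positive definite. The quantity $\det(\calA_{|S})$ is the upper-left $(m-1)\times(m-1)$ minor of $A$, which is also the Gram determinant of $h$ restricted to $T_xS$ in a $g$-orthonormal basis. This basis-dependent definition agrees with the intrinsic one used in the proof of the lemma, because there the induced metrics satisfy $g_{|S}(\calA_{|S}\cdot,\cdot)=h_{|S}$ and the compression to $T_xS$ is exactly this minor.

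The key step is Cramer's rule (the cofactor formula for the inverse): $(A^{-1})_{mm} = \det([A_{ij}]_{i,j\le m-1})/\det A$. Since the basis is $g$-orthonormal, $(A^{-1})_{mm} = g(\calA\inv\varepsilon_m,\varepsilon_m) = g(\calA\inv\mathbf{n}^S_g,\mathbf{n}^S_g)$. Multiplying by $\det A=\det(\calA)$ yields the identity, and positivity of both sides justifies taking square roots.

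The only point needing care is consistency of conventions: $\det(\calA_{|S})$ must be computed in a basis adapted to the splitting $T_xS\oplus N_xS$ with $g$-orthonormality, since the minor of a non-self-adjoint representation would not be basis-invariant. I would state explicitly that, with a $g$-orthonormal adapted basis, the minor equals the determinant of the $g$-compression of $\calA$ to $T_xS$. This is invariant under $g$-orthonormal changes of basis of $T_xS$, which is all that is needed. The rest is routine.
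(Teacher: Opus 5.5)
Your proposal is correct and follows the paper's proof essentially verbatim: reduce to Lemma~\ref{lem: transformation of Hausdorff measures w.r.t metric} with $k=m-1$, then verify $\det(\calA_{|S})=\det(\calA)\,g(\calA\inv\mathbf{n}^S_g,\mathbf{n}^S_g)$ pointwise using the cofactor (Cramer) formula for $(A^{-1})_{mm}$ in a $g$-orthonormal basis adapted to $T_xS\oplus N_xS$. Your remark that the minor is only well defined once the basis is adapted to this splitting is a useful precision that the paper leaves implicit; note, however, that what actually matters is that the complement of $T_xS$ is chosen $g$-orthogonal, not self-adjointness of the representation, since even for $g$-self-adjoint $\calA$ the minor changes if a non-orthogonal complement is used.
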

\begin{proof}
In view of Lemma~\ref{lem: transformation of Hausdorff measures w.r.t metric} it suffices to show that for locally $\calH^{m-1}$-rectifiable $S$ one has
\begin{equation*}
    \det (\calA_{|S}) =  \det(\calA)\, g(\calA\inv \mathbf{n}^S_g,\mathbf{n}^S_g)
\end{equation*}
$\calH^{m-1}$-almost everywhere on $S$. Fix $x\in S$ such that $T_xS$ exists, let $\varepsilon_1,\dots,\varepsilon_{m-1}$ be an  orthonormal basis for $T_xS$ with respect to $g$ and let $\varepsilon_m=\mathbf{n}^S_g$ be the normal vector to $S$ at $x$. Then $A_{i,j} := g(\calA(x)\varepsilon_i,\varepsilon_j)$  is the  representation of $\calA$ at $x$ with respect to $(\varepsilon_i)_{1\leq i\leq m}$, i.e. $A_{i,j}=\varepsilon_i\cdot A\varepsilon_j$. Let $\mathbf{a}_{i,j}$ be the $(i,j)$-th minor of $A$ and  denote by $\mathrm{adj}(A)_{i,j}=(-1)^{i+j} \mathbf{a}_{j,i}$ the adjugate matrix of $A$. Recall that by Cramer's rule one has
$    \mathrm{adj}(A) = \det( A)\, A\inv.
$
Using $\varepsilon_m=\mathbf{n}^S_g$, we  compute
    \begin{equation*}
        \det (\calA_{|S}) (x)
        = \mathbf{a}_{m,m}
        =  \mathbf{n}^S_g \cdot \mathrm{adj}(A)\, \mathbf{n}^S_g
        =\det( A)\, \mathbf{n}^S_g \cdot A\inv \mathbf{n}^S_g
        = \det(\calA)(x)\, g(\calA\inv(x) \mathbf{n}^S_g,\mathbf{n}^S_g)
    \end{equation*}
and the right- and left-hand side of this identity are independent of the choice of basis  for $T_xS$. 
\end{proof}

For fixed $x\in M$, let $R_x\in(0,\infty]$ be the injectivity radius at $x$. Recall that the exponential map $\mathrm{exp}_x$ is a diffeomorphism from the open ball around $0\in T_xM$ of radius $R_x$ to $B(x,R_x)\subset M$. Induced charts are referred to as normal charts in the sequel.

\begin{definition}
% [Blow-up on $M$]
\label{def: blow-up on M}
     For $x\in M$  and $r< R_x$, we define the \textit{blow-up map at $x$} by
    \begin{equation*}
        \phi^M_{x,r} :B(x,R_x) \to T_xM;
        \quad 
        \phi^M_{x,r}(y):
        = \frac{\exp_x\inv (y) }{r}
    \end{equation*}
    and call  $S_{x,r} :=  \phi^M_{x,r}(S):= \phi^M_{x,r}(S\cap B(x,R_x)) \subset T_xM$ the \emph{blow-ups of $S\subset M$ at $x$}.
\end{definition}
This allows a Riemannian characterization of approximate tangent spaces:
\begin{lemma}\label{lem: Riemannian approximate tangent space with exponential map}
     A set  $S\subset M$ of locally finite $\calH^k$-measure admits an approximate tangent space $T_xS$  at $x$ if and only if  
    \begin{equation*}
         \frac{( \phi^M_{x,r})_*(\calH^k_g\llcorner S)}{r^k} \overset{*}{\rightharpoonup}  \calH^k_{g_x}\llcorner T_xS \quad\text{ as }r\downarrow 0,
    \end{equation*}
    where $\calH^k_{g_x}$ is the $k$-dimensional Hausdorff measure on the fiber $(T_xM, g_x)$. In that case, $T_xS$ coincides with the approximate tangent space from Definition~\ref{def: approximate tangent space in charts}.
\end{lemma}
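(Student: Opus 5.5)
The plan is to reduce the statement to Definition~\ref{def: approximate tangent space in charts} by working in a normal chart, and then compare the Riemannian and Euclidean Hausdorff measures near $x$. Fix a $g_x$-orthonormal basis of $T_xM$ and let $\iota:T_xM\to\R^m$ be the resulting linear isometry from $(T_xM,g_x)$ to $(\R^m,e)$. Then $\varphi:=\iota\circ\exp_x\inv$ is a normal chart on $B(x,R_x)$ with $\varphi(x)=0$. With $\phi_r(y)=y/r$ we get $\phi^M_{x,r}=\iota\inv\circ\phi_r\circ\varphi$. Since $\iota$ is an isometry, $\iota_*\calH^k_{g_x}=\calH^k_e$, so the claimed convergence is equivalent to
\begin{equation*}
\frac{(\phi_r)_*\varphi_*(\calH^k_g\llcorner S)}{r^k}\overset{*}{\rightharpoonup}\calH^k_e\llcorner \iota(T_xS)\quad\text{on }\R^m .
\end{equation*}
By \eqref{eqn: pushforward of Hausdorff measures} we have $\varphi_*(\calH^k_g\llcorner S)=\calH^k_{\tilde g}\llcorner\varphi(S)$ with $\tilde g=(\varphi\inv)^*g$. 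In normal coordinates $\tilde g_{ij}(0)=\delta_{ij}$, and $\tilde g$ is smooth, hence $\tilde g(z)=e+O(|z|)$.

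The key step is to show that, tested against any $f\in C_c(\R^m)$ with support in a ball $B_e(0,\rho)$, the blown-up measures $r^{-k}(\phi_r)_*(\calH^k_{\tilde g}\llcorner\varphi(S))$ and $r^{-k}(\phi_r)_*(\calH^k_e\llcorner\varphi(S))=\calH^k_e\llcorner(\varphi(S)/r)$ have asymptotically equal actions. On $B_e(0,\rho r)$ the identity map from $(\R^m,e)$ to $(\R^m,\tilde g)$ is bi-Lipschitz with constant $L_r=1+O(\rho r)\to1$. By \eqref{eqn: Lipschitz estimate for Hausdorff measures} this gives $L_r^{-k}\calH^k_e\le\calH^k_{\tilde g}\le L_r^k\calH^k_e$ on subsets of this ball. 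This comparison is a direct inequality of measures and does not require rectifiability, so Lemma~\ref{lem: transformation of Hausdorff measures w.r.t metric} is not needed.

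Splitting $f=f^+-f^-$, the difference of the two actions is bounded by $(L_r^k-1)\,\|f\|_\infty\, r^{-k}\calH^k_e(\varphi(S)\cap B_e(0,\rho r))$. It remains to show that this mass factor stays bounded as $r\downarrow0$. Under either hypothesis this holds. Under the Euclidean hypothesis, the mass of $\calH^k_e\llcorner(\varphi(S)/r)$ on a slightly larger ball converges (test against a cutoff function), hence is bounded. Under the Riemannian hypothesis, the Riemannian blow-ups are bounded on balls, and the comparison of measures transfers this bound to the Euclidean ones. I expect this to be the main technical point: the a priori local mass bound is needed to make the error term vanish, and it must be derived from whichever side of the equivalence is assumed. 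Once it is in place the error is $O(r)$, so the two blow-up sequences share their weak$^*$ limits.

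Consequently $S$ satisfies the Riemannian blow-up condition with plane $T_xS$ if and only if $\varphi(S)$ has Euclidean approximate tangent space $\iota(T_xS)$ at $0$. Since $d\varphi_x=\iota$ (because $d(\exp_x)_0=\mathrm{id}$), we get $(d\varphi\inv)_0(\iota(T_xS))=T_xS$. Combined with the chart independence from Lemma~\ref{lem: approximate tangent space independent of chart}, this shows that the existence statement and the identification of $T_xS$ agree with Definition~\ref{def: approximate tangent space in charts}.
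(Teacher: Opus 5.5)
Your argument is correct, but it takes a different route from the paper. The paper proves this lemma in one line, as the normal-chart case of Lemma~\ref{lem: Riemannian approximate tangent space chart wise}. Its first reduction ($\varphi_*\mathcal{H}^k_g=\mathcal{H}^k_{\tilde g}$, $(d\varphi_x)_*\mathcal{H}^k_{g_x}=\mathcal{H}^k_{\tilde g_0}$) is the same as yours. It then compares $\mathcal{H}^k_{\tilde g}\llcorner\varphi(S)$ with $\mathcal{H}^k_{\tilde g_0}\llcorner\varphi(S)$ through the area formula of Lemma~\ref{lem: transformation of Hausdorff measures w.r.t metric}. Concretely, it uses the Jacobians of $\tilde g$ restricted to the approximate tangent planes $T_z\varphi(S)$, which differ from their values for the frozen metric $\tilde g_0$ by $O(|z|)$. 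For a general chart this is followed by a linear change from $\tilde g_0$ to $e$.

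That argument requires $\varphi(S)$ to be rectifiable near $0$. Rectifiability is not implied by the existence of an approximate tangent space at the single point $x$. Your two-sided comparison $L_r^{-k}\mathcal{H}^k_e\le\mathcal{H}^k_{\tilde g}\le L_r^{k}\mathcal{H}^k_e$ on $B_e(0,\rho r)$, obtained from \eqref{eqn: Lipschitz estimate for Hausdorff measures}, uses only the metric. It therefore covers arbitrary sets of locally finite $\mathcal{H}^k$-measure, exactly as the lemma is stated. You also make the a priori mass bound explicit in both directions, which the paper leaves implicit for the converse. The paper's formulation gives the statement for arbitrary charts; your method yields that too if you freeze the metric at $\tilde g_0$ instead of $e$.

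The one step you should still write out is the lower bound $|y-z|\le L_r\,d_{\tilde g}(y,z)$. Here $d_{\tilde g}$ is the global length distance, so almost-minimizing curves may leave the ball. In normal coordinates $d_{\tilde g}(0,w)=|w|$. Hence for $y,z\in B_e(0,s)$, a curve leaving $B_e(0,2s)$ has length at least $2s>|y-z|$. A curve staying in $B_e(0,2s)$ has length at least $(1-Cs)|y-z|$.
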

\begin{proof}
    The statement is the special case of Lemma~\ref{lem: Riemannian approximate tangent space chart wise} applied to normal coordinates around~$x$.
\end{proof}

\begin{lemma}\label{lem: Riemannian approximate tangent space chart wise}
    A   set $S$ of locally finite $\calH^k$ measure has an approximate tangent space $T_xS$ at $x\in S$ if and only if for each chart $(U,\varphi)$ around $x$ one has 
    \begin{equation}\label{eqn: Riemannian approximate tangent space chart wise}
        \frac{(\phi_{\varphi(x),r}\circ\varphi)_* (\calH^k_g \llcorner S )}{r^k} \overset{*}{\rightharpoonup } (d\varphi_x)_* (\calH^k_{g_x} \llcorner (T_xS)) \text{ on } \R^m \text{ as } r\downarrow 0.
    \end{equation}
\end{lemma}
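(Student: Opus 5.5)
The plan is to compare the measure $\frac{(\phi_{\varphi(x),r}\circ\varphi)_*(\calH^k_g\llcorner S)}{r^k}$ with the Euclidean blow-up $\calH^k_e\llcorner\frac{\varphi(S)-\varphi(x)}{r}$ that appears in Definition~\ref{def: approximate tangent space in charts}. The two differ only by a density, namely the Jacobian factor relating $\calH^k_{\tilde g}$ and $\calH^k_e$ on $\varphi(S)$, where $\tilde g=(\varphi\inv)^*g$. After blow-up this density freezes at the point $w:=\varphi(x)$. Without loss of generality take $w=0$.

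\emph{Step 1: transport to the chart.} By \eqref{eqn: pushforward of Hausdorff measures} we have $\varphi_*(\calH^k_g\llcorner S)=\calH^k_{\tilde g}\llcorner\varphi(S)$ on $\varphi(U)$. Applying Lemma~\ref{lem: transformation of Hausdorff measures w.r.t metric} to the metrics $e$ and $\tilde g$ on $\varphi(U)$ (with $\calA=\calA_{e,\tilde g}$) gives $\calH^k_{\tilde g}\llcorner\varphi(S)=J\,\calH^k_e\llcorner\varphi(S)$ with $J=\sqrt{\det(\calA_{|\varphi(S)})}$. This identity requires $\varphi(S)$ to be rectifiable. To cover the "if" direction without assuming rectifiability, I would use instead the two-sided bounds $c^{-1}\calH^k_e\le\calH^k_{\tilde g}\le c\,\calH^k_e$ from \eqref{eqn: Lipschitz estimate for Hausdorff measures}, applied on small balls. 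These bounds come from local bi-Lipschitz equivalence, and the constant tends to $1$ as the balls shrink, once $\tilde g$ is compared with the constant metric $\tilde g_0$. Rescaling by $\phi_{0,r}$ then gives
\[
\frac{(\phi_{0,r})_*(\calH^k_{\tilde g}\llcorner\varphi(S))}{r^k}=\calH^k_{\tilde g(r\,\cdot)}\llcorner\tfrac{\varphi(S)}{r}.
\]
Since $\tilde g(r\cdot)\to\tilde g_0$ locally uniformly, the ratio of the Hausdorff measures for $\tilde g(r\cdot)$ and $\tilde g_0$ on compact sets tends to $1$. By the Lipschitz estimate, with Lipschitz constant $1+o(1)$, for every nonnegative $f\in C_c(\R^m)$ we get
\[
\Bigl|\calH^k_{\tilde g(r\cdot)}\llcorner\tfrac{\varphi(S)}{r}(f)-\calH^k_{\tilde g_0}\llcorner\tfrac{\varphi(S)}{r}(f)\Bigr|\le o(1)\,\calH^k_e\llcorner\tfrac{\varphi(S)}{r}(\xi)
\]
for a cutoff $\xi$. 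The right-hand side tends to zero provided the family $\calH^k_e\llcorner\frac{\varphi(S)}{r}$ is locally uniformly bounded. Under either direction of the hypothesis this bound holds, by weak$^*$ convergence together with the comparison of the measures.

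\emph{Step 2: constant metric.} $\calH^k_{\tilde g_0}$ is the Hausdorff measure of a fixed scalar product on $\R^m$. On each $k$-plane $P$ it is a constant multiple $c_P\,\calH^k_e\llcorner P$. By the area formula and the blow-up convergence, $\calH^k_{\tilde g_0}\llcorner\frac{\varphi(S)}{r}$ converges weakly$^*$ to $\calH^k_{\tilde g_0}\llcorner P$ if and only if $\calH^k_e\llcorner\frac{\varphi(S)}{r}$ converges weakly$^*$ to $\calH^k_e\llcorner P$. This equivalence mirrors the argument for \eqref{eqn: approximate tangent space expansion2} in Lemma~\ref{lem: approximate tangent space independent of chart}, where the linear change of metric is handled. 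Finally, since $d\varphi_x:(T_xM,g_x)\to(\R^m,\tilde g_0)$ is an isometry, \eqref{eqn: pushforward of Hausdorff measures} gives
\[
(d\varphi_x)_*(\calH^k_{g_x}\llcorner T_xS)=\calH^k_{\tilde g_0}\llcorner d\varphi_x(T_xS),
\]
and $d\varphi_x(T_xS)=T_{0}\varphi(S)$ by Definition~\ref{def: approximate tangent space in charts}. Combined with Step 1, this proves both directions of \eqref{eqn: Riemannian approximate tangent space chart wise}. The validity for every chart, rather than only some chart, follows from Lemma~\ref{lem: approximate tangent space independent of chart}.

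\emph{Main obstacle.} I expect the hardest part to be Step 2's claim that weak$^*$ convergence to a plane is equivalent for $\calH^k_e$ and $\calH^k_{\tilde g_0}$ when $S$ is not assumed rectifiable, since no pointwise density is then available. I would first use the area formula to obtain rectifiability of $\varphi(S)$ near $0$ in the relevant direction, via the Euclidean theory on the blow-up. Failing that, I would handle it by Lebesgue--Besicovitch differentiation (Theorem~\ref{thm: Lebesgue--Besicovitch--Federer}) of $\calH^k_{\tilde g_0}\llcorner\varphi(S)$ with respect to $\calH^k_e\llcorner\varphi(S)$, evaluated at $0$.
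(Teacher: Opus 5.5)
Your plan has the same structure as the paper's proof: push $\mathcal{H}^k_g\llcorner S$ into the chart, replace $\tilde g$ by the frozen metric $\tilde g_0$, then pass between $\mathcal{H}^k_{\tilde g_0}$ and $\mathcal{H}^k_e$.

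Step~1 is sound and cleaner than the paper's version. The paper compares $\tilde g$ with $\tilde g_0$ via Lemma~\ref{lem: transformation of Hausdorff measures w.r.t metric}, which tacitly needs $\varphi(S)$ to be rectifiable, and a pointwise hypothesis does not give that. Your observation that the identity $(K,\tilde g(r\,\cdot))\to(K,\tilde g_0)$ is bi-Lipschitz with constant $1+o(1)$ on compact $K$ works for every set of locally finite measure.

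You were right to flag Step~2, and it is a genuine gap. It is exactly the reduction the paper makes by appealing to ``global equivalence'' of $\mathcal{H}^k_e$ and $\mathcal{H}^k_{\tilde g_0}$ (as for \eqref{eqn: approximate tangent space expansion2}). Two-sided bounds make the limits comparable, not equal. On a rectifiable $\Sigma$ one has $\mathcal{H}^k_{\tilde g_0}\llcorner\Sigma=c_{T_z\Sigma}\,\mathcal{H}^k_e\llcorner\Sigma$, your constant $c_P$ evaluated at the tangent plane at $z$. Weak$^*$ convergence of the Euclidean blow-ups to $\mathcal{H}^k_e\llcorner P$ does not force $T_z\Sigma\to P$ in measure.

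Concretely, in $\mathbb{R}^2$ split each interval $\pm[2^{-j-1},2^{-j}]$ of the $z_1$-axis into $2^j$ equal cells. Over each cell place a segment of slope $1$, starting on the axis, whose horizontal shadow is a fraction $1/\sqrt2$ of the cell. The union $\Sigma$ satisfies $\mathcal{H}^1_e\llcorner\frac{\Sigma}{r}\overset{*}{\rightharpoonup}\mathcal{H}^1_e\llcorner P$ with $P=\mathbb{R}\times\{0\}$. For $|(a,b)|^2_{\tilde g_0}=a^2+4b^2$, however, $\mathcal{H}^1_{\tilde g_0}\llcorner\frac{\Sigma}{r}\overset{*}{\rightharpoonup}\sqrt{5/2}\,\mathcal{H}^1_{\tilde g_0}\llcorner P$. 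Take $M=\mathbb{R}^2$ with this constant metric, $S=\Sigma\cup\{0\}$, and the identity chart. Then $S$ has an approximate tangent space at $0$ in the sense of Definition~\ref{def: approximate tangent space in charts}, but \eqref{eqn: Riemannian approximate tangent space chart wise} fails in that chart. Read literally, the ``only if'' direction is false at this point. Read in the chart $\mathrm{diag}(1,2)$ of the flat plane, the same set shows that Lemma~\ref{lem: approximate tangent space independent of chart} is only valid almost everywhere.

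Your fallbacks do not repair this. Rectifiability buys nothing, since $\Sigma$ is a countable union of segments. Differentiating $\mathcal{H}^k_{\tilde g_0}\llcorner\varphi(S)$ with respect to $\mathcal{H}^k_e\llcorner\varphi(S)$ at $0$ gives $\sqrt{5/2}$ in the example, not $c_P=1$; Theorem~\ref{thm: Lebesgue--Besicovitch--Federer} says nothing at a prescribed point anyway.

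What your argument does establish is the following.
\begin{itemize}
\item Condition \eqref{eqn: Riemannian approximate tangent space chart wise} is chart-independent at every point. One pushes forward the same measure $\mathcal{H}^k_g\llcorner S$, and the rescaled transition maps $\mathcal{L}_r$ from the proof of Lemma~\ref{lem: approximate tangent space independent of chart} converge locally uniformly to $\mathcal{L}_0$. Use this, rather than Lemma~\ref{lem: approximate tangent space independent of chart}, to get ``every chart''.
\item In a chart with $d\varphi_x:(T_xM,g_x)\to(\mathbb{R}^m,e)$ isometric (e.g.\ a normal chart) one has $\tilde g_0=e$. Step~2 disappears, and Step~1 alone gives equivalence with the Euclidean condition.
\end{itemize}
Hence the ``if'' direction holds at every point. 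The ``only if'' direction holds whenever the approximate tangent space is witnessed in such a chart.

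For an arbitrary chart, Step~2 needs the extra input $r^{-k}\int_{\varphi(S)\cap B_e(\varphi(x),Rr)}\|\pi_{T_z\varphi(S)}-\pi_P\|\,d\mathcal{H}^k_e(z)\to0$ for all $R>0$, with $\pi$ denoting orthogonal projections. This Lebesgue-point property of the tangent-plane map holds at $\mathcal{H}^k$-a.e.\ point of a rectifiable $S$, but not at every point.
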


\begin{proof}
    Again we assume without loss of generality that $(U,\varphi)$ is a chart around $x$ with $\varphi(x)=0$, and let $\tilde{g}= (\varphi\inv)^*g$.
    Using that
    $\varphi_*\calH^k_g=\calH^k_{\tilde{g}}$ and $(d\varphi_x)_* \calH^k_{g_x} = \calH^k_{\tilde{g}_0}$, it follows that \eqref{eqn: Riemannian approximate tangent space chart wise} is equivalent to
    \begin{equation}\label{eqn: Riemannian approximate tangent space chart wise via blow ups}
        \calH^k_{\tilde{g}}\llcorner\big(\tfrac{\varphi(S)}{r}\big) \overset{*}{\rightharpoonup} \calH^k_{\tilde{g}_0}\llcorner\big(T_0\varphi(S)).
    \end{equation}
    Moreover, by linearity of $d\varphi_x:T_xM\to\R^m$, one can argue as in the proof of Lemma~\ref{lem: approximate tangent space independent of chart} to find that by global equivalence of $\calH^k_e$ and $\calH^k_{\tilde{g}_0}$
    it suffices to prove that \eqref{eqn: Riemannian approximate tangent space chart wise via blow ups} holds if and only if
    \begin{equation}\label{eqn: euclidean approximate tangent space w.r.t pushforward metric}
    \calH^k_{\tilde{g_0}}\llcorner\big(\tfrac{\varphi(S)}{r}\big) \overset{*}{\rightharpoonup} \calH^k_{\tilde{g_0}}\llcorner\big(T_0\varphi(S)\big).
    \end{equation}

     We assume  \eqref{eqn: euclidean approximate tangent space w.r.t pushforward metric} and show that 
    \begin{equation}
        \calH^k_{\tilde{g}} \llcorner \big(\tfrac{\varphi ( S)}{r} \big) - \calH^k_{\tilde{g}_0} \llcorner \big(\tfrac{\varphi ( S)}{r} \big)  \overset{*}{\rightharpoonup} 0
    \end{equation}

    Let $f\in C_c^\infty(\R^m)$ be arbitrary and let $\calA = \calA_{e,\tilde{g}}$ as in \eqref{eqn: calA endomorphism}.  We use Lemma~\ref{lem: transformation of Hausdorff measures w.r.t metric} to estimate
    \begin{equation*}
    \begin{aligned}
        \Big(\calH^k_{\tilde{g}} \llcorner \big(\tfrac{\varphi ( S)}{r} \big)- \calH^k_{\tilde{g}_0} \llcorner \big(\tfrac{\varphi ( S)}{r} \big)\Big )(f) 
        &=r^{-k}\Big(\calH^k_{\tilde{g}} \llcorner \big(\varphi ( S) \big)- \calH^k_{\tilde{g}_0} \llcorner \big(\varphi ( S) \big)\Big )(f\circ \phi_r) \\
        &= r^{-k}\int_{ \varphi
        (S)} f(\tfrac{z}{r}) \big(|\det\calA_{| \varphi(S)} (z)  
        -  |\det\calA_{| \varphi(S)} (0)| \big) \d\calH^k_e(z)\\
        &\leq \sup_{\tfrac{z}{r}\in  \sppt(f) } \big||\det\calA_{| \varphi(S)}(z)| - |\det\calA_{|\varphi(S)}(0)|\big| \int_{\frac{\varphi
        (S)}{r}}| f| \d\calH^k_e.
        \end{aligned}
    \end{equation*}
    The last factor is bounded  since $\calH^k_e\llcorner(\tfrac{\varphi(S)}{r})\overset{*}{\rightharpoonup}{\calH^k_e}\llcorner(T_0\varphi(S))$ by assumption. Moreover, using that  $z\mapsto\det|\calA_{|\varphi(S)}(z)|$ is locally Lipschitz, there exists some constant $C>0$ such that the whole expression in the last line is bounded above by 
    $C\cdot\sup\{|z|:\tfrac{z}{r}\in \sppt(f)\}$ and vanishes as $r$ tends to zero. This implies  \eqref{eqn: Riemannian approximate tangent space chart wise via blow ups}.
    To prove the other direction, assume that \eqref{eqn: Riemannian approximate tangent space chart wise via blow ups} holds and  write 
    $$ \calH^k_{\tilde{g_0}}\llcorner\big(\tfrac{\varphi(S)}{r}\big) (f)
    = \int_{\varphi(S)} f(\tfrac{z}{r}) |\det((\calA\inv(z)\calA(0))_{|\varphi(S)})| \d\calH^k_e.
    $$
    Then  the same line of arguments as before shows that  \eqref{eqn: euclidean approximate tangent space w.r.t pushforward metric} holds true.
\end{proof}

\begin{lemma}\label{lem: convergence of blow-ups in charts}
    Let $H\subset T_xM$ be a measurable set such that $\vol_{g_x}(\partial H)=0$ and let $\varphi$ be a local chart around $x$. The blow-ups $E_{x,r}\subset T_xM$ of a measurable set $E\subset M$ at $x\in E$ converge locally in $\vol_{g_x}$ to $H$ as $r\downarrow0$ if and only if the blow-ups of $\varphi(E)\subset \R^m$ at $\varphi(x)$ converge locally in Lebesgue measure $\lambda$ to $d\varphi_x (H)$.
\end{lemma}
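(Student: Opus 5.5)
Throughout, the Euclidean and Riemannian blow-ups will be compared through one diffeomorphism. We may assume $\varphi(x)=0$ and write $\psi:=\varphi\circ\exp_x$. On a small ball around $0\in T_xM$ this is a diffeomorphism onto a neighborhood of $0\in\R^m$. Since $d(\exp_x)_0=\mathrm{id}$, we have $d\psi_0=d\varphi_x=:L$. The Riemannian blow-ups are $E_{x,r}=\tfrac1r\exp_x\inv(E)$ and the Euclidean ones are $\varphi(E)/r$. They are related by
\begin{equation*}
    \tfrac{\varphi(E)}{r}=\psi_r(E_{x,r}),\qquad \psi_r(v):=\tfrac{\psi(rv)}{r},
\end{equation*}
valid on $\{|v|<R/r\}$, a region that exhausts $T_xM$ as $r\downarrow0$. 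As in part (i) of the proof of Lemma~\ref{lem: approximate tangent space independent of chart}, smoothness of $\psi$ gives $\psi_r\to L$ and $d\psi_r\to L$ locally uniformly. The same holds for the inverses: $\psi_r\inv\to L\inv$ locally uniformly.

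For the forward direction, assume $E_{x,r}\to H$ locally in $\vol_{g_x}$, and fix a compact $K\subset\R^m$. I split the symmetric difference into two pieces:
\begin{equation*}
    \lambda\big((\psi_r(E_{x,r})\Delta L(H))\cap K\big)\le \lambda\big((\psi_r(E_{x,r})\Delta\psi_r(H))\cap K\big)+\lambda\big((\psi_r(H)\Delta L(H))\cap K\big).
\end{equation*}
The first piece equals $\lambda(\psi_r((E_{x,r}\Delta H)\cap K'))$ for a fixed compact $K'\supset\bigcup_{r\le r_0}\psi_r\inv(K)$. By the change of variables formula with the uniformly bounded Jacobians $\det d\psi_r$, it is at most $C\,\vol_{g_x}((E_{x,r}\Delta H)\cap K')$, which tends to $0$. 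Note that $\vol_{g_x}$ and the Lebesgue measure on $T_xM$ transported by any linear isomorphism differ only by a constant factor.

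The second piece is the main point, and it is where the hypothesis $\vol_{g_x}(\partial H)=0$ enters. Set $\delta_r:=\sup_{K'}|\psi_r-L|\to0$. Then $\psi_r(H)\Delta L(H)$, intersected with $K$, lies in the $\delta_r$-neighborhood of $L(\partial H)\cap K''$ for a slightly larger compact $K''$. To see this, take $y=\psi_r(v)$ with $v\in H$ and $y\notin L(H)$. The segment from $L(v)$ to $y$ has length at most $\delta_r$, starts in $L(H)$ and ends outside it, so it meets $L(\partial H)$. The reverse inclusion is argued symmetrically using $\psi_r\inv\to L\inv$. The $\delta$-neighborhoods of the compact set $L(\partial H)\cap K''$ decrease to its closure, which is the set itself since $\partial H$ is closed. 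That set has Lebesgue measure zero because $L$ is linear and $\vol_{g_x}(\partial H)=0$. Continuity from above of $\lambda$ on bounded sets then gives that the second piece tends to $0$.

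The converse is the same argument with the roles exchanged: apply $\psi_r\inv\to L\inv$ and use $\lambda(\partial L(H))=\lambda(L(\partial H))=0$. The step needing most care is the boundary-neighborhood estimate, in particular making the inclusions hold uniformly for $r$ small on fixed compacts. I would handle this by choosing $K'$ and $K''$ once, via the local uniform convergence of $\psi_r$ and $\psi_r\inv$.
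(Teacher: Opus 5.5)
Your proof is correct and is essentially the paper's argument. The paper first handles a normal chart via the canonical identification of $(T_xM,g_x)$ with $\mathbb{R}^m$, and then proves chart-independence through the rescaled transition maps $\mathcal{L}_r$ with the same two-term split (bounded Jacobians for the first term, $\lambda$-nullity of the image of $\partial H$ for the second); you merge both steps into the single map $\psi_r(v)=\varphi(\exp_x(rv))/r$ and replace the paper's appeal to the continuous mapping theorem by an explicit, self-contained segment and $\delta$-neighborhood argument (note only that your first piece is bounded by, not equal to, $\lambda\big(\psi_r((E_{x,r}\Delta H)\cap K')\big)$, which is all you need).
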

\begin{proof}
    We first  assume that $\varphi$ is a normal chart centered in $x$ and let  $\tilde{g} =( \varphi\inv)^*g$.  Then convergence of $\phi^M_{x,r}(E)\subset T_xM$ to $H$ locally in $\vol_{g_x}$ is clearly equivalent to convergence of  $\varphi(E)\subset \R^m$ to $d\varphi_0(H)$ with respect to the induced volume measure $\vol_{\tilde{g}_0}$  due to the canonical identification of $(T_xM,g_x)$ and $(\R^m, \tilde{g}_0)$. Since  $\vol_{\tilde{g}_0}$ has a constant density with respect to the Lebesgue measure, this shows equivalence of both statements in the special case where $\varphi$ is a normal chart. It remains to show that convergence of the blow-ups $\varphi(E)$ at $\varphi(x)$ in $\R^m$ is independent of the choice of chart. 
    To this end, consider two charts $\varphi$ and $\psi$ as in part $(i)$ of the proof of Lemma~\ref{lem: approximate tangent space independent of chart} as well as the maps $\calL_r$ and $\calL_0$ and the corresponding constructions.
 Then we have 
$$\tfrac{\psi(E)}{r} = \calL_r(\tfrac{\varphi(E)}{r})\qand 
\mathbf{1}_{{\psi(E)}/{r}} = \mathbf{1}_{{\varphi(E)}/{r}}\circ \calL_r\inv.$$
We assume without loss of generality that $\varphi(E)/r$ locally converges in measure to $d\varphi_x(H)$ and show that $\psi(E)/r$ locally converges in measure to $\calL_0\inv(d\varphi_x(H))=d\psi_x(H)$. Fix a relatively compact set $V\subset \R^m$. Then
\begin{equation}\label{eqn: triangel inequality for convergence of blow ups in measure}
    \int_V |\mathbf{1}_{{\psi(E)}/{r}} - \mathbf{1}_{d\psi_x(H)}|\d\lambda 
    \leq \int_V |(\mathbf{1}_{{\varphi(E)}/{r}} - \mathbf{1}_{d\varphi_x(H)})\circ\calL_r\inv |\d\lambda 
    +  \int_V |\mathbf{1}_{d\varphi_x(H)} \circ \calL_r\inv- \mathbf{1}_{d\varphi_x(H)}\circ\calL_0\inv|\d\lambda 
\end{equation}
and we prove convergence of the first and second term on the right-hand side separately. Since  $\calL_r\to\calL_0$ uniformly, and similarly $\calL_r\inv\to \calL_0\inv$ uniformly, we can find a compact set $K(V)\subset\R^m$ containing all sets $\calL_r\inv(V)$ for $0\leq r \leq 1$.
Using the transformation formula, the first term on the right-hand side of \eqref{eqn: triangel inequality for convergence of blow ups in measure} becomes
\begin{equation*}
    \int_{\calL_r\inv(V)} |(\mathbf{1}_{{\varphi(E)}/{r}} - \mathbf{1}_{d\varphi_x(H)}) \det (d\calL_r)| \d\lambda
    \leq \int_{K} |\mathbf{1}_{{\varphi(E)}/{r}} - \mathbf{1}_{d\varphi_x(H)}| \d\lambda \sup_{z\in K(V)}|\det (d\calL_r(z))| \to 0.
\end{equation*}
For the last term of \eqref{eqn: triangel inequality for convergence of blow ups in measure}, note that the discontinuity set $\partial (d\varphi_x(H))$ of $\mathbf{1}_{d\varphi_x(H)}$ satisfies $\lambda(\calL_0 (\partial( d\varphi_x(H)))) = 0$ since $\calL_0$ is a linear bijection. Thus, using that $\calL_r\inv$ converges to $\calL\inv$ locally in measure, we can apply the continuous mapping theorem to conclude that $ \mathbf{1}_{d\varphi_x(H)}\circ \calL_r\inv \to \mathbf{1}_{d\varphi_x(H)}\circ \calL_0\inv$ locally in measure and this is equivalent to convergence locally in $L^1$. Therefore, the last term of \eqref{eqn: triangel inequality for convergence of blow ups in measure} tends to zero as $r\downarrow 0$ and the proof is complete.
\end{proof}

We use these constructions to prove the announced Riemannian version of the classical structure theorem due to De~Giorgi. 

\begin{theorem}[De~Giorgi]\label{thm: De~Giorgi}
    Let $E\subset M$ have (locally) finite perimeter.
    \begin{enumerate}[label=(\roman*)]
        \item The reduced boundary $\partial^*E$ is (locally) $\calH^{m-1}$-rectifiable.
        \item For  every $x\in \partial^*E$ the approximate tangent space $T_x\partial^*E$ exists and is given by the  orthogonal complement of $\normal[]{E}(x)$ in $T_xM$.
        \item The perimeter measure of $E$ satisfies ${\TV{\mathbf{1}_E}} = \calH^{m-1}\llcorner(\partial^*E)$.
        \item If $x\in \partial^*E$, then the blow-ups $E_{x,r}:=\phi^M_{r}(E)\subset T_xM$ locally converge as $r\downarrow 0$ in measure to the half space $H^E(x)= \set{\xi \in T_xM: (\xi, \normal{E}(x)) \geq 0}$.
    \end{enumerate}   
\end{theorem}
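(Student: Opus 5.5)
The plan is to reduce every assertion to the Euclidean De~Giorgi theorem \cite[Theorem 3.59]{AmbFusPal00_FunctionsBoundedVariation} by working in charts, using the localization results already established. Since all four statements are local, I fix a countable locally finite atlas $\{(U_\alpha,\varphi_\alpha)\}$ of relatively compact charts. On a single chart $(U,\varphi)$, Remark~\ref{rmk: pushfoward of vector measures}(3) gives $\varphi_*(D\mathbf{1}_E)=D_{\tilde g}\mathbf{1}_{\varphi(E)}$ with $\tilde g=(\varphi^{-1})^*g$. Remark~\ref{rmk: pushfoward of vector measures}(2) together with Lemma~\ref{lem: equivalence of gvm spaces for equivalent norms} shows that $\varphi(E)$ has locally finite Euclidean perimeter in $\varphi(U)$. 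Its Euclidean perimeter measure and the pushforward of $|D\mathbf{1}_E|$ then differ by an explicit, locally bounded, nonvanishing density, as in \eqref{eqn: g to h transformation of unweighted variation}. By Corollary~\ref{cor: chart wise reduced boundary} we have $\varphi(\partial^*E\cap U)=\partial^*\varphi(E)\cap\varphi(U)$, and the two normals are related by $\calA\,\varphi_*$.

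\emph{Parts (i) and (ii).} The Euclidean theorem says that $\partial^*\varphi(E)$ is locally $\calH^{m-1}$-rectifiable. Remark~\ref{rmk: rectifiability in charts}(iii) transfers this to $\partial^*E\cap U$, and a countable union over the atlas gives (i). For (ii), the Euclidean theorem also gives, at every point of $\partial^*\varphi(E)$, the approximate tangent space $\normal[e]{\varphi(E)}(\varphi(x))^{\perp_e}$. Definition~\ref{def: approximate tangent space in charts} and Lemma~\ref{lem: approximate tangent space independent of chart} then give $T_x\partial^*E=(d\varphi^{-1})(\normal[e]{\varphi(E)}^{\perp_e})$. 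By Corollary~\ref{cor: chart wise reduced boundary}, $\normal[e]{\varphi(E)}$ is parallel to $\calA\varphi_*\normal{E}$. For $w=\varphi_*v$ we have $(\calA\varphi_*\normal{E},w)_e=(\varphi_*\normal{E},w)_{\tilde g}=(\normal{E},v)_g$. Hence the $e$-orthogonal complement pulls back to the $g$-orthogonal complement of $\normal{E}(x)$, which proves (ii).

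\emph{Part (iii).} This is the step I expect to be the main obstacle, because the densities have to be matched exactly. Euclidean De~Giorgi gives $|D\mathbf{1}_{\varphi(E)}|_e=\calH^{m-1}_e\llcorner\partial^*\varphi(E)$. The polar vector of $D\mathbf{1}_{\varphi(E)}$ with respect to $\tilde g$, taken on $\partial^*\varphi(E)$, is $\varphi_*\normal{E}$. I would apply \eqref{eqn: g to h transformation of unweighted variation} with base metric $e$ and second metric $\tilde g$, so that $\calA=\calA_{e,\tilde g}$, to get
\[
|D\mathbf{1}_{\varphi(E)}|_{\tilde g}=\sqrt{\det(\calA)\,(\normal[e]{\varphi(E)},\calA^{-1}\normal[e]{\varphi(E)})_e}\;|D\mathbf{1}_{\varphi(E)}|_e .
\]
Corollary~\ref{cor: transformation Hausdorff mesures of codimension one}, applied with $g=e$, $h=\tilde g$, $S=\partial^*\varphi(E)$, and with the Euclidean unit normal given by (ii), produces exactly the same density for $\calH^{m-1}_{\tilde g}\llcorner S$ relative to $\calH^{m-1}_e\llcorner S$. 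It follows that $|D\mathbf{1}_{\varphi(E)}|_{\tilde g}=\calH^{m-1}_{\tilde g}\llcorner\partial^*\varphi(E)$. Pushing forward by $\varphi^{-1}$ via \eqref{eqn: pushforward of Hausdorff measures} gives $|D\mathbf{1}_E|\llcorner U=\calH^{m-1}_g\llcorner(\partial^*E\cap U)$. A partition of unity, or simply countable additivity over a disjoint Borel refinement of the atlas, then globalizes the identity. The remaining care point is to confirm that the $\tilde g$-polar vector and the $e$-polar vector are related exactly as in Lemma~\ref{lem: equivalence of gvm spaces for equivalent norms} on a set of full measure, so that the two square-root densities really coincide.

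\emph{Part (iv).} The Euclidean theorem says that the blow-ups of $\varphi(E)$ at $\varphi(x)$ converge locally in measure to the half-space $\{z:(z,\normal[e]{\varphi(E)})_e\ge0\}$. I would rewrite this half-space as $d\varphi_x(H^E(x))$ using the identity $(\calA\varphi_*\normal{E},d\varphi_x\xi)_e=(\normal{E},\xi)_g$ from part (ii). Its boundary is a hyperplane, so $\vol_{g_x}(\partial H^E(x))=0$. Lemma~\ref{lem: convergence of blow-ups in charts} then yields convergence of $E_{x,r}$ to $H^E(x)$ locally in $\vol_{g_x}$.
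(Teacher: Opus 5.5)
Your proposal is correct and follows essentially the same route as the paper. The paper also reduces to the Euclidean theorem chartwise via Corollary~\ref{cor: chart wise reduced boundary}, obtains (iii) by matching the density in \eqref{eqn: g to h transformation of unweighted variation} (base metric $e$, second metric $\tilde g$) with the one from Corollary~\ref{cor: transformation Hausdorff mesures of codimension one}, and derives (ii) and (iv) from the identity $(\xi,\normal{E}(x))_g = (d\varphi\,\xi,\calA\,d\varphi\,\normal{E}(x))_e$ together with Lemma~\ref{lem: approximate tangent space independent of chart} and Lemma~\ref{lem: convergence of blow-ups in charts}. The differences are cosmetic: you cite the Euclidean theorem from Ambrosio--Fusco--Pallara rather than Maggi, and you check explicitly that $\vol_{g_x}(\partial H^E(x))=0$ and that $\varphi(E)$ has locally finite Euclidean perimeter, which the paper leaves implicit.
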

\begin{proof}
We cover $M$ by countably many  charts and let  $\varphi:U\to \tilde{U}=\varphi(U) \subset \R^m$ be such a chart with inverse $\varphi\inv:\tilde{U}\to U$. Set $\tilde{g}:=(\varphi\inv)^*g$ 
    and $\calA = \calA_{e,\tilde{g}}$ as in \eqref{eqn: calA endomorphism}.

 \begin{enumerate}[wide, itemsep=6pt, label = (\roman*)]
     \item Combine Corollary~\ref{cor: chart wise reduced boundary} with Remark \ref{rmk: rectifiability in charts} and apply the corresponding statement for $\R^m$ from \cite[Cor.~16.1]{Mag12_SetsFinitePerimeter}.
     \item  By Definition~\ref{def: approximate tangent space in charts} and Lemma~\ref{lem: approximate tangent space independent of chart}, the Riemannian approximate tangent space of $\partial^*E$ at $x$ exists if and only if the Euclidean approximate tangent space of $\varphi(\partial^*E)$ exists at $\varphi(x)$ and one has 
     \begin{equation}\label{eqn: pushforward of tangent spaces}
          T_{\varphi(x)}\varphi(\partial^*E) = d\varphi(T_x\partial^*E).
     \end{equation}
      Moreover, in view of Corollary~\ref{cor: chart wise reduced boundary},
     the  Euclidean normal vector $\normal[e]{\varphi(E)}(\varphi(x))$  and the Riemannian normal vector $\normal{E}(x)$  satisfy 
     $$ \normal[e]{\varphi(E)}(\varphi(x)) = c\cdot\,{\calA\, d\varphi (\normal{E}(x))} \quad\text{ with } \quad c= |\calA\, d\varphi (\normal{E}(x))|_e\inv.
     $$ 
     Now for arbitrary $\xi\in T_xM$, one has
    \begin{equation}\label{eqn: scalar product of normal and other tangent vectors}
        \big(\xi,\normal{E}(x)\big)_g = \big(d\varphi (\xi), \calA\, d\varphi  (\normal{E}(x))\big)_e = c\cdot\, \big( d\varphi (\xi), \normal[e]{\varphi(E)}(\varphi(x)\big)_e.
    \end{equation}
     Together with \eqref{eqn: pushforward of tangent spaces} and   the  Euclidean statement in \cite[Cor.~16.1]{Mag12_SetsFinitePerimeter}, this implies   that the approximate tangent space at $x$ is given as the orthogonal complement of the measure-theoretic inner normal.
    \item  Let $B\subset U$ be a Borel measurable subset of $U$. 
% With $\calA$ as above, the  induced volume measures transform via $\vol_h = \sqrt{\det(\calA)}\,\vol_e$.
    Applying the pushforward relations for covector measures in Remark \ref{rmk: pushfoward of vector measures}, we have
    \begin{equation}\label{eqn: De~Giorgi first eqn}
            \TV[g]{\mathbf{1}_E}(B) 
        = \TV[\tilde{g}]{(\mathbf{1}_{E}\circ\varphi\inv)}(\varphi(B))
        = \TV[\tilde{g}]{\mathbf{1}_{\varphi(E)}} (\varphi(B)).
    \end{equation}
    Since the polar  vector field of the Euclidean variation measure of $\mathbf{1}_{\varphi(E)}$ coincides with the Euclidean generalized normal vector $\normal[e]{\varphi(E)}$ on $\partial^*(\varphi(E))\cap\varphi(U)$, we can apply \eqref{eqn: g to h transformation of unweighted variation} to get 
    \begin{equation}\label{eqn: De~Giorgi second eqn}
       \TV[\tilde{g}]{\mathbf{1}_{\varphi(E)}} 
        = \sqrt{ \det(\calA)\, \big(\calA\inv \normal[e]{\varphi(E)},\normal[e]{\varphi(E)}\big)_e}\,  \TV[e] {\mathbf{1}_{\varphi (E)}}.
    \end{equation}
    By De~Giorgi's theorem in Euclidean space we have that
    \begin{equation} \label{eqn: De~Giorgi third eqn}
         \TV[e] {\mathbf{1}_{\varphi (E) }} (\varphi(B)) = \calH^{m-1}_e \big(\partial^*(\varphi(E)) \cap \varphi(B)\big)
    \end{equation}
    and  $\normal[e]{\varphi(E)}$ is orthogonal to the approximate tangent space of $\partial^*\varphi(E)\cap\varphi(U)$.
     Therefore,  Corollary~\ref{cor: transformation Hausdorff mesures of codimension one}  yields
    \begin{equation}\label{eqn: De~Giorgi last eqn}
     \sqrt{\det(\calA)\, \big(\calA\inv \normal[e]{\varphi(E)},\normal[e]{\varphi(E)}\big)_e}\, \, \calH^{m-1}_e \big(\partial^*(\varphi(E)) \cap \varphi(B)\big)
         = \calH^{m-1}_{\tilde{g}}\big(\partial^* (\varphi(E)) \cap \varphi(B)\big).
    \end{equation}
    Combining \eqref{eqn: De~Giorgi first eqn}--\eqref{eqn: De~Giorgi last eqn} and using $\calH^k_{\tilde{g}}=\calH^k_g\circ \varphi\inv$ on $\varphi(U)$,  and  $\varphi\inv (\partial^* (\varphi(E)) \cap \varphi(B)) = \partial^*E\cap B$,   we   conclude that $\TV[g]{\mathbf{1}_E}(B) 
        = \calH^{m-1}_g (\partial^*E\cap B).$
    \item This follows from combining Lemma~\ref{lem: convergence of blow-ups in charts} with \eqref{eqn: scalar product of normal and other tangent vectors} and  using the Euclidean statement from \cite[Theorem~15.5]{Mag12_SetsFinitePerimeter}. 
 \end{enumerate}
\end{proof}

%%%%%%%%%%%%%%%%%%%%%%%%%%%%%%%%%%%%%%%%%%%%%%%%%%%%%%%%%%%%
\subsection{The measure-theoretic boundary and Federer's Theorem}
\label{sec: measure-theoretic boundary and Federer's Theorem}
%%%%%%%%%%%%%%%%%%%%%%%%%%%%%%%%%%%%%%%%%%%%%%%%%%%%%%%%%%%%
Let $E$ be a  measurable subset of $M$. For $r>0$ define
\begin{equation*}
    \Theta_E(x,r) :=\frac{\vol(E\cap B(x,r))}{\vol(B(x,r))}\in [0,1]
    \qand
    \Theta_E(x) :=\lim_{r\downarrow 0} \Theta_E(x,r) .
\end{equation*}
Then $\Theta_E(x) $ is called the \emph{density of $E$ at $x\in M$} and one has  
\begin{equation}\label{eqn: density of complement}
    \Theta_E(x) = 1-\Theta_{E^c}(x).
\end{equation}
For $t\in [0,1]$, we denote by $E^{(t)}$ the set of points $x\in M$ where $E$ has density~$t$.
Clearly, $E$ has density $1$ at all  points in the topological interior of $E$ and density $0$ at all  points in the topological exterior of $E$.
Moreover, by Theorem~\ref{thm: Lebesgue--Besicovitch--Federer}~\ref{item: differentiation thm for functions} for $f= \mathbf{1}_E$ one has  $\Theta_E(x) = 1$ for  almost every $x\in E$ and $\Theta_E(x) = 0$ for almost every $x\in M\setminus E$ with respect to the volume measure. Thus,  $E^{(1)}$ and $E^{(0)}$ are called the \textit{essential interior} and \textit{essential exterior}, respectively.
The \emph{essential boundary}  of $E$ is defined as the set
\begin{equation*}
    \partial^{\ess}E:= \set{ x\in M :\, \Theta_E(x) \in (0,1) } \subset \partial E.
\end{equation*}
Sometimes these sets are referred to as \textit{measure-theoretic} interior, exterior, and boundary. If $U$ is an open neighborhood of $x$, then $\Theta_E(x) = \Theta_{E\cap U}(x)$. Moreover, the set $\partial^{\ess}E$ is independent of the Riemannian metric $g$   on $M$. To see this, let $h$ be another Riemannian metric and recall that  locally, $\vol_h = \varrho \vol_g$ with a smooth, positive density function  $\varrho$ that is bounded away from zero. This implies
    \begin{equation*}
        \frac{\vol_h(E\cap B(x,r) )}{\vol_h(B(x,r) )}  \lesssim \frac{\vol_g(E\cap B(x,r) )}{\vol_g(B(x,r) )} \lesssim \frac{\vol_h(E\cap B(x,r) )}{\vol_h(B(x,r) )},
    \end{equation*}
    and we note that the same holds if we replace the volume measure with any absolutely continuous measure on~$M$.
    Then with \eqref{eqn: density of complement}, the density of $E$ with respect to $\vol_h$ at a point $x$ equals zero (or one) if and only if the density with respect to $\vol_g$ equals zero (or one). This means that $\smash{E^{(0)}}$ and $\smash{E^{(1)}}$ are purely topological data since they are independent of the choice of Riemannian metric. We shall see that $E^{(t)}$ for $t\in(0,1)$, on the other hand, does depend on $g$ and is therefore Riemannian. First we prove invariance of the essential boundary under coordinate charts.

\begin{lemma}\label{lem: essential boundary in charts}
    A point $x\in M$ belongs to $\partial^{\ess}E$ if and only if $\varphi(x)$ belongs to $\partial^{\ess}(\varphi(E)) \cap \varphi(U)$ in $\R^m$ for every chart $(U,\varphi)$ of $M$ with $x\in U$.
\end{lemma}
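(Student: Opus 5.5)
The plan is to compare Riemannian balls around $x$ with Euclidean balls around $w:=\varphi(x)$ in the chart, and to use the observation made just before the lemma: densities $0$ and $1$ are stable under replacing the measure by a comparable one. The essential boundary is the complement of $E^{(0)}\cup E^{(1)}$. So it suffices to show that $x\in E^{(0)}$ if and only if $w\in\varphi(E)^{(0)}$ (Euclidean density). The statement for $E^{(1)}$ then follows by applying this to $E^c$ and using \eqref{eqn: density of complement}, since $\varphi(E^c\cap U)=\varphi(U)\setminus\varphi(E)$ and $w$ is an interior point of $\varphi(U)$.

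\emph{Step 1: localization and measure comparison.} Shrink $U$ to a relatively compact neighbourhood $V\subset\subset U$ of $x$; densities are local, so this changes nothing. Set $\tilde g=(\varphi^{-1})^*g$. On $\varphi(V)$ the volume satisfies $\varphi_*\vol_g=\sqrt{\det\tilde g}\,\lambda$, where $\lambda$ is Lebesgue measure and the density $\sqrt{\det\tilde g}$ is bounded above and below by positive constants. By compactness, $\varphi$ is bi-Lipschitz on $V$ with some constant $L\ge1$ between $d_g$ and the Euclidean distance. Hence for small $r$
\[
\bar B_e(w,r/L)\subset\varphi(B(x,r))\subset \bar B_e(w,Lr).
\]

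\emph{Step 2: sandwich of densities.} From these inclusions, the two-sided bound on the volume density, and $\lambda(\bar B_e(w,Lr))=L^{2m}\lambda(\bar B_e(w,r/L))$, I obtain
\[
\frac{\vol(E\cap B(x,r))}{\vol(B(x,r))}\le C\,\frac{\lambda(\varphi(E)\cap \bar B_e(w,Lr))}{\lambda(\bar B_e(w,Lr))}.
\]
This uses the numerator inclusion with the larger ball and the denominator inclusion with the smaller ball; the constant $C$ depends only on $L$, $m$ and the density bounds. The symmetric inequality holds with the roles exchanged. Letting $r\downarrow0$, Euclidean density $0$ of $\varphi(E)$ at $w$ gives $\Theta_E(x)=0$, and conversely. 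The analogous statement for density $1$ comes from the complement trick above. This proves the equivalence for one chart. Since the argument works for every chart around $x$, the statement ``for every chart'' follows, and it is equivalent to ``for some chart''.

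\emph{Main obstacle.} The delicate point is that $\Theta_E(x)$ is defined as a limit, so membership in $\partial^{\ess}E$ also requires the density to exist, which the comparison argument cannot transfer. I would handle this by interpreting $\partial^{\ess}E$ the way the paper's inclusion ``$\subset\partial E$'' and the later use in Federer's theorem suggest, namely as $M\setminus(E^{(0)}\cup E^{(1)})$. Points where the limit fails to exist are then counted as boundary points, and these sets are exactly what the sandwich estimate controls. If the literal limit-based definition is insisted upon, a full equivalence is impossible: a density in $(0,1)$ is metric-dependent, as remarked just before the lemma. In that case I would flag that the argument only establishes invariance of the complement of $E^{(0)}\cup E^{(1)}$.
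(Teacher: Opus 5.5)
Your argument is correct and is essentially the paper's sandwich argument, mirrored. The paper traps $\varphi^{-1}(B_e(\varphi(x),r))$ between the Riemannian balls $B(x,L^{-1}r)$ and $B(x,Lr)$. It therefore needs the local doubling constant $C_v$ of $\mathrm{vol}$ on $M$ to compare their volumes. You instead trap $\varphi(B(x,r))$ between Euclidean balls and use only the exact scaling of Lebesgue measure, so no doubling result on $M$ is needed. In both versions, density $1$ is then handled by passing to the complement.

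Your caveat about the definition is justified, and it applies equally to the paper's proof. The paper lets $r\downarrow 0$ in $\Theta_{\varphi(E)}(\varphi(x),r)\le C\,\Theta_E(x,Lr)$ without knowing that $\Theta_{\varphi(E)}(\varphi(x))$ exists. Read with $\limsup$, it proves exactly what you prove: chart invariance of $E^{(0)}$ and $E^{(1)}$, hence of $M\setminus(E^{(0)}\cup E^{(1)})$, which is the standard definition of the essential boundary.

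One correction to your justification. Under the literal definition, the obstruction is not that the \emph{value} of a density in $(0,1)$ depends on the chart; it is that its \emph{existence} does. For example, in Euclidean $\mathbb{R}^2$ let $E$ be the double cone $\{|x_2|>|x_1|\}$ with its axis turned by the angle $\log\log(1/\rho)$ at radius $\rho$. Then $\Theta_E(0,r)=1/2$ for all small $r$. In the linear chart $\mathrm{diag}(1,2)$, however, the ratios $\Theta_{\varphi(E)}(0,r)$ oscillate between values below and above $1/2$. So $0\in\partial^{\mathrm{ess}}E$ while $\varphi(0)\notin\partial^{\mathrm{ess}}\varphi(E)$ in the literal sense.
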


\begin{proof}
    Let $x\in\partial^{\ess}E$ and let $(U,\varphi)$ be a chart of $M$ containing $x$.
    Let $ r_0>0$ be such that $B_e(\varphi(x), r_0)\subset \varphi(U)$ and suppose that $r_0$ is sufficiently small such that for some $L=L(r_0)>1$ one has
    \begin{equation*}
        L\inv |\varphi(y)-\varphi(z)| \leq \dist(y,z) \leq L\, |\varphi(y)-\varphi(z)| \tforall y,z \in \varphi\inv (B_e(\varphi(x), r_0)),
    \end{equation*}
    and $B(x,Lr_0)\subset U$ (see \cite[Lemma 3.24]{Gri12_HeatKernelAnalysis} for a construction).
    Then for
    % $\alpha = \frac{s_0}{r_0}$ and 
    all $r< r_0$  one has 
    % $B_e(\varphi(x) ,r) \subset B_e(\varphi(x),\alpha r)\subset \varphi(U)$ and 
    \begin{equation*}
        \varphi\inv \left(B_e (\varphi(x), r)\right) \subset B (x, L r)
    \qand
         \varphi(B(x, L\inv r)) \subset B_e (\varphi(x), r ) ,
    \end{equation*}
    hence
    \begin{equation}\label{eqn: nesting of metric balls}
       B (x,  L\inv r))\subset \varphi\inv ( B_e (\varphi(x), r )) \subset B (x,  Lr)).
    \end{equation}
    By Remark \ref{rmk: local doubling condition and Poincare inequality   in arbitrary Riemannian manifolds} there exists a local  doubling  constant $C_v = C_v(r_0)$,  that is,  for all $r<r_0$ we have
    \begin{equation}\label{eqn: volume growth constant for nested balls}
        \vol(B(x, L r)) \leq L^{2m} C_v \,\vol(B(x,L\inv r)).
    \end{equation}
    Since $r$ is sufficiently small such that $\varphi\inv( B_e(\varphi(x),r))\subset U$ and since $\varphi$ is bijective, we have
        \begin{gather*}
        \Theta_{\varphi(E)}(\varphi(x),r) 
         = \frac{\lambda \Bigl( \varphi\bigl(E \bigr) \cap B_e(\varphi(x),r) \Bigr)}{\lambda \Bigl( B_e(\varphi(x),r) \Bigr)}
         = \frac{
        \lambda \Bigl( \varphi \bigl(E\cap \varphi\inv( B_e(\varphi(x),r) )  \bigr) \Bigr)}
        {\lambda \Bigl( \varphi \bigl( \varphi\inv( B_e(\varphi(x),r) )\bigr)\Bigr)
        }\\
         = \frac{ \displaystyle
        \int_{ E\cap \varphi\inv( B_e(\varphi(x),r))} {(\sqrt{\det g})}\inv \d\vol }
        {\displaystyle
        \int_{ \varphi\inv( B_e(\varphi(x),r) )}  {(\sqrt{\det g})}\inv \d\vol }
        \leq C_\varrho
        \frac{ \vol \Bigl(E\cap \varphi\inv( B_e(\varphi(x),r)) \Bigr)}
        {\vol \Bigl(\varphi\inv( B_e(\varphi(x),r) ) \Bigr)}
    \end{gather*}
with 
\begin{equation*}
    C_\varrho =C_\varrho(r_0)=\frac{\displaystyle \sup \set{ \sqrt{\det g} (y):\, y\in \varphi\inv( B_e(\varphi(x),r_0) } }{\displaystyle \inf \set{ \sqrt{\det g} (y):\, y\in \varphi\inv( B_e(\varphi(x),r_0)} }>0.
\end{equation*}
Now we use \eqref{eqn: nesting of metric balls} and \eqref{eqn: volume growth constant for nested balls} to estimate
    \begin{gather*}
         C_\varrho 
        \frac{ \vol \Bigl(E\cap \varphi\inv( B_e(\varphi(x),r)) \Bigr)}
        {\vol \Bigl(\varphi\inv( B_e(\varphi(x),r) ) \Bigr)}
        \leq  C_\varrho 
        \frac{ \vol \Bigl( E\cap B(x,Lr) \Bigr)}
        {\vol \Bigl( B(x,L\inv r)\Bigr)}   \\
        \leq L^{2m} C_v C_\varrho 
        \frac{ \vol \Bigl(E\cap  B(x,Lr) \Bigr)}
        {\vol \Bigl(  B(x,Lr) \Bigr)}
        =  L^{2m} C_v C_\varrho\, \Theta_E(x,Lr).
    \end{gather*}
    Taking the limit $r\to 0$, it follows that $\Theta_{\varphi(E)}(\varphi(x)) \geq \mathrm{const.}\, \Theta_E(x)$ where $\Theta_E(x)$ is the density of $E$ at $x$ with respect to the Riemannian volume measure. Therefore, $\Theta_E(x)\neq 0 $ implies $\Theta_{\varphi(E)}(\varphi
    (x))\neq 0$. Using that $\Theta_{E}(x) = 1$ if and only if $\Theta_{M\setminus E}(x) = 0$ it follows similarly that $\Theta_E (x)\neq 1 $ implies $\Theta_{\varphi(E)}(\varphi
    (x))\neq 1$. Recalling that the essential boundary is independent of the choice of volume measure, we conclude: if $x\in \partial^{\ess}E$, then $\varphi(x)\in \partial^{\ess}(\varphi(E))\cap\varphi(U)$. The proof for the reverse implication works analogously.
\end{proof}
In the proof of Lemma~\ref{lem: essential boundary in charts} we showed that for a given  point $x\in M$, a set  $E\subset M$  has density $t=0$ (respectively, $t=1)$ at $x$ if and only if for some/every chart $\varphi$ around $x$ one has that $\varphi(E)$
 has density $t=0$ (respectively, $t=1$) at $\varphi(x)$. This is generally false for $t\in (0,1)$ as the following example shows for $t=1/2$.

\begin{example}\label{ex: density not preserved under linear transformation}
    Consider the density of the set
$E= \set{|x_2| > |x_1|}\subset \R^2$
at $x = 0$. Then for arbitrary $r>0$,  
\begin{equation*}
   % \lim_{r\downarrow 0} 
   \frac{\lambda(E\cap B_r(0))}{\lambda(B_r(0))} 
   % = \frac{\ \lambda(B_r(0))}{\lambda(B_r(0))} 
   = \frac{1}{2} 
\end{equation*}
is the  Euclidean density of $E$ at $x=0$.
Let $A$ be the diagonal matrix with entries $A_{11}=1$ and $A_{22}=2$. As Figure \ref{fig:density under linear transformation} illustrates, we get for every $r>0$
\begin{equation*}
   % \lim_{r\downarrow 0} 
   \frac{\lambda(A(E)\cap B_r(0))}{\lambda(B_r(0))} 
   < \frac{r^2}{2\pi r^2}  = \frac{1}{2\pi} .
\end{equation*}
\end{example}

\begin{figure}[H]
    \centering
     \caption{The density of $E$ at the origin is not preserved under the linear transformation $A$.}
    \begin{multicols}{2}
        \begin{subfigure}[b]{\linewidth}
            \centering
          \begin{tikzpicture}[scale=.8,>=stealth]
    % --- Parameters ---
    \pgfmathsetmacro{\rnum}{1.5}       % r
    
    \newlength{\xradius} 
    \setlength{\xradius}{\rnum cm}   % y-semi-axis
    
    \pgfmathsetmacro{\Axnum}{1.8*\rnum}
    \pgfmathsetmacro{\Aynum}{1.8*\rnum}
    \newlength{\ellAx}  
    \setlength{\ellAx}{\Axnum cm}
    \newlength{\ellAy}  
    \setlength{\ellAy}{\Aynum cm}
    
    % --- Ball of radius r ---
    \fill[myorange!35] (0,0) circle [radius=\xradius];   % fill the circle
    
    % --- Light green E (|x2| >= |x1|) ---
    \fill[myblue!35] 
        (-\ellAx,\ellAx) -- (-\ellAx,\ellAx) -- (0,0) -- (\ellAx,\ellAx) -- (\ellAx,\ellAx) -- cycle;
    \fill[myblue!35] 
        (-\ellAx,-\ellAx) -- (-\ellAx,-\ellAx) -- (0,0) -- (\ellAx,-\ellAx) -- (\ellAx,-\ellAx) -- cycle;

    % --- E ∩ circle (myorange) ---
    \begin{scope}
        \clip (0,0) circle [radius=\xradius];
        \fill[myorange,opacity=0.8] (-\ellAx,\ellAx) -- (0,0) -- (\ellAx,\ellAx) -- cycle;
        \fill[myorange,opacity=0.8] (-\ellAx,-\ellAx) -- (0,0) -- (\ellAx,-\ellAx) -- cycle;
    \end{scope}
    
    % --- Axes ---
    \draw[->] (-\ellAx,0) -- (\ellAx,0) node[right] {$x_1$};
    \draw[->] (0,-\ellAx) -- (0,\ellAx) node[above] {$x_2$} node[pos = 0.9, right, color = myblue] {$E$};
    
    % --- Intersection points of circle ---
    \node[ left] at (0,\xradius) {$r$};
    \end{tikzpicture}

             \caption{The volume of  $E\cap B_r(0)$ equals half the volume of $B_r(0)$.}
            \label{fig:subfig1}
        \end{subfigure}

        \begin{subfigure}[b]{\linewidth}
        \centering
        
        \begin{tikzpicture}[scale=.8,>=stealth]
        
        % --- Parameters ---
        \pgfmathsetmacro{\rnum}{1.5}       % r
        \pgfmathsetmacro{\twornum}{3}
        % \newlength{\xradius} 
        \setlength{\xradius}{\rnum cm}   % y-semi-axis
        \newlength{\yradius} \setlength{\yradius}{\twornum cm}
        \pgfmathsetmacro{\Axnum}{1.8*\rnum}
        \pgfmathsetmacro{\Aynum}{\rnum}
        % \newlength{\ellAx}  
        \setlength{\ellAx}{\Axnum cm}
        
        % --- Ball of radius r ---
        \fill[myorange!35] (0,0) circle [radius=\xradius];   % fill the circle
        
        % --- Light green E (|x2| >= |x1|) ---
        \fill[myblue!35] 
            (-0.5*\ellAx,\ellAx) -- (-0.5*\ellAx,\ellAx) -- (0,0) -- (0.5*\ellAx,\ellAx) -- (0.5*\ellAx,\ellAx) -- cycle;
        \fill[myblue!35] 
            (-0.5*\ellAx,-\ellAx) -- (-0.5*\ellAx,-\ellAx) -- (0,0) -- (0.5*\ellAx,-\ellAx) -- (0.5*\ellAx,-\ellAx) -- cycle;
        
        % --- Dark green E ∩ {|x2| <= r} ---
        \fill[myblue!60] (-0.5*\xradius,\xradius) -- (0,0) -- (0.5*\xradius,\xradius) -- cycle;
        \fill[myblue!60] (-0.5*\xradius,-\xradius) -- (0,0) -- (0.5*\xradius,-\xradius) -- cycle;
        
        % --- E ∩ circle (myorange) ---
        \begin{scope}
            \clip (0,0) circle [radius=\xradius];
            \fill[myorange,opacity=0.8] (-0.5*\ellAx,\ellAx) -- (0,0) -- (0.5*\ellAx,\ellAx) -- cycle;
            \fill[myorange,opacity=0.8] (-0.5*\ellAx,-\ellAx) -- (0,0) -- (0.5*\ellAx,-\ellAx) -- cycle;
        \end{scope}
        
        % --- Axes ---
        \draw[->] (-\ellAx,0) -- (\ellAx,0) node[right] {$x_1$};
        \draw[->] (0,-\ellAx) -- (0,\ellAx) node[above] {$x_2$} node[pos = 0.9, right, color = myblue] {$A(E)$};
        
        % --- Intersection points of circle ---
        % \node[below ] at (\xradius,0) {$r$};
        \node[ left] at (0,\xradius) {$r$};
        \end{tikzpicture}
        
            \caption{The volume of $A(E)\cap  B_r(0)$ is smaller than $r^2$, the volume  of the dark colored area.}
            \label{fig:subfig2}
        \end{subfigure}
    \end{multicols}
    \label{fig:density under linear transformation}
\end{figure}
We remark that nevertheless, applying the transformation formula and exploiting smoothness of the Jacobian determinant, one finds    that
\begin{equation*}
   \lim_{r\downarrow 0} \frac{\lambda(f(E\cap B_r(x))}{\lambda(f(B_r(x)))} = \lim_{r\downarrow 0} \frac{\lambda(E\cap B_r(x)}{\lambda(  B_r(x))}
\end{equation*}
for every diffeomorphism $f:M\to \R^m$.
The Riemannian nature of the density of $E$ at $x$ is reflected in the following result on the density of a point in  normal coordinates. The proof relies on the following estimate for the doubling constant from~\cite[inequality~(2)]{Sal92_UniformlyEllipticOperators}:
If $r$ is sufficiently small such that $B(x,2r)$ is  relatively compact  and if $\mathrm{Ric}_{|B(x,2r)}\geq -K$ for some  $K \geq 0$, then
\begin{equation}\label{eqn: doubling estimate}
    \vol  \big(B(x,r)\big) \leq \vol  \big(B(x,s)\big) \left( \frac{r}{s} \right)^{m} \exp\big( (m-1)K r \big)
  \quad \text{for all } x \in M \text{ and } 0<s < r .
\end{equation}

\begin{corollary}\label{cor: density of of sets in normal coordinates}
    The density of $E\subset M$ at $x\in M$ coincides with the density of its image under a normal chart centered in $x$.
\end{corollary}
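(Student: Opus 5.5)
The plan is to compare, for small $r$, the Riemannian ball $B(x,r)$ with its image under a normal chart $\varphi=\exp_x\inv$ (composed with a linear isometry $(T_xM,g_x)\cong(\R^m,e)$), centered at $\varphi(x)=0$. The key geometric fact is that in normal coordinates geodesic balls centered at $x$ are exactly Euclidean balls: for $r<R_x$ one has $\varphi(B(x,r))=B_e(0,r)$, because radial geodesics from $x$ are straight lines through $0$ and $\dist(x,y)=|\varphi(y)|_e$. Hence no nesting constants $L$ are needed as in Lemma~\ref{lem: essential boundary in charts}. The only discrepancy between the two density quotients comes from the volume density $\sqrt{\det\tilde g}$ with $\tilde g=(\varphi\inv)^*g$. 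In normal coordinates this density satisfies $\sqrt{\det\tilde g}(0)=1$ and is smooth, so $|\sqrt{\det\tilde g}(z)-1|\le C|z|$ on a small ball.

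First, I write
\begin{equation*}
\Theta_E(x,r)=\frac{\int_{\varphi(E)\cap B_e(0,r)}\sqrt{\det\tilde g}\,\d\lambda}{\int_{B_e(0,r)}\sqrt{\det\tilde g}\,\d\lambda}.
\end{equation*}
Second, I bound the density from above and below by $1\pm Cr$ on $B_e(0,r)$. This gives
\begin{equation*}
\frac{1-Cr}{1+Cr}\,\Theta_{\varphi(E)}(0,r)\le\Theta_E(x,r)\le\frac{1+Cr}{1-Cr}\,\Theta_{\varphi(E)}(0,r).
\end{equation*}
Letting $r\downarrow0$, one limit exists if and only if the other does, and then they coincide.

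The paper announces the use of the doubling estimate \eqref{eqn: doubling estimate}. If one prefers to avoid the expansion of $\det\tilde g$ in normal coordinates, an alternative route is available: compare $\vol(B(x,r))$ with $\omega_m r^m$ through \eqref{eqn: doubling estimate} by letting $s\downarrow0$ and using $\vol(B(x,s))/(\omega_m s^m)\to1$. This gives $\vol(B(x,r))\le\omega_m r^m e^{(m-1)Kr}$. A reverse inequality follows from the local Lipschitz bound on the chart with constants tending to $1$.

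I expect the main obstacle to be precisely this step: justifying that the ratio of the Riemannian and Euclidean volume elements tends to $1$ uniformly on $B_e(0,r)$, equivalently that $\lambda(B_e(0,r))/\vol(B(x,r))\to1$, rather than merely staying bounded. The general-chart argument of Lemma~\ref{lem: essential boundary in charts} loses the constants $L^{2m}C_vC_\varrho$, and Example~\ref{ex: density not preserved under linear transformation} shows such constants cannot be discarded in general. I would therefore first rely on $g_{ij}(0)=\delta_{ij}$ in normal coordinates together with continuity of $\det\tilde g$. I would fall back on \eqref{eqn: doubling estimate} only if a cleaner intrinsic argument is desired.
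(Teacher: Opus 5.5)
Your proof is correct, and it takes a different and leaner route than the paper's.

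The paper does not use the exact identity $\varphi(B(x,r))=B_e(0,r)$. Instead it recycles the machinery of Lemma~\ref{lem: essential boundary in charts}:
\begin{itemize}
\item it nests $B(x,L^{-1}r)\subset\varphi^{-1}(B_e(\varphi(x),r))\subset B(x,Lr)$;
\item it controls $\vol(B(x,Lr))/\vol(B(x,L^{-1}r))$ with the Ricci-dependent doubling estimate \eqref{eqn: doubling estimate};
\item it uses $g_{ij}=\delta_{ij}+\calO(\dist(x,\cdot))$ in normal coordinates to show that $L(r_0)$, $C_\varrho(r_0)$ and $C_v(r_0)$ all tend to $1$.
\end{itemize}
This yields $\Theta_{\varphi(E)}(\varphi(x))\le\Theta_E(x)$, and the reverse inequality follows analogously.

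You invoke the Gauss lemma instead: geodesic balls about $x$ of radius below $R_x$ are metric balls of the same radius, and this needs no completeness. That removes the nesting constant and the doubling constant entirely. Only the continuity of $\sqrt{\det\tilde g}$ at $0$, with value $1$, matters, and no curvature input enters.

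Your comparison also has a concrete advantage. You compare the two quotients at the same radius and get $|\Theta_E(x,r)-\Theta_{\varphi(E)}(0,r)|\le 2Cr/(1-Cr)\to 0$. So existence of either limit immediately gives existence and equality of the other, without separate $\limsup$/$\liminf$ bookkeeping. The paper's estimate pairs the radius $r$ with $L(r_0)r$.

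What the paper's route buys is generality of the chart. It only uses that the coordinate metric equals $\delta_{ij}$ at $x$. So it applies to any chart whose differential at $x$ is a linear isometry onto $(\R^m,e)$, whereas your exact ball identity is special to normal coordinates. Your argument does adapt to that case as well: squeeze $\varphi(B(x,r))$ between $B_e(0,r/L)$ and $B_e(0,Lr)$ and do the annulus estimate in $\R^m$, where the relative volume is $L^m-L^{-m}\to 0$. This still avoids \eqref{eqn: doubling estimate}.

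Finally, the fallback via \eqref{eqn: doubling estimate} and the ``main obstacle'' you flag are unnecessary. The condition $\tilde g_{ij}(0)=\delta_{ij}$ together with smoothness already gives the uniform bound $1-Cr\le\sqrt{\det\tilde g}\le 1+Cr$ that you use.
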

\begin{proof}
    Consider the constants $L(r_0), C_\varrho$ and $C_v$ from the proof of Lemma~\ref{lem: essential boundary in charts}. From the construction in \cite[Lemma 3.24]{Gri12_HeatKernelAnalysis} it follows that we can choose 
    \begin{align*}
        L(r_0) &= \sup_{y\in \varphi\inv(B_e(\varphi(x),r_0))}\max\set{ \norm[]{[g_{ij}](y)},1/ \norm[]{[g_{ij}]\inv(y)} } \\
        &= \sup_{y\in \varphi\inv(B_e(\varphi(x),r_0))}\max\set{ \lambda_{\max}(y), 1/\lambda_{\min}(y) },
    \end{align*}
    where $\lambda_{\max}(y), \lambda_{\min}(y)$ are the largest and smallest eigenvalues of the matrix $[g_{ij}](y)$.
    By \eqref{eqn: doubling estimate}, for sufficiently small $r_0$ and a local lower Ricci curvature bound $K$ we have
    \begin{equation*}
        C_v(r_0) = \left(\tfrac{1}{L(r_0)} \right)^{2m}\exp((m-1)K r_0).
    \end{equation*}
    Now suppose $\varphi$ is a normal chart in $x$ and  $L(r_0)r_0$ is smaller than the injectivity radius at $x$. Then 
    $$g_{ij}(y)=\delta_{ij}+\calO(\dist(x,y)) \tfor y\in B(x,L(r_0)r_0)$$
    (see e.g. \cite{BerGetVer04_HeatKernelsDirac}).
     Therefore, we get
    \begin{equation*}
        L(r_0)\to 1,\quad C_\varrho(r_0)\to 1 \qand C_v(r_0)\to 1 \quad\text{ as } r_0\to 0.
    \end{equation*}
    Thus, since for arbitrary $r< r_0$  we have
    \begin{equation*}
        \Theta_{\varphi(E)}(\varphi(x),r) 
        \leq  {L(r_0)}^{2m}\,C_v(r_0) \,C_\varrho(r_0)\, \Theta_E (x,L(r_0) r) ,
    \end{equation*}
    taking the limit $r_0\to0$ implies that $\Theta_{\varphi(E)}(\varphi(x)) \leq \Theta_E (x)  $.
   The other direction works analogously.
\end{proof}

With the machinery developed in the previous sections, the formulation of Federer's theorem~\cite[Theorem 3.61]{AmbFusPal00_FunctionsBoundedVariation}  translates verbatim to the Riemannian setting:
\begin{theorem}[Federer]\label{thm: Federer}
If $E$   has finite perimeter in $\Omega$, then 
$$
(\partial^* E \cap \Omega) \subset E^{(1/2)}\subset \partial^{\ess}E,
$$
and
$$
\mathcal{H}^{m-1}\big( \Omega \setminus (E^{(0)} \cup \partial^*E \cup E^{(1)}) \big) = 0.
$$
In particular, $E$ has density either $0$, $\tfrac{1}{2}$, or $1$ at 
$\mathcal{H}^{m-1}$-almost every $x \in \Omega$, and 
$\mathcal{H}^{m-1}$-almost every $x \in \partial^{\ess} E \cap \Omega$ belongs to $\partial^*E$.
\end{theorem}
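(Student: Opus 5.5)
The plan is to reduce everything to charts and invoke the Euclidean Federer theorem, using the localization lemmas already established. The one statement that does not transfer directly is density $1/2$, since Example~\ref{ex: density not preserved under linear transformation} shows that densities in $(0,1)$ are not chart-invariant. For this part I will work in normal coordinates.

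\emph{First inclusion.} Let $x\in\partial^*E\cap\Omega$ and let $\varphi$ be a normal chart centered at $x$. By Corollary~\ref{cor: density of of sets in normal coordinates}, $\Theta_E(x)=\Theta_{\varphi(E)}(0)$. By Theorem~\ref{thm: De~Giorgi}(iv), the blow-ups $E_{x,r}$ converge locally in measure to the half space $H^E(x)\subset T_xM$. The boundary of $H^E(x)$ is a hyperplane and hence $\vol_{g_x}$-null, so Lemma~\ref{lem: convergence of blow-ups in charts} applies. Since $\varphi$ is normal, $d\varphi_x$ is an isometry $(T_xM,g_x)\to(\R^m,e)$, so $\varphi(E)/r$ converges locally in Lebesgue measure to the Euclidean half space $d\varphi_x(H^E(x))$. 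Convergence in $L^1(B_e(0,1))$ then gives
\[
\Theta_{\varphi(E)}(0,r)=\frac{\lambda\bigl(\tfrac{\varphi(E)}{r}\cap B_e(0,1)\bigr)}{\lambda(B_e(0,1))}\to\tfrac12 .
\]
Hence $\Theta_E(x)=1/2$. The second inclusion $E^{(1/2)}\subset\partial^{\ess}E$ is immediate from the definition.

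\emph{Null-set statement.} Cover $\Omega$ by countably many relatively compact charts $(U,\varphi)$ with $U\subset\Omega$. Fix one such chart. By Remark~\ref{rmk: pushfoward of vector measures} together with local bi-Lipschitz equivalence of $\tilde g$ and $e$, the set $\varphi(E)$ has locally finite Euclidean perimeter in $\varphi(U)$. The Euclidean Federer theorem \cite[Theorem 3.61]{AmbFusPal00_FunctionsBoundedVariation}, applied in $\varphi(U)$, gives
\[
\calH^{m-1}_e\Bigl(\varphi(U)\setminus\bigl(\varphi(E)^{(0)}\cup\partial^*\varphi(E)\cup\varphi(E)^{(1)}\bigr)\Bigr)=0 .
\]
The three pieces transfer back to $M$ as follows:
\begin{itemize}
\item densities $0$ and $1$ are chart-invariant, by the proof of Lemma~\ref{lem: essential boundary in charts};
\item the reduced boundary transfers by Corollary~\ref{cor: chart wise reduced boundary}, which gives $\varphi^{-1}(\partial^*\varphi(E)\cap\varphi(U))=\partial^*E\cap U$;
\item $\calH^{m-1}$-null sets are preserved under the locally bi-Lipschitz map $\varphi^{-1}$, by \eqref{eqn: Lipschitz estimate for Hausdorff measures}.
\end{itemize}
Therefore $\calH^{m-1}\bigl(U\setminus(E^{(0)}\cup\partial^*E\cup E^{(1)})\bigr)=0$, and a countable union over the charts yields the claim on $\Omega$.

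\emph{Consequences.} Off an $\calH^{m-1}$-null set, every point of $\Omega$ lies in $E^{(0)}$, $E^{(1)}$, or $\partial^*E\subset E^{(1/2)}$; so the density is $0$, $1/2$ or $1$ there. Points of $\partial^{\ess}E$ have density in $(0,1)$ and so are excluded from $E^{(0)}\cup E^{(1)}$; hence $\calH^{m-1}$-almost every point of $\partial^{\ess}E\cap\Omega$ lies in $\partial^*E$.

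The main obstacle is the first inclusion. Density $1/2$ is not preserved by arbitrary charts, so I must pass through normal coordinates (Corollary~\ref{cor: density of of sets in normal coordinates}). I also need to check that the blow-up convergence of Theorem~\ref{thm: De~Giorgi}(iv), together with Lemma~\ref{lem: convergence of blow-ups in charts}, indeed lands on a Euclidean half space under an isometric $d\varphi_x$. The rest is bookkeeping with chart invariance.
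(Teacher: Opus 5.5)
Your proof is correct and follows the paper's route: a normal chart together with Corollary~\ref{cor: density of of sets in normal coordinates} for $\partial^*E\cap\Omega\subset E^{(1/2)}$, and a chartwise transfer of the Euclidean Federer theorem plus a countable covering for the null-set statement. The one difference is in the first inclusion. The paper uses Corollary~\ref{cor: chart wise reduced boundary} to get $\varphi(x)\in\partial^*\varphi(E)$ and then cites the Euclidean inclusion $\partial^*\subset E^{(1/2)}$, whereas you recompute the density $1/2$ from the blow-up limit via Theorem~\ref{thm: De~Giorgi}(iv) and Lemma~\ref{lem: convergence of blow-ups in charts}. That is valid but slightly more roundabout, and the isometry of $d\varphi_x$ is not needed, since any linear image of a half space through $0$ still fills exactly half of the Euclidean unit ball.
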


\begin{proof}
    Let $x\in \partial^*E\cap\Omega$ and let $(U,\varphi)$ be a normal chart centered in $x$. Then by Corollary~\ref{cor: chart wise reduced boundary} we have $\varphi(x)\in \partial^*(\varphi(E))\cap \varphi(\Omega)\subset\R^m$, hence Federer's theorem in Euclidean space implies that $\varphi(x)\in \varphi(E)^{(1/2)}$. Since by Corollary~\ref{cor: density of of sets in normal coordinates} the density of a set at a point is invariant under a normal chart centered in that point, it follows that $x\in E^{(1/2)}$ and the first inclusion is proved. The second inclusion is an immediate consequence of the definition of the essential boundary. Finally, using the Euclidean result, for an arbitrary chart $(U,\varphi)$ we have 
    \begin{equation*}
        \mathcal{H}^{m-1}_{e}\big(\varphi( \Omega) \setminus (\varphi(E)^{(0)} \cup \partial^*(\varphi(E) )\cup \varphi(E)^{(1)}) \big) = 0
    \end{equation*}
    and since nullsets are invariant under Hausdorff measures with respect to a change of the Riemannian metric, using the same arguments as before and the pushforward formula for Hausdorff measures, we get
    \begin{equation*}
        0
        = \mathcal{H}^{m-1}_{\tilde{g}}\big(\varphi( \Omega) \setminus (\varphi(E)^{(0)} \cup \partial^*(\varphi(E) )\cup \varphi(E)^{(1)}) \big) 
        = \mathcal{H}^{m-1}_g\big( (\Omega\cap U )\setminus (E^{(0)} \cup \partial^*E \cup E^{(1)}) \big) 
    \end{equation*}    
    and the full statement follows from a straightforward covering argument.
\end{proof}

As we have observed earlier, the density of a measurable set $E$ at $x$ equals the density of its image  $\varphi(E)$ under a chart $\varphi$ at $\varphi(x)$ for  almost every $x\in M$. Using Federer's theorem and our previous results regarding pointwise properties of the reduced and essential boundary, we can refine this statement as follows:
\begin{corollary} 
    If $E$ has finite perimeter in the chart domain of an arbitrary local chart $(U,\varphi)$   around $x$, then
    \begin{equation*}
        \calH^{m-1}_g \big(E^{(1/2)} \Delta (\varphi\inv(\varphi(E)^{(1/2)}))\big) = 0.
    \end{equation*}
    In other words, $\varphi(E)$ has density $1/2$ at $\varphi(x)$  at $\calH^{m-1}_g$-almost every $x\in E^{(1/2)}$.  
\end{corollary}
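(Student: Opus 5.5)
The plan is to sandwich both $E^{(1/2)}\cap U$ and $\varphi\inv(\varphi(E)^{(1/2)})$ between $\partial^*E\cap U$ and a common $\calH^{m-1}$-null set, using Federer's theorem once on $M$ and once in the chart. We work in $U$ with $\Omega=U$ in Theorem~\ref{thm: Federer}. Since $\mathbf{1}_{\varphi(E)}=\mathbf{1}_E\circ\varphi\inv$, Remark~\ref{rmk: pushfoward of vector measures} shows that $\varphi(E)$ has finite perimeter in $\varphi(U)$ with respect to $\tilde g$. After shrinking $U$ to a relatively compact subchart (which is harmless, since countably many subcharts cover $U$ and a countable union of null sets is null), $\tilde g$ and $e$ are bi-Lipschitz equivalent, so $\varphi(E)$ also has finite Euclidean perimeter in $\varphi(U)$.

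\emph{Step 1 (Riemannian side).} Theorem~\ref{thm: Federer} on $\Omega=U$ gives $\partial^*E\cap U\subset E^{(1/2)}$. Since a point of density $1/2$ lies neither in $E^{(0)}$ nor in $E^{(1)}$, it also gives $\calH^{m-1}_g\big((E^{(1/2)}\cap U)\setminus\partial^*E\big)=0$. Hence $E^{(1/2)}\cap U$ and $\partial^*E\cap U$ differ by an $\calH^{m-1}_g$-null set.

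\emph{Step 2 (Euclidean side).} The Euclidean Federer theorem in $\varphi(U)$ gives, in the same way, that $\varphi(E)^{(1/2)}\cap\varphi(U)$ and $\partial^*\varphi(E)\cap\varphi(U)$ differ by an $\calH^{m-1}_e$-null set. By Corollary~\ref{cor: chart wise reduced boundary}, $\varphi\inv(\partial^*\varphi(E)\cap\varphi(U))=\partial^*E\cap U$. Since $\varphi\inv$ is locally bi-Lipschitz, \eqref{eqn: Lipschitz estimate for Hausdorff measures} and \eqref{eqn: pushforward of Hausdorff measures} show that it maps $\calH^{m-1}_e$-null sets to $\calH^{m-1}_g$-null sets. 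Therefore $\varphi\inv(\varphi(E)^{(1/2)})$ differs from $\partial^*E\cap U$ by an $\calH^{m-1}_g$-null set.

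\emph{Step 3.} Combining Steps 1 and 2 by the triangle inequality for symmetric differences yields the claim, with the symmetric difference understood inside $U$. The ``in other words'' statement is then just a restatement: for $\calH^{m-1}_g$-a.e.\ $x\in E^{(1/2)}\cap U$, the point $\varphi(x)$ lies in $\varphi(E)^{(1/2)}$.

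The main obstacle is not the density comparison, which fails pointwise by Example~\ref{ex: density not preserved under linear transformation}. Rather, it is making sure that Corollary~\ref{cor: chart wise reduced boundary} and the null-set transfer are applied in the right measure. The Euclidean reduced boundary must be the one relative to the Euclidean perimeter, and this is exactly what the corollary provides. I would also note explicitly why the Euclidean Federer theorem may be applied: $\varphi(E)$ has finite perimeter in $\varphi(U)$ by Remark~\ref{rmk: pushfoward of vector measures}(i), after the relatively compact shrinking described above.
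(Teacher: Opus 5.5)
Your proposal is correct and is essentially the argument the paper intends (the paper states this corollary without a separate proof, pointing only to Federer's theorem and the earlier chartwise results): you apply Theorem~\ref{thm: Federer} on $U$ and its Euclidean counterpart on $\varphi(U)$, identify the two reduced boundaries via Corollary~\ref{cor: chart wise reduced boundary}, and transfer $\mathcal{H}^{m-1}$-null sets through the locally bi-Lipschitz map $\varphi^{-1}$. Reading the symmetric difference inside $U$ is the right interpretation of the statement. The shrinking to relatively compact subcharts is harmless but not strictly needed: the Euclidean Federer theorem only requires locally finite perimeter, and that property does not depend on the metric.
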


We compare this result to the setting of a complete metric measure space $(\calX,d,\mu)$ satisfying a doubling property and a $1$-Poincar\'e inequality (PI space). Since general PI spaces do not have a fixed dimension, one uses the following construction:
    Let $B_r $ denote a metric ball  of radius $r $ in $\calX$  and let $\calS = \calS_{\mu,d}$ be the \textit{generalized codimension-$1$ spherical Hausdorff} measure given  by
    \begin{equation*}
     \calS_{\mu,d}(A) := \lim_{r  \to 0} \inf \biggl\{   \sum_j \frac{\mu(B_{r _j})}{r _j} :\, A\subset \bigcup_j B_{r _j},\, r _j\leq r   \biggr\} .
     \end{equation*} 
     The following result is due to~\cite[Theorems 5.3]{Amb02_FinePropertiesSets} and~\cite[Theorem 4.6]{AmbMirPal04_SpecialFunctionsBounded}:
    If $E\subset M$ is a set of finite perimeter, then there exist $0<c_1<c_2<\infty$ and a Borel measurable density function $\theta:\partial^{\ess}E\to [c_1,c_2]$ such that
    \begin{equation}\label{eqn: federer on PI spaces}
        \TV{\mathbf{1}_E}(A) = \int_{\partial^{\ess}E\cap A} \theta\d \calS_{\mu,g} \tfor A\in \scrB{(M)}.
    \end{equation}
    For the Riemannian distance function  and the volume measure  on $M$, the measure $\calS$ is equivalent to the classical Hausdorff measure $\calH^{m-1}$. While the existence  of the density function in \eqref{eqn: federer on PI spaces} is obtained in a nonconstructive way, Theorem~\ref{thm: Federer} refines this result to the explicit density $\theta\equiv 1$ with respect to $\calH^{m-1}$ and we obtain
    \begin{equation*}
        \TV{\mathbf{1}_E} = \calH^{m-1}\llcorner\partial^{\ess} E
    \end{equation*}
    in analogy to the Euclidean case. 
    Notice, moreover, that $\Omega$ has finite perimeter if and only if  $\calH^{m-1}(\partial^{\ess} \Omega)<\infty$. This result, too, is due to Federer in the Euclidean case and was proved for metric measure  spaces in \cite{Lah20_FederersCharacterizationSets}. 

%%%%%%%%%%%%%%%%%%%%%%%%%%%%%%%%%%%%%%%%%%%%%%%%%%%%%%%%%%%
\section{Theory for  boundary value problems and a mixed boundary capillarity problem} 
\label{sec: BVP and capillarity}
%%%%%%%%%%%%%%%%%%%%%%%%%%%%%%%%%%%%%%%%%%%%%%%%%%%%%%%%%%%%

\subsection{Extensions, boundary traces and the Gauss--Green formula}\label{sec: gauss green}
 We begin by collecting the essential ingredients for boundary value problems from the metric measure space literature and adapt them to the Riemannian setting with the classical codimension-$1$ Hausdorff measures on~$M$.
We recall the definitions of extension domains from~\cite{CapKoiRaj24_SobolevBVPerimeter}.
\begin{definition} \label{def: s-BV extension domain}
A domain $\Omega\subset M$ is called a \emph{BV-extension domain} if there exists a   map  $\calE:BV(\Omega) \to BV(M)$ with $\calE u|_{\Omega} = u$ and a constant $C>0$ such that $\norm[BV(M)]{\calE u}\leq C\norm[BV(\Omega)]{u}$ for    $u \in BV(\Omega)$. If in addition, $\calE$ can be chosen  such that  the total variation satisfies $|\Var{(\calE u)}|(\partial \Omega) = 0$, then $\Omega$ is called \textit{strong BV-extension domain}.
\end{definition}

\begin{remark}\label{rmk: properties of extension domains}
    \begin{enumerate}[leftmargin = *,   labelindent = 0em,]
        \item By \cite[Theorem 3.1]{Lah15_ExtensionsTracesFunctions} a sufficient condition for a bounded domain $\Omega$ in a complete PI space to be a strong $BV$-extension domain is that $\Omega$ be \textit{uniform} in the following sense: there exists a constant $C\geq 1$ such that  any two points $x,y\in \Omega$ can be joined by a rectifiable curve $\gamma$ whose length $\ell(\gamma)$ satisfies
    \begin{equation*}
        \ell(\gamma)\leq C \dist(x,y) \qand \dist(\gamma(t),\partial\Omega)\geq C\inv \min\set{t,\ell(\gamma)-t}, \quad t\in[0,\ell(\gamma)].
    \end{equation*}
    By classical results, this is satisfied by bounded Lipschitz domains in $\R^m$ and carries over to bounded Lipschitz domains in $M$.  
    More recent results  show that in complete PI spaces,  an open and bounded domain $\Omega$ is a strong $BV$-extension domain if and only if it is a $W^{1,1}$-extension domain (cf. \cite[Theorems 1.5, 1.6]{CapKoiRaj24_SobolevBVPerimeter} and the prior Remark that the boundary of Sobolev extension domains in PI spaces have zero measure). 
    \item \textit{Compactness in $BV(\Omega)$.}  Assume that $\Omega\subset M$ is a  BV-extension domain. If $(u_n)\subset BV(\Omega)$ is uniformly bounded in $BV(\Omega)$, then there exists $u\in BV(\Omega)$ such that along a subsequence, $u_n \to  u$ in $L^1(\Omega)$ and $Du_n\to  Du$ weakly$^*$. Indeed, if $\calE$ denotes the extension operator, then $\norm[BV(M)]{\calE u_n} \leq \norm[BV(\Omega)]{u_n}$ is uniformly bounded. Now one can apply Step 1 from the proof of the local compactness result  Corollary~\ref{Cor: compactness in BV_loc} to the set $\Omega\subset\subset M$.
    \item \textit{Smooth approximation up to the boundary.}
    If $\Omega$ is a strong BV-extension domain, then every $u\in BV(\Omega)$ can be approximated strictly by a sequence in $C^{\infty}(\bar{\Omega})$: Let $u_n\in C^\infty(M)\cap W^{1,1}(M)$ be a strict approximation of the extended function $\calE u\in BV(M)$. Since we have $\TV{(\calE u)}(\partial\Omega) = 0$, the Portmanteau theorem~\cite[4.10]{Els18_MassUndIntegrationstheorie} implies that $\TV{u_n}(\Omega)$ converges to $\TV{(\calE u)}(\Omega) = \TV{u}(\Omega)$, and clearly, $(u_n)$  converges to $u$ in $L^1(\Omega)$. Hence the sequence lies in $C^\infty(\bar{\Omega})$ and converges to $u$ strictly in~$BV(\Omega)$.
    \end{enumerate}
\end{remark}

\begin{definition} \label{def: BV trace operator}
   A domain $\Omega\subset M$ is said to admit a    \textit{($BV$-)trace operator} if for every $u\in BV(\Omega)$ there exists a measurable function $\trace[\Omega](u):\partial\Omega\to\R$ such that for $\calH^{m-1}$-almost every $x\in \partial\Omega$ one has
   \begin{equation*}
        \lim_{r\downarrow 0} \frac{1}{r^m}\int_{\Omega\cap B(x,r)} |u(y)-\trace[\Omega] (u)(x)| \d\vol(y) =  0.
    \end{equation*}
\end{definition}

\begin{theorem}
% [Bounded trace operator]
\label{thm: Bounded trace operator}
A  strong $BV$-extension domain $\Omega$ 
admits a $BV$-trace operator. If in addition, $\Omega$ admits a constant $C>0$ such that for $\calH^{m-1}$-almost every $x\in \partial^{*}\Omega$ one has
\begin{equation}\label{eqn: condition for L^1 trace bound}
    \calH^{m-1}(\partial^{*}\Omega\cap B(x,r)) \leq C  \frac{\vol(B(x,r))}{r} \tforall r\in (0,2\diam(\Omega)),
\end{equation}
then the trace operator $u\mapsto\trace[\Omega](u)$ is a bounded linear operator from $BV(\Omega)$ to $L^1(\partial^*\Omega;\calH^{m-1})$. Similarly, $\trace[\Omega]: BV(\Omega)\to L^1(\partial\Omega;\calH^{m-1})$ is bounded if condition \eqref{eqn: condition for L^1 trace bound} is satisfied for the full boundary $\partial\Omega$.

\end{theorem}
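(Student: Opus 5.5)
The plan is to reduce the statement to the trace theory for $BV$ functions on PI spaces, in particular \cite{LahSha18_TraceTheoremsFunctions}, using the localization of Remark~\ref{rmk: local doubling condition and Poincare inequality   in arbitrary Riemannian manifolds}. Then the Riemannian structure theory of Section~\ref{sec: structure theory} converts the codimension-$1$ spherical measure $\calS_{\vol,g}$ into $\calH^{m-1}$.

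\textbf{Existence of traces.} Take $u\in BV(\Omega)$ and its strong extension $\calE u\in BV(M)$ with $|D\calE u|(\partial\Omega)=0$. Cover $\partial\Omega$ by countably many relatively compact open sets $V$. Each $V$ embeds, via the double construction, into a closed manifold, which is a complete PI space, and balls $B(x,r)$ with $x\in V$ and $r$ small stay inside. The trace condition in Definition~\ref{def: BV trace operator} only involves small balls, so it is a local statement and it suffices to prove it on each $V$.

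On such a $V$ we follow the Lahti--Shanmugalingam argument. Put $w=\calE u$. For $\calS$-a.e. $x$ outside the jump set of $w$, the function $w$ has an approximate limit $\tilde w(x)$ in the $L^1$-average sense. This uses the Lebesgue point result for $BV$ functions in PI spaces from \cite{AmbMirPal04_SpecialFunctionsBounded}, or the coarea formula (Theorem~\ref{thm: coarea formula}) together with Federer's theorem (Theorem~\ref{thm: Federer}) applied to superlevel sets.

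Jump points are $\sigma$-finite for $\calS$ and carry $|Dw|$-mass, since $|Dw|\gtrsim\calS\llcorner J_w$. Because $|Dw|(\partial\Omega)=0$, the set $J_w\cap\partial\Omega$ is $\calS$-null. We therefore define $\trace[\Omega](u)(x):=\tilde w(x)$. Two facts then give the limit in Definition~\ref{def: BV trace operator}:
\begin{itemize}
\item $\frac1{r^m}\lesssim \frac1{\vol(B(x,r))}$ locally, with constants from doubling;
\item $\int_{\Omega\cap B}|u-\tilde w(x)|\le\int_B|w-\tilde w(x)|$.
\end{itemize}
Finally, $\calS$ and $\calH^{m-1}$ have the same null sets locally, since they are comparable by the local doubling and the Ahlfors-type bounds $\vol(B(x,r))\simeq r^m$ on compacts. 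So the limit holds $\calH^{m-1}$-a.e. Measurability follows by writing $\tilde w$ as a limit of ball averages along $r=1/k$.

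\textbf{$L^1$ bound.} Here the plan is the standard estimate for the truncated averages. For $x\in\partial^*\Omega$ and $0<r<2\diam\Omega$, write
$$|\tilde w(x)|\le \fint_{B(x,r)}|w|+\sum_k\fint_{B(x,2^{-k}r)}|w-w_{B(x,2^{-k+1}r)}|.$$
Apply the local $1$-Poincar\'e inequality to each term, which bounds it by a multiple of $\frac{2^{-k}r}{\vol(B(x,2^{-k}r))}|Dw|(B(x,\lambda 2^{-k}r))$. Integrating over $\partial^*\Omega$ with respect to $\calH^{m-1}$ and exchanging sums and integrals via Fubini, the hypothesis \eqref{eqn: condition for L^1 trace bound} gives
$$\int_{\partial^*\Omega}\frac{\rho\,\mathbf 1_{B(x,\rho)}(y)}{\vol(B(x,\rho))}\,d\calH^{m-1}(x)\lesssim C$$
uniformly in $y$, using doubling. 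Taking $r\simeq\diam\Omega$ bounds the first term by $\norm[1]{w}\,\calH^{m-1}(\partial^*\Omega)/\vol(\Omega)$, which is finite by the hypothesis. This yields
$$\norm[L^1(\partial^*\Omega)]{\trace[\Omega]u}\lesssim\norm[BV(M)]{\calE u}\le C'\norm[BV(\Omega)]{u}.$$
The same computation works verbatim with $\partial\Omega$ in place of $\partial^*\Omega$.

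\textbf{Main obstacle.} The main obstacle is that the doubling and Poincar\'e constants are only local, so the telescoping and Fubini step need a uniform constant over the scales up to $2\diam\Omega$. I would handle this by working in a relatively compact neighborhood $M_0\supset\bar\Omega$, where Remark~\ref{rmk: local doubling condition and Poincare inequality   in arbitrary Riemannian manifolds} gives uniform constants. This requires $\Omega$ to be relatively compact, which is implicit in the boundedness of $\diam\Omega$ if one also assumes $\bar\Omega$ compact.
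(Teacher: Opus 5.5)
Your existence argument works, and it takes a different route from the paper. The paper first derives the strong measure density condition for strong $BV$-extension domains and then cites \cite[Theorem 5.7]{HakKinLahLeh16_RelaxationIntegralRepresentation}. You instead use Lebesgue points of $w=\calE u$ off $J_w$ together with $|Dw|(\partial\Omega)=0$, which suffices because Definition~\ref{def: BV trace operator} normalizes by $r^{-m}$. (The domination you need is $|Dw|\llcorner J_w\gtrsim (w^\vee-w^\wedge)\,\calH^{m-1}\llcorner J_w$, not $|Dw|\gtrsim\calH^{m-1}\llcorner J_w$, but the conclusion $\calH^{m-1}(J_w\cap\partial\Omega)=0$ is the same.)

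The genuine gap is the $L^1$ bound. Your Fubini estimate controls one scale $\rho$ at a time, but the telescoping sum runs over all $\rho_k=2^{-k}r$. After exchanging sum and integral, what you actually obtain is
\begin{equation*}
\int_{\partial^*\Omega}|\tilde w|\,d\calH^{m-1}\lesssim \int_{\partial^*\Omega}\fint_{B(x,r)}|w|\,d\calH^{m-1}(x)+\int_M K\,d|Dw|,\qquad K(y):=\sum_{k\geq 1}\frac{\rho_k\,\calH^{m-1}\big(\partial^*\Omega\cap B(y,\lambda\rho_k)\big)}{\vol(B(y,\rho_k))}.
\end{equation*}
Each summand of $K$ is $\lesssim C$ by \eqref{eqn: condition for L^1 trace bound}, but there are infinitely many of them. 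Near a flat piece of $\partial\Omega$, every scale $\rho_k\gtrsim d:=\dist(y,\partial^*\Omega)$ contributes a term of order one, so $K(y)\simeq\log(\diam\Omega/d)$. This is not $|Dw|$-integrable in general, even though $|Dw|(\partial\Omega)=0$. For instance, $u=1/\log(1/y_m)$ near a flat boundary piece $\{y_m=0\}$ lies in $W^{1,1}$ and has trace $0$. Yet $\int_\Omega K\,d|Du|\simeq\int_0\frac{dt}{t\log(1/t)}=\infty$, whatever extension is used. So your chain of inequalities ends in $+\infty$; this is the familiar breakdown of telescoping/potential estimates at the endpoint $p=1$.

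The missing idea is to use \eqref{eqn: condition for L^1 trace bound} at a single, stopping-time scale per boundary point. This is what lies behind \cite[Theorem 4.13]{Lah15_ExtensionsTracesFunctions} and \cite[Theorem 5.5]{LahSha18_TraceTheoremsFunctions}, which the paper simply cites. The former is stated for the essential boundary, and the paper uses Theorem~\ref{thm: Federer} to pass to $\partial^*\Omega$. Concretely, the argument runs as follows.
\begin{enumerate}
\item By the coarea formula it suffices to bound $\calH^{m-1}(\{x\in\partial^*\Omega:\tilde w(x)>t\})$. The superlevel set $F_t=\{w>t\}$ has density $1$ at each such $x$.
\item If $F_t$ fills half of $B(x,R)$ with $R\simeq\diam\Omega$, these points cost $\lesssim\vol(F_t)/R$ after a covering.
\item Otherwise there is a smallest dyadic $\rho_x<R$ at which the density of $F_t$ in $B(x,\rho_x)$ drops below $\tfrac12$; by doubling it stays above $\tfrac{1}{2C_d}$.
\item At that scale the relative isoperimetric inequality gives $\vol(B(x,\rho_x))/\rho_x\lesssim P(F_t;B(x,\lambda\rho_x))$, and \eqref{eqn: condition for L^1 trace bound} gives $\calH^{m-1}(\partial^*\Omega\cap B(x,5\lambda\rho_x))\lesssim\vol(B(x,\rho_x))/\rho_x$.
\item A $5r$-covering with disjoint balls $B(x_i,\lambda\rho_{x_i})$ sums these contributions to $\lesssim P(F_t;M)$.
\item Integrating in $t$ gives $\|\tilde w\|_{L^1(\partial^*\Omega)}\lesssim|Dw|(M)+\|w\|_{L^1(M)}/\diam\Omega$.
\end{enumerate}

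You also still owe an argument for linearity. The map $\calE$ in Definition~\ref{def: s-BV extension domain} need not be linear, so $u\mapsto\tilde w$ is linear only once traces are known to be $\calH^{m-1}$-a.e. unique. That requires positive density of $\Omega$ at boundary points, which is supplied by the measure density condition your existence route bypasses. On $\partial^*\Omega$ alone it also follows from density $\tfrac12$ via Theorem~\ref{thm: Federer}.
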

\begin{proof}
    By \cite[Proposition 4.2]{CapKoiLucRaj25_ClosedBVextension$W^11$extension}, a strong $BV$-extension domain satisfies a \textit{strong measure density condition} in the sense that there exists a constant $c>0$ such that
\begin{equation*}
    \Theta_\Omega(x,r)=\frac{\vol(\Omega\cap B(x,r))}{ \vol(B(x,r))} \geq c \tforall x \in \overline{M\setminus \Omega^{(0)}},\, r\in(0,\diam(\Omega)).
\end{equation*}
 Since $\Omega$ is open, we have $\smash{ \Omega \subset \Omega^{(1)}\subset M\setminus\Omega^{(0)} }$, hence~$\smash{ \overline{\Omega}\subset \overline{M\setminus \Omega^{(0)}} }$. Using this condition,
the existence statement is proved in \cite[Theorem 5.7]{HakKinLahLeh16_RelaxationIntegralRepresentation}. The $L^1$-estimate is proved for $\partial^{\ess}\Omega$
in \cite[Theorem 4.13]{Lah15_ExtensionsTracesFunctions}  and by Theorem~\ref{thm: Federer} we may replace the essential boundary $\partial^{\ess}\Omega$ with $\partial^*\Omega$.
The $L^1$-estimate for  $\partial\Omega$ can be found in \cite[Theorem 5.5]{LahSha18_TraceTheoremsFunctions}.
\end{proof}

\begin{remark} \label{rmk: properties of for bounded trace}
    \begin{enumerate}[leftmargin = *,   labelindent = 0em,]
        \item  Condition \eqref{eqn: condition for L^1 trace bound} is necessary and sufficient for the $L^1$-bound but is \textit{not} satisfied by arbitrary uniform domains $\Omega$, even if $\calH^{m-1}(\partial\Omega)<\infty$ (see \cite[Example 5.3]{LahSha18_TraceTheoremsFunctions}). In particular, it is not satisfied for arbitrary strong $BV$-extension domains. However, the condition does hold for   bounded Lipschitz domains where one has $|D\mathbf{1}_\Omega| = \calH^{m-1}\llcorner(\partial\Omega)<\infty$ (see the proof of \cite[Theorem 3.87]{AmbFusPal00_FunctionsBoundedVariation} for the Euclidean case). If the domain is relatively compact, then either of the two $L^1$-bounds implies that $\calH^{m-1}(\partial^{*}\Omega)<\infty$, hence $\Omega$ must have  finite perimeter. For further necessary and sufficient conditions for the existence and $L^1$-boundedness of trace operators see~\cite{Lah15_ExtensionsTracesFunctions,LahSha18_TraceTheoremsFunctions}. 
        \item If $\Omega\subset M$ admits a bounded linear trace operator $\trace[\Omega]: BV(\Omega) \to L^1(\partial\Omega; \calH^{m-1})$, then the trace operator is continuous with respect to the weaker topology of strict convergence on~$BV(\Omega)$. That is, if a sequence $u_n \to u$ strictly in $BV(\Omega)$, then $\trace[\Omega](u_n) \to \trace[\Omega](u)$ in $L^1(\partial\Omega;\calH^{m-1})$ by 
        \cite[Proposition 7.1]{KorLahLiSha19_NotionsDirichletProblem}. The proof of this theorem applies verbatim if one replaces $\partial\Omega$ by $\partial^{*}\Omega$ and the analogous statement remains true if the trace operator is only continuous from $BV(\Omega)$ to $L^1(\partial^{*}\Omega;\calH^{m-1})$.
        \item For an arbitrary domain $\Omega\subset M$ and $u\in BV(\Omega)$ we define the \textit{zero extension of $u$ to $M$} as the function $\bar{u}\in L^1(M)$   given by $\bar{u}_{|\Omega}=u$ and $\bar{u}_{|M\setminus\Omega}=0$. Then we have
        \begin{equation*}
            BV(\bar{\Omega}):= \set{ u\in BV(\Omega):\, \bar{u}\in BV(M)} = \set{ u\in BV(M): {u}_{|M\setminus\Omega}=0}.
        \end{equation*}
        If $\Omega$ is a bounded $BV$-extension domain with a trace operator that admits an $L^1$-bound in the sense of Theorem~\ref{thm: Bounded trace operator}, then one has $\bar{u}\in BV(M)$ for all $u\in BV(\Omega)$ (cf. \cite{Lah15_ExtensionsTracesFunctions,LahSha18_TraceTheoremsFunctions}), hence $BV(\Omega)= BV(\bar{\Omega})$.  
    \end{enumerate}
\end{remark}

\begin{corollary}[Gauss--Green]\label{cor: Gauss--Green}
    Let $\Omega$ be a finite perimeter strong $BV$-extension domain that admits a bounded linear trace operator  $\trace[\Omega]:BV(\Omega)\to L^1(\partial^{*}\Omega;\calH^{m-1})$. Then for $u\in BV(\Omega)$ and all $X\in \Gamma_{C_c^\infty}(M;TM)$ one has
    \begin{equation*}
       - \int_\Omega u\ddiv(X)\d \vol = \int_\Omega X\cdot\d Du + \int_{\partial^{*}\Omega} \trace[\Omega] (u) (X,\normal{\Omega}) \d\calH^{m-1}.
    \end{equation*}
\end{corollary}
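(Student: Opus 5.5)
The plan is to apply the structure theory to the zero extension $\bar u$ of $u$ to $M$. By Remark~\ref{rmk: properties of for bounded trace}(3), or directly from the $L^1$-bounded trace, we have $\bar u\in BV(M)$. The identity to prove is then
\[
-\int_\Omega u\ddiv(X)\d\vol = -\int_M \bar u\ddiv(X)\d\vol = D\bar u[X].
\]
Since $\Omega$ is open, we can split this as
\[
D\bar u[X] = D\bar u\llcorner\Omega\,[X] + D\bar u\llcorner\partial\Omega\,[X] + D\bar u\llcorner(M\setminus\bar\Omega)\,[X].
\]
The first term equals $\int_\Omega X\cdot\d Du$, because $D\bar u\llcorner\Omega = Du$; this follows by testing with vector fields supported in $\Omega$. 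The third term vanishes because $\bar u=0$ on the open set $M\setminus\bar\Omega$. It therefore remains to identify $D\bar u\llcorner\partial\Omega$ as the covector measure $\trace[\Omega](u)\,\normal{\Omega}\,\calH^{m-1}\llcorner\partial^*\Omega$, with the sign convention that $\normal{\Omega}$ is the inner normal.

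\textbf{Step 1: reduce to smooth functions.} Using Remark~\ref{rmk: properties of extension domains}(3), pick $u_n\in C^\infty(\bar\Omega)$ (restrictions of smooth functions on $M$) with $u_n\to u$ strictly in $BV(\Omega)$. By Remark~\ref{rmk: properties of for bounded trace}(2), $\trace[\Omega](u_n)\to\trace[\Omega](u)$ in $L^1(\partial^*\Omega;\calH^{m-1})$. By Corollary~\ref{cor: weak$^*$ and weak convergence of (total) variation measure}(ii), $Du_n\rightharpoonup Du$ weakly, so $\int_\Omega X\cdot\d Du_n\to\int_\Omega X\cdot\d Du$ for bounded continuous $X$. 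The left-hand side converges by $L^1$ convergence. Hence it suffices to prove the formula for $u$ smooth up to the boundary. For such $u$ the trace equals $u|_{\partial\Omega}$ $\calH^{m-1}$-a.e.: this follows from continuity of $u$ together with the measure density condition quoted in the proof of Theorem~\ref{thm: Bounded trace operator}.

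\textbf{Step 2: the smooth case.} For $u\in C^\infty(M)$, write $uX$ as a $C^1_c$ vector field and use $\ddiv(uX)=u\ddiv X+(\nabla u,X)$. This gives
\[
-\int_\Omega u\ddiv X\d\vol=\int_\Omega(\nabla u,X)\d\vol-\int_\Omega\ddiv(uX)\d\vol.
\]
The first term is $\int_\Omega X\cdot\d Du$. The second term is $D\mathbf 1_\Omega[uX]$. By Theorem~\ref{thm: characterization of BV} together with De~Giorgi's Theorem~\ref{thm: De~Giorgi}(iii), this equals
\[
\int_{\partial^*\Omega}(\Sigma^{\Omega},uX)\d\calH^{m-1}.
\]
On $\partial^*\Omega$ we have $\Sigma^\Omega=\normal{\Omega}$ by the definition of the reduced boundary. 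This yields exactly the boundary term $\int_{\partial^*\Omega}u\,(X,\normal{\Omega})\d\calH^{m-1}$. The finite-perimeter hypothesis on $\Omega$ is what makes $D\mathbf 1_\Omega$ a finite covector measure here.

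\textbf{Main obstacle.} The delicate point is Step 1. First, the trace must be continuous along the approximating sequence; this is exactly what the bounded trace operator plus Remark~\ref{rmk: properties of for bounded trace}(2) supply. Second, the approximants must be smooth on a neighbourhood of $\bar\Omega$ so that Step 2 applies; this is what the strong extension property provides, since $|D\calE u|(\partial\Omega)=0$. I will also check that $\trace[\Omega](u_n)=u_n$ on $\partial^*\Omega$. For continuous $u_n$, the Lebesgue-type limit in Definition~\ref{def: BV trace operator} is clearly attained with value $u_n(x)$ at every boundary point. This is consistent because $\vol(\Omega\cap B(x,r))\lesssim r^m$ locally, by local doubling.
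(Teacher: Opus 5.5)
Your Steps 1 and 2 are the paper's argument: strict approximation by functions smooth up to $\bar\Omega$ via the strong extension property, the product rule combined with $|D\mathbf{1}_\Omega| = \calH^{m-1}\llcorner\partial^*\Omega$ in the smooth case, and passage to the limit using weak convergence of $Du_n$ and continuity of the trace under strict convergence. Your opening zero-extension decomposition is never used (invoking the zero-extension remark there would also require $\Omega$ bounded, which is not assumed), while your check that the trace of a function smooth up to the boundary equals its restriction, using the measure density lower bound for uniqueness, fills in a step the paper takes for granted.
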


\begin{proof}
Since $\Omega$ is a strong $BV$-extension domain, there exist $u_n\in C^\infty(\bar{\Omega})$ converging to $u$ strictly in $BV(\Omega)$ by Remark \ref{rmk: properties of extension domains}. Since $\Omega$ has finite perimeter, we obtain
\begin{equation}\label{eqn: gauss thm in BV}
    -\int_{\Omega} \ddiv(\phi X)\d\vol = \int_{\partial\Omega} (\phi X, \normal{\Omega})\d\TV{\mathbf{1}_\Omega}
\end{equation}
for arbitrary $\phi\in C^\infty(M)$ and $X\in \Gamma_{C_c^\infty}(M;TM)$. 
Using the chain rule for $\ddiv(u_n X)$ and applying \eqref{eqn: gauss thm in BV} with $\phi=u_n$ yields
\begin{equation*}
\begin{aligned}
     - \int_\Omega u_n\ddiv(X)\d \vol 
    &=   \int_\Omega du_n(X)\d\vol -\int_\Omega \ddiv(u_nX)\d\vol \\
    &=  \int_\Omega X\cdot\d Du_n + \int_{\partial\Omega} u_n (X,\normal{\Omega}) \d\TV{\mathbf{1}_\Omega}.
\end{aligned}
\end{equation*}
By strict convergence, one has $u_n \to u$ in $L^1(\Omega)$ and $Du_n\to Du$  weakly (cf.\ Corollary~\ref{cor: weak$^*$ and weak convergence of (total) variation measure}). Moreover, continuity of the trace operator with respect to strict convergence (Remark \ref{rmk: properties of for bounded trace}) implies that ${u_n}_{|\partial\Omega} = \trace[\Omega](u_n) \to \trace[\Omega] (u)$ in $L^1(\partial^{*} \Omega;\calH^{m-1})$.  Thus, taking limits on both sides and using $\TV{\mathbf{1}_\Omega} = \calH^{m-1}\llcorner\partial^{*}\Omega$ on $M$, we obtain the claimed formula.
\end{proof}

Since we will repeatedly rely on the validity of Corollary~\ref{cor: Gauss--Green}, we introduce the following convention, which will later be assumed from time to time:

\begin{assumption} \label{ass: gauss green domain}
   The domain $\Omega\subset M$ has finite perimeter, admits a strong $BV$-extension   operator $\calE:BV(\Omega)\to BV(M)$ and   a bounded linear trace operator $\trace[\Omega]: BV(\Omega)\to  L^1(\partial^{*}\Omega;\calH^{m-1})$.
\end{assumption}

\subsection{Strict interior approximation of finite perimeter sets for mixed boundary problems}\label{sec: mixed bvp and strict approximation}

Throughout this section let $\Omega\subset M$ be a domain  and let $\gamma\subset\partial \Omega$ be a  measurable subset with $\calH^{m-1}(\gamma^*)\neq 0$  for  $\gamma^*:=\gamma\cap\partial^*\Omega$. The set $\gamma$  will play the role of a designated Dirichlet boundary.  On the level of finite perimeter sets, $\calH^{m-1}(\partial^* E\cap\gamma)=0$ reflects a Dirichlet boundary condition. 
If $\Omega$ admits a trace operator, then we define by
\begin{equation*}
    BV_{\gamma}(\Omega):= \set{ u\in BV(\Omega):\ \trace[\Omega] (u) = 0,\, \calH^{m-1}\text{-a.e. on }\gamma}
\end{equation*}
the space of \textit{$BV$ functions with zero boundary conditions on $\gamma$}. As before, $\bar{u}$ denotes the zero extension of $u\in BV(\Omega)$.
Note that under Assumption~\ref{ass: gauss green domain},   one has
\begin{equation*}
     {P(E;\Omega\cup \gamma)} 
     % = {P(E;\Omega\cup \gamma^\ast)} 
     = {P(E;\Omega)+\calH^{m-1}(\partial^*E\cap\gamma)}
\end{equation*}
and
\begin{equation*}
    {|D\bar{u}|(\Omega\cup \gamma)} 
    % = {|D\bar{u}|(\Omega\cup \gamma^\ast)} 
    =  {|Du|(\Omega)+ \int_{\gamma^* }\trace[\Omega](u)\d\calH^{m-1}}.
\end{equation*}

\begin{lemma}\label{lem: characterization of mixed boundary TV} 
Assume \ref{ass: gauss green domain} and that $\gamma\subset\partial\Omega$ is relatively open. 
Then for all $u\in BV(\Omega)$ one has
    $$
        \TV{\bar{u}}(\Omega\cup\gamma) = \sup \set{\int_\Omega u \ddiv(X) \d \vol:\, X\in \Gamma_{C_c^\infty}(\Omega\cup\gamma;TM), \, |X|\leq 1  },
    $$
where $\Gamma_{C_c^\infty}(\Omega\cup\gamma;TM)$ is defined as in Appendix \ref{appendix}.
\end{lemma}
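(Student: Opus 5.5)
Since $\Omega\cup\gamma$ is open in $\bar\Omega$ but not in $M$, the plan is to realize it as the trace of an open set of $M$ and then apply the open-set characterization of total variation. Because $\gamma$ is relatively open in $\partial\Omega$, there is an open $W\subset M$ with $W\cap\partial\Omega=\gamma$; set $V:=\Omega\cup W$. Then $V$ is open and $V\cap\bar\Omega=\Omega\cup\gamma$. Since $\bar u$ vanishes outside $\Omega$, the measure $\TV{\bar u}$ is concentrated on $\bar\Omega$ (it vanishes on the open set $M\setminus\bar\Omega$). Hence
\begin{equation*}
\TV{\bar u}(\Omega\cup\gamma)=\TV{\bar u}(V).
\end{equation*}
Recall that $\bar u\in BV(M)$ by Remark~\ref{rmk: properties of for bounded trace} under Assumption~\ref{ass: gauss green domain}.

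By the characterization of the total variation on open sets (Theorem~\ref{thm: characterization of BV} together with the density remark after Corollary~\ref{cor: riesz markov for loc fin covector measures}), we have
\begin{equation*}
\TV{\bar u}(V)=\sup\Big\{\int_M \bar u\ddiv(X)\d\vol:\ X\in\Gamma_{C_c^\infty}(V;TV),\ |X|\le1\Big\}.
\end{equation*}
For such $X$, the integral equals $\int_\Omega u\ddiv(X)\d\vol$. Moreover, the restriction of $X$ to $\Omega\cup\gamma$ has support $\operatorname{supp}X\cap\bar\Omega$, which is compact and contained in $V\cap\bar\Omega=\Omega\cup\gamma$. This gives the inequality ``$\le$'', provided the appendix definition of $\Gamma_{C_c^\infty}(\Omega\cup\gamma;TM)$ is, as I expect, the restrictions to $\Omega\cup\gamma$ of smooth fields on $M$ whose support meets $\bar\Omega$ in a compact subset of $\Omega\cup\gamma$.

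For ``$\ge$'', take $X$ in that class with $|X|\le1$. Let $K:=\operatorname{supp}X\cap\bar\Omega$, a compact subset of $\Omega\cup\gamma\subset V$. Choose a cutoff $\zeta\in C_c^\infty(V)$ with $0\le\zeta\le1$ and $\zeta\equiv1$ on a neighbourhood of $K$. Then $\zeta X\in\Gamma_{C_c^\infty}(V;TV)$ satisfies $|\zeta X|\le1$, and $\zeta X=X$ on a neighbourhood of $\bar\Omega\cap\operatorname{supp}X$. Consequently $\ddiv(\zeta X)=\ddiv X$ on $\Omega$, so
\begin{equation*}
\int_\Omega u\ddiv(X)\d\vol=\int_M\bar u\ddiv(\zeta X)\d\vol\le\TV{\bar u}(V).
\end{equation*}

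The main obstacle is pinning down the test class from the appendix and the topological step of constructing $W$ and the cutoff. In particular, one must ensure that $K$ is compact in $V$ even though $V$ may contain points of $\partial\Omega\setminus\gamma$ only through $W$. The choice $W\cap\partial\Omega=\gamma$ avoids this, since then $V\cap\bar\Omega=\Omega\cup\gamma$. Alternatively, the identity $\TV{\bar u}(\Omega\cup\gamma)=\TV{u}(\Omega)+\int_{\gamma^*}|\trace[\Omega] u|\d\calH^{m-1}$ from Corollary~\ref{cor: Gauss--Green} could serve as a cross-check, but it is not needed.
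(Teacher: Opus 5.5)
The proposal follows the paper's proof, with one step that fails as written.

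\textbf{Comparison with the paper.} The paper also takes an open $U\subset M$ with $U\cap\bar\Omega=\Omega\cup\gamma$. It uses that $|D\bar u|$ does not charge $M\setminus\bar\Omega$ to write $|D\bar u|(\Omega\cup\gamma)=|D\bar u|(U)$. It then compares the two test classes by restriction for one inequality and by extension for the other. You differ in two small ways:
\begin{itemize}
\item you extend with a smooth cutoff, where the paper uses the Tietze--Urysohn lemma of the appendix;
\item you use $\int_M\bar u\,\mathrm{div}(Y)\,d\mathrm{vol}=\int_\Omega u\,\mathrm{div}(Y)\,d\mathrm{vol}$ directly, where the paper cites Gauss--Green.
\end{itemize}
Both choices are fine. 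Your reading of the appendix test class is also correct: multiply the local smooth extension by a cutoff supported in a small neighbourhood of its compact support in $\Omega\cup\gamma$.

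\textbf{The failing step.} In the ``$\ge$'' direction, the claim $|\zeta X|\le1$ is not justified. The bound $|X|\le1$ in the supremum is imposed only on $\Omega\cup\gamma$. The extension may exceed $1$ at points of $M\setminus\bar\Omega$ arbitrarily close to $\gamma$, and $\zeta\equiv1$ near $K$ keeps those values. Choosing a better smooth extension does not help in general. For example, in $\mathbb{R}^m$, suppose that near a point of $\gamma$ one has $\Omega=\{x_m<0\}$ and $X=\sqrt{1+x_m}\,e_1$; then every $C^1$ extension satisfies $|X|>1$ for small $x_m>0$.

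\textbf{The repair.} It uses only facts you already have. Since $|D\bar u|(M\setminus\bar\Omega)=0$ and $\zeta=0$ on $\bar\Omega\setminus V=\bar\Omega\setminus(\Omega\cup\gamma)$,
\[
\int_\Omega u\,\mathrm{div}(X)\,d\mathrm{vol}=\int_M\bar u\,\mathrm{div}(\zeta X)\,d\mathrm{vol}=-\int_{\Omega\cup\gamma}(\zeta X,\Sigma^{\bar u})\,d|D\bar u|\le|D\bar u|(\Omega\cup\gamma).
\]
The last inequality holds because $|\zeta X|\le|X|\le1$ on $\Omega\cup\gamma$ and $|\Sigma^{\bar u}|=1$ holds $|D\bar u|$-a.e.

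Alternatively, by continuity and compactness $|X|<1+\varepsilon$ on a neighbourhood of $K$. Taking $\operatorname{supp}\zeta$ inside that neighbourhood gives the bound $(1+\varepsilon)|D\bar u|(V)$, and you then let $\varepsilon\downarrow0$.

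The paper's inclusion $\Gamma_{C_c^\infty}(\Omega\cup\gamma;TM)\subset\Gamma_{C_c}(U,TU)$ also leaves this norm point implicit.
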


\begin{proof}
    Since $\gamma\subset\partial\Omega$ is relatively open, there exists an open set $U\subset M$ such that $\bar{\Omega}\cap U = \Omega\cup \gamma$.  Now, since $\TV{\bar{u}}$ is concentrated on $\bar{\Omega}$, it follows that 
    \begin{equation} \label{eqn: tv sup for locally compact sets}
        \TV{\bar{u}}(\Omega\cup\gamma) 
    = \TV{\bar{u}}(U)
     = \sup \set{ \Var{\bar{u}}[X]:\, X\in \Gamma_{C_c}(U,TU),\, |X|\leq1}
    .
    \end{equation}
    Note that $\bar{\Omega}\cap U = \Omega\cup \gamma$ is a locally compact Hausdorff space. By Lemma \ref{lem: smooth and cts functions on LCH spaces} the space $C_c^\infty(\Omega\cup\gamma)$ is dense in $C_c(\Omega\cup\gamma)$ with respect to uniform convergence and every function in $ C_c(\Omega\cup\gamma)$ can be extended to a function in $C_c(U)\subset C_0(U)$. In particular, using a partition of unity argument, one has $\Gamma_{C_c^\infty}(\Omega\cup\gamma;TM) \subset \Gamma_{C_c}(U,TU)$.  For every $X\in \Gamma_{C_c^\infty}(\Omega\cup\gamma;TM)$, the Gauss--Green formula above shows that
    \begin{equation*}
        -\int_\Omega u \ddiv(X) \d \vol =\Var{\bar{u}}[X],
    \end{equation*}
    which implies that 
    \begin{equation*}
           \sup \set{\int_\Omega u \ddiv(X) \d \vol:\, X\in \Gamma_{C_c^\infty}(\Omega\cup\gamma;TM), \, |X|\leq 1  }
       %     \\
       % \leq \sup \set{\Var{\bar{u}}[X]:\, X  \in\Gamma_{C_c^\infty}(U,TU):\, |X|\leq1}
        = \TV{\bar{u}}(\Omega\cup\gamma)
       .
    \end{equation*}

    For the other inequality, take $X\in \Gamma_{C_c^\infty}(U,TU)$. Then the support of the restriction $X_{|\bar{\Omega}\cap U}$ is compact since it is given by $\sppt(X)\cap\bar{\Omega}$, and therefore, 
    $$X_{|\bar{\Omega}\cap U}\in \Gamma_{C_c^\infty}(\bar{\Omega}\cap U,TM) = \Gamma_{C_c^\infty}(\Omega\cup\gamma,TM).$$
    Finally, we use the Gauss--Green formula once more to deduce that $\Var{\bar{u}}[X] = \Var{\bar{u}}(X_{|\Omega\cup\gamma})$, and conclude using \eqref{eqn: tv sup for locally compact sets} that
    \begin{align*}
        \TV{\bar{u}}(\Omega\cup\gamma) &= \sup\set{\Var{\bar{u}}[X]:\, X\in \Gamma_{C_c^\infty}(U,TU):\, |X|\leq1} \\
        &\leq \sup\set{\Var{\bar{u}}[X]:\, X\in \Gamma_{C_c^\infty}(\Omega\cup\gamma;TM), \, |X|\leq 1 }.
    \end{align*}
\end{proof}

\begin{remark}\label{rmk: compactness and lsc for mixed boundaries}
Let $\gamma$ be relatively open in $\partial\Omega$.

\begin{enumerate}[leftmargin = *, 
 labelindent = 0em, 
% itemsep=6pt
]
    \item   \textit{Compactness in $BV(\Omega\cup\gamma)$.} Let $\Omega$ be a  $BV$-extension domain.  If $(u_n)\subset BV(\Omega)$ is a sequence such that $\norm[1,\Omega]{u_n} + \TV{\bar{u}_n}(\Omega\cup\gamma) $ is uniformly bounded, then there exists $u\in BV(\Omega)$ such that along some subsequence, $u_n\to  u$ in $L^1(\Omega)$ and $\Var{\bar{u}_n} \llcorner (\Omega\cup\gamma) \to  \Var{\bar{u}} \llcorner (\Omega\cup\gamma) $ weakly$^*$ in the sense of generalized vector measures on $\Omega\cup\gamma$. In other words, $D\bar{u}_n[X]\to D\bar{u}[X]$ for all vector fields $X$ with coefficients in $C_0(\Omega\cup\gamma )$.
    This can be seen as follows: Since $\gamma\subset \partial\Omega$ is relatively open, there exists an open subset $U\subset M$ such that $U\cap\bar{\Omega} = \Omega \cup \gamma$ and since $\Omega$ is a strong $BV$-extension domain,  we can pick $U$ satisfying this property as well.  Then $\norm[1,U]{\bar{u}_n}+\TV{\bar{u}_n}(U) = \norm[1,\Omega]{\bar{u}_n}+\TV{\bar{u}_n}(\Omega\cup\gamma)$, so the  compactness result from Remark~\ref{rmk: properties of extension domains} for $\bar{u}_n\subset BV(M)$ on $U$ yields existence of a subsequence converging to some $\bar{u}$ in $L^1(U)$ and with $D\bar{u}_n\to D\bar{u}$ weakly$^*$ on $U$. Using the same density and extension argument as in the proof of Lemma~\ref{lem: characterization of mixed boundary TV}, it follows that $D\bar{u}_n[X]\to D\bar{u}[X]$ for all $X\in \Gamma_{C_c^\infty}(\Omega\cup\gamma;TM)$.
    \item \textit{Lower semicontinuity.}  Assume \ref{ass: gauss green domain}. Then Lemma~\ref{lem: characterization of mixed boundary TV} implies the following: If $u_n\to  u$ in $L^1(\Omega)$, or if $Du_n[X]\to Du[X]$ holds for all $C_0(\Omega\cup\gamma)$ vector fields $X$, then $\liminf_n |\Var{\bar{u}_n}|(U)\geq |\Var{\bar{u}}|(U) $ for all relatively open subsets $U\subset \Omega\cup\gamma$. 

    \item If  $\tilde{\gamma}\subset\partial\Omega$ is a measurable subset with $\calH^{m-1}(\tilde{\gamma}\Delta\gamma)=0$, then one has 
    $$|D\bar{u}|(\Omega\cup \gamma) 
    % = |Du|(\Omega) + \int_{\gamma\cap\partial^*\Omega}\trace[\Omega](u)\d\calH^{m-1} =  |Du|(\Omega) + \int_{\tilde{\gamma}\cap\partial^*\Omega}\trace[\Omega](u)\d\calH^{m-1} 
    = |D\bar{u}|(\Omega\cup \tilde{\gamma}),$$
    hence  the compactness and lower semicontinuity properties remain valid on $\Omega\cup\tilde{\gamma}$.
\end{enumerate}
\end{remark}
 
For future reference we record:
\begin{assumption}\label{ass: approximation domain}
     The set $\Omega\subset M$ is a  relatively compact set of finite perimeter with
    \begin{equation}\label{eqn: Omega boundary regularity}
    \calH^{m-1}\big(\Omega^{(1)}\cap \partial \Omega \big) = 0.
    \end{equation}
\end{assumption}

\begin{remark}\label{rmk: strict approximation from inside for full boundary}

\begin{enumerate}[leftmargin = *, 
 labelindent = 0em, 
% itemsep=6pt
]
    \item It was shown in \cite{GuiHuLi23_SmoothInteriorApproximation} that a relatively compact set of finite perimeter $\Omega$ in $\R^m$ satisfies \eqref{eqn: Omega boundary regularity}
    if and only if $\Omega$ can be approximated strictly  from \textit{inside} by smoothly bounded open sets, meaning that there exist sets $\smash{ \hat{\Omega}_n\subset\subset \Omega }$ with $C^\infty$-boundary such that $\smash{ \hat{\Omega}_n\to  \Omega }$  and $\smash{ \Per(\hat{\Omega}_n)\to  \Per(\Omega) }$ as~$n\to\infty$. 
    Employing the localization techniques from the previous section, the same is true on Riemannian manifolds.
    \item Recalling the definitions from Section~\ref{sec: measure-theoretic boundary and Federer's Theorem}, notice that \eqref{eqn: Omega boundary regularity} is equivalent to 
    $$
    \limsup_{r\downarrow 0} \Theta_\Omega(x,r) <1\quad\text{for $\calH^{m-1}$-almost every $x\in \partial\Omega$.}
    $$  
    Similarly, $\smash{ \calH^{m-1}(\Omega^{(0)}\cap \partial \Omega) = 0 }$ is equivalent to  
    $$
    \liminf_{r\downarrow 0} \Theta_\Omega(x,r) >0\quad\text{ for $\calH^{m-1}$-almost every $x\in \partial\Omega$.}
    $$
    In particular, this implies that if both \ref{ass: gauss green domain} and \ref{ass: approximation domain} are satisfied, then the strong measure density condition  on strong extension domains yields 
    \begin{equation*}
        \calH^{m-1}((\Omega^{(0)}\cup\Omega^{(1)})\cap \partial \Omega)=0. 
    \end{equation*}
\end{enumerate}
\end{remark}

The final approximation result will rely on the following property:

\begin{lemma}\label{lem: uniqueness of normal vector for subsets}  
    Let $E$ and $F $ be sets of locally finite perimeter in  $M$.
      If $x\in \partial^*E\cap\partial^*F$ and there exists some $\varepsilon>0$ such that essentially (that is,  up to nullsets) one has
      \begin{equation}\label{eqn: normal vector inclusion}
          (E\cap B_\varepsilon(x))\subset (F \cap B_\varepsilon(x)),
      \end{equation}
      then  $\normal{E}(x) = \normal{F}(x)$.
      \end{lemma}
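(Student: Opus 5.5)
We reduce the statement to blow-ups via De~Giorgi's theorem, Theorem~\ref{thm: De~Giorgi}~(iv). Since $x\in\partial^*E\cap\partial^*F$, the blow-ups $E_{x,r}=\phi^M_{x,r}(E)$ and $F_{x,r}=\phi^M_{x,r}(F)$ in $T_xM$ converge locally in $\vol_{g_x}$ to the half spaces
\begin{equation*}
H^E(x)=\set{\xi\in T_xM:\,(\xi,\normal{E}(x))\geq 0}\qand H^F(x)=\set{\xi\in T_xM:\,(\xi,\normal{F}(x))\geq 0}.
\end{equation*}
For $r<\min\{\varepsilon, R_x\}$, the essential inclusion \eqref{eqn: normal vector inclusion} transfers to the blow-ups. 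Indeed, $\exp_x^{-1}$ is a diffeomorphism on $B(x,R_x)$, so it maps null sets to null sets. Hence, up to $\vol_{g_x}$-null sets, $E_{x,r}\cap B_{g_x}(0,\varepsilon/r)\subset F_{x,r}\cap B_{g_x}(0,\varepsilon/r)$. Since $\varepsilon/r\to\infty$, for every fixed compact $K\subset T_xM$ we get $\vol_{g_x}(K\cap E_{x,r}\setminus F_{x,r})=0$ for all sufficiently small $r$.

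Next we pass to the limit. Local convergence in measure gives $\mathbf{1}_{E_{x,r}}\to\mathbf{1}_{H^E(x)}$ and $\mathbf{1}_{F_{x,r}}\to\mathbf{1}_{H^F(x)}$ in $L^1(K)$. Therefore
\begin{equation*}
\vol_{g_x}\big(K\cap H^E(x)\setminus H^F(x)\big)=\lim_{r\downarrow0}\vol_{g_x}\big(K\cap E_{x,r}\setminus F_{x,r}\big)=0,
\end{equation*}
using that $\int_K(\mathbf{1}_A-\mathbf{1}_B)^+$ is continuous under $L^1$ convergence. Taking $K$ to be closed balls of growing radius, we conclude that $H^E(x)\subset H^F(x)$ up to a null set.

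Finally, there is an elementary linear-algebra step. Write $a=\normal{E}(x)$ and $b=\normal{F}(x)$, both unit vectors in $(T_xM,g_x)$. Suppose $a\neq b$. Then the open cone $\set{\xi:\,(\xi,a)>0,\ (\xi,b)<0}$ is nonempty and open, hence of positive measure. To see it is nonempty:
\begin{itemize}
\item if $b\neq -a$, take $\xi=a-b$, which gives $(\xi,a)=1-(a,b)>0$ and $(\xi,b)=(a,b)-1<0$;
\item if $b=-a$, take $\xi=a$.
\end{itemize}
This cone lies in $H^E(x)\setminus H^F(x)$, contradicting the inclusion just obtained. Hence $\normal{E}(x)=\normal{F}(x)$.

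The only delicate point is the first step, namely that essential inclusion is preserved under the blow-up maps and in the limit. It is handled by the null-set preservation of $\exp_x^{-1}$ together with the fact that each compact set eventually lies inside the blown-up ball $B_{g_x}(0,\varepsilon/r)$. The part of Theorem~\ref{thm: De~Giorgi}~(iv) we use is exactly the chart-invariant blow-up convergence in normal coordinates, so no further localization is needed.
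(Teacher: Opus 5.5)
Your proof is correct and follows essentially the same route as the paper: blow-up convergence to the half spaces from part (iv) of De Giorgi's theorem, transfer of the essential inclusion to $E_{x,r}\cap K\subset F_{x,r}\cap K$ for small $r$ and each compact $K$, and passage to the limit to get $H^E(x)\subset H^F(x)$ up to a null set. You also spell out the final linear-algebra step showing that this inclusion forces the two unit normals to coincide, which the paper leaves implicit.
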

\begin{proof}
    Assume that \eqref{eqn: normal vector inclusion} is satisfied, and without loss of generality, assume that $\varepsilon$ is small enough such that the exponential map is well-defined on the ball of radius $\varepsilon$ around $0$ in $T_xM$. 
    For $r>0$,  let $E_{x,r} $ be the blow-up of $E$ at $x\in M$ from Definition~\ref{def: blow-up on M}. If $x\in \partial^*E$, then by Theorem~\ref{thm: De~Giorgi} one has 
    $$E_{x,r} \overset{loc.}{\to} H^E(x) = \set{\xi\in T_xM:\ (\xi,\normal{E}(x))\geq 0 },$$
    and an analogous condition holds for $F_{x,r}$.

     Now let $K\subset T_xM$ be a compact set and let $R_K>0$ be the   radius of some ball centered at zero which contains $K$.  We set $r_K:= {\varepsilon}/{R_K}$. Using that $\exp_x$ is a radial isometry, by definition of the blow-up one gets that $E_{x,r}\cap K$ is essentially contained  in $F_{x,r}\cap K$ for all $r<r_K$. Then from $E_{x,r}\cap K \to H^E(x)\cap K$ and $F_{x,r}\cap K \to  H^F(x) \cap K$ it follows that $H^E(x)\cap K$ is essentially contained in   $  H^F(x) \cap K$. Since $K$ was arbitrary, this implies that the half spaces $H^E(x)$ and $H^F(x)$  coincide and therefore, $\normal{E}(x) = \normal{F}(x)$.
\end{proof}

We specify the assumptions on the Dirichlet boundary.

 \begin{assumption} \label{ass: gamma dirichlet boundary}
There exists a relatively open set $\hat{\gamma}\subset\partial\Omega$ with $\calH^{m-1}(\hat{\gamma}\Delta\gamma)=0$ as well as a set $A\subset \Omega$ with finite perimeter such that  
\begin{equation}\label{eqn: assumption for gamma}
\partial^*A\cap\partial^*\Omega = \gamma\cap\partial^*\Omega=: \gamma^*.
\end{equation}    
 \end{assumption}

\begin{example}\label{ex: decomposition of Lipschitz domains}
        If $\Omega$ is a   relatively compact Lipschitz domain and $\gamma\subset\partial\Omega$ is the closure of a relatively open  subset of $\partial\Omega$, then one can show that there exists a bounded Lipschitz domain $A\subset\Omega$ such  that $\partial A\cap \partial \Omega = \gamma$.
\end{example}

    \begin{theorem}\label{thm: strict Dirichlet approximation of finite perimeter sets}
    Assume \ref{ass: gauss green domain}, \ref{ass: approximation domain} and \ref{ass: gamma dirichlet boundary}
    and let $E\subset\Omega$ be a set of finite perimeter in $\Omega$. Then there exists a sequence of finite perimeter sets $E_n\subset\Omega$ with $\Per(E_n;\gamma)=0$ and $\Per(E_n; \partial\Omega\setminus\gamma) = \Per(E; \partial\Omega\setminus\gamma)$ for all $n\in\N$ such that
        \begin{equation}\label{eqn: Dirichlet approximation of sets}
            E_n\to  E, \qand \Per(E_n; \Omega) \to  \Per(E;\Omega\cup\gamma ).
        \end{equation}
        In other words, $E_n$ converges to $E$ strictly on $\Omega\cup\gamma$.
    \end{theorem}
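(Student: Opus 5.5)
The natural candidate is $E_n := E\setminus A_n$, where $A_n\subset A$ is a thin "collar" of $\Omega$ along $\gamma$. We want $\partial^*E_n$ to avoid $\gamma^*$ while the new interior boundary created inside $\Omega$ costs asymptotically exactly $\calH^{m-1}(\partial^*E\cap\gamma^*)$. The set $A$ from Assumption~\ref{ass: gamma dirichlet boundary} is the tool: since $\partial^*A\cap\partial^*\Omega=\gamma^*$, Lemma~\ref{lem: uniqueness of normal vector for subsets} (with $A\subset\Omega$) gives $\normal{A}=\normal{\Omega}$ on $\gamma^*$. Hence $A$ looks like $\Omega$ near $\calH^{m-1}$-a.e.\ point of $\gamma^*$, by Theorem~\ref{thm: De~Giorgi}(iv).

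\textbf{Step 1 (approximating $A$ from inside).} By Assumption~\ref{ass: approximation domain} and Remark~\ref{rmk: strict approximation from inside for full boundary}(1), together with the localization techniques of Section~\ref{sec: structure theory}, I pick smooth sets $\hat\Omega_k\subset\subset\Omega$ with $\hat\Omega_k\to\Omega$ and $\Per(\hat\Omega_k)\to\Per(\Omega)$. I then set $A_k := A\setminus \hat\Omega_k$, a collar of $\partial\Omega$ inside $A$ with $|A_k|\to0$. The aim is to choose the levels (for instance by a coarea/Fubini argument over a family of smooth sets interpolating $\hat\Omega_k$, using Theorem~\ref{thm: coarea formula}) so that three properties hold.
\begin{itemize}
\item[(a)] $\calH^{m-1}(\partial^*E\cap\partial\hat\Omega_k)=0$.
\item[(b)] $\Per(E\setminus A_k;\Omega)\le \Per(E;\Omega\setminus \overline{A_k'}) + \calH^{m-1}(E^{(1)}\cap\partial^*A_k\cap\Omega)+o(1)$, where $A_k'$ is a slightly smaller neighbourhood.
\item[(c)] $\calH^{m-1}(E^{(1)}\cap \partial\hat\Omega_k\cap A)\to \calH^{m-1}(\partial^*E\cap\gamma^*)$, with $\calH^{m-1}(E^{(1)}\cap \partial^*A\cap\Omega)$ handled separately.
\end{itemize}
The standard set-difference formula for perimeters (transferred chartwise via Corollary~\ref{cor: chart wise reduced boundary} and Theorem~\ref{thm: Federer}) gives
\[
\Per(E\setminus A_k;\cdot)=\calH^{m-1}\llcorner\bigl((\partial^*E\cap A_k^{(0)})\cup(\partial^*A_k\cap E^{(1)})\cup\{\normal{E}=-\normal{A_k}\}\bigr).
\]
This is what reduces (b) to (c).

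\textbf{Step 2 (boundary conditions).} Since $E_n\subset \Omega\setminus A_n$ and $A_n$ contains a full collar of $\gamma$, Federer's theorem shows that $E_n$ has density $0$ at $\calH^{m-1}$-a.e.\ point of $\gamma^*$, so $\Per(E_n;\gamma)=0$ (after replacing $\gamma$ by $\hat\gamma$, Remark~\ref{rmk: compactness and lsc for mixed boundaries}(3)). Away from $\overline{A}$ nothing changes, which gives $\Per(E_n;\partial\Omega\setminus\gamma)=\Per(E;\partial\Omega\setminus\gamma)$. For this I need $\overline A\cap\partial\Omega$ to agree with $\gamma$ up to null sets; that should follow from \eqref{eqn: assumption for gamma} together with Remark~\ref{rmk: strict approximation from inside for full boundary}(2), which says that a.e.\ boundary point of $\Omega$ lies in $\partial^*\Omega$.

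\textbf{Step 3 (convergence).} $L^1$ convergence is immediate since $|A_n|\to0$. Lower semicontinuity (Remark~\ref{rmk: compactness and lsc for mixed boundaries}(2), applied to the zero extensions on $\Omega\cup\hat\gamma$) gives
\[
\liminf_n\Per(E_n;\Omega)\ge\Per(E;\Omega\cup\gamma),
\]
because $\Per(E_n;\gamma)=0$. For the upper bound I use (b)--(c): $\Per(E;\Omega\setminus\overline{A_n'})\to\Per(E;\Omega)$, and the new interface satisfies $\limsup\le\calH^{m-1}(\partial^*E\cap\gamma^*)=\Per(E;\gamma)$.

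The main obstacle is (c). I must show that the inner boundaries $\partial\hat\Omega_k\cap A$, weighted by $\mathbf 1_{E^{(1)}}$, converge in mass exactly to the trace part of $E$ on $\gamma^*$. My first attempt: strict convergence $\hat\Omega_k\to\Omega$ gives $|D\mathbf 1_{\hat\Omega_k}|\rightharpoonup\calH^{m-1}\llcorner\partial^*\Omega$ by Corollary~\ref{cor: weak$^*$ and weak convergence of (total) variation measure}(ii). I would combine this with the trace of $\mathbf 1_E$ (Theorem~\ref{thm: Bounded trace operator}), noting that $\mathbf 1_E$ has trace $\mathbf 1$ exactly on $\partial^*E\cap\partial^*\Omega$, and with continuity of traces under strict convergence, choosing good levels by averaging so that the weak convergence can be tested against the discontinuous weight $\mathbf 1_E$.
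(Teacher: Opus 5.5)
\textbf{The gap is the upper bound.} Your construction is the right one: $E\setminus(A\setminus\hat\Omega_n)=(E\cap(\Omega\setminus A))\cup(E\cap\hat\Omega_n)$ is exactly the paper's $E_n$, and your lower bound via Remark~\ref{rmk: compactness and lsc for mixed boundaries} matches the paper's. What is missing is the bound $\limsup_n\Per(E_n;\Omega)\le\Per(E;\Omega\cup\gamma)$, which is the substance of the theorem. You only outline it as the program (a)--(c), and you flag (c) yourself as unresolved.

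As sketched, that program does not close, for three reasons.
\begin{itemize}
\item The $\hat\Omega_k$ from Remark~\ref{rmk: strict approximation from inside for full boundary} form a sequence, not a one-parameter family. There are therefore no levels to select unless you build a new family and re-prove strict convergence for it.
\item The weak convergence $|D\mathbf 1_{\hat\Omega_k}|\rightharpoonup\calH^{m-1}\llcorner\partial^*\Omega$ cannot be restricted to $A$ or tested against $\mathbf 1_{E^{(1)}}\mathbf 1_A$. The limit measure charges $\gamma^*$, which lies in the topological boundary of $A$, so Portmanteau gives you nothing there.
\item The piece $E^{(1)}\cap\partial^*A\cap\Omega\setminus\overline{\hat\Omega_k}$ of the new interface, which you set aside, is not forced to vanish. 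Convergence in measure of $\hat\Omega_k$ to $\Omega$ does not control it.
\end{itemize}

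\textbf{How to close it.} The new interface never has to be computed. Write $E_n=F\cup G_n$ with $F:=E\setminus A$ and $G_n:=E\cap\hat\Omega_n$. Use submodularity, $\Per(F\cup G;U)+\Per(F\cap G;U)\le\Per(F;U)+\Per(G;U)$ (cf.\ \cite[Theorem~16.3]{Mag12_SetsFinitePerimeter}, which transfers to $M$).

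First take $U=M$: $\Per(E\cap\hat\Omega_n)\le\Per(E)+\Per(\hat\Omega_n)-\Per(E\cup\hat\Omega_n)$. Since $E\cup\hat\Omega_n\to\Omega$ and $\Per(\hat\Omega_n)\to\Per(\Omega)$, lower semicontinuity gives $\Per(G_n)\to\Per(E)$.

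Then take $U=\Omega$, using that $G_n$ and $F\cap\hat\Omega_n$ are compactly contained in $\Omega$:
\[
\Per(E_n;\Omega)\le\Per(F;\Omega)+\Per(G_n)-\Per(F\cap\hat\Omega_n).
\]
Lower semicontinuity of $\Per(F\cap\hat\Omega_n)$ now yields $\limsup_n\Per(E_n;\Omega)\le\Per(E)-\Per(F;\partial\Omega)$. This equals $\Per(E;\Omega\cup\gamma)$ once you know $\Per(E\setminus A;\partial\Omega)=\Per(E;\partial\Omega\setminus\gamma)$.

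\textbf{Repairing Step 2.} For that identity, and for your Step 2, drop the topological language. $A$ need not contain any collar of $\gamma$. Likewise, $\overline A\cap\partial\Omega$ need not agree with $\gamma$ up to null sets, since it depends on the representative and thin tentacles of small perimeter can reach far along $\partial\Omega$. Argue with densities instead.
\begin{itemize}
\item At every $x\in\gamma^*$ the inner normals of $A\subset\Omega$ and $\Omega$ agree (Lemma~\ref{lem: uniqueness of normal vector for subsets}). By Theorem~\ref{thm: De~Giorgi}(iv) this gives $\Theta_{\Omega\setminus A}(x)=0$. Also $\Theta_{\hat\Omega_n}(x)=0$, so $\Theta_{E_n}(x)=0$ and $x\notin\partial^*E_n$.
\item At $\calH^{m-1}$-a.e.\ $x\in\partial\Omega\setminus\gamma$ you have $x\in\partial^*\Omega\setminus\partial^*A$, by Remark~\ref{rmk: strict approximation from inside for full boundary}(2) and Theorem~\ref{thm: Federer}. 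Hence $x\in A^{(0)}$, because $A\subset\Omega$ and $\Theta_\Omega(x)=\tfrac12$. So $E$, $E\setminus A$ and $E_n$ have the same blow-ups at $x$, and by Theorem~\ref{thm: Federer} the same reduced boundary there up to $\calH^{m-1}$-null sets.
\end{itemize}
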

    \begin{proof}
        Firstly,  notice that for every subset $E\subset \Omega$ with $\Per(E; \Omega)<\infty$ the trace theorem   implies that
        \begin{equation}\label{eqn: boundary decomposition for perimeter}
        \begin{aligned}
            \Per(E) &= \Per(E; \Omega) + \Per(E;\gamma) + \Per( E ; \partial\Omega\setminus\gamma)\\
            &= |D\mathbf{1}_E|(\Omega) + \calH^{m-1}(\partial^*E\cap\gamma) + \calH^{m-1}(\partial^*E\cap(\partial^*\Omega\setminus\gamma)).
            \end{aligned}
        \end{equation}
          In addition, for arbitrary sets $E,F \subset M$ of (locally) finite perimeter, it holds that both $E\cup F $ and $E\cap F $ are of (locally) finite perimeter and for every Borel measurable set  $U\subset M$ one has
        \begin{equation}\label{eqn: perimeter union and intersection inequality}
            \Per(E\cup F ;U) + \Per(E\cap F ; U) \leq \Per(E;U)+ \Per(F ;U).
        \end{equation}
        This follows from \cite[Theorem 16.3]{Mag12_SetsFinitePerimeter} in the Euclidean case and with the structure theory from the previous section at hand, the proof of this theorem applies verbatim to the Riemannian case. We divide the proof into two steps.\\
        
        \textit{Step 1: There exists a sequence of finite perimeter sets $\hat{E}_n\subset\Omega$ with $\Per(\hat{E}_n;\partial\Omega) = 0$ such that $\hat{E}_n\to E$ and $\Per(\hat{E}_n;\Omega) \to \Per(E;\bar{\Omega}) = \Per(E)$. }\\
        \textit{Proof of Step 1:} Let  $\hat{\Omega}_n$ be as in Remark~\ref{rmk: strict approximation from inside for full boundary} and set $\hat{E}_n:= E \cap\hat{\Omega}_n$. Then it is clear that $\hat{E}_n\to  E$. Thus, by lower semicontinuity of the perimeter  and since $\hat{E}_n\subset\subset\Omega$, it follows that
        $\liminf \Per(\hat{E}_n;\Omega) = \liminf \Per(\hat{E}_n) \geq \Per(E)$.
         It remains to show that $P(\hat{E}_n;\bar{\Omega}) =P(\hat{E}_n) $ is upper semicontinuous: Using \eqref{eqn: perimeter union and intersection inequality} we compute
        \begin{equation*}
            \begin{gathered}
            \limsup \Per(\hat{E}_n)  
            % = \limsup\Per(E\cap\hat{\Omega}_n) 
            \leq \limsup (-\Per(E\cup \hat{\Omega}_n)) + \Per(E) + \lim \Per(\hat{\Omega}_n)\\
            = - \liminf \Per(E\cup \hat{\Omega}_n)  + \Per(E) + \Per(\Omega)
            \leq  \Per(E),
            \end{gathered}
        \end{equation*}
        where the last inequality follows from  $E\cup\hat{\Omega}_n \to \Omega$ and lower semicontinuity of $\Per(E\cup\hat{\Omega}_n)$.\\

         \textit{Step 2: There exists a sequence $(E_n)$ satisfying the statement of the Lemma.}\\
        \textit{Proof of Step 2:} Let $A\subset\Omega$ be as in \ref{ass: gamma dirichlet boundary} and set $E_n:=( E\cap(\Omega\setminus A))\cup \hat{E}_n$ where $\hat{E}_n=E\cap\hat{\Omega}_n$ is as in Step 1. We first prove the following  identities:
        \begin{align}
            \Per(E\cap(\Omega\setminus A);\partial\Omega) = \Per(E;\partial\Omega\setminus\gamma) \quad& \text{and}\quad \Per(E\cap(\Omega\setminus A);\gamma) = 0\label{eqn: first perimeter identity}\\
            \Per(E_n;\gamma) &= 0 \label{eqn: second perimeter identity}\\
            \Per(E_n; \partial\Omega\setminus\gamma) &=  \Per(E; \partial\Omega\setminus\gamma) \label{eqn: third perimeter identity}
        \end{align}
        for all $n\in \N$.
        Starting with \eqref{eqn: first perimeter identity}, we use \cite[Theorem 16.3]{Mag12_SetsFinitePerimeter} to get
        \begin{equation}\label{eqn: perimeter of intersection full}\begin{split}
            \Per(E\cap(\Omega\setminus A);\partial\Omega) 
            &= \Per(E;(\Omega\setminus A)\one\cap \partial\Omega))
            + \Per(\Omega\setminus A; E\one\cap \partial \Omega) \\
            &\quad + \calH^{m-1}\big(\bigl\{\normal{E} = \normal{\Omega\setminus A}\bigr\} \cap \partial\Omega\big).
        \end{split}
        \end{equation}
        By De~Giorgi's Theorem~\ref{thm: De~Giorgi}, the first two terms on the right-hand side equal
        \begin{equation*}
            \calH^{m-1}(\partial^*E\cap (\Omega\setminus A)\one\cap \partial\Omega))+ \calH^{m-1}(\partial^*(\Omega\setminus A) \cap E\one\cap \partial \Omega)
        \end{equation*}
        and using that $((\Omega\setminus A)\one\cap \partial\Omega)\subset  (\Omega^{(1)}\cap\partial \Omega)$ and $( E\one\cap \partial \Omega)\subset (\Omega^{(1)}\cap\partial \Omega)$, the equality \eqref{eqn: Omega boundary regularity} implies that both terms are equal to zero. 
         For the third term, observe first that for every $x\in \set{\normal{E} = \normal{\Omega\setminus A}} \cap \partial\Omega$  by definition one has $x\in \partial^*(\Omega\setminus A)\cap \partial^*\Omega$ and since $\Omega\setminus A\subset\Omega$,  Lemma~\ref{lem: uniqueness of normal vector for subsets} implies that $\normal{\Omega\setminus A} (x) = \normal{\Omega} (x)$. Hence we get
        \begin{align*}
            \bigl\{\normal{E} = \normal{\Omega\setminus A}\bigr\} \cap \partial\Omega = \bigl\{\normal{E} = \normal{\Omega} \bigr\} \cap \partial^*(\Omega \setminus A) = \partial^*E\cap \partial^* \Omega  \cap \partial^* (\Omega \setminus A),
        \end{align*}
        where we applied Lemma~\ref{lem: uniqueness of normal vector for subsets} to $E\subset\Omega$ to get the second equality.
        Since with Theorem \ref{thm: Federer} we get that 
        $$\calH^{m-1}(\partial^* \Omega  \cap \partial^* (\Omega \setminus A)) = \calH^{m-1}((\partial\Omega\setminus\gamma)\cap \partial^*\Omega),$$  the first equality in \eqref{eqn: first perimeter identity} follows via
        \begin{equation*}
            \calH^{m-1}\big(\bigl\{\normal{E} = \normal{\Omega\setminus A}\bigr\} \cap \partial\Omega\ \big)
            % = \calH^{m-1}(\partial^*E\cap \partial^* \Omega  \cap \partial^* (\Omega \setminus A))
            =\calH^{m-1}(\partial^*E\cap \partial^* \Omega  \cap (\partial\Omega \setminus \gamma))
            =\Per(E;\partial\Omega\setminus\gamma).
        \end{equation*}
        Using the decomposition \eqref{eqn: perimeter of intersection full} for $\gamma$ instead of the full boundary $\partial\Omega$ and repeating the arguments thereafter, one finds that   $\Per(E\cap(\Omega\setminus A);\gamma) = 0$.
        We move on to \eqref{eqn: second perimeter identity}. Applying \eqref{eqn: perimeter union and intersection inequality} to $E_n$, we get
        \begin{equation*}
                \Per(E_n;\gamma) 
                % = \Per((E\cap (\Omega\setminus A)) \cup \hat{E}_n;\gamma)\\
                \leq  \Per(E\cap(\Omega\setminus A);\gamma) + \Per(\hat{E}_n;\gamma) -\Per(\hat{E}_n\cap(\Omega\setminus A);\gamma).
            \end{equation*}
        The first term equals zero by \eqref{eqn: first perimeter identity}. The remaining two terms vanish since $\hat{E_n}$ is compactly contained in $\Omega$ and thus, has zero perimeter on $\partial\Omega$.
        For the third identity \eqref{eqn: third perimeter identity}, we write $F:= E\cap(\Omega\setminus A)$ and invoke \cite[Theorem 16.3]{Mag12_SetsFinitePerimeter} once more to get
        \begin{equation*}
            \begin{aligned}
                \Per(E_n;\partial\Omega\setminus\gamma) 
                = \Per(\hat{E}_n; F^{(0)} \cap (\partial\Omega\setminus\gamma))+ \Per(F; \hat{E}_n^{(0)}\cap (\partial\Omega\setminus\gamma)) + \calH^{m-1}(\{\normal{\hat{E}_n} = \normal{F}\}\cap \partial\Omega\setminus\gamma)
            \end{aligned}
        \end{equation*}
        The first term vanishes since $\hat{E}_n\subset\subset \Omega$ has no perimeter on $\partial\Omega\setminus\gamma$. For the same reason one has  \smash{$\{\normal{\hat{E}_n} = \normal{F}\}=\emptyset$} and the third term vanishes as well. Finally, $\hat{E}_n\subset\subset\Omega$ implies that $(\partial\Omega\setminus\gamma)\subset\hat{E}_n^{(0)} $ and thus, the second term is equal to $\Per(E\cap(\Omega\setminus A);\partial\Omega\setminus\gamma)$. Using \eqref{eqn: first perimeter identity}, we obtain the claimed identity  \eqref{eqn: third perimeter identity}.

        We are left to prove the convergence \eqref{eqn: Dirichlet approximation of sets}. Since we clearly have $E_n\to  E$, it follows from lower semicontinuity of  relative perimeters in $\Omega\cup \gamma$ (see Remark \ref{rmk: compactness and lsc for mixed boundaries}) that  $\liminf \Per(E_n;\Omega) = \liminf \Per(E_n;\Omega\cup\gamma) \geq \Per(E;\Omega\cup\gamma)$.
        Using \eqref{eqn: perimeter union and intersection inequality}  we get
        \begin{equation*}
            \begin{gathered}
                \Per(E_n;\Omega) 
                % = \Per((E\cap (\Omega\setminus A)) \cup \hat{E}_n;\Omega)\\
                \leq  \Per(E\cap(\Omega\setminus A);\Omega) + \Per(\hat{E}_n;\Omega) -\Per((E\cap(\Omega\setminus A))\cap \hat{\Omega}_n;\Omega) .
            \end{gathered}
        \end{equation*}
         Taking  $\limsup$ on both sides as before  and using \eqref{eqn: boundary decomposition for perimeter}  as well as Step 1 yields 
        \begin{equation*}\begin{aligned} 
            \limsup \Per(E_n;\Omega) 
            &\leq  \Per(E\cap(\Omega\setminus A);\Omega) + \Per(E) -\Per(E\cap(\Omega\setminus A)) \\
            &= \Per(E) - \Per(E\cap(\Omega\setminus A);\partial\Omega)\\
            &=\Per(E;\Omega\cup\gamma),
        \end{aligned}
        \end{equation*}
        where the last identity is equivalent to \eqref{eqn: first perimeter identity}.
    \end{proof}

We briefly comment on related results to Theorem~\ref{thm: strict Dirichlet approximation of finite perimeter sets} and  applications in variational problems, complementing the discussion from the introduction. In general, strict (smooth) interior approximation results for finite perimeter sets in $\Omega$ can be viewed as the measure theoretic analog of strict density of (smooth) compactly supported functions in $BV(\Omega)$, but the techniques available for set approximations are fundamentally different. In particular, standard mollification procedures and smooth cut-off functions are not available. 

In this spirit, the approximation result for Euclidean $BV$-functions on Lipschitz domains by a sequence of functions in $BV(\R^m)$ with compact support given in \cite[Lemma 1]{IonLac05_GeneralizedCheegerSets}  is related to Theorem~\ref{thm: strict Dirichlet approximation of finite perimeter sets}.  However, this is not an \textit{interior} approximation (with respect to $\gamma$). Moreover, one should note that the approximating sets $E_n\subset\Omega$ in our setting cannot simultaneously be smoothly bounded and satisfy \eqref{eqn: Dirichlet approximation of sets} unless $\partial E\cap\gamma^*$ is smooth or $\calH^{m-1}(\partial E\cap \gamma^*)=0$.

\subsection{A capillarity problem in a container with inhomogeneous boundary}\label{sec: mixed boundary capillarity}
Capillarity problems model equilibrium states of liquids and the formation of droplets on surfaces. We refer to \cite[Chapter 19]{Mag12_SetsFinitePerimeter} for an introduction to the variational framework. If $\Omega$ is an open set in $\R^m$, the free energy of a liquid covering a subset $E$ of $\Omega$  is modeled by
\begin{equation*}
    \calF_\beta(E) := \Per(E;\Omega) + \beta \Per(E;\partial\Omega ) + \int_E v(x)\d x.
\end{equation*}
The parameter $\beta \in[-1,1]$ is an adhesion parameter between the container walls $\partial\Omega$ and the liquid, and the last term is a potential energy term. If $v\in C^0(\Omega)$ and $\hat{E}\in \scrC(\Omega)$ with $c:=|\hat{E}|<\infty$ minimizes $\calF_\beta$ among all finite perimeter sets $E$ in $\Omega$ with volume $ |E|=c$, then there exists some $\lambda\in \R$ such that $\hat{E}$ has distributional mean curvature $-v+\lambda$ in $\Omega$. If additionally, $\Omega$ has a $C^1$-boundary and $\Omega\cap \partial\hat{E}$ is a $C^2$-hypersurface with boundary, then $\hat{E}$ satisfies \textit{Young's law}:
\begin{equation*}
    \normal{\hat{E}} \cdot\normal{\Omega} = -\beta \text{ on } \partial(\Omega\cap \partial \hat{E}).
\end{equation*}
In other words, the contact angle $\theta$ between the liquid and the container walls satisfies $\beta =\cos(\theta)$. The term $\beta \Per(E;\partial\Omega )$ in $\calF_\beta$ is also referred to as the \textit{wetting energy} and $\partial\Omega$ can be understood as fully wetting if $|\beta| = 1$ and as hydrophobic if $\beta = 0$. Various aspects of capillarity problems and the related prescribed mean curvature problems have been studied extensively in recent years, see \cite{PasPoz24_QuantitativeIsoperimetricInequalities, LeeParPyo25_CapillaryStableMinimal, DeFusMor24_RegularityCapillarityDroplets, FusJulMorPra25_IsoperimetricInequalityCapillary, Sch25_IsoperimetricConditionsLower}, just to mention a few.

The authors in \cite{LiZhoZhu25_MinmaxTheoryCapillary} and \cite{HonSat23_CapillarySurfacesStability} study regularity and stability of capillary surfaces in $3$-manifolds. Motivated by this, we  propose a variational formulation for a capillary problem with \textit{mixed boundary  in a Riemannian manifold} and use the results from Section \ref{sec: mixed bvp and strict approximation} to prove existence of minimizers and use the approximation from  Theorem~\ref{thm: strict Dirichlet approximation of finite perimeter sets} to construct recovery sequences in a Gamma-convergence problem for capillarity functionals with varying adhesion parameters $\beta$.

\begin{lemma}\label{lem: existence for mixed boundary capillarity}
Let $\Omega$ satisfy \ref{ass: gauss green domain} and let $\gamma\subset\partial\Omega$ be $\calH^{m-1}$-equivalent to a relatively open subset of $\partial\Omega$.  For $\beta \in[0,1]$, $v\in L^1(\Omega)$ and measurable sets $E\subset\Omega$, let
\begin{equation}
    \calF_\beta^\gamma(E) = \Per(E;\Omega) + \Per(E;\gamma) + \beta \Per(E;\partial\Omega\setminus\gamma) + \int_E v\d\vol
\end{equation}
    Then the functional $\calF_\beta^\gamma$ is lower semicontinuous with respect to convergence in volume measure on $\Omega$. Moreover, for every $c\in (0,|\Omega|)$, $\calF^\gamma_\beta$ admits a minimizer in  $\set{E\in \scrC(\Omega):\ |E|=c}$.
\end{lemma}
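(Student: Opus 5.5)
The plan is to rewrite $\calF^\gamma_\beta$ as a convex combination of two perimeter functionals, each of which is already known to be lower semicontinuous, plus a potential term that is continuous under convergence in measure. By Remark~\ref{rmk: compactness and lsc for mixed boundaries}(3) we may replace $\gamma$ by the relatively open set $\hat\gamma\subset\partial\Omega$ with $\calH^{m-1}(\gamma\Delta\hat\gamma)=0$ without changing any term, so we assume $\gamma$ is relatively open.

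For $E\subset\Omega$ of finite perimeter in $\Omega$, the zero extension $\mathbf 1_E$ lies in $BV(M)$ (Remark~\ref{rmk: properties of for bounded trace}), and $\TV{\mathbf 1_E}$ is concentrated on $\bar\Omega$. This gives the decomposition \eqref{eqn: boundary decomposition for perimeter}, $\Per(E)=\Per(E;\Omega)+\Per(E;\gamma)+\Per(E;\partial\Omega\setminus\gamma)$. Hence
\begin{equation*}
\calF^\gamma_\beta(E)=(1-\beta)\,\Per(E;\Omega\cup\gamma)+\beta\,\Per(E)+\int_E v\d\vol .
\end{equation*}
Since $\beta\in[0,1]$, both coefficients are nonnegative. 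Let $E_n\to E$ in measure in $\Omega$. Then $\Per(E;\Omega\cup\gamma)\le\liminf_n\Per(E_n;\Omega\cup\gamma)$ by Remark~\ref{rmk: compactness and lsc for mixed boundaries}(2), applied with the relatively open set $U=\Omega\cup\gamma$; this is where Assumption~\ref{ass: gauss green domain} and Lemma~\ref{lem: characterization of mixed boundary TV} enter. Also $\Per(E)\le\liminf_n\Per(E_n)$ by $L^1$-lower semicontinuity of the total variation on $M$. For the potential term, $\bigl|\int_{E_n}v-\int_E v\bigr|\le\int_\Omega|\mathbf 1_{E_n}-\mathbf 1_E|\,|v|\d\vol$. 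Along any subsequence, a further subsequence converges a.e., so dominated convergence with dominant $|v|\in L^1(\Omega)$ sends this to zero, and the whole sequence converges. Passing to a subsequence realizing the $\liminf$ of $\calF^\gamma_\beta(E_n)$ then yields lower semicontinuity; if the $\liminf$ is $+\infty$ there is nothing to prove.

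For existence we use the direct method. The bound $\calF^\gamma_\beta\ge-\|v\|_{1,\Omega}$ gives $\inf>-\infty$. The step I expect to be the main obstacle is showing that the admissible class contains some set of finite energy, i.e.\ a finite perimeter set of volume exactly $c$. I would first try level sets of a distance function $d=\dist(\cdot,x_0)$, $x_0\in\Omega$. By the coarea formula (Theorem~\ref{thm: coarea formula}, applied to a cut-off of $d$), $\Per(\{d<t\})<\infty$ for a.e.\ $t$. By \eqref{eqn: perimeter union and intersection inequality}, $\Per(\Omega\cap\{d<t\})\le\Per(\Omega)+\Per(\{d<t\})<\infty$ for such $t$. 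The function $t\mapsto|\Omega\cap\{d<t\}|$ is continuous, since level sets of $d$ are $\vol$-null because $|\nabla d|=1$ a.e., but the exact level $c$ might be hit only at an exceptional $t$. To avoid this, I would choose a good $t_1$ with $c-\delta<|\Omega\cap\{d<t_1\}|<c$. I would then add a small geodesic ball $\bar B(y,s)\subset\Omega\setminus\{d\le t_1\}$, whose boundary is smooth for all small $s$ and which exists because $\Omega$ is open and $|\Omega|>c$. The volume of the union is continuous in $s$ and its perimeter is finite for every $s$, so the intermediate value theorem gives an admissible competitor.

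Now take a minimizing sequence $(E_n)$. The decomposition above yields $\Per(E_n;\Omega)\le\calF^\gamma_\beta(E_n)+\|v\|_{1,\Omega}$, hence $\sup_n\|\mathbf 1_{E_n}\|_{BV(\Omega)}<\infty$. Since $\Omega$ is a $BV$-extension domain, Remark~\ref{rmk: properties of extension domains}(2) gives a subsequence with $\mathbf 1_{E_n}\to u$ in $L^1(\Omega)$. By Remark~\ref{rmk: compactness of Caccioppoli sets}, $u=\mathbf 1_E$ for a measurable $E\subset\Omega$, and $|E|=\lim|E_n|=c$. Lower semicontinuity shows that $E$ is a minimizer; it has finite energy, so $\Per(E;\Omega)<\infty$, and $|E|\le|\Omega|<\infty$ since $\Omega$ has finite volume, so $E\in\scrC(\Omega)$.
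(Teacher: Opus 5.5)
Your proof is correct and follows the paper's argument. You use the same rewriting $\calF^\gamma_\beta(E)=(1-\beta)\Per(E;\Omega\cup\gamma)+\beta\Per(E)+\int_E v\,d\vol$, the same lower semicontinuity from Remark~\ref{rmk: compactness and lsc for mixed boundaries} and classical $L^1$-lower semicontinuity, and the same direct method via extension-domain compactness and Remark~\ref{rmk: compactness of Caccioppoli sets}.

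Your construction of a finite-energy competitor of volume $c$ fills a point the paper skips when it asserts $m_\beta(c)\in\R$. Your bound $\Per(E_n;\Omega)\le\calF^\gamma_\beta(E_n)+\|v\|_{1,\Omega}$ is also the correct form of the paper's estimate. To make the intermediate value step airtight, fix the order of choices. First fix $t^*$ with $|\Omega\cap\{d<t^*\}|=c$. Then choose $\bar B(y,s_0)\subset\Omega\cap\{d>t^*\}$ and set $\delta:=\vol(B(y,s_0))$. Only then pick the good $t_1<t^*$. This guarantees the added ball has room to supply the missing volume.
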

\begin{proof}
    We rewrite
    \begin{equation*}
        \calF_\beta^\gamma(E) = (1-\beta)\Per(E;\Omega\cup\gamma) + \beta\Per(E) + \int_E v\d\vol.
    \end{equation*}
    By virtue of  the lower semicontinuity statement for mixed boundaries in Remark \ref{rmk: compactness and lsc for mixed boundaries} and the classical lower semicontinuity property of perimeters on open sets, the first two summands are lower semicontinuous and nonnegative functionals. Using that  the third summand is continuous with respect to $E$, the sum of all three functionals is lower semicontinuous, too. For arbitrary subsets  $E\in \Omega$, the value $\calF_\beta^\gamma(E)$ is bounded below by $-\norm[\Omega,1]{v}$. Therefore, for every $c\in(0,|\Omega|)$,  
    $$m_\beta(c):=\inf\set{ \calF_\beta^\gamma(E):\, E\subset\Omega,\,|E|=c}$$ 
    takes a value in $\R$ and we may pick an infimizing sequence of sets $E_n\subset \Omega$ with $|E|=c$ such that $\calF_\beta^\gamma(E_n)\to m_\beta(c)$. By definition of $\calF_\beta^\gamma$ and nonnegativity of perimeters, it follows that for all $n$,
    \begin{equation*}
        \Per(E_n;\Omega) \leq m_\beta(c) +  \norm[\Omega,1]{v}<\infty,
    \end{equation*}
     hence the sequence is uniformly bounded in $\scrC(\Omega)$ and since $\Omega$ is a bounded extension domain, we may argue as for the compactness property in Remark \ref{rmk: compactness and lsc for mixed boundaries} combined with Remark \ref{rmk: compactness of Caccioppoli sets} to find that $(E_n)$ converges in measure to some $E\in \scrC(\Omega)$ along a subsequence and clearly, $|E|=c$.
     Finally, invoking lower semicontinuity of $\calF_\beta^\gamma$, we conclude that $\calF_\beta^\gamma(E) =m_\beta(c)$. 
\end{proof}

We want to examine the behavior of the minimizer under continuous changes of the prescribed contact angle $\theta$ via the  parameter $\beta$. To this end, we prove the following convergence result where we will crucially depend on the approximation Theorem~\ref{thm: strict Dirichlet approximation of finite perimeter sets} for the construction of a recovery sequence.
\begin{theorem}
 Suppose that the assumptions of Lemma~\ref{lem: existence for mixed boundary capillarity} hold. Assume furthermore that \ref{ass: approximation domain} and \ref{ass: gamma dirichlet boundary} are satisfied. 
    If $(\beta_n)\to \beta$ in $[0,1]$, then the functionals $\calF^\gamma_{\beta_n}$ Gamma-converge to $\calF_\beta^\gamma$ on the space of measurable subsets of $\Omega$.
\end{theorem}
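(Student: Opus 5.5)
We verify the two defining inequalities of Gamma-convergence with respect to convergence in volume measure on $\Omega$. As in the proof of Lemma~\ref{lem: existence for mixed boundary capillarity}, we use the rewriting
\begin{equation*}
    \calF_\beta^\gamma(E) = (1-\beta)\Per(E;\Omega\cup\gamma) + \beta\Per(E) + \int_E v\d\vol .
\end{equation*}
Here $\Per(E;\Omega\cup\gamma)\le \Per(E)$ holds whenever $E\subset\Omega$. The potential term is continuous under convergence in measure, since $v\in L^1(\Omega)$, so it plays no role in either inequality.

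\emph{Liminf inequality.} Let $E_n\to E$ in measure.
\begin{itemize}
    \item[] We may assume $\liminf_n \calF^\gamma_{\beta_n}(E_n)<\infty$ and pass to a subsequence realizing the liminf. Since all coefficients are nonnegative and $\int_{E_n} v\d\vol\ge -\norm[\Omega,1]{v}$, we get $\sup_n \Per(E_n;\Omega)<\infty$.
    \item[] We then split into the cases $\beta<1$ and $\beta=1$. If $\beta<1$, then $1-\beta_n$ is eventually bounded below, so $\Per(E_n;\Omega\cup\gamma)$ is bounded as well. In either case $\beta_n\Per(E_n)$ is bounded along the subsequence.
    \item[] Using $\liminf(a_nx_n)\ge a\liminf x_n$ for $a_n\to a\ge0$ and $x_n\ge 0$, we estimate
    \begin{equation*}
        \liminf_n \calF^\gamma_{\beta_n}(E_n)\ \ge\ (1-\beta)\liminf_n\Per(E_n;\Omega\cup\gamma) + \beta\liminf_n \Per(E_n) + \int_E v\d\vol .
    \end{equation*}
    \item[] Lower semicontinuity of $\Per(\cdot\,;\Omega\cup\gamma)$ (Remark~\ref{rmk: compactness and lsc for mixed boundaries}) and of $\Per(\cdot\,;M)$ then gives $\liminf_n \calF^\gamma_{\beta_n}(E_n)\ge\calF^\gamma_\beta(E)$.
    \item[] One subtlety remains when $\beta=0$ or $\beta=1$: a coefficient tends to zero while the corresponding perimeter might blow up. This is harmless, because the product is nonnegative and the bound $\liminf a_nx_n\ge 0 = a\cdot(\cdot)$ suffices.
\end{itemize}

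\emph{Limsup inequality (recovery sequence).} Let $E\subset\Omega$ with $\calF^\gamma_\beta(E)<\infty$; otherwise there is nothing to prove. Then $E$ has finite perimeter in $\Omega$, and by Assumption~\ref{ass: gauss green domain} and \eqref{eqn: boundary decomposition for perimeter} also $\Per(E)<\infty$.
\begin{itemize}
    \item[] The constant sequence $E_n=E$ does not work directly. It gives $\calF^\gamma_{\beta_n}(E)\to\calF^\gamma_\beta(E)$, since every term is finite and affine in $\beta$. So the recovery sequence is in fact trivial whenever $\Per(E)<\infty$, which is always the case here.
    \item[] We nonetheless record where Theorem~\ref{thm: strict Dirichlet approximation of finite perimeter sets} enters. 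It lets us take the infimum over sets with $\Per(E_n;\gamma)=0$, and this is the form needed if Gamma-convergence is to be compatible with the Dirichlet constraint. In that case we take $E_n$ from the theorem, which satisfies
    \begin{equation*}
        \Per(E_n;\Omega)\to\Per(E;\Omega\cup\gamma),\qquad \Per(E_n;\gamma)=0,\qquad \Per(E_n;\partial\Omega\setminus\gamma)=\Per(E;\partial\Omega\setminus\gamma).
    \end{equation*}
    \item[] Consequently
    \begin{equation*}
        \calF^\gamma_{\beta_n}(E_n)=\Per(E_n;\Omega)+\beta_n\Per(E;\partial\Omega\setminus\gamma)+\int_{E_n}v\d\vol \to \calF^\gamma_\beta(E).
    \end{equation*}
    \item[] Both choices work. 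Which one to present depends on whether $\calF^\gamma$ is understood as restricted to sets with $\Per(E;\gamma)=0$; presenting the strict approximation covers both readings.
\end{itemize}

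\emph{Main obstacle.} The main difficulty is the liminf inequality at the degenerate endpoints $\beta\in\{0,1\}$, where a coefficient may vanish while the corresponding perimeter is unbounded. This is handled by the nonnegativity argument above. We must also ensure that the limit $E$ still lies in the admissible class of measurable subsets of $\Omega$, which holds by Remark~\ref{rmk: compactness of Caccioppoli sets}.
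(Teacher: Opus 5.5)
Your proof is correct. The liminf part is the paper's argument: the same rewriting $\calF^\gamma_\beta=(1-\beta)\Per(\cdot\,;\Omega\cup\gamma)+\beta\Per(\cdot)+\int v\,d\mathrm{vol}$, lower semicontinuity of both perimeters, and continuity of the potential term. Your treatment of the endpoints $\beta\in\{0,1\}$ only makes explicit the convention $0\cdot\infty=0$ that the paper uses tacitly.

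The genuine difference is the recovery sequence. The paper takes $E_n$ from Theorem~\ref{thm: strict Dirichlet approximation of finite perimeter sets}. You observe that the constant sequence $E_n=E$ already suffices, and you are right. Delete your sentence claiming it ``does not work directly'', since the next line proves the opposite. What you should state cleanly is why every term is finite:
\begin{itemize}
\item If $\calF^\gamma_\beta(E)<\infty$, then $\Per(E;\Omega)<\infty$.
\item Since $\Omega$ is relatively compact, $\mathbf{1}_E\in BV(\Omega)$.
\item Under Assumption~\ref{ass: gauss green domain}, the Gauss--Green formula (Corollary~\ref{cor: Gauss--Green}, Remark~\ref{rmk: properties of for bounded trace}) puts the zero extension in $BV(M)$, with $\Per(E;\partial\Omega)$ bounded by the $L^1(\partial^*\Omega;\calH^{m-1})$-norm of the trace of $\mathbf{1}_E$.
\end{itemize}
Hence $\calF^\gamma_{\beta_n}(E)$ is affine in $\beta_n$ with finite coefficients and converges to $\calF^\gamma_\beta(E)$.

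Your route shows that, for the statement as written, neither Theorem~\ref{thm: strict Dirichlet approximation of finite perimeter sets}, nor Assumption~\ref{ass: gamma dirichlet boundary}, nor condition \eqref{eqn: Omega boundary regularity} is needed. It also preserves volume exactly, so the same proof gives Gamma-convergence of the volume-constrained functionals of Lemma~\ref{lem: existence for mixed boundary capillarity}. The paper's sequence does not preserve volume exactly.

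What the paper's route buys is that its recovery sequences satisfy $\Per(E_n;\gamma)=0$. The same argument therefore also yields Gamma-convergence of the no-touch-constrained functionals (set to $+\infty$ when $\Per(E;\gamma)>0$) to $\calF^\gamma_\beta$, i.e.\ it identifies $\calF^\gamma_\beta$ as their relaxation. This is the distinction you draw at the end, and it is the only reading under which the approximation theorem is genuinely needed.
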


\begin{proof}
    Let $E\subset\Omega$ be arbitrary. We first prove the lower semicontinuity estimate and assume that $(E_n)$ is a sequence of measurable subsets of $\Omega$ that converges to $E$ in volume measure. Using the same arguments as in the proof of Lemma~\ref{lem: existence for mixed boundary capillarity}, we immediately get
    \begin{equation*}
    \begin{aligned}
       \liminf_n \calF^\gamma_{\beta_n}(E_n) 
       &\geq \lim_n (1-\beta_n) \liminf_n \Per(E_n;\Omega\cup\gamma) + \lim_n\beta_n\liminf_n\Per(E_n)+ \lim_n\int_{E_n} v\d\vol\\
       &\geq  (1-\beta)\Per(E;\Omega\cup\gamma) + \beta \Per(E) + \int_E v\d\vol \\
       &= \calF_\beta^\gamma(E).
       \end{aligned}
    \end{equation*}
    We conclude the proof by constructing a recovery sequence. Indeed, for arbitrary fixed $E$, let $(E_n)$ be the sequence from  Theorem~\ref{thm: strict Dirichlet approximation of finite perimeter sets}. Then  the construction directly yields
    \begin{equation*}
       \lim_n \calF^\gamma_{\beta_n}(E_n) 
       = \lim_n    \left( \Per(E_n;\Omega\cup\gamma) + \beta_n \Per(E_n;\partial\Omega\setminus\gamma) + \int_{E_n} v\d\vol \right)
        = \calF_\beta^\gamma(E).
    \end{equation*}
\end{proof}

\appendix
\section{}\label{appendix}

%%%%%%%%%%%%%%%%%%%%%%%%%%%%%%%%%%%%%%%%%%%%%%%%%%%%%%%%%%%%
% \section{Proofs for some auxiliary results} \label{appendix: proofs for auxuliary results}
%%%%%%%%%%%%%%%%%%%%%%%%%%%%%%%%%%%%%%%%%%%%%%%%%%%%%%%%%%%%
For an arbitrary subset $A\subset M$ we write $C(A)$ for the space of continuous functions on $A$ with respect to the subspace topology. Let  $C_c(A)$ be the space of continuous functions with compact support in $A$ and $C_b(A)$ the space of uniformly bounded continuous functions on $A$. We denote the space of continuous functions \textit{vanishing at infinity} by $C_0(A)$
 and clearly one has $C_c(A)\subset C_0(A)\subset C_b(A)$. If $A$ is locally compact, then $C_0(A)$ is the closure of $C_c(A)$ with respect to uniform convergence.
 Given an open set $\Omega\subset M$ and $k\in \N\cup\set{\infty}$, we denote the space of $k$-times continuously differentiable functions on $\Omega$  by  $C^k(\Omega)$ and for arbitrary sets $A\subset M$ we let  
$$
    C^k(A) = \big\{u:A\to \R :\, \text{ there exist } U\subset M \text{ open and }  \hat{u}\in C^k(U) \text{ s.t. }  A\subset U \tand\hat{u}_{|A} = u \big\}
$$
and define $C^k_c(A)\subset C_0^k(A)\subset C^k_b(A)$ as above.
Similarly we define
$\Gamma_{C^\infty}(A;TM)$ to be the space of sections $X:A\to TM$ for which there exists an open subset $U\subset M$ containing $A$ such that $X$   extends to a section in $\Gamma_{C^\infty}(U;TM) = \Gamma_{C^\infty}(TU) $. 

\begin{lemma}\label{lem: smooth and cts functions on LCH spaces} Let $\calX$ be a locally compact Hausdorff space and let $A\subset \calX$ be locally compact in the subspace topology.
    \begin{enumerate}[leftmargin = *,   labelindent = 0em,]
        \item  Every  $u\in C_c(A)$ can be extended to a function $\tilde{u}\in C_c(\calX)$.
        \item Suppose $\calX=M$. 
        Then $C_c^\infty(A)$ is dense in $C_c(A)$ (and $C_0(A)$) with respect to uniform convergence.
        \item There exists a countable set $\calD\subset C_c^\infty(A)$ which is dense in $C_0(A)$.
    \end{enumerate}
\end{lemma}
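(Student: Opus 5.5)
The plan is to prove the three parts in order, using standard locally compact Hausdorff (LCH) space tools together with smooth partitions of unity on $M$.

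For (1), let $u\in C_c(A)$ with $K:=\sppt u$. Then $K$ is compact in $A$, hence compact and closed in $\calX$. Since $A$ is locally compact, each point of $A$ has a compact neighbourhood in $A$. This gives an open $V\subset\calX$ with $A\cap V$ relatively open in $A$ and $K\subset A\cap V$. I would then use the standard fact that a locally compact subspace of a Hausdorff space is open in its closure. So $A=\bar A\cap W$ for some open $W\subset\calX$, and shrinking $V$ we may take $V\subset W$.
\begin{itemize}
\item The set $K$ is compact, so $u$ extended by zero to $\bar A\cap V$ is continuous on the closed subset $\bar A\cap V$ of the LCH space $V$: zero outside $K$, and $K$ is closed.
\item Pick an open $V'$ with $K\subset V'\subset\subset V$. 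The function $u$, extended by zero, is continuous on the compact Hausdorff (hence normal) space $\overline{V'}\cap\bar A$. Tietze then extends it to $\overline{V'}$.
\item Multiplying by an Urysohn cutoff $\chi\in C_c(V')$ with $\chi=1$ on $K$ gives $\tilde u\in C_c(\calX)$ with $\tilde u_{|A}=u$.
\end{itemize}
The fact I expect to need care is that the extension agrees with $u$ on $A\cap V'$ but is cut to zero elsewhere. This is consistent because $u=0$ on $A\setminus K$.

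For (2), let $u\in C_c(A)$ and $\varepsilon>0$. First extend $u$ to $\tilde u\in C_c(M)$ by (1). Next approximate $\tilde u$ uniformly on $M$ by some $w\in C_c^\infty(M)$, using partition-of-unity mollification in finitely many charts covering $\sppt\tilde u$. Then $w_{|A}\in C^\infty(A)$ by the definition of $C^k(A)$, with $\sup_A|w-u|<\varepsilon$.

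The main obstacle is that the support of $w_{|A}$ must be compact in $A$, not merely in $M$. I would handle this by multiplying by a cutoff. Choose $\chi\in C_c^\infty(M)$, $0\le\chi\le1$, $\chi=1$ on $\sppt u$. Its support must meet $A$ only in a compact subset of $A$, for instance $\sppt\chi\subset V'$ with $\overline{V'}\cap\bar A\subset A$, which is possible since $A$ is open in $\bar A$ near the compact set $K$. Then $\sppt(\chi w)\cap A\subset\overline{V'}\cap\bar A$, which is compact and contained in $A$. Moreover $|\chi w-u|\le\chi|w-u|+(1-\chi)|u|<\varepsilon$ on $A$. Density in $C_0(A)$ follows since $C_c(A)$ is dense in $C_0(A)$ for locally compact $A$.

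For (3), $M$ is second countable and $A$ is a subspace, so $A$ is second countable and locally compact, hence $\sigma$-compact with an exhaustion by compacts $K_j$. Use the fact that $C_0(A)$ is separable: it is the closure of the union of the separable spaces $C(K_j)$. More directly, I would take a countable dense family in $C_c^\infty(M)$ in $C^0$ norm on each compact set, for example rational polynomial combinations in charts times a countable partition of unity. I would then apply the cutoff construction from (2) with a countable family of cutoffs $\chi_j$ adapted to the $K_j$. The countable set $\calD=\{\chi_j w\}$ is then dense in $C_c(A)$, hence in $C_0(A)$, by (2).
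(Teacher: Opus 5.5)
Your proposal is correct and follows essentially the paper's route. For (1), both use a Tietze extension from a compact subset of $A$ containing the support of $u$, followed by an Urysohn cutoff; for (2), both extend, approximate by smooth functions and restrict while controlling supports; for (3), both use separability of $C_0(A)$ together with the density from (2).

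The differences are only technical. You find the compact set via the fact that a locally compact subspace is open in its closure, and you control supports by multiplying with a smooth cutoff after approximating. The paper instead applies the neighbourhood lemma for locally compact Hausdorff spaces directly inside $A$ and keeps the mollified supports inside $U$. You also make explicit that $A$ is second countable (inherited from $M$), which the separability claim in (3) actually requires and the paper leaves implicit.
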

\begin{proof}
    1. Let $K:=\sppt(u)$. Then $K$ is a compact subset of the LCH space $A$. By \cite[Proposition 4.31]{Fol99_RealAnalysisModern} there exists a relatively open, relatively compact set $U_A$ in $A$ such that
    $$
    K\subset \subset U_A \subset \mathrm{cl}_A(U_A) \subset \subset A \subset \calX.
    $$
    Here $\mathrm{cl}_A$ denotes the closure of a set in the subspace topology of $A$, but since $\mathrm{cl}_A(U_A)$ is a compact subspace of $A$, it is a compact subspace of $\calX$ and therefore, $\mathrm{cl}_A(U_A) = \mathrm{cl}(U_A)$.  
    By definition of the subspace topology, $U_A$ is open in $A$ if and only if there exists an open set $U\subset \calX$ s.t. $U_A = U\cap A$.

    We apply Tietze's extension theorem for locally compact Hausdorff spaces \cite[Theorem 4.34]{Fol99_RealAnalysisModern} to $u\in C(\mathrm{cl}(U_A))$ where we consider $\mathrm{cl}(U_A)\subset\subset \calX$. That is, we let $\hat{u}\in C(\calX)$ s.t. $\hat{u}_{|\mathrm{cl}(U_A)} = u$.
    Moreover, by Urysohn's lemma \cite[Lemma 4.32]{Fol99_RealAnalysisModern}, since $K\subset\subset U$ and $U$ is open in $\calX$, there exists $f\in C(\calX)$ such that $f(x)=1$ for $x\in K$ and $f(x) = 0$ for all $x\in \calX\setminus U$. We define $\tilde{u} = \hat{u}f\in C(\calX)$ and write $A = (A\setminus U) \cup (A\cap U)$. If $x\in A\setminus U$, then $f(x) = 0$, hence $\tilde{u}(x) = 0$ and since $K = \sppt(u) \subset U$, $u(x) = 0$. Now assume $x\in A\cap U = U_A$, then $\hat{u}(x) = u(x)$ and if $x\notin K$, then $\tilde{u}(x) = \hat{u}(x)f(x) = 0 = u(x)$. Finally, if
    $x\in K$, then $f(x) = 1$, hence $\tilde{u}(x) = \hat{u}(x)f(x) = u(x)$. 

    2. Fix $u\in C_c(A)$ as above and let $\tilde{u}\in C_c(M)$ be the continuous extension to $M$ with $U$ and $U_A$ as above. Then $\sppt(\tilde{u})$ is compactly contained in $U$ with $U\subset M$ open, hence by using Friedrichs mollifiers there is a sequence of functions $\tilde{u}_n\in C_c^\infty(M)$ with $\sppt(\tilde{u}_n)\subset\subset U$ converging uniformly to $\tilde{u}$ on $M$. Therefore, by setting $u_n :{=\tilde{u}_n}_{|A}$, we obtain a sequence in $C^\infty(A)$ approximating $u$ uniformly on $A$ and one has 
    $$
    \sppt(u_n) 
    =    \sppt(\tilde{u}_n)\cap A
    = (\sppt(\tilde{u}_n)\cap \mathrm{cl}(U_A) )
    \subset A.
    $$
    Hence, $\sppt(u_n)$ is compact as the intersection of two compact sets and we conclude that $u_n\in C_c^\infty(A)$. Density in $C_0(A)$ follows density of $C_c(A)$ in $C_0$.

    3. Since $A$ is locally compact, $C_0(A)$ is a separable Banach space (cf. \cite{Cho12_NotesSeparability$C^ast$Algebras}). Now the claim follows from density of $C_c^\infty(A)$ in $C_0(A)$.
    \end{proof}

\section*{Declarations}
\textbf{Competing interests:} On behalf of all authors, the corresponding author states that there is no conflict of interest. 

\textbf{Data availability:} Data sharing is not applicable to this article as no datasets were generated or analyzed.

\bibliography{references}
\bibliographystyle{abbrv}

\end{document}